%
%

\documentclass{memo-l}
\usepackage[utf8]{inputenc}
\usepackage{amsmath}
\usepackage{amssymb}

\usepackage[english]{babel}
\usepackage{fancyhdr}
\usepackage[export]{adjustbox}
\usepackage[colorlinks=true]{hyperref}

\usepackage[usenames,dvipsnames]{color}
\usepackage{array,longtable,calc}
\usepackage{tikz}
\usetikzlibrary{arrows}
\usepackage{float}
\usepackage{afterpage}
\usepackage{wrapfig}
\usepackage{lscape}
\usepackage{longtable}
\usepackage{rotating}
\usepackage{epstopdf}
\usepackage{array}

\usepackage{enumitem}   
\makeatletter
\def\namedlabel#1#2{\begingroup
	#2%
	\def\@currentlabel{#2}%
	\phantomsection\label{#1}\endgroup
}
\makeatother


\usepackage{}

\newtheorem{theorem}{Theorem}[chapter]
\newtheorem{lemma}[theorem]{Lemma}
\newtheorem{corollary}[theorem]{Corollary}
\newtheorem{proposition}[theorem]{Proposition}

\theoremstyle{definition}
\newtheorem{definition}[theorem]{Definition}

\newtheorem{claim}{Claim}[chapter]
\newenvironment{claimproof}[1]{\par\noindent\underline{Proof:}\space#1}{\hfill $\Diamond$}

\theoremstyle{remark}

\numberwithin{section}{chapter}
\numberwithin{equation}{chapter}


\definecolor{darkgreen}{rgb}{0,0.6,0}
\definecolor{darkblue}{rgb}{0.0,0.0,0.8}
\definecolor{orange}{rgb}{1.0,0.45,0.0}


\def \C {\mathbb C}

\def \D {\mathbb D}

\def \R {\mathbb R}

\def \N {\mathbb N}

\def \R {\mathbb R}

\def \T {\mathbb T}

\def \chat {\hat {\mathbb C}}

\def \interior {\mathrm{int\,}}

\def \epsilon {\varepsilon}


\def \Car {Carath\'eodory }

\def \phi {\varphi}

\def \pt {{\rm pt}}


\def \Am {{\mathcal A}_{\infty, m}}

\def \Fm {{\mathcal F}_m}

\def \Fn {{\mathcal F}_n}

\def \Jm {{\mathcal J}_m}

\def \Jn {{\mathcal J}_n}

\def \Km {{\mathcal K}_m}

\def \Pm {\{P_m \}_{m=1}^\infty}

\def \Pmt {\{\tilde P_m \}_{m=1}^\infty}


\def \calE {{\mathcal E}}

\def \calK {\mathcal K}

\def \calS {\mathcal S}

\def \calU {\mathcal U}

\def \calV {\mathcal V}

\def \calW {\mathcal W}


\def \Uu {(U,u)}

\def \Umum {(U_m,u_m)}

\makeindex

\begin{document}

\frontmatter

\title{On Possible Limit Functions on a Fatou Component in non-Autonomous Iteration}


\author{Mark Comerford}
\address{Department of Mathematics,
   University of Rhode Island,
   5 Lippitt Road, Room 102F,
   Kingston, RI 02881, USA}
\curraddr{}
\email{mcomerford@math.uri.edu}
\thanks{}

\author{Christopher Staniszewski}
\address{Department of Mathematics,
   Framingham State University,
   100 State Street
   Framingham, MA 01701, USA}
\curraddr{}
\email{cstaniszewski@framingham.edu}
\thanks{}

\date{}

\subjclass[2020]{Primary: 37F10; Secondary: 30D05}

\keywords{}


\begin{abstract}
The possibilities for limit functions on a Fatou component for the iteration of a single polynomial or rational function are well understood and quite restricted. In non-autonomous iteration, where one considers compositions of arbitrary polynomials with suitably bounded degrees and coefficients, one should observe a far greater range of behaviour. We show this is indeed the case and we exhibit a bounded sequence of quadratic polynomials which has a bounded Fatou component on which one obtains as limit functions every member of the classical Schlicht family of normalized univalent functions on the unit disc. The proof is based on quasiconformal surgery and the use of high iterates of a quadratic polynomial with a Siegel disc which closely approximate the identity on compact subsets. Careful bookkeeping using the hyperbolic metric is required to control the errors in approximating the desired limit functions and ensure that these errors ultimately tend to zero.
\end{abstract}

\maketitle

\tableofcontents


\mainmatter
\chapter{Introduction}

This work is concerned with non-autonomous iteration of bounded sequences of polynomials, a relatively new area of complex dynamics. In classical complex dynamics, one studies the iteration of a (fixed) rational function on the Riemann sphere.  Often in applications of dynamical systems, noise is introduced, and thus it is natural to consider a scheme of iteration where the function at each stage is allowed to vary.  Here we study the situation where the functions being applied are polynomials with appropriate bounds on the degrees and coefficients.  

Non-Autonomous Iteration, in our context, was first studied by Fornaess and Sibony \cite{FS}
and also by Sester, Sumi and others who were working in the closely related area of skew-products \cite{Ses, Sumi1, Sumi2, Sumi3, Sumi4}. Further work was done by Rainer Br\"uck, Stefan Reitz, Matthias B\"uger \cite{BR2, BR1, BBR, BUG}, Michael Benedicks, and the first author \cite{Com4, Com5, Com6, Com8, ComW}, among others. 

One of the main topics of interest in non-autonomous iteration is discovering which results in classical complex dynamics generalize to the non-autonomous setting and which do not.  For instance, the former author proved there is a generalization of the Sullivan Straightening Theorem \cite{CG,Com12,DH2}, while Sullivan's Non-Wandering Theorem \cite{CG, Sul} no longer holds in this context \cite{Com3}.  One can thus construct polynomial sequences which either provide counterexamples or have interesting properties in their own right.

\section{Non-Autonomous Iteration}

Following \cite{Com12, FS}, let $d \ge 2$, $M \geq 0$, $K \geq 1$, and let $\Pm$ be a sequence of polynomials where each $P_m(z) = a_{d_m,m}z^{d_m} + a_{d_m-1,m}z^{d_m-1} + \cdots \cdots + 
a_{1,m}z + a_{0,m}$ is a polynomial of degree $2 \leq d_m \leq d$ whose coefficients satisfy 
\[\qquad 1/K \leq |a_{d_m,m}| \leq K,\: m \geq 1, \quad |a_{k,m}| \leq M,\: m \geq 1,\: 0 \leq k \leq d_m -1. \]
Such sequences are called \emph{bounded sequences of polynomials} or simply \emph{bounded sequences}.  For a constant $C\ge1$, we will say that a bounded sequence is \textit{$C$-bounded} if all of the coefficients in the sequence are bounded above in absolute value by $C$ while the leading coefficients are also bounded below in absolute value by $\tfrac{1}{C}$.

For each $1 \le m$, let $Q_m$ be the composition $P_m \circ \cdots \cdots \circ P_2 \circ P_1$ and for each $0 \le m \le n$, let $Q_{m,n}$ be the composition $P_n \circ \cdots \cdots \circ P_{m+2} \circ P_{m+1}$ (where we set $Q_{m,m} = Id$ for each $m \ge 0$). Let the degrees of these compositions be $D_m$ and $D_{m,n}$ respectively so that $D_m = \prod_{i=1}^m d_i$, $D_{m,n} = \prod_{i=m+1}^n d_i$.

For each $m \ge 0$ define the \emph{$m$th iterated Fatou set} or simply the \emph{Fatou set} at time $m$, $\Fm$, by 
\[ \Fm = \{z \in \chat : \{Q_{m,n}\}_{n=m}^\infty \;
\mbox{is a normal family on some neighborhood of}\; z \}\]
where we take our neighborhoods with respect to the spherical topology on $\chat$. We then set the \emph{$m$th iterated Julia set} or simply the \emph{Julia set} at time $m$, $\Jm$, to be the complement $\chat \setminus \Fm$. 

It is easy to show that these iterated Fatou and Julia sets are completely invariant in the following sense. 

\begin{theorem}
\label{completelyinvariant}
For any $m \le n \in \N$, $Q_{m,n}(\Jm) = \Jn$  
and $Q_{m,n}(\Fm) = \Fn$, with Fatou 
components of $\Fm$ being mapped surjectively onto those of $\Fn$ by 
$Q_{m,n}$. 
\end{theorem}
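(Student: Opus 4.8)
The plan is to derive everything from the identity $Q_{m,k} = Q_{n,k} \circ Q_{m,n}$, valid for all $k \ge n$, which says that the tail of the family defining $\Fm$ is the tail of the family defining $\Fn$ post-composed with the single fixed polynomial $g := Q_{m,n}$. We may assume $m < n$, the case $m=n$ being trivial. Being a nonconstant polynomial, $g$ is a proper, open, surjective, finite-to-one self-map of $\chat$; in particular, near any point $z$ it is a branched covering of a neighbourhood of $g(z)$, branched only over $g(z)$ itself.

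First I would establish the key function-theoretic fact: for $w = g(z)$ one has $z \in \Fm$ if and only if $w \in \Fn$, i.e. $\Fm = g^{-1}(\Fn)$. The inclusion $g^{-1}(\Fn) \subseteq \Fm$ is the easy half: if $\{Q_{n,k}\}_{k\ge n}$ is normal on a neighbourhood of $w$, then post-composing with the fixed continuous map $g$ preserves locally uniform convergence (and the limit $\equiv\infty$), so $\{Q_{n,k}\circ g\}_{k\ge n} = \{Q_{m,k}\}_{k\ge n}$ is normal near $z$; adjoining the finitely many maps $Q_{m,k}$ with $m \le k < n$ changes nothing, so $z\in\Fm$. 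For the reverse inclusion I must show that if $\{h_k\circ g\}$ (with $h_k := Q_{n,k}$) is normal at $z$ then $\{h_k\}$ is normal at $w$. Choose a neighbourhood $U$ of $z$ on which $g$ is a proper branched cover onto a neighbourhood $V$ of $w$ and on which $\{h_k\circ g\}$ is normal. Given a subsequence, extract $h_{k_j}\circ g \to \phi$ locally uniformly on $U$, with $\phi$ holomorphic or $\phi\equiv\infty$. If $\phi\equiv\infty$, then since every point of $V$ is $g$ of a point of $U$ and convergence is uniform on compacta, $h_{k_j}\to\infty$ locally uniformly on $V$. If $\phi$ is holomorphic, observe that $\phi$ is constant on each fibre of $g$ (two points with the same $g$-image receive the same limit), so $\phi$ descends to a holomorphic $\psi$ on $V$ with $\phi = \psi\circ g$, and the same compacta argument gives $h_{k_j}\to\psi$ locally uniformly on $V$. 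Either way $\{h_k\}$ is normal at $w$, so $w\in\Fn$.

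I expect the main obstacle to be exactly this descent of $\phi$ through the branched cover at a critical point of $g$: one cannot simply bound spherical derivatives, since $(h_k\circ g)^\#(z') = h_k^\#(g(z'))\,|g'(z')|$ and the resulting bound on $h_k^\#$ blows up at a critical value. Instead one argues that $\phi$, being continuous, constant on fibres, and holomorphic off the discrete critical set, factors holomorphically through $g$ — conveniently, in the local model $g(\zeta)=\zeta^q$ a function invariant under $\zeta\mapsto e^{2\pi i/q}\zeta$ is a power series in $\zeta^q$, which is the one computation worth spelling out.

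Finally I would assemble the statements. Combining the two inclusions gives $\Fm = g^{-1}(\Fn)$; since $g$ is surjective, $g(\Fm) = g(g^{-1}(\Fn)) = \Fn$. Taking complements in the compact sphere $\chat$ yields $\Jm = g^{-1}(\Jn)$, and then $g(\Jm) = \Jn$. For the components, $g$ restricts to a continuous, open, surjective map $\Fm \to \Fn$. If $W$ is a component of $\Fm$, then $g(W)$ is connected, open and contained in $\Fn$, hence in a single component $W'$ of $\Fn$; moreover $g(W)$ is closed in $W'$, for if $g(z_j)\to w_\infty \in W'$ with $z_j\in W$, compactness of $\chat$ lets us pass to $z_j\to z_\infty$, whence $g(z_\infty)=w_\infty\in\Fn$ so $z_\infty\in g^{-1}(\Fn)=\Fm$, and $z_\infty\in\overline{W}$ forces $z_\infty\in W$ since distinct components of the open set $\Fm$ cannot be adherent to one another. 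Being nonempty, open and closed in the connected set $W'$, $g(W)=W'$, which is the asserted surjectivity onto components.
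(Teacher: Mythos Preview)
Your argument is correct. The paper does not actually prove this theorem; it merely asserts that ``it is easy to show'' and states the result, so there is no proof in the paper to compare against. Your write-up supplies exactly the standard argument one expects: the identity $Q_{m,k}=Q_{n,k}\circ Q_{m,n}$ reduces everything to the single polynomial $g=Q_{m,n}$, the equality $\Fm=g^{-1}(\Fn)$ is established in both directions, and the component statement follows from openness plus a properness/closedness check. The one genuinely nontrivial step---pushing normality of $\{h_k\circ g\}$ forward to normality of $\{h_k\}$ through a possible critical value of $g$---is handled correctly by your descent of the limit $\phi$ through the local model $\zeta\mapsto\zeta^q$ (equivalently, $\psi$ is continuous and holomorphic off a discrete set, hence holomorphic by removable singularities). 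Nothing needs to be added.
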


It is easy to see that, given bounds $d$, $K$, $M$ as above, we can find some radius $R$ 
depending only on $d$, $K$, $M$ so that, for any sequence $\Pm$ with these bounds and any $m \ge 0$, 
\[ |Q_{m,n}(z)| \to \infty \qquad \mbox{as} \quad n \to \infty, \quad |z| > R\]
which shows in particular that, as for classical polynomial Julia sets, there 
will be a \emph{basin of infinity at time $m$}, denoted $\Am$ on which all points escape locally uniformly to infinity under
iteration. Such a radius will be called an \emph{escape radius} for the bounds $d$, $K$, $M$. Note that the maximum principle shows that just as in 
the classical case (see \cite{CG}), there can be only one component on which $\infty$ is 
a limit function and so the sets $\Am$ are 
completely invariant in the sense given in Theorem \ref{completelyinvariant}.

The complement of $\Am$ is called the {\it filled Julia set} at time $m$ for
the sequence $\Pm$ and is denoted by $\Km$. The same 
argument using Montel's theorem as in the classical case then shows that 
$\partial \Km = \Jm$ (see \cite{CG}). In view of the existence of the escape radius above we have the following obvious result which we will use in proving our main result (see Theorem \ref{thetheorem} below).

\begin{proposition}
\label{VerySimpleProposition}
If $V$ is an open connected set for which there exists a subsequence $\{m_k\}_{k=1}^\infty$ such that the sequence of forward images $\{Q_{m_k}(V)\}_{k=1}^\infty$ is uniformly bounded, then $V$ is contained in a bounded Fatou component for $\Pm$. 
\end{proposition}

\section{The Schlicht Class} 

The \textit{Schlicht} class of functions, commonly denoted by $\mathcal{S}$, is the set of univalent functions defined on the unit disk such that, for all $f \in \calS$, we have $f(0)=0$ and $f'(0)=1$.  This is a classical class of functions for which many useful results are known. A common and useful technique is to use scaling or conformal mapping to apply results for $\mathcal{S}$ to arbitrary univalent functions defined on arbitrary domains (see for example Theorem \ref{abitmore} below).

\subsection{Statement of the Main Theorem}

We now give the statement of the main result of this paper.

\begin{theorem}
\label{thetheorem}
There exists a bounded sequence of quadratic polynomials $\{P_m \}_{m=1}^{\infty}$ and a bounded Fatou component $V$ for this sequence containing $0$ such that for all $f\in \mathcal{S}$ there exists a subsequence of iterates $\{Q_{m_k}\}_{k=1}^{\infty}$ which converges locally uniformly to $f$ on $V$.
\end{theorem}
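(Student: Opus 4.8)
The plan is to build the sequence by concatenating, over a countable schedule of "epochs," long blocks of iterates of carefully chosen polynomials with Siegel discs, using the near-identity behaviour on the Siegel disc to realize each target $f \in \calS$ in turn up to arbitrarily small error. The central mechanism is this: if $P$ is a quadratic polynomial with a Siegel disc $U$ around a fixed point with rotation number $\theta$, then on any compact $\mathcal{K} \subset U$ the iterates $P^{q_j}$ (for $q_j$ the denominators of the continued-fraction convergents of $\theta$, so that $q_j\theta \to 0 \pmod 1$) converge uniformly to the identity. Conjugating $P$ by an affine map we may place the Siegel disc wherever we like and make it as round as we like; more importantly, by choosing the linearizing coordinate we have a biholomorphism $\phi : U \to \D$ (with $\phi(\text{center}) = 0$), and in that coordinate the dynamics is an irrational rotation. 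This gives us, as building blocks, maps that are arbitrarily close to the identity on a fixed large compact set, which is exactly the flexibility needed to steer a composition toward a prescribed limit.

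First I would fix an enumeration $\{f^{(1)}, f^{(2)}, \dots\}$ of a countable dense subset of $\calS$ (dense in the topology of local uniform convergence on $\D$), arranged so that each element of a fixed countable dense set appears infinitely often; density suffices because the final convergence to an arbitrary $f \in \calS$ along a subsequence can be obtained by a diagonal argument once we can hit a dense set with arbitrarily small error. The component $V$ will be (the time-$0$ avatar of) a Siegel disc that is preserved in an appropriate sense throughout. The key compatibility requirement is that each $f^{(k)}$, which maps $\D$ into $\D$ univalently but need not be a rotation, must be realized as $Q_{m_k}$ on $V$; since the elementary blocks are conjugates of Siegel rotations and hence close to conjugates of the identity, one realizes $f^{(k)}$ not by a single block but by \emph{changing the linearizing coordinate between epochs}: at the end of epoch $k$ we arrange (via a transitional stretch of the sequence) that the coordinate identifying the current Fatou component with $\D$ is precisely $(f^{(k)})^{-1}$ composed with the running coordinate, so that $Q_{m_k}$ expressed in the original $V$-coordinate is $f^{(k)}$ up to the small Siegel-disc error. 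Concatenating blocks and controlling the errors geometrically (say error $< 2^{-k}$ at epoch $k$) yields, for a fixed $f \in \calS$ approximated by $f^{(k_j)} \to f$, a subsequence $\{Q_{m_{k_j}}\}$ converging locally uniformly to $f$ on $V$.

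The steps, in order, are: (1) record the quantitative Siegel-disc lemma — for a quadratic $P_\theta(z) = e^{2\pi i\theta} z + z^2$ with $\theta$ of bounded type (or any Brjuno number), the linearized disc has definite size and $P_\theta^{q_j} \to \Id$ uniformly on compacta of $U$, with explicit control so that after affine conjugation the coefficients stay in a bounded family; (2) set up the epoch schedule with target errors $\epsilon_k \to 0$ and a dense sequence $(f^{(k)})$ in $\calS$ with infinite repetition; (3) construct the transitional blocks that re-coordinatize the Fatou component from the epoch-$(k-1)$ identification to the epoch-$k$ identification, checking that the required conjugating maps can be realized by bounded quadratics and that the Siegel disc persists as a genuine Fatou component throughout (using Theorem \ref{completelyinvariant} to track components forward, and an escape-radius/Montel argument to confirm normality on $V$ and non-normality on $\partial V$); (4) assemble the full sequence $\Pm$, verify the uniform coefficient bounds (so it is a bounded sequence of quadratics), and verify that $V$ is not contained in the basin at infinity so it is a bona fide bounded Fatou component; (5) extract, for each $f \in \calS$, the subsequence $\{m_{k_j}\}$ along which $Q_{m_{k_j}} \to f$ locally uniformly on $V$, using the triangle inequality against the $\epsilon_k$-errors and local uniform density.

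**The main obstacle** I anticipate is step (3): realizing the \emph{re-coordinatizations} between epochs while staying inside a bounded family of quadratics and while keeping $V$ a single Fatou component with the right boundary behaviour. Near-identity blocks built from Siegel rotations only move us within the conjugacy class of a rotation; to interpolate from "$V \cong \D$ via $\psi_{k-1}$" to "$V \cong \D$ via $\psi_k$" we need the composition $\psi_k \circ \psi_{k-1}^{-1}$ — an arbitrary element of $\calS$ up to normalization — to be approximated by a composition of bounded quadratics mapping one round disc biholomorphically onto a possibly very distorted region, all while the orbit stays in a bounded region of $\C$ so that the escape radius is respected and normality is not destroyed. This is where the univalence and the precise $f'(0) = 1$, $f(0) = 0$ normalizations of $\calS$ must be exploited (so that the target genuinely matches the normalization of a linearizing coordinate), and where the bulk of the quantitative work — choosing conjugating affine maps, bounding intermediate domains, and estimating the accumulated error — will lie.
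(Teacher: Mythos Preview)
Your high-level architecture matches the paper's: Siegel near-identity dynamics as the engine, a countable dense schedule through $\calS$, epoch-by-epoch assembly with geometrically shrinking errors, and the recognition that the hard part is realizing an arbitrary $f \in \calS$ (or $f_{k} \circ f_{k-1}^{-1}$) as a composition of bounded quadratics. But you have correctly located the gap and then left it open. Saying that the transitional blocks must ``approximate $\psi_k \circ \psi_{k-1}^{-1}$ by a composition of bounded quadratics'' and that this is ``where the bulk of the quantitative work will lie'' is not yet a mechanism; compositions of long Siegel blocks are close to rotations, and no amount of concatenating near-rotations produces a map close to a general schlicht function. The paper supplies the missing idea via quasiconformal surgery: one builds a quasiconformal homeomorphism of $\chat$ equal to $f$ inside a curve $\gamma$ containing the filled Julia set $\K$, equal to the identity outside a larger curve $\Gamma$, and interpolated in the annulus between. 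Pulling this Beltrami data back by $P^N$ confines the dilatation to a shrinking Green's sublevel set; straightening then yields a finite sequence of genuine quadratics, uniformly bounded (the paper gets $17+\kappa$), whose composition approximates $f$ to any prescribed accuracy on a fixed compact in the Siegel disc. This is the Polynomial Implementation Lemma, and without it step (3) of your plan has no content.

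There is a second gap you have not flagged. Your error budget ``$<2^{-k}$ at epoch $k$'' ignores that each Phase-I approximation leaves an error defined only on a sub-disc of $U$, and to continue you must \emph{undo} the accumulated composition so that the next epoch again sees a map close to the identity. This correction (the paper's Phase II) cannot be done directly by the surgery lemma, because the error function is only defined on a subset of the Siegel disc, not on a neighbourhood of $\K$ where the interpolation annulus must live. The paper conjugates by a conformal dilation $\phi_{2h}$ in the coordinates of a Green's domain $V_{2h}$, applies the surgery lemma to the conjugated error, and then cancels the conjugacy with further polynomial blocks; controlling the inevitable loss of domain under this procedure (so that the disc on which everything works does not shrink to a point) requires a delicate ``Fitting Lemma'' balancing the Green's height $h$ against the error size $\epsilon_1$. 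None of this is visible in your sketch, and it is not a routine bookkeeping matter --- it is the second main idea of the proof.
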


The strength of this statement is that \textit{every} member of $\calS$ is a limit function on the \emph{same} Fatou component for a \textit{single} polynomial sequence. 

\begin{figure}
\label{siegeldiscjuliaset1}
\begin{center}
\scalebox{.32}{\includegraphics{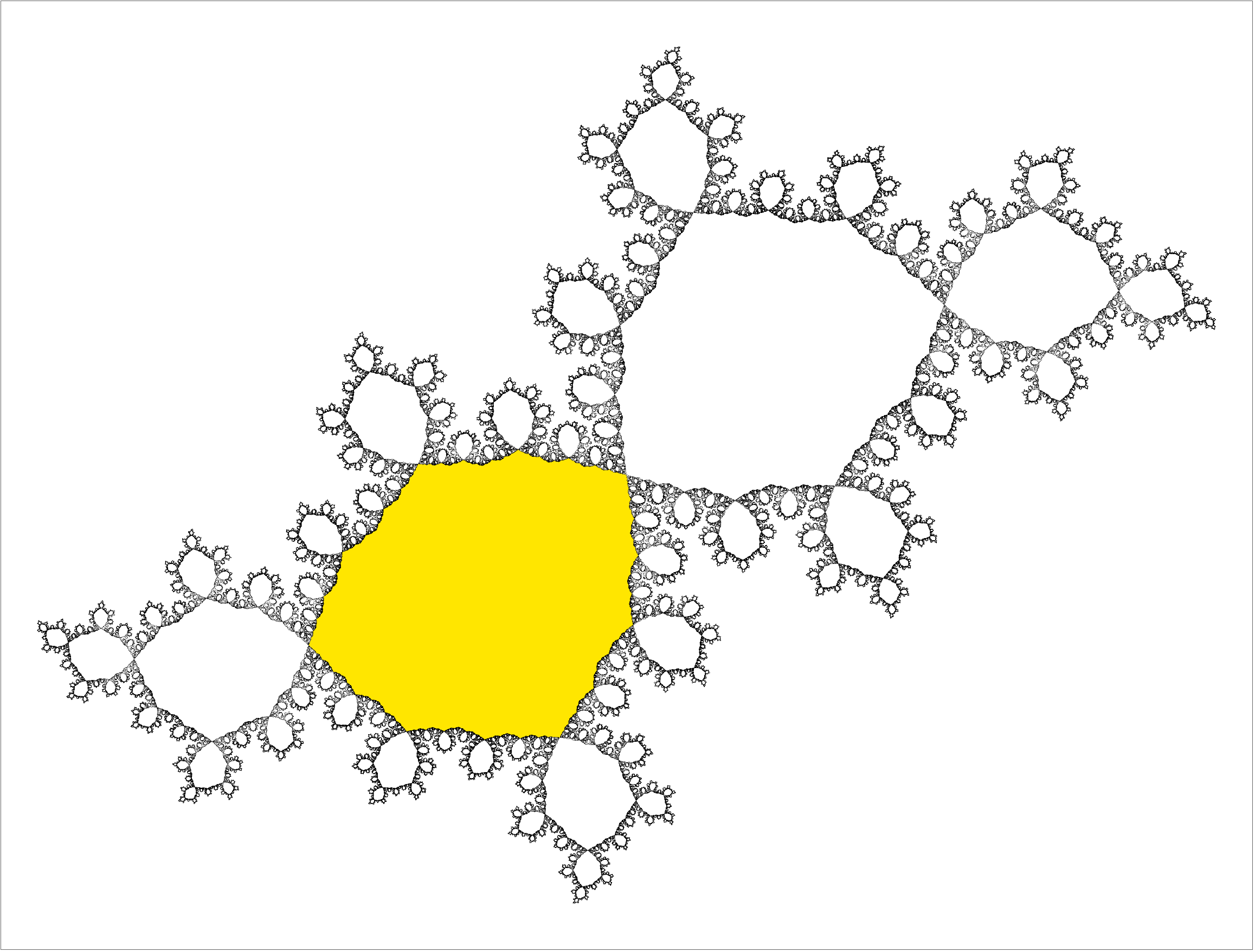}}
\caption{The filled Julia set $\calK_\lambda$ for $P_\lambda$ with Siegel disc highlighted}
\end{center}
\end{figure}

The proof relies on a scaled version of the polynomial \label{Plambda}$P_\lambda(z)= \lambda z(1-z)$ where $\lambda= e^{\frac{2\pi i (\sqrt{5}-1)}{2}}$.  As $P_\lambda$ is conjugate to an irrational rotation on its Siegel disk about $0$, which we denote by $U_\lambda$, we may find a subsequence of iterates which converges uniformly to the identity on compact subsets of $U_\lambda$.  We will rescale $P_\lambda$ so that the filled Julia set for the scaled version \label{overview1}$P$ of $P_\lambda$, is contained in a small Euclidean disc about $0$.  This is done so that, for any $f \in \mathcal S$, we can use the distortion theorems to control 
$|f'|$ on a relatively large hyperbolic disk inside $U$, the scaled version of the Siegel disc $U_\lambda$.  

The initial inspiration for this proof came from the concept of L\"owner chains (see e.g. \cite{CDG, Dur}), particularly the idea that a univalent function can be expressed as a composition of many functions which are close to the identity. Given   our remarks above about iterates of $P_\lambda$ which converge to the identity locally uniformly on $U_\lambda$, this encouraged us to think we might be able to approximate these univalent functions which are close to the identity in some way with polynomials and then compose these
polynomials to get an approximation of the desired univalent function on some suitable subset of $U_\lambda$, a principle which we like to summarize as `Do almost nothing and you can do almost anything'. As a matter of fact, there is now only one point in our proof where we make use of a L\"owner chain, although it is not necessary to know this: the interested reader can find this in the `up' section in the proof of Phase II (Lemma \ref{PhaseII}).

The proof of Theorem \ref{thetheorem} will follow from an inductive argument, and each step in the induction will be broken up into two phases:

\begin{itemize}
\item Phase I: Construct a bounded polynomial composition which approximates given functions from $\mathcal{S}$ on a subset of the unit disk.

\item Phase II: Construct a bounded polynomial composition which corrects the error of the previous Phase I composition to arbitrary accuracy on a slightly smaller subset.
\end{itemize}

Great care is needed to control the error in the approximations and to ensure that the domain loss that necessarily occurs in each Phase II correction eventually stabilizes, so that we are left with a non-empty region upon which the desired approximations hold.  

To create our polynomial approximations, we use what we call the Polynomial Implementation Lemma (Lemma \ref{PIL}).  \label{overview2}Suppose we want to approximate a given univalent function $f$ with a polynomial composition. Let $\calK$ be the filled Julia set for $P$ and let $\gamma$, $\Gamma$ be two Jordan curves enclosing $\calK$ with $\gamma$ lying inside $\Gamma$. In addition, we require that $f$ be defined inside and on a neighbourhood of $\gamma$, and that $f(\gamma)$ lie inside $\Gamma$.  We construct a homeomorphism of the sphere as follows:  define it to be $f$ inside $\gamma$, the identity outside $\Gamma$, and extend by interpolation to the region between $\gamma$ and $\Gamma$.  The homeomophism can be made quasiconformal, with non-zero dilation (possibly) only on the region between $\gamma$ and $\Gamma$.  If we then pull back with a high iterate of $P$ which is close to the identity, the support of the dilatation becomes small, which will eventually allow us to conclude that, when we straighten, we get a polynomial composition that approximates $f$ closely on a large compact subset of $U$.  In Phase I (Lemma \ref{PhaseI}), we then apply this process repeatedly to create a polynomial composition which approximates a finite set of functions from $\calS$. 

In Phase II (Lemma \ref{PhaseII}), we wish to correct the error from the Phase I composition.  This error is defined on a subset of the Siegel disk, but in order to apply the Polynomial Implementation Lemma to create a composition which corrects the error, we need the error to be defined on a region which contains $\calK$. 

To get around this, we conjugate so that the conjugated error is defined on a region which contains $\calK$.  This introduces a further problem, namely that we must now cancel the conjugacy with polynomial compositions.  A key element of the proof is viewing the expanding part of the conjugacy as a dilation in the correct conformal coordinates.  An inevitable loss of domain occurs in using these conformal coordinates, but we are, in the end, able to create a Phase II composition which corrects the error of the Phase I approximation on a (slightly smaller) compact subset of $U$.  What allows us to control the loss of domain, is firstly that, while some loss of domain is unavoidable, the accuracy of the Phase II correction is completely at our disposal. Secondly, one can show that the loss of domain will tend to zero as the size of the error to be corrected tends to zero (Lemma \ref{FittingLemma}).This eventually allows us to control loss of domain.  We then implement a fairly lengthly inductive argument to prove the theorem, getting better approximations to more functions in the Schlicht class with each stage in the induction, while ensuring that the region upon which the approximation holds does not shrink to nothing.

Theorem \ref{thetheorem} can be generalized somewhat to suitable normal families on arbitrary open sets. 

\begin{theorem}
\label{abitmore}
Let $\Omega \subset \C$ be open and let ${\mathcal N}$ be a locally bounded normal family of univalent functions on $\Omega$ all 
of whose limit functions are non-constant. Let $z_0 \in \Omega$. Then there exists a bounded sequence $\Pmt$ of quadratic polynomials and a bounded Fatou component $W$ for this sequence containing $z_0$ such that for all $f \in {\mathcal N}$ there exists a subsequence of iterates $\{\tilde Q_{m_k}\}_{k=1}^{\infty}$ which converges locally uniformly to $f$ on $W$.\end{theorem}

\section{Related Results}

In our proof we will make extensive use of the hyperbolic metric. This has two main advantages - conformal invariance and the fact that hyperbolic Riemann surfaces are infinitely large when measured using their hyperbolic metrics which allows one to neatly characterize relatively compact subsets using the external hyperbolic radius (see Definition \ref{intexthypraddef} in Section 2.2 below on the hyperbolic metric). An alternative approach is to try to do everything using the Euclidean metric. This requires among other things that in the analogue of the `up' portion of the proof of our Phase II (Lemma \ref{PhaseII}), we must ensure that 
the image of the Siegel disc under a dilation about the fixed point by a factor which is just larger than $1$ will cover the Siegel disc 
- in other words we need a Siegel disc which is star-shaped (about the fixed point). Fortunately, there is a result in the paper of Avila, Buff and Ch\'{e}ritat (Main Theorem in \cite{ABC}) which guarantees the existence of such Siegel discs. This led the authors to extensively investigate using this approach to prove a version of Theorem \ref{thetheorem} but, in practice, although this can probably be made to work, they found this to be at least as complicated as the proof outlined in the current manuscript.

Results on approximating a large class of analytic germs of diffeomorphisms were proved in the paper of Loray \cite{Loray}, particularly Th\'{e}or\`{e}me 3.2.3 in this work where he uses a pseudo-group induced by a non-solvable subgroup of diffeomorphisms to approximate all germs of conformal maps which send one prescribed point to another with only very mild restrictions.  Although we cannot rule out the possibility that these results could be used to obtain a version of our Theorems \ref{thetheorem} and \ref{abitmore}, this would be far from immediate. For example, pseudo-groups are closed under taking inverses (see D\'{e}finition 3.4.1 in \cite{Loray}). In our context, we can at best only approximate inverses, e.g. the suitable inverse branch of $P_\lambda$ on $U_\lambda$ which fixes $0$. Moreover, one would need to be able to compose many such approximations while still ensuring that the resulting composition would be close to the ideal version, as well as being defined on a set which was not too small. Thus, one would unavoidably require a complex bookkeeping scheme for tracking the sizes of errors and domains, which is a large part of what we need to concern ourselves with below.

Finally, in \cite{GT}, Gelfriech and Turaev show that an area-preserving two dimensional map with an elliptic periodic point can be renormalized so that the renormalized iterates are dense in the set of all real-analytic symplectic maps of a two-dimensional disk. However, this is clearly not as close to what we do as the two other cases mentioned above.

\section{Acknowledgements}
We wish to express our gratitude to Xavier Buff, Arnaud Ch\'eritat, and Pascale Roesch for their helpful comments and suggestions when the first author spent some time at the Universit\'{e} Paul Sabatier in Toulouse in 2016. We also wish to express our gratitude to Hiroki Sumi at Kyoto University for directing us to the work of Gelfriech and Turaev. Finally, we wish to thank Lo\"{\i}c Teyssier of the Universit\'{e} de Strasbourg for informing us about the work of Loray and helping us to determine how close Loray's results were to our own.

\chapter{Background}

We will now discuss some background which will be instrumental in proving Theorem \ref{thetheorem}.

\section{Classical Results on $\mathcal{S}$}

We now state some common results regarding the class $\mathcal{S}$.  These can be found in many texts, in particular \cite{CG}.  Before we state the first result, let us establish some notation.  Throughout, let $\D$ be the unit disk and let $\mathrm{D}(z,R)$ be the (open) Euclidean disk centered at $z$ of radius $R$.  The following is Theorem I.1.3 in \cite{CG}.  

\begin{theorem}
\label{Koebe}
(The Koebe one-quarter theorem) If $f \in \mathcal{S}$, then $f(\D)\supset \mathrm{D}(0,\frac{1}{4})$.  
\end{theorem}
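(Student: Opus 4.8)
The plan is to establish this by combining the area theorem (applied to a square-root transform of $f$) with a normalization trick that reduces the omitted-value statement to a coefficient bound. First I would recall the \emph{area theorem}: if $g(z) = z + b_0 + b_1/z + b_2/z^2 + \cdots$ is univalent on the exterior $\{|z|>1\}$ of the closed unit disc, then $\sum_{n=1}^\infty n|b_n|^2 \le 1$; in particular $|b_1| \le 1$. This follows from computing the area of the complement of $g(\{|z|>1\})$ via Green's theorem and noting it must be nonnegative. I will take this (or rather, prove it in a short lemma) as the analytic engine.

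Next I would perform the standard square-root trick. Given $f \in \calS$ with $f(z) = z + a_2 z^2 + a_3 z^3 + \cdots$, the function $f(z^2)$ vanishes to order $2$ at $0$ and is nonvanishing elsewhere on $\D$ (since $f$ is univalent, hence zero-free on $\D \setminus \{0\}$), so it has a single-valued holomorphic square root $h(z) = z\sqrt{f(z^2)/z^2}$ normalized by $h(z) = z + \tfrac{1}{2}a_2 z^3 + \cdots$. One checks that $h$ is \emph{odd} and univalent on $\D$: if $h(z_1) = h(z_2)$ then squaring gives $f(z_1^2) = f(z_2^2)$, so $z_1^2 = z_2^2$, and oddness rules out $z_1 = -z_2$ unless $z_1 = z_2 = 0$. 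Then $g(z) := 1/h(1/z) = z - \tfrac{1}{2}a_2 z^{-1} + \cdots$ is univalent on $\{|z|>1\}$ of the form required by the area theorem, whence $|{-\tfrac{1}{2}a_2}| \le 1$, i.e. $|a_2| \le 2$. (This is the Bieberbach bound $|a_2|\le 2$, which is really all we need.)

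Finally I would exploit invariance of $\calS$ under the specific Möbius-type renormalization adapted to an omitted value. Suppose $w \notin f(\D)$. The function
\[
F(z) = \frac{w f(z)}{w - f(z)}
\]
is again univalent on $\D$ (it is $\phi_w \circ f$ where $\phi_w(\zeta) = w\zeta/(w-\zeta)$ is a Möbius map, injective and finite on $f(\D)$ precisely because $w \notin f(\D)$), and a short expansion shows $F(0) = 0$, $F'(0) = 1$, so $F \in \calS$; its second Taylor coefficient works out to $a_2 + 1/w$. Applying the Bieberbach bound to both $f$ and $F$ gives $|a_2| \le 2$ and $|a_2 + 1/w| \le 2$, hence $|1/w| \le |a_2 + 1/w| + |a_2| \le 4$, i.e. $|w| \ge 1/4$. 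Therefore every omitted value has modulus at least $1/4$, which says exactly that $f(\D) \supset \mathrm{D}(0,\tfrac14)$.

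The main obstacle, such as it is, is the bookkeeping in the square-root/inversion step — verifying that $h$ is genuinely single-valued, odd, and univalent, and correctly tracking the Laurent coefficient of $g$ through the transformation $z \mapsto 1/h(1/z)$ — together with checking that the Möbius renormalization $F$ lands back in $\calS$ with the claimed second coefficient. None of this is deep, but the coefficient computations must be done carefully since the whole argument hinges on the single inequality $|a_2|\le 2$. An alternative, should one wish to avoid the area theorem, is to quote the Bieberbach bound $|a_2|\le 2$ directly (it is standard and stated in \cite{CG}) and run only the last paragraph; I would present the self-contained version but flag that the sharp constant $1/4$ is attained by the Koebe function $k(z) = z/(1-z)^2$, which omits $-1/4$, showing the result is best possible.
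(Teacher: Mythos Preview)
Your proof is correct and is the standard classical argument (area theorem $\Rightarrow$ Bieberbach's $|a_2|\le 2$ $\Rightarrow$ omitted-value renormalization). Note, however, that the paper does not give its own proof of this theorem: it is stated as background and attributed to \cite{CG} (Theorem I.1.3 there), so there is nothing in the paper to compare against beyond the citation. Your self-contained derivation is exactly the argument one finds in that reference, so in that sense your approach coincides with what the paper invokes.
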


Also of great importance are the well-known distortion theorems (Theorem I.1.6 in \cite{CG}).  

\begin{theorem}
\label{distortion}
(The distortion theorems):  If $f\in \calS$, then 
\begin{align*}
\frac{1-|z|}{(1+|z|)^3}\leq |f'(z)| \leq \frac{1+|z|}{(1-|z|)^3}, \\
\frac{|z|}{(1+|z|)^2}\leq |f(z)| \leq \frac{|z|}{(1-|z|)^2}. 
\end{align*}  
\end{theorem}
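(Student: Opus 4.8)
The plan is to derive all four inequalities from the single coefficient bound $|a_2| \le 2$ for a function $f(z) = z + a_2 z^2 + \cdots$ in $\calS$ (Bieberbach's inequality), which I would in turn obtain from the area theorem: if $g(w) = w + b_0 + \sum_{n \ge 1} b_n w^{-n}$ is univalent on $\{|w| > 1\}$, then $\sum_{n \ge 1} n|b_n|^2 \le 1$. Applying this to $g(w) = 1/h(1/w)$, where $h(z) = \sqrt{f(z^2)} = z + \tfrac{a_2}{2}z^3 + \cdots$ is the odd square-root transform of $f$ (well defined and univalent since $f(w)/w$ is non-vanishing on $\D$), gives $b_1 = -a_2/2$, hence $|a_2| \le 2$. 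One may alternatively just quote $|a_2|\le 2$.

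The first substantive step is a pointwise differential inequality. For fixed $\zeta \in \D$ I would form the Koebe transform
\[ F(z) \;=\; \frac{f\left(\dfrac{z+\zeta}{1+\bar\zeta z}\right) - f(\zeta)}{(1-|\zeta|^2)\,f'(\zeta)}, \]
check that $F \in \calS$ (it is an affine image of $f$ precomposed with a disc automorphism, with $F(0)=0$ and $F'(0)=1$), and compute $F''(0)$ using $\varphi_\zeta'(0) = 1-|\zeta|^2$ and $\varphi_\zeta''(0) = -2\bar\zeta(1-|\zeta|^2)$ for $\varphi_\zeta(z)=(z+\zeta)/(1+\bar\zeta z)$; this gives $F''(0) = (1-|\zeta|^2)\,f''(\zeta)/f'(\zeta) - 2\bar\zeta$. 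Multiplying by $\zeta$, dividing by $1-|\zeta|^2$, and invoking $|F''(0)| = 2|a_2(F)| \le 4$ yields, for all $z \in \D$,
\[ \left| \frac{z f''(z)}{f'(z)} - \frac{2|z|^2}{1-|z|^2} \right| \;\le\; \frac{4|z|}{1-|z|^2}. \]

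The second step converts this into the stated bounds by integrating along rays. Since $f' \ne 0$ on $\D$, $\log f'$ has a branch with $\log f'(0)=0$, and writing $z = re^{i\theta}$ one has $r\,\partial_r \log|f'(re^{i\theta})| = \mathrm{Re}\big(z f''(z)/f'(z)\big)$, so the inequality above gives $\tfrac{2r-4}{1-r^2} \le \partial_r \log|f'(re^{i\theta})| \le \tfrac{2r+4}{1-r^2}$. Integrating from $0$ to $|z|$ and using $\int_0^{|z|} \tfrac{2r \mp 4}{1-r^2}\,dr = \log\tfrac{1 \mp |z|}{(1 \pm |z|)^3}$ gives, after exponentiation, the bounds on $|f'|$. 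For $|f|$: the upper bound follows by integrating $|f'|$ along the segment $[0,z]$, giving $|f(z)| \le \int_0^{|z|}\tfrac{1+t}{(1-t)^3}\,dt = \tfrac{|z|}{(1-|z|)^2}$; for the lower bound, if $|f(z)| \ge \tfrac14$ there is nothing to prove since $\tfrac{|z|}{(1+|z|)^2} < \tfrac14$ on $\D$, and otherwise the segment $[0,f(z)]$ lies in $f(\D)$ by the Koebe one-quarter theorem, so its $f$-preimage is an arc $C$ from $0$ to $z$ along which $|f(z)| = \int_C |f'(\zeta)|\,|d\zeta| \ge \int_C \tfrac{1-|\zeta|}{(1+|\zeta|)^3}\,|d\zeta| \ge \int_0^{|z|}\tfrac{1-t}{(1+t)^3}\,dt = \tfrac{|z|}{(1+|z|)^2}$, using $|d\zeta| \ge |\,d|\zeta|\,|$ and positivity of the integrand.

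I expect the main obstacle to be the first step: setting up the Koebe transform correctly and extracting the pointwise estimate from $|a_2| \le 2$ (equivalently, proving that estimate from the area theorem). Everything afterwards is one-variable calculus; the only mild subtlety is the path-integral estimate for the lower bound on $|f|$, which is precisely why one routes through the Koebe one-quarter theorem there rather than integrating naively along $[0,z]$.
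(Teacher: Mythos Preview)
Your proof is correct and is the standard classical derivation of the distortion theorems from Bieberbach's inequality via the Koebe transform and radial integration. However, the paper does not give its own proof of this statement: Theorem~\ref{distortion} is stated as background and simply cited as Theorem~I.1.6 in \cite{CG}. So there is nothing to compare against beyond noting that your argument is essentially the one found in standard references such as \cite{CG} or \cite{Dur}.
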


The above implies immediately that $\mathcal{S}$ is a normal family in view of Montel's theorem. More precisely, we have the following:

\begin{corollary}
\label{SisNormal}
The family $\mathcal{S}$ is normal, and the limit of any sequence in $\mathcal{S}$ belongs to $\mathcal{S}$.  
\end{corollary}

\section{The Hyperbolic Metric}

We will be using the hyperbolic metric to measure both the accuracy of our approximations and the loss of domain that occurs in each Phase II composition. We first establish some notation for hyperbolic discs. Let $R$ be a hyperbolic Riemann surface and let $\Delta_R(z,r)$ be the (open) hyperbolic disc in $R$ centered at $z$ of radius $r$.  If the domain is obvious in context, we may simply denote this disc $\Delta(z,r)$. Lastly, let $\mathrm{d}\rho_R$ represent the hyperbolic length element for $R$.

One of the key tools we will be using is the following relationship between the hyperbolic and Euclidean metrics (see \cite{CG} Theorem I.4.3).   If $D$ is a domain in $\mathbb{C}$ and $z\in D$, let $\delta_D(z)$ denote the Euclidean distance to $\partial D$.  

\begin{lemma}
\label{lemma4.3}
Let $D\subsetneq \mathbb{C}$ be a simply connected domain and let $z \in D$.  Then 
\begin{align*}
\frac{1}{2}\frac{|\mathrm{d}z|}{\delta_D(z)}\leq \mathrm{d}\rho_D(z) \leq 2 \frac{|\mathrm{d}z|}{\delta_D(z)}.
\end{align*}
\end{lemma}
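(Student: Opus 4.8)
The plan is to prove the two inequalities separately, using monotonicity of the hyperbolic metric for the upper bound and the Koebe one-quarter theorem for the lower bound. First I would record the one computation everything rests on: with the normalization of \cite{CG}, where the hyperbolic metric on $\D$ is $\mathrm{d}\rho_{\D}(z) = \frac{2|\mathrm{d}z|}{1-|z|^2}$, pulling back along the affine map $w \mapsto a + rw$ shows that a Euclidean disc $\mathrm{D}(a,r)$ carries the metric $\frac{2r|\mathrm{d}z|}{r^2 - |z-a|^2}$; in particular its density at the center is $\frac{2}{r}$.

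For the right-hand inequality, note that $\mathrm{D}(z,\delta_D(z)) \subset D$ by the very definition of $\delta_D(z)$ as the Euclidean distance from $z$ to $\partial D$. Applying the Schwarz lemma to the inclusion map gives the standard monotonicity $\mathrm{d}\rho_D \le \mathrm{d}\rho_{\mathrm{D}(z,\delta_D(z))}$ on $\mathrm{D}(z,\delta_D(z))$, so evaluating at the center and using the computation above, $\mathrm{d}\rho_D(z) \le \frac{2|\mathrm{d}z|}{\delta_D(z)}$, which is even the claimed constant.

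For the left-hand inequality, let $\phi \colon \D \to D$ be a Riemann map with $\phi(0) = z$, normalized so that $\phi'(0) > 0$. Then $w \mapsto (\phi(w) - z)/\phi'(0)$ lies in $\mathcal{S}$, so by the Koebe one-quarter theorem its image contains $\mathrm{D}(0,\tfrac14)$, which means $D = \phi(\D) \supset \mathrm{D}(z, \phi'(0)/4)$ and hence $\delta_D(z) \ge \phi'(0)/4$. On the other hand, conformal invariance of the hyperbolic metric together with $\mathrm{d}\rho_{\D}(0) = 2|\mathrm{d}z|$ gives $\mathrm{d}\rho_D(z) = \frac{2|\mathrm{d}z|}{\phi'(0)}$. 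Combining the two, $\mathrm{d}\rho_D(z) = \frac{2|\mathrm{d}z|}{\phi'(0)} \ge \frac{2|\mathrm{d}z|}{4\,\delta_D(z)} = \frac{1}{2}\frac{|\mathrm{d}z|}{\delta_D(z)}$.

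There is no real obstacle here beyond bookkeeping: the only substantive input is the Koebe one-quarter theorem, already stated in this section, and the Schwarz lemma. The one place to stay alert is the normalization convention for the hyperbolic density — that it is $\frac{2}{1-|z|^2}$ on $\D$ rather than $\frac{1}{1-|z|^2}$ — since this factor of $2$ is precisely what produces the constants $2$ and $\tfrac12$ in the statement; with the other common normalization one would obtain $1$ and $\tfrac14$ instead.
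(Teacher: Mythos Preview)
Your proof is correct and is the standard argument. Note that the paper does not actually prove this lemma: it is stated with a citation to \cite{CG} Theorem I.4.3, and no proof is given in the text. Your argument---monotonicity via the Schwarz lemma for the upper bound, Koebe one-quarter for the lower bound---is precisely the proof one finds in that reference, so there is nothing to compare beyond saying you have reproduced the cited result faithfully, including the correct handling of the factor of $2$ from the normalization $\mathrm{d}\rho_{\D}(z) = \tfrac{2|\mathrm{d}z|}{1-|z|^2}$.
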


We remark that there is also a more general version of this theorem for hyperbolic domains in $\mathbb{C}$ which are not necessarily simply connected (again see \cite{CG} Theorem I.4.3). However, for the purposes of this paper, we will consider only simply connected domains which are proper subsets of $\C$. The advantage of this is that there is always a unique geodesic segment joining any two distinct points, and we can use the length of this segment to measure hyperbolic distance. One immediate application of this lemma is the following which will be useful to us later in the proof of the induction (Lemma \ref{mediuminductionlemma}) leading up to the main result (Theorem \ref{thetheorem}).

\begin{lemma}
\label{net}
Let $K \subset \D$ be compact and let $\epsilon > 0$. We can then find a finite set $\{f_{i} \}_{i=1}^N  \subset \mathcal{S}$ such that, given $f \in \mathcal{S}$, there exists (at least one) $1 \le k \le N$ such that 
\[\sup_{z \in K} \rho_{\D}(f(z),f_k(z))<\epsilon.\] 
\end{lemma}

\begin{proof}
This follows immediately from the normality of $\mathcal S$ (Corollary \ref{SisNormal}), combined with Proposition VII.1.16 in \cite{Con}. 
\end{proof}

A set $\{f_{i} \}_{i=1}^N \in \mathcal{S}$ as above will be called an \emph{$\epsilon$-net} for $\mathcal S$ on $K$ or simply an  \emph{$\epsilon$-net} if the set $K$ is clear from the context.

Next, we will need a notion of internal and external hyperbolic radii, which is one of the crucial bookkeeping tools we will be using, especially for controlling loss of domain in Phase II.  

\begin{definition}
\label{intexthypraddef}	
Suppose $V$ is a hyperbolic Riemann surface, $v \in V$, and $X$ is a non-empty subset of $V$.  Define the {\rm external hyperbolic radius of $X$ in $V$ about $v$}, denoted $R^{ext}_{(V,v)}X$, by
$$R^{ext}_{(V,v)}X=\sup_{z \in  X} \rho_{V}(v,z).$$ 
If $v \in X$, we further define the {\rm internal hyperbolic radius of $X$ in $V$ about $v$}, denoted $R^{int}_{(V,v)}X$, by
$$R^{int}_{(V,v)}X = \inf_{z \in V \setminus X} \rho_{V}(v,z).$$  
If $v \in X$ and 
it happens that $R^{int}_{(V,v)}X=R^{ext}_{(V,v)}X$, we will call their common value the {\rm hyperbolic radius of $X$ in $V$ about $v$}, and denote it by $R_{(V,v)}X$.
\end{definition}

We remark that, for any $v \in V$,  if $X=V$, then $R^{int}_{(V,v)}X=R^{ext}_{(V,v)}X=\infty$.  Also, if $v \in X$ and $X \subsetneq V$, then $R^{int}_{(V,v)}X<\infty$. Indeed, let $w \in V \setminus X$.  Then 
\begin{align*}
R^{int}_{(V,v)}X&=\inf_{z \in V \setminus X} \rho_{V}(v,z) \\
&\leq\rho_{V}(v,w) \\
&<\infty.
\end{align*}

We also remark that the internal and external hyperbolic radii are increasing with respect to set-theoretic inclusion in the obvious way. Namely, if $\emptyset \ne X \subset Y$ are subsets of $V$, then $R^{ext}_{(V,v)}X \le R^{ext}_{(V,v)}Y$ while, if $v \in X$, we also have $R^{int}_{(V,v)}X \le R^{int}_{(V,v)}Y$.

The names `internal hyperbolic radius' and `external hyperbolic radius' are justified in view of the following lemma which is how they are often used in practice. 

\begin{lemma}
\label{anotherstupidfuckinglemma}
Let $V \subsetneq \C$ be a simply connected domain, $v \in V$, and $X$ be a non-empty subset of $V$. We then have the following:

\begin{enumerate}
\item If $0 < R^{ext}_{(V,v)}X < \infty$, then $X \subset \overline \Delta_V(v, R^{ext}_{(V,v)}X)$,

\item If $v \in X$ and $0 < R^{int}_{(V,v)}X < \infty$, then $R^{int}_{(V,v)}X = \sup\{r: \Delta_V(v, r) \subset X\}$ so that in particular
$\Delta_V(v, R^{int}_{(V,v)}X) \subset X$, 

\item If $v \in X$, then $R^{int}_{(V,v)}X \le R^{ext}_{(V,v)}X$. 

\end{enumerate}
\end{lemma}

\begin{proof}
\emph{(1)} follows immediately from the above definition for external hyperbolic radius. For \emph{(2)}, if we temporarily let $R:= \sup\{r: \Delta_V(v, r) \subset X\}$, then from the definition of internal hyperbolic radius, it follows easily that $V \setminus X \subset V \setminus \Delta_V(v, R^{int}_{(V,v)}X)$ from which we have that $R^{int}_{(V,v)}X \le R$. Note that since $R^{int}_{(V,v)}X > 0$, this means that $R > 0$ and the set of which we take the supremum to find $R$ must be non-empty. On the other hand, if we let $z \in V \setminus X$ (note that the requirement that $ R^{int}_{(V,v)}X < \infty$ ensures that we can always find such a point), then we must have that $\rho_V(v,x) \ge R$, and on taking an infimum over all such $x$, we have $R^{int}_{(V,v)}X \ge R > 0$ from which we obtain \emph{(2)}.\emph{(3)} then follows from \emph{(1)} and \emph{(2)} (the result being trivial in the cases where the external hyperbolic radius is infinite or the internal hyperbolic radius is zero) which completes the proof. 
\end{proof}

We remark that \emph{(2)} above illustrates how the internal hyperbolic radius $R^{int}_{(V,v)}X$ is effectively the radius of the largest disc about $v$ which lies inside $X$. The reason that we took $R^{int}_{(V,v)}X = \inf_{z \in V \setminus X} \rho_{V}(v,z)$ as our definition and not the alternative $\sup\{r: \Delta_V(v, r) \subset X\}$ is that this version still works, even if $\inf_{z \in V \setminus X} \rho_{V}(v,z)$ is zero or infinite. This lemma leads to the following handy corollary. 

\begin{corollary}
\label{HyperbolicAvoidance}
Suppose $V \subsetneq \C$ is a simply connected domain, $v \in V$, and that $X$, $Y$ are subsets of $V$, with $v \in Y$.  
\begin{enumerate}
\item If $R^{ext}_{(V,v)}X \le R^{int}_{(V,v)}Y$, then $\overline X \subset \overline Y$,  

\item If $R^{ext}_{(V,v)}X < R^{int}_{(V,v)}Y$, then $\overline X \subsetneq Y$.  
\end{enumerate}
\end{corollary}

We also have the following equivalent formulation for the internal and external hyperbolic radii which is often very useful in practice.  

\begin{lemma}
\label{equivalenthypradformulation}
Let $V \subsetneq \C$ be a simply connected domain, let $v \in V$, and let $X$ be a non-empty subset of $V$. We then have the following:

\begin{enumerate}
\item If $v \in X$, then $R^{int}_{(V,v)}X = \inf_{z \in (\partial X) \cap V} \rho_{V}(v,z),$

\item $R^{ext}_{(V,v)}X \ge \sup_{z \in (\partial X) \cap V} \rho_{V}(v,z).$
\end{enumerate}

If, in addition, $R^{ext}_{(V,v)}X < \infty$ or $X \subsetneq V$ and $\chat \setminus X$ is connected, we also have

\begin{enumerate}

\item[(3)] $R^{ext}_{(V,v)}X = \sup_{z \in (\partial X) \cap V} \rho_{V}(v,z).$
\end{enumerate}

In particular, the above holds if $X = U \subset V$ is a simply connected domain. 
\end{lemma}

Note that we can get strict inequality in \emph{(2)} above. For example, let $V = \D$, $v=0$, and let $X = \{z:  |z| \le \tfrac{1}{3}\} \cup \{z: \tfrac{2}{3} \le |z| < 1\}$. We leave the elementary details to the interested reader.

\begin{proof}
To prove  \emph{(1)}, we first observe that the result is trivial if  $R^{int}_{(V,v)}X = \infty$ which happens if and only if $X = V$. So suppose now that $X \subsetneq V$. Note that, in this case, $\partial X \cap V \ne \emptyset$, since otherwise $\interior X$ and $V \setminus \overline X$ would give a separation of the connected set $V$. 

Now let $z \in \partial X \cap V$ and pick $\epsilon > 0$. Since $z \in \partial X$ there exists $w \in V \setminus X$ with $\rho_V(z, w) < \epsilon$. By the triangle inequality $\rho_V(v, w) < \rho_V(v, z) + \epsilon$ and, on taking the infimum on the left hand side, 
$$ R^{int}_{(V,v)}X \le \rho_V(v, z) + \epsilon.$$
If we then take the infimum over all $z \in  \partial X \cap V$ on the right hand side and let $\epsilon$ tend to $0$, we then obtain that 
$$R^{int}_{(V,v)}X \le \inf_{z \in (\partial X) \cap V} \rho_{V}(v,z).$$
Now we show $R^{int}_{(V,v)}X \geq \inf_{z \in (\partial X) \cap V} \rho_{V}(v,z)$.  Take a point $w \in V \setminus X$ and connect $v$ to $w$ with a geodesic segment $\gamma$ in $V$. $\gamma$ must then meet $\partial X$ since otherwise $\interior X$ and $V \setminus \overline X$ would give a separation of the connected set $[\gamma]$ (the track of $\gamma$). So let $z_0 \in \partial X \cap [\gamma]$. Clearly
\begin{align*}
\rho_V(v,z_0)\leq \rho_V(v,w),
\end{align*}
so 
\begin{align*}
\inf_{z \in (\partial X) \cap V}\rho_{V}(v,z) \leq \rho_V(v,w),
\end{align*}
and thus 
\begin{align*}
\inf_{z \in (\partial X) \cap V}\rho_{V}(v,z) \leq R^{int}_{(V,v)}X.
\end{align*}
This completes the proof of \emph{(1)}. 

To prove  \emph{(2)}, we first consider the case when $\sup_{z \in (\partial X) \cap V} \rho_{V}(v,z)=\infty$. Note that, since the supremum of the empty set is minus infinity, this in particular implies that $\partial X \cap V \neq \emptyset$. Thus we can find a sequence $\{z_n \}\in (\partial X)\cap V$ such that $\rho_{V}(v,z_n)\rightarrow \infty$.  For each $z_n$, choose $x_n\in X$ such that $\rho_{V}(z_n,x_n)\leq 1$.  Then $\rho_{V}(v,x_n)\rightarrow \infty$ by the reverse triangle inequality, which shows $R^{ext}_{(V,v)}X=\infty$ so that we have equality. 

Now consider the case when $\sup_{z \in (\partial X) \cap V} \rho_{V}(v,z)<\infty$. The result is trivial if this supremum is minus infinity, so again we can assume that $\partial X \cap V \neq \emptyset$. Similarly to above, we can take a sequence $\{z_n \}\in (\partial X)\cap V$ for which $\rho_V(v,z_n)\rightarrow \sup_{z \in (\partial X) \cap V} \rho_{V}(v,z)$.  Then take a sequence $\{x_n \}\in X$ such that $\rho_V(x_n,z_n)<\frac{1}{n}$.  By definition of the external hyperbolic radius, we must have 
\begin{align*}
\rho_V(v,x_n)\leq R^{ext}_{(V,u)}X, 
\end{align*}  
and since $\rho_V(x_n,z_n)<\frac{1}{n}$, by the reverse triangle inequality, on letting $n$ tend to infinity,
\begin{align*}
\sup_{z \in (\partial X) \cap V} \rho_{V}(v,z) \leq R^{ext}_{(V,v)}X
\end{align*}
which proves \emph{(2)} as desired.

Now we show that, under the additional assumption that $R^{ext}_{(V,v)}X < \infty$ or $X \subsetneq V$ and $\chat \setminus X$ is connected, 
$\sup_{z \in (\partial X) \cap V} \rho_{V}(v,z) \geq R^{ext}_{(V,v)}X$ from which \emph{(3)} follows.  Assume first that $R^{ext}_{(V,v)}X < \infty$ and let $\{x_n \}\in X$ be a sequence in $X$ such that $\rho_V(v, x_n) \to R^{ext}_{(V,v)}X$ as $n$ tends to infinity  (recall that we have assumed $X \neq \emptyset$ so that the set over which we are taking our supremum to obtain the external hyperbolic radius is non-empty). Note also that $R^{ext}_{(V,v)}X = 0$ if and only if $X = \partial X =  \{v\}$ in which case the result is trivial, so we can assume that $\rho_V(v, x_n)> 0$ for each $n$. For each $n$ let $\gamma_n$ be the unique hyperbolic geodesic in $V$ which passes through $v$ and $x_n$. Then there must be a point $z_n$ (which may possibly be $x_n$ itself) on $\gamma_n \cap \partial X$ which does not lie on the same side of $x_n$ as $v$ since otherwise the portion of $\gamma_n$ on the same side of $v$ as $x_n$ and which runs from $x_n$ to $\partial V$ would be separated by the open sets $\interior X$ and $V \setminus \overline X$. However, this is impossible since $x_n \in X$ while $R^{ext}_{(V,v)}X < \infty$ which forces $\gamma_n$ to eventually leave $X$ (in both directions). It then follows that for each $n$ we have that 
$$\rho_V(v, z_n) \ge \rho_V(v, x_n)$$ so that 
$$\sup_{z \in (\partial X) \cap V} \rho_V(v, z) \ge \rho_V(v, x_n)$$
and the desired conclusion then follows on letting $n$ tend to infinity. 

Now suppose that $X \subsetneq V$ and $\chat \setminus X$ is connected. We observe that, since $V$ is connected we must have $\partial X \cap V \ne \emptyset$, while if $X = U$ is a simply connected domain, then $\chat \setminus X$ is automatically connected (e.g. \cite{New} VI.4.1 or \cite{Con} Theorem VIII.2.2).

In view of \emph{(2)} above, \emph{(3)} holds if 
$\sup_{z \in (\partial X) \cap V} \rho_{V}(v,z) = \infty$, so assume from now on that $\sup_{z \in (\partial X) \cap V} \rho_{V}(v,z) < \infty$ and note that $\partial X \cap V \ne \emptyset$ implies that this supremum will be non-negative so that we can set $\rho:=\sup_{z \in (\partial X) \cap V} \rho_{V}(v,z)$. Note that, if $\rho = 0$, then $V \setminus \{v\} \cap \partial X = \emptyset$ and, since $V \setminus \{v\}$ is connected, then either $V \setminus \{v\} \subset \interior X \subset X$ or $V \setminus \{v\} \subset V \setminus \overline X \subset V \setminus X$. In the first case, $X = V \setminus \{v\}$ in which case one checks easily that \emph{(3)} fails. However, we can rule out this case since $\chat \setminus X = \{v\} \cup \chat \setminus V$ is disconnected which violates our hypothesis that $\chat \setminus X$ be connected. In the second case, we have $X = \{v\}$ in which case one easily checks that \emph{(3)} holds. Thus we can assume from now on that $\rho > 0$.

\begin{claim}
$X \subset {\overline \Delta_V}(v,\rho)$. 
\end{claim}
\begin{claimproof}Suppose not.  Then there exists $x \in X$ such that $\rho_V(v, x)>\rho$.  Set ${\tilde \rho}:=\rho_V(v,x) > \rho$ and define $C$ to be the hyperbolic circle of radius ${\tilde \rho}$ about $v$ with respect to the hyperbolic metric of $V$.  Then we have $C \cap \partial X = \emptyset$ since, for all $z \in \partial X \cap V$, by definition we have $\rho_V(v,z) \le \rho < \tilde \rho$. Thus $x \in \interior X$.

Now we have $x \in C$.  We next argue that each point of $C$ must lie in $X$.  Suppose $z$ is another point on $C$ such that $z \notin X$.  Then $z$ would be in $V\setminus X$.  As $C \cap \partial X = \emptyset$, we have that $z \in V \setminus {\overline X}= \interior (V \setminus X)$.  But this is impossible as $\interior X$ and $\interior (V \setminus X)$ would then form a separation of the connected set $C$.  Thus $C \subset X$ and $C$ induces a separation of $\chat \setminus X$.  Indeed, since $\rho < \tilde \rho$, $\partial X$ is inside the Jordan curve $C$ and hence there are points of $\chat \setminus X$ inside $C$. On the other hand, 
 $\infty \in \chat \setminus V \subset \chat \setminus X$ lies outside $C$. This contradicts our assumption that $\chat \setminus X$ is connected. \end{claimproof}

Immediately from the above claim, we see that $\rho = \sup_{z \in (\partial X) \cap V} \rho_{V}(v,z) \geq R^{ext}_{(V,v)}X$, and thus, in the case where $X \subsetneq V$ and $\chat \setminus X$ is connected,
\begin{align*}
\sup_{z \in (\partial X) \cap V} \rho_{V}(v,z) = R^{ext}_{(V,v)}X
\end{align*}
which proves \emph{(3)} as desired.  \end{proof}

We will require the following elementary definition from metric spaces:

\begin{definition}
\label{HypDist}
Suppose $R$ is a hyperbolic Riemann surface and that $A$ and $B$ are non-empty subsets of $R$.  For $z \in R$, we define 
\begin{align*}
\rho_{R}(z, B)=\inf_{w\in B}\rho_R(z,w)
\end{align*}
and 
\begin{align*}
\rho_{R}(A,B)=\inf_{z\in A}\rho_R(z,B).
\end{align*}
\end{definition}

We say that a subset $X$ of a simply connected domain $V \subsetneq \C$ is \emph{hyperbolically convex} if, for every $z, w \in X$, the geodesic segment $\gamma_{z,w}$ from $z$ to $w$ lies inside $X$ (this is the same as the definition given in Section 2. of \cite{MM1}). We then have the following elementary but useful lemma. 

\begin{lemma}
\label{stupidfuckinglemma}
(The hyperbolic convexity lemma) Let $V \subsetneq \C$ be a simply connected domain. Then any hyperbolic disc $\Delta_V(z, R)$ is hyperbolically convex with respect to the hyperbolic metric of $V$. 
\end{lemma}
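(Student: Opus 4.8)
The plan is to transfer the statement through a Riemann map to the unit disc, where hyperbolic discs are genuine round Euclidean discs and hence obviously convex. Let $\phi : D \to \D$ be a conformal isomorphism; since $D$ is simply connected and not all of $\C$, such a $\phi$ exists by the Riemann mapping theorem, and $\phi$ is an isometry from the hyperbolic metric of $D$ to the hyperbolic (Poincar\'e) metric of $\D$. Because $\phi$ is a hyperbolic isometry, it carries geodesics to geodesics and carries the hyperbolic disc $\Delta_D(z,R)$ bijectively onto the hyperbolic disc $\Delta_{\D}(\phi(z),R)$; moreover $D$ is hyperbolically convex if and only if $\D$ is, and $\Delta_D(z,R)$ is hyperbolically convex in $D$ if and only if $\Delta_{\D}(\phi(z),R)$ is hyperbolically convex in $\D$. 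So it suffices to prove the claim in $\D$.

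In $\D$ one uses the well-known fact that every hyperbolic disc in $\D$ coincides, as a set, with some Euclidean disc $\mathrm{D}(c,r)$ with $\overline{\mathrm{D}(c,r)} \subset \D$ (the hyperbolic centre and the Euclidean centre differ in general, but the underlying point set is the same). This is classical: a M\"obius automorphism of $\D$ sending $\phi(z)$ to $0$ carries $\Delta_{\D}(\phi(z),R)$ to the hyperbolically-centred disc $\Delta_{\D}(0,R)$, which is the Euclidean disc $\mathrm{D}(0, \tanh(R/2))$; since M\"obius maps send circles to circles, pulling back shows $\Delta_{\D}(\phi(z),R)$ is a Euclidean disc compactly contained in $\D$. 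Now take any two points $p, q$ in this Euclidean disc. The hyperbolic geodesic of $\D$ joining $p$ and $q$ is an arc of a circle orthogonal to $\partial\D$ (or a diameter), and in particular it is contained in the Euclidean convex hull of its endpoints together with $\partial\D$; more directly, the geodesic from $p$ to $q$ lies inside the Euclidean disc because that disc is itself convex in the Euclidean sense and the geodesic arc, bulging toward $\partial \D$, still cannot leave a round disc whose closure avoids $\partial\D$ — one can verify this by applying the M\"obius automorphism taking the hyperbolic centre to $0$, under which the geodesic becomes a Euclidean segment through the convex disc $\Delta_{\D}(0,R)$ and hence stays inside. Either way the geodesic segment $\gamma_{p,q}$ stays in $\Delta_{\D}(\phi(z),R)$, which is exactly hyperbolic convexity.

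Assembling: given $z, w \in \Delta_D(z_0, R)$, set $p = \phi(z)$, $q = \phi(w)$, so $p, q \in \Delta_{\D}(\phi(z_0), R)$; by the previous paragraph the $\D$-geodesic from $p$ to $q$ lies in $\Delta_{\D}(\phi(z_0),R)$; applying $\phi^{-1}$, which is a hyperbolic isometry and hence sends this geodesic to the $D$-geodesic from $z$ to $w$, we conclude that $\gamma_{z,w} \subset \Delta_D(z_0, R)$. This is the definition of hyperbolic convexity of $\Delta_D(z_0,R)$.

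The only genuinely substantive point — and the one I would expect to need the most care in writing out — is the assertion that a hyperbolic disc in $\D$ is a Euclidean disc compactly contained in $\D$ and that Euclidean line segments joining points of $\Delta_{\D}(0,R)$ stay in $\Delta_{\D}(0,R)$; this is where one actually uses the geometry of the Poincar\'e disc rather than soft functorial properties. Everything else — that Riemann maps are hyperbolic isometries, that isometries preserve geodesics and map metric balls to metric balls of the same radius, and that convexity is preserved under isometry — is formal. One should be slightly careful that $D \neq \C$ is exactly what guarantees $D$ is hyperbolic so that $\phi$ exists and the hyperbolic metric is defined; this hypothesis is used precisely here and nowhere else.
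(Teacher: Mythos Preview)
Your overall strategy---transfer to $\D$ via a Riemann map, use that hyperbolic discs in $\D$ are Euclidean round discs, then exploit Euclidean convexity---is exactly the paper's approach. But your verification of the key step contains an error. You write that applying the M\"obius automorphism taking the \emph{hyperbolic centre} to $0$ makes the geodesic from $p$ to $q$ ``a Euclidean segment''. This is false: centring the disc at $0$ does nothing special to the geodesic between two arbitrary points $p,q$; it remains a circular arc orthogonal to $\partial\D$, not a straight segment. Your heuristic alternative (``bulging toward $\partial\D$ \ldots\ cannot leave a round disc'') is not a proof either.

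The fix is to choose the M\"obius map differently: send one of the \emph{endpoints}, say $p$, to $0$ (and rotate so the image of $q$ is real and positive). Then the geodesic from $0$ to the image of $q$ is genuinely the Euclidean segment $[0,q']$, while the hyperbolic disc---whose centre is now somewhere off $0$---is still a Euclidean round disc and hence Euclidean convex, so it contains that segment. This is precisely how the paper argues it. The point is that to straighten a hyperbolic geodesic into a Euclidean segment you must put one of its endpoints at the origin, not the centre of the ambient ball.
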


\begin{proof}
Let $a$, $b$ be two points in $\Delta_V(z, R)$. Using conformal invariance we can apply a suitably chosen Riemann map from $V$ to the unit disc $\D$ so that, without loss of generality, we can assume that $a=0$ while $b$ is on the positive real axis whence the shortest geodesic segment from $a$ to $b$ is the line segment $[0,b]$ on the positive real axis. On the other hand, the disc 
$\Delta_\D(z, R)$ is a round disc ${\mathrm D}(w, r)$ for some $w \in D$ and $r \in (0,1)$ which is therefore convex (with respect to the Euclidean metric) and the result follows. 
\end{proof}

Ordinary derivatives are useful for estimating how points get moved apart by applying functions when using the Euclidean metric.  In our case we will need a notion of a derivative taken with respect to the hyperbolic metric.  

Let $R,S$ be hyperbolic Riemann surfaces with metrics 
\begin{align*}
\mathrm{d}\rho_R &= \sigma_R(z)|\mathrm{d}z|, \\
\mathrm{d}\rho_S &= \sigma_S(z)|\mathrm{d}z|, 
\end{align*}
respectively and let $\ell_R$, $\ell_S$ denote the hyperbolic length in $R$, $S$ respectively. Let $X \subset R$ and let 
$f$ be defined and analytic on an open set containing $X$ with $f(X) \subset S$. For $z \in X$, define the \emph{hyperbolic derivative:}
\begin{align}
\label{hypderiv}
f_{R,S}^{\natural}(z):= f'(z)\frac{\sigma_S(f(z))}{\sigma_R(z)}
\end{align}

see e.g. the differential operation $D_{h1}$ defined in Section 2. of \cite{MM1} and Section 3 of \cite{MM2}.  Note that the hyperbolic derivative satisfies the chain rule, i.e. if $R$, $S$, $T$, are hyperbolic Riemann surfaces with $g$ defined and analytic on an open set containing $X \subset R$ and $f$ defined and analytic on an open set containing $Y \subset S$ with $f(X) \subset Y$, then, on the set $X$, 
\begin{align}
\label{chainrule}
(f\circ g)_{R,T}^{\natural}=(f^{\natural}_{S,T}\circ g)\cdot g_{R,S}^{\natural}.
\end{align}

We also have a version of conformal invariance which is essentially Theorem 7.1.1 in \cite{KL} or which the interested reader can simply deduce from the formula for the hyperbolic metric using a universal covering map from the disc (see, e.g. on page 12 in \cite{CG}), namely:
\begin{align}
\label{isometry}
\mbox{If} \: f: R \mapsto S\:  \mbox{is a covering map, then} \: |f^\natural| = 1 \: \mbox{on} \: R. 
\end{align}
We observe that the above is bascially another way of rephrasing part of the Schwarz lemma for the hyperbolic metric (e.g. \cite{CG} Theorem I.4.1) where we have an isometry of hyperbolic metrics if and only if the mapping from one Riemann surface to the other lifts to an automorphism of the unit disc. The main utility of the hyperbolic derivative for us will be via the hyperbolic metric version of the standard M-L estimates for line integrals (see Lemma \ref{hyperbolicML} below). First, however, we make one more definition.

Let $X$ be a non-empty subset of $R$ and let $f$ be defined and analytic on an open set containing $X$ with $f(X) \subset S$. Define the \emph{hyperbolic Lipschitz bound of $f$ on $X$} as 
\begin{align*}
\|f_{R,S}^{\natural} \|_X :=\sup_{z \in X}|f_{R,S}^{\natural}(z)|.
\end{align*} 
We recall that for any two points $z$, $w$ in $R$, the hyperbolic distance $\rho_R(z,w)$ is the same as the length of a shortest geodesic segment in $R$ joining $z$ to $w$ (see e.g. Theorems 7.1.2 and 7.2.3 in \cite{KL}).

\begin{lemma}
\label{hyperbolicML}
(Hyperbolic M-L estimates) Suppose $R,S$ are hyperbolic Riemann Surfaces. Let  $\gamma$ be a piecewise smooth curve in $R$ and let $f$ be holomorphic on an open neighbourhood of $[ \gamma ] $ and map this neighbourhood inside $S$
with $|f_{R,S}^{\natural}|\leq M$ on $[ \gamma ]$.  Then 
$$\ell_S(f(\gamma)) \leq M \ell_R (\gamma).$$

In particular, if $z, w \in R$ and $\gamma$ is a shortest hyperbolic geodesic segment connecting $z$ and $w$ and $|f_{R,S}^{\natural}|\leq M$ on $[ \gamma ] $, then
$$\rho_S(f(z), f(w))  \leq M\rho_R(z,w).$$
\end{lemma}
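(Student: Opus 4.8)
The plan is to reduce the statement to the ordinary M-L (length) inequality applied to the hyperbolic length elements, exactly as one does for the Euclidean M-L estimate, with the hyperbolic derivative $f_{R,S}^{\natural}$ playing the role of $|f'|$. First I would recall that if $\gamma\colon[0,1]\to R$ is a rectifiable parametrization of the curve, then by definition
\begin{align*}
\ell_R(\gamma) = \int_0^1 \sigma_R(\gamma(t))\,|\gamma'(t)|\,dt,
\end{align*}
while the image curve $f\circ\gamma$ has hyperbolic length in $S$ given by
\begin{align*}
\ell_S(f(\gamma)) = \int_0^1 \sigma_S\big(f(\gamma(t))\big)\,\big|(f\circ\gamma)'(t)\big|\,dt
 = \int_0^1 \sigma_S\big(f(\gamma(t))\big)\,|f'(\gamma(t))|\,|\gamma'(t)|\,dt,
\end{align*}
using the chain rule $(f\circ\gamma)'(t) = f'(\gamma(t))\gamma'(t)$ for the holomorphic map $f$.

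The key step is then simply to rewrite the integrand of $\ell_S(f(\gamma))$ in terms of the hyperbolic derivative: by the definition \eqref{hypderiv},
\begin{align*}
\sigma_S\big(f(\gamma(t))\big)\,|f'(\gamma(t))| = \big|f_{R,S}^{\natural}(\gamma(t))\big|\,\sigma_R(\gamma(t)).
\end{align*}
Substituting this and using the hypothesis $|f_{R,S}^{\natural}|\le M$ on the trace $[\gamma]$ gives
\begin{align*}
\ell_S(f(\gamma)) = \int_0^1 \big|f_{R,S}^{\natural}(\gamma(t))\big|\,\sigma_R(\gamma(t))\,|\gamma'(t)|\,dt
 \le M\int_0^1 \sigma_R(\gamma(t))\,|\gamma'(t)|\,dt = M\,\ell_R(\gamma),
\end{align*}
which is the first assertion. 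For the second assertion, if $\gamma$ is a shortest geodesic from $z$ to $w$ then $\ell_R(\gamma) = \rho_R(z,w)$, and $f(\gamma)$ is some curve in $S$ joining $f(z)$ to $f(w)$, so $\rho_S(f(z),f(w)) \le \ell_S(f(\gamma)) \le M\,\ell_R(\gamma) = M\,\rho_R(z,w)$.

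This is entirely routine; there is no real obstacle. The only points requiring a word of care are: that one may parametrize the rectifiable curve so that the integrals above make sense (standard), and — if one wants to be scrupulous — that when $R$ is multiply connected the "geodesic" in the second part should be taken to be a shortest curve realizing $\rho_R(z,w)$, which exists since $R$ is a complete hyperbolic surface; but in all our applications $R$ is simply connected, where existence and uniqueness of the geodesic segment is classical. I would present the argument in the few lines above and omit further detail.
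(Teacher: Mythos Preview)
Your proof is correct and follows essentially the same approach as the paper's own proof: both express $\ell_S(f(\gamma))$ as a parametrized integral, substitute the definition of $f_{R,S}^{\natural}$ to convert the integrand to $|f^{\natural}|\,\mathrm{d}\rho_R$, apply the bound $M$, and deduce the second part from the fact that $f(\gamma)$ is a curve in $S$ joining $f(z)$ to $f(w)$.
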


\begin{proof} For the first part, if $\gamma: [a,b] \to R$, we calculate 
\begin{align*}
\ell_S(f(\gamma)) &=\int_{f(\gamma)}\mathrm{d}\rho_S \\
&=\int_{a}^{b} \sigma_S(f(\gamma(t))) \cdot |f'(\gamma(t))| \cdot |\gamma'(t)|\, \mathrm{d}t \\
&=\int_{a}^{b}|f^{\natural}(\gamma(t))|\cdot \sigma_R(\gamma(t)) \cdot |\gamma'(t)|\, \mathrm{d}t \\
&=\int_{\gamma}|f^{\natural}|\, \mathrm{d}\rho_R \\
&\leq M \int_{\gamma} \mathrm{d}\rho_R \\
&= M \ell_R(\gamma).
\end{align*}
The second part then follows immediately from this and the facts that by Theorems 7.1.2 and 7.2.3 in \cite{KL}, $\rho_R(z,w)$ is equal to the length of the shortest geodesic segment in $R$ joining $z$ and $w$ while $f(\gamma)$ is at least as long in $S$ as the distance between $f(z)$ and $f(w)$.
\end{proof}

In this paper, we will be working with hyperbolic derivatives only for mappings which map a subset of $U$ to $U$ where $U$ is a suitably scaled version of the Siegel disc $U_\lambda$ introduced in the last chapter and 
where we are obviously using the hyperbolic density of $U$ in the definition \eqref{hypderiv} above. For the sake of readability, from now on, we will suppress the subscripts and simply write $f^\natural$ instead of $f_{U,U}^\natural$ for derivatives taken with respect to the hyperbolic metric of $U$.

\section{Star-Shaped Domains}

Recall that a domain $D \subset \C$ is said to be \emph{star-shaped} with respect to some point $z_0 \in D$ if, for every point $z \in D$, $[z_0, z] \subset D$ where $[z_0, z]$ denotes the Euclidean line segment  from $z_0$ to $z$. We have the following classical result which will be of use to us later in the `up' section of the proof of Phase II (Lemma \ref{PhaseII}).   

\begin{lemma}[\cite{Dur} Corollary to Theorem 3.6] \label{starshapedDuren}
For every radius $r \le \rho := \tanh(\tfrac{\pi}{4})= .655\ldots\,$, each function $f \in {\mathcal S}$ maps the Euclidean disc ${\mathrm D}(0,r)$ to a domain which is starlike with respect to the origin. This is false for every $r > \rho$.
\end{lemma}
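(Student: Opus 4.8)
The statement is a classical result about the Schlicht class, usually attributed to the theory of starlike functions and Loewner-type estimates, so the natural route is the standard one via the analytic characterization of starlikeness. Recall that a function $g$ univalent on a disc $\mathrm{D}(0,r)$ with $g(0)=0$ maps $\mathrm{D}(0,r)$ onto a domain starlike with respect to the origin if and only if $\operatorname{Re}\bigl(zg'(z)/g(z)\bigr) \ge 0$ for all $z$ with $|z| < r$. So the plan is: given $f \in \mathcal{S}$, apply this criterion to $g(z) = f(z)$ restricted to $\mathrm{D}(0,r)$ and show the inequality holds precisely when $r \le \tanh(\pi/4)$.

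The key analytic input is a sharp lower bound for $\operatorname{Re}\bigl(zf'(z)/f(z)\bigr)$ over $f \in \mathcal{S}$ and $|z| = r$. This is exactly the content of the estimate one derives from the Loewner differential equation (or, equivalently, from the rotation theorem / the representation of $\log\bigl(zf'(z)/f(z)\bigr)$ in terms of a Herglotz function): for $f \in \mathcal{S}$ and $|z| = r < 1$,
\begin{align*}
\operatorname{Re}\frac{zf'(z)}{f(z)} \ge \frac{1-r}{1+r}\cdot\text{(something)}
\end{align*}
— more precisely, the sharp bound is that $\log\bigl(zf'(z)/f(z)\bigr)$ lies in the closed disc of radius $\log\frac{1+r}{1-r}$ about $0$, so that the argument of $zf'(z)/f(z)$ has modulus at most $\log\frac{1+r}{1-r}$. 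Hence $\operatorname{Re}\bigl(zf'(z)/f(z)\bigr) \ge 0$ for all such $z$ and all $f \in \mathcal{S}$ if and only if $\log\frac{1+r}{1-r} \le \frac{\pi}{2}$, i.e. $\frac{1+r}{1-r} \le e^{\pi/2}$, which rearranges to $r \le \frac{e^{\pi/2}-1}{e^{\pi/2}+1} = \tanh(\pi/4) = \rho$. This proves the positive assertion. For the sharpness (the ``false for every $r>\rho$'' clause), one exhibits the extremal function: the appropriate rotation of the Koebe function, or rather the function for which $\log\bigl(zf'(z)/f(z)\bigr)$ actually attains the boundary value $i\,\log\frac{1+r}{1-r}$ at some point of modulus $r$ — for such $f$, when $r > \rho$ the real part $\operatorname{Re}\bigl(zf'(z)/f(z)\bigr)$ becomes negative at a nearby point of the circle $|z|=r$, so $f(\mathrm{D}(0,r))$ fails to be starlike.

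Since the distortion theorems quoted earlier in the excerpt are already available, one could in principle try to extract the bound on $\arg\bigl(zf'(z)/f(z)\bigr)$ from them, but the cleanest and historically correct derivation is to cite Duren's \cite{Dur} Corollary to Theorem 3.6 directly, which is precisely this statement; indeed the lemma is stated as a quotation from that source. So operationally the ``proof'' here is just the citation, and the above sketch records the mechanism in case a self-contained argument is wanted: invoke the subordination/Herglotz representation of $zf'(z)/f(z)$, read off the sharp disc containing its logarithm, and solve $\log\frac{1+r}{1-r} = \frac{\pi}{2}$ for $r$.

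\textbf{Main obstacle.} The only real work is establishing the sharp bound $\bigl|\arg\bigl(zf'(z)/f(z)\bigr)\bigr| \le \log\frac{1+|z|}{1-|z|}$ for $f \in \mathcal{S}$, together with its sharpness; everything else is elementary algebra with $\tanh$. Since this is a well-known classical estimate available in \cite{Dur}, in the paper itself the sensible move is simply to quote the result rather than reprove it, which is exactly what the authors have done by attributing the lemma to \cite{Dur}.
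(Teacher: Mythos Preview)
Your proposal is correct, and you have accurately identified the situation: the paper does not prove this lemma at all but simply quotes it from Duren \cite{Dur}, and your sketch of the underlying argument (starlikeness criterion via $\operatorname{Re}\bigl(zf'(z)/f(z)\bigr)\ge 0$, the sharp bound $\bigl|\arg\bigl(zf'(z)/f(z)\bigr)\bigr|\le \log\tfrac{1+r}{1-r}$, and the resolution $\log\tfrac{1+r}{1-r}=\tfrac{\pi}{2}$) is the standard one found there. There is nothing to add.
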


Since this value of $r$ corresponds via the formula $\rho_\D(0,z) = \log \tfrac{1 + |z|}{1-|z|}$ to a hyperbolic radius about $0$ of exactly $\tfrac{\pi}{2}$, we have the following easy consequence. 

\begin{lemma} \label{starshaped}
If $f$ is univalent on $\D$, $z_0 \in \D$, $r  \le  \tfrac{\pi}{2}$, and $\Delta_\D(z_0, r)$ denotes the hyperbolic disc in $\D$ of radius $r$ about $z_0$, then the image
$f(\Delta_\D(z_0, r))$ is star-shaped with respect to $f(z_0)$.
\end{lemma}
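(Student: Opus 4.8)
The plan is to reduce Lemma~\ref{starshaped} to Lemma~\ref{starshapedDuren} by a change of coordinates that sends the hyperbolic disc $\Delta_\D(z_0,r)$ to a round disc $\mathrm{D}(0,\tanh(r))$ centred at the origin, so that the classical starlikeness result applies directly. First I would fix a M\"obius automorphism $\phi$ of $\D$ with $\phi(0)=z_0$; since $\phi$ is a hyperbolic isometry, $\phi(\mathrm{D}(0,s)) = \Delta_\D(z_0,r)$ where $s$ is chosen so that $\rho_\D(0,s)=r$, and the formula $\rho_\D(0,z)=\log\tfrac{1+|z|}{1-|z|}$ gives $s = \tanh(r/2)$ --- wait, more carefully, the hyperbolic distance convention used just above the lemma is $\rho_\D(0,z)=\log\tfrac{1+|z|}{1-|z|}$, and the preceding remark identifies $r=\tfrac\pi2$ with the Euclidean radius $\rho=\tanh(\tfrac\pi4)$ via this same formula, so by consistency $s$ is the unique solution of $\log\tfrac{1+s}{1-s}=r$, which is strictly less than $\tanh(\tfrac\pi4)$ precisely when $r<\tfrac\pi2$.

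Next I would consider the composition $g := f\circ\phi$. This is univalent on $\D$ but not normalised: $g(0)=f(z_0)$ and $g'(0)=f'(z_0)\phi'(0)$, which need not be $0$ and $1$. So I would pass to the normalisation $\tilde g(w) := \dfrac{g(w)-g(0)}{g'(0)}$, which now lies in $\mathcal S$. Applying Lemma~\ref{starshapedDuren} with radius $s<\tanh(\tfrac\pi4)$, the image $\tilde g(\mathrm{D}(0,s))$ is starlike with respect to the origin. Then I would undo the normalisation: the affine map $w\mapsto g'(0)w + g(0)$ carries starlikeness with respect to $0$ to starlikeness with respect to $g(0)=f(z_0)$ (affine maps send line segments to line segments and fix ratios of collinearity, so $[0,a]\subset E \iff [g(0), g'(0)a+g(0)]\subset g'(0)E+g(0)$). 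Hence $g(\mathrm{D}(0,s)) = f(\phi(\mathrm{D}(0,s))) = f(\Delta_\D(z_0,r))$ is star-shaped with respect to $f(z_0)$, as claimed.

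There is no serious obstacle here; the only point requiring a little care is bookkeeping the hyperbolic-distance normalisation so that $r<\tfrac\pi2$ translates to exactly the Euclidean radius threshold $\tanh(\tfrac\pi4)$ appearing in Lemma~\ref{starshapedDuren}, and checking that composing with a disc automorphism on the right followed by an affine map on the left preserves the star-shapedness property (with the star centre transforming correctly). Both are elementary: disc automorphisms are hyperbolic isometries so they map hyperbolic discs to hyperbolic discs of the same radius (and in particular the one centred at $z_0$ to one centred at the image of its centre, which we have arranged to be a Euclidean disc about $0$), and affine maps of $\C$ preserve the "star-shaped about a point" property with the obvious transformation of the centre. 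I would also note in passing that the lemma statement as written has $\Delta_\D(z,r)$ about $z_0$ --- a mild notational slip --- and I would simply use $\Delta_\D(z_0,r)$ throughout.
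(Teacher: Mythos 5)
Your proof is correct and fills in exactly the reduction the paper implicitly intends (the paper simply asserts Lemma \ref{starshaped} as an "easy corollary" of Lemma \ref{starshapedDuren} after the remark relating $\tanh(\tfrac{\pi}{4})$ to hyperbolic radius $\tfrac{\pi}{2}$): conjugate by a disc automorphism carrying $0$ to $z_0$, renormalise to land in $\mathcal{S}$, apply Lemma \ref{starshapedDuren}, and undo the affine normalisation, noting that affine maps transport star-shapedness with the correct centre. Your bookkeeping of the radius conversion $s=\tanh(r/2)$ and the observation that the typo $\Delta_\D(z,r)$ should read $\Delta_\D(z_0,r)$ are both accurate.
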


The important property of star-shaped domains for us is that, if we dilate such a domain about its centre point by an amount greater than $1$, then the enlarged domain will cover the original. More precisely, if $X$ is star-shaped with respect to $z_0$, $r > 1$ and we let $rX$ be the domain $rX := \{z: (z - z_0)/r + z_0 \in X\}$, then $X \subset rX$. Again, this is something we will make use of in the `up' portion of the proof of Phase II (Lemma \ref{PhaseII}).

\section{The \Car Topology}

The \Car topology is a topology on pointed domains,  which consist of a domain and a marked point of the domain which is referred to as the base point.  In \cite{Car} Constantin \Car defined a suitable topology for simply connected pointed domains for which convergence in this topology is equivalent to the convergence of suitably normalized inverse Riemann maps.  The work was then extended in an appropriate sense to hyperbolic domains by Adam Epstein in his Ph.D thesis \cite{Ep}.  This work was expanded upon further still by the first author \cite{Com10,Com11}.  This is a supremely useful tool in non-autonomous iteration where the domains on which certain functions are defined may vary.  We follow \cite{Com10} for the following discussion. Recall that a \emph{pointed domain} 
is an ordered pair $(U,u)$ consisting of an open  
connected subset $U$ of $\chat$, (possibly equal to $\chat$ itself) and a point $u$ in 
$U$. 

\begin{definition}
\label{CarConv}
We say that $\Umum \to \Uu$ in the \Car topology if:

\begin{enumerate}

\item $u_m \to u$ in the spherical topology,

\item for all compact sets $K \subset U$, $K \subset U_m$ for all but finitely many $m$,

\item for any \emph{connected} (spherically) open set $N$ containing $u$, if $N \subset U_m$ for
infinitely many $m$, then $N \subset U$.

\end{enumerate}
\end{definition}

We also wish to consider the degenerate case where $U = \{u\}$. In this 
case, condition \emph{(2)} is omitted ($U$ has no interior of which we can take
compact subsets) while condition \emph{(3)} becomes
\emph{
\begin{enumerate}
\setcounter{enumi}{2}
\item for any \emph{connected} (spherically) open set $N$ containing $u$, 
$N$ is contained in at most finitely many of the sets $U_m$.
\end{enumerate}}

Convergence in the \Car topology can also be described using the \textit{\Car kernel}.  Originally defined by \Car himself in \cite{Car}, one first requires that $u_m \to u$ in the spherical topology.  If there is no open set containing $u$ which is contained in the intersection of all but finitely many of the sets $U_m$, then one defines the kernel of the sequence $\{(U_m,u_m) \}_{m=1}^{\infty}$ to be $\{u \}$.  Otherwise one defines the \Car kernel as the largest domain $U$ containing $u$ with the property \emph{(2)} above.  It is easy to check that a largest domain does indeed exist.  \Car convergence can also be described in terms of the Hausdorff topology.  We have the following theorem in \cite{Com10}.

\begin{theorem}
\label{EquivCarConv}
Let $\{(U_m,u_m) \}_{m=1}^{\infty}$ be a sequence of pointed domains and $(U,u)$ be another pointed domain where we allow the possibility that $(U,u)=(\{ u\},u)$.  Then the following are equivalent:

\begin{enumerate}
\item $\Umum \to \Uu$;

\item $u_m \to u$ in the spherical topology and $\{(U_m,u_m) \}_{m=1}^{\infty}$ has \Car kernel $U$ as does every subsequence;

\item $u_m \to u$ in the spherical topology and, for any subsequence where the complements of the sets $U_m$ converge in the Hausdorff topology (with respect to the spherical metric), $U$ corresponds with the connected component of the complement of the Hausdorff limit which contains $u$ (this component being empty in the degenerate case $U=\{ u\}$).
\end{enumerate}
\end{theorem}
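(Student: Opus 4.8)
The statement to prove is Theorem~\ref{EquivCarConv}, which asserts the equivalence of three characterizations of \Car convergence. Let me think about how I would prove this.

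\textbf{The plan} is to establish the cycle of implications $(1) \Rightarrow (2) \Rightarrow (3) \Rightarrow (1)$, treating the degenerate case $U = \{u\}$ separately at each stage since the definitions bifurcate there. The backbone of the argument is the observation that the \Car kernel, as defined, packages precisely conditions 2 and 3 of Definition~\ref{CarConv}: condition 2 says $U$ is contained in the kernel (every compact subset of $U$ eventually lies in the $U_m$, which after exhausting $U$ by compacts says $U$ itself has the kernel's defining property), and condition 3 is the maximality clause that forces $U$ to be \emph{all} of the kernel. So $(1) \Leftrightarrow$ ($u_m \to u$ and kernel is $U$) is essentially a repackaging; the subsequence clause in (2) is then automatic because Definition~\ref{CarConv} is manifestly hereditary under passing to subsequences (conditions 2 and 3 only get easier to satisfy, or rather, are preserved). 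Conversely if some subsequence had a strictly larger kernel, condition 3 would fail along that subsequence hence along the whole sequence. This gives $(1) \Leftrightarrow (2)$ with only routine set-theoretic bookkeeping.

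\textbf{For $(2) \Leftrightarrow (3)$}, the key step is to relate the \Car kernel to Hausdorff limits of complements. First I would invoke compactness of the hyperspace of closed subsets of $\chat$ under the Hausdorff metric (the sphere is compact, so closed sets form a compact metric space): every subsequence of $\{\chat \setminus U_m\}$ has a further subsequence converging in the Hausdorff topology. Given such a convergent sub-subsequence with Hausdorff limit $E$, I would show that the connected component $W$ of $\chat \setminus E$ containing $u$ (when $u \notin E$) coincides with the \Car kernel of that sub-subsequence. The inclusion ``kernel $\subseteq W$'' follows because any open $N \ni u$ contained in cofinitely many $U_m$ stays off the sets $\chat \setminus U_m$ cofinitely, hence $N \cap E = \emptyset$ by the Hausdorff convergence, so $N$ lies in the component of $\chat \setminus E$ through $u$. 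The reverse inclusion ``$W \subseteq$ kernel'' is the more delicate direction: given a compact $K \subset W$, one uses that $W$ is open and $K$ is a positive distance from $E$, so a compact neighborhood $K'$ of $K$ in $W$ also avoids $E$; then Hausdorff convergence $\chat \setminus U_m \to E$ forces $\chat\setminus U_m$ to avoid $K'$ eventually, i.e. $K \subset K' \subset U_m$ eventually. Combining: along \emph{every} Hausdorff-convergent subsequence the kernel is the $u$-component of the complement of the limit; by (2)'s ``every subsequence has kernel $U$'' this component is always $U$, which is statement (3). Conversely (3) plus the compactness argument recovers that every subsequence has kernel $U$, giving (2). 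The degenerate case is handled by noting $U = \{u\}$ corresponds exactly to $u$ lying in $E$ (or on its boundary in the sense that no neighborhood of $u$ survives), matching the ``empty component'' convention, and to the modified condition 3.

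\textbf{The main obstacle} I anticipate is the reverse inclusion $W \subseteq \text{kernel}$ in the Hausdorff characterization — specifically, being careful that Hausdorff convergence of the \emph{complements} $\chat \setminus U_m$ genuinely gives local containment $K \subset U_m$ rather than something weaker, and handling the degenerate boundary cases (where $u$ is a limit of points of $\chat \setminus U_m$) uniformly with the nondegenerate case. One must also be attentive that the component of $\chat \setminus E$ through $u$ could fail to be simply connected or could be all of $\chat$; the statement as phrased only claims it equals $U$, so no topological type needs to be verified, but one should confirm that ``$u$-component of $\chat \setminus E$'' is well-defined and open (true since $E$ is closed). Since all three conditions are visibly subsequence-stable, the cleanest logical organization is: prove the single-subsequence fact ``Hausdorff limit $E$ of $\chat \setminus U_{m_j}$ $\Rightarrow$ $u$-component of $\chat\setminus E$ is the kernel of $\{(U_{m_j}, u_{m_j})\}$,'' then quantify over subsequences and apply the compactness of the hyperspace to derive $(2) \Leftrightarrow (3)$, with $(1) \Leftrightarrow (2)$ following from the definitional unpacking described above.
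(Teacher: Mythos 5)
The paper does not actually prove Theorem~\ref{EquivCarConv}; it is stated as a result cited from \cite{Com10} with no argument given, so there is no in-paper proof to compare your attempt against. Judged on its own, your outline is the natural argument and is essentially sound: the unpacking of $(1) \Leftrightarrow (2)$ through the kernel definition (with the observation that all three conditions of Definition~\ref{CarConv} are subsequence-stable), the key lemma identifying the kernel of a Hausdorff-convergent subsequence with the $u$-component of the complement of the Hausdorff limit, and the appeal to compactness of the space of closed subsets of $\chat$ under the Hausdorff metric are all the right moves and in the right order, including the observation that the ``$W \subseteq \mathrm{kernel}$'' inclusion rests on $K$ being at positive spherical distance from $E$.

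The one place where the sketch is genuinely thinner than it should be is the deduction $(3) \Rightarrow (2)$. Given an arbitrary subsequence with kernel $V$, passing to a Hausdorff-convergent sub-subsequence and applying your key lemma identifies the kernel of the \emph{sub-subsequence} with $U$, but this alone does not determine $V$, since kernels can grow under passage to sub-subsequences. You need two separate arguments: if $V \not\subset U$, pick a compact connected $K \subset V$ meeting $\chat \setminus U$, note $K$ is eventually contained in the $U_{m_j}$, pass to a Hausdorff-convergent sub-subsequence whose kernel is $U$ by (3), and derive a contradiction from $K \subset U$; if instead $V \subsetneq U$, pick a compact $K \subset U$ that is \emph{not} eventually contained in the $U_{m_j}$, pass to a sub-subsequence omitting $K$ entirely, then to a Hausdorff-convergent sub-sub-subsequence, and contradict $K \subset U = \mathrm{kernel}$ there. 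Your one-line ``(3) plus the compactness argument recovers (2)'' elides this bifurcation. A similar but easier check is needed in the degenerate case, where $u \in E$ replaces the $u$-component and one must verify that a neighborhood $N$ of $u$ eventually contained in the $U_{m_j}$ would force $N \cap E = \emptyset$, contradicting $u \in E$. These are fillable gaps rather than errors; the overall plan is correct.
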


Of particular use to us will be the following theorem in \cite{Com10} regarding the equivalence of \Car convergence and the local uniform convergence of suitably normalized covering maps, most of which was proved by Adam Epstein in his PhD thesis \cite{Ep}:

\begin{theorem}
\label{theorem1.2}
Let $\{(U_m,u_m) \}_{m\geq 1}$ be a sequence of pointed hyperbolic domains and for each $m$ let $\pi_m$ be the unique normalized covering map from $\D$ to $U_m$ satisfying $\pi_m(0)=0$, $\pi_m'(0)>0$. 

Then $(U_m,u_m)$ converges in the \Car topology to another pointed hyperbolic domain $(U,u)$ if and only if the mappings $\pi_m$ converge with respect to the spherical metric uniformly on compact subsets of $\D$ to the covering map $\pi$ from $\D$ to $U$ satisfying $\pi(0)=u$, $\pi'(0)>0$.  

In addition, in the case of convergence, if $D$ is a simply connected subset of $U$ and $v \in D$, then locally defined branches $\omega_m$ of $\pi_m^{-1}$ on $D$ for which $\omega_m(v)$ converges to a point in $\D$ will converge locally uniformly with respect to the spherical metric on $D$ to a uniquely defined branch $\omega$ of $\pi^{-1}$. 

Finally, if $\pi_m$ converges with respect to the spherical topology locally uniformly on $\D$ to the constant function $u$, then $(U_m,u_m)$ converges to $(\{u \},u)$.  
\end{theorem}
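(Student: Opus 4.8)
\emph{Overview and the easy direction.} The plan is to establish the two implications separately and then handle the degenerate case, the whole argument turning on one quantitative fact: that \Car convergence $\Umum \to \Uu$ forces the hyperbolic densities to converge, $\sigma_{U_m}(u_m) \to \sigma_U(u)$, equivalently (covering maps being local hyperbolic isometries) $\pi_m'(0) \to \pi'(0)$. For the implication ``$\Leftarrow$'', suppose $\pi_m \to \pi$ locally uniformly and verify the three conditions of Definition~\ref{CarConv} directly. Condition~1 holds since $u_m = \pi_m(0) \to \pi(0) = u$. For condition~2, given a compact $K \subset U$, choose finitely many round discs $B_j$ with closures lying compactly in $\D$ and $\pi$ injective on each, so that the $\pi(\tfrac12 B_j)$ cover $K$; the argument principle for $\pi_m \to \pi$ on each $\overline{B_j}$ then gives $\pi(\tfrac12 B_j) \subset \pi_m(B_j) \subset U_m$, hence $K \subset U_m$, for all large $m$. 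For condition~3, let $N$ be a connected open set containing $u$ with $N \subset U_m$ for infinitely many $m$, and let $w_0 \in N$; monotonicity of the hyperbolic metric gives $\rho_{U_m}(u_m, w_0) \le \rho_N(u_m, w_0)$, which stays bounded along that subsequence, so taking $\zeta_m \in \D$ to be the endpoint of the $\pi_m$-lift based at $0$ of a $\rho_{U_m}$-geodesic from $u_m$ to $w_0$ keeps the $\zeta_m$ in a fixed compact subset of $\D$; passing to a convergent subsequence $\zeta_m \to \zeta$ and using $\pi_m \to \pi$ gives $\pi(\zeta) = w_0$, so $w_0 \in \pi(\D) = U$. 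Hence $\Umum \to \Uu$.

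\emph{The other direction: the limit map.} For ``$\Rightarrow$'', assume $\Umum \to \Uu$. First, $\{\pi_m\}$ is a normal family: along any subsequence, pass to a further one with $\chat \setminus U_m$ converging in the Hausdorff metric to a compact set $X$; by Theorem~\ref{EquivCarConv} the \Car kernel $U$ is the component of $\chat \setminus X$ containing $u$, and since $U$ is hyperbolic one must have $\#X \ge 3$, so the $\pi_m$ omit triples of points that are uniformly separated for $m$ large, and Montel's theorem (after conjugating these triples to $0,1,\infty$ by M\"obius maps that converge) applies. Let $g$ be a locally uniform limit of $\pi_m$ along some subsequence. Then $g(0) = u$; a Koebe/Schwarz lower bound $\pi_m'(0) \ge c > 0$ --- obtained by lifting a fixed round disc about $u$, which lies in $U_m$ for large $m$, through $\pi_m$ --- shows $g$ is non-constant. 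Moreover $g(\D) \subset U$: given $z_0 \in \D$, cover a path in $\D$ from $0$ to $z_0$ by finitely many small discs $B_1, \dots, B_k$ so that $N := g(B_1) \cup \cdots \cup g(B_k)$ is a domain containing $g(0) = u$; the argument principle shows that every compact subset of $N$ lies in $U_m$ for $m$ large, so $N$ is contained in the \Car kernel $U$, and in particular $g(z_0) \in U$.

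\emph{The main obstacle: identifying the limit.} It remains to prove $g = \pi$, and this is where the work concentrates. Since $g(\D) \subset U$ and $\D$ is simply connected, $g$ lifts through the covering $\pi$ as $g = \pi \circ \phi$ for some holomorphic $\phi : \D \to \D$ with $\phi(0) = 0$; a priori $\phi$ need not be an automorphism, and to force $\phi = \mathrm{Id}$ we must pin down $g'(0) = \pi'(0)$. By Schwarz $g'(0) \le \pi'(0)$ (along the relevant subsequence), so $\limsup_m \pi_m'(0) \le \pi'(0)$ and hence $\liminf_m \sigma_{U_m}(u_m) = \liminf_m \sigma_\D(0)/\pi_m'(0) \ge \sigma_U(u)$. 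The reverse estimate $\limsup_m \sigma_{U_m}(u_m) \le \sigma_U(u)$ is the delicate half: for $R$ large the set $\pi(\overline{\Delta_\D(0,R)})$ is compact in $U$, hence lies in $U_m$ for $m$ large, so $\pi|_{\Delta_\D(0,R)}$ lifts through the covering $\pi_m$ to a holomorphic $h_m : \Delta_\D(0,R) \to \D$ with $h_m(0)$ a $\pi_m$-preimage of $u$; applying Schwarz--Pick to $h_m$ at $0$ (using that $\Delta_\D(0,R)$ is the Euclidean disc of radius $\tanh(R/2)$, of hyperbolic density $\sigma_\D(0)/\tanh(R/2)$ at its centre) and differentiating $\pi_m \circ h_m = \pi$ yields $\sigma_{U_m}(u) \le \sigma_U(u)/\tanh(R/2)$ for $m$ large; letting $R \to \infty$ gives $\limsup_m \sigma_{U_m}(u) \le \sigma_U(u)$, and since $\sigma_{U_m}$ is locally uniformly bounded near $u$ (a fixed round disc about $u$ lies in $U_m$ eventually), $\rho_{U_m}(u, u_m) \to 0$ and thus $\sigma_{U_m}(u_m)/\sigma_{U_m}(u) \to 1$. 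Combining, $\sigma_{U_m}(u_m) \to \sigma_U(u)$, so $\pi_m'(0) \to \pi'(0)$, hence $g'(0) = \pi'(0)$, $\phi'(0) = 1$, and by Schwarz $\phi = \mathrm{Id}$, so $g = \pi$. Since every subsequence of $\{\pi_m\}$ has a sub-subsequence converging to $\pi$, we get $\pi_m \to \pi$ locally uniformly.

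\emph{Inverse branches and the degenerate case.} For the statement on inverse branches, the $\omega_m$ take values in $\D$ and so form a normal family; any locally uniform limit $\omega$ satisfies $\pi \circ \omega = \mathrm{id}_D$ (pass to the limit in $\pi_m \circ \omega_m = \mathrm{id}_D$) with $\omega(v) = \lim_m \omega_m(v)$, and since a branch of $\pi^{\circ -1}$ on a connected set is determined by its value at a single point, the limit is unique, whence $\omega_m \to \omega$. Finally, in the degenerate case, suppose $\pi_m \to u$ locally uniformly with constant limit; then $u_m \to u$ and $\pi_m'(0) \to 0$, and if some connected open $N \ni u$ were contained in infinitely many $U_m$, monotonicity would give $\sigma_{U_m}(u_m) \le \sigma_N(u_m)$, a bounded quantity, so $\pi_m'(0) = \sigma_\D(0)/\sigma_{U_m}(u_m)$ would be bounded below along that subsequence --- impossible. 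Hence every such $N$ meets only finitely many of the $U_m$, which is precisely the degenerate \Car condition, and $\Umum \to (\{u\}, u)$.
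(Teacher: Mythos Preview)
The paper does not prove Theorem~\ref{theorem1.2}; it is quoted as background from \cite{Com10}, so there is no in-paper argument to compare against. Your proof is essentially correct and follows the classical route one would expect in \cite{Com10} or in the original \Car setting: normality of $\{\pi_m\}$ via Montel, identification of any subsequential limit $g$ with $\pi$ by lifting $g=\pi\circ\phi$ and forcing $\phi'(0)=1$ through the two-sided estimate $\pi_m'(0)\to\pi'(0)$ (equivalently, convergence of hyperbolic densities at the basepoints).

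Two small points worth tightening. First, the step $\sigma_{U_m}(u_m)/\sigma_{U_m}(u)\to 1$ needs one more line: once a fixed Euclidean disc $\mathrm D(u,r)$ lies in $U_m$ for all large $m$, Koebe distortion applied to the local inverse of $\pi_m$ on $\mathrm D(u,r)$ gives uniform control of $\sigma_{U_m}$ there, and $u_m\to u$ does the rest. Second, in showing $g(\D)\subset U$ you obtain only that every compact subset of $N=\bigcup g(B_j)$ lies in $U_m$ for large $m$, not that $N$ itself does; this is enough, but you are then using the kernel description of $U$ from Theorem~\ref{EquivCarConv} (every subsequence has kernel $U$) rather than condition~3 of Definition~\ref{CarConv} directly. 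Alternatively, enlarge each $B_j$ slightly and use the argument principle to get $g(B_j)\subset\pi_m(2B_j)\subset U_m$, so that $N\subset U_m$ outright and condition~3 applies as stated.
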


\chapter{The Polynomial Implementation Lemma}

\section{Setup}

\label{IntrotoPIL}
Let $\Omega,\Omega'\subset \C$ be bounded Jordan domains with analytic boundary curves $\gamma$ and $\Gamma$, respectively, such that ${\overline \Omega} \subset \Omega'$. By making a translation if necessary, we can assume without loss of generality that $0 \in \Omega$ so that $\gamma$ then separates $0$ from $\infty$. Suppose $f$ is analytic and injective on a neighborhood of ${\overline \Omega}$ such that $f(\gamma)$ is still inside $\Gamma$. Let $A=\Omega' \setminus {\overline \Omega}$ be the conformal annulus bounded by $\gamma$ and $\Gamma$ and let $\tilde A$ be the conformal annulus bounded by $f(\gamma)$ and $\Gamma$.  Define 
\[  F(z)= \left\{
\begin{array}{ll}
     f(z)  & z\in {\overline \Omega} \\
     z & z \in \chat \setminus \Omega' \\
\end{array} .
\right. \]
 
We wish to extend $F$ to a quasiconformal homeomorphism of $\chat$. To do this, the main tool we use will be a lemma of Lehto \cite{Leh} which allows us to define $F$ in the `missing' region between $\Omega$ and $\chat \setminus \Omega'$. First, however, we need to gather some terminology.

Recall that in \cite{New}, a Jordan curve $C$ in the plane (parametrized on the unit circle $\T$) is said to be \emph{positively oriented} if the algebraic number of times a ray from the bounded complementary domain to the unbounded complementary domain crosses the curve is $1$ or, equivalently, the winding number of the curve about points in its bounded complementary region is also $1$ (the reader is referred to the discussion on Page 188-194 of \cite{New}). 

Following the proof of Theorem VII.11.1, Newman goes on to define a homeomorphism $g$ defined on $\C$ to be \emph{orientation-preserving} or \emph{sense-preserving} if it preserves the orientation of all simple closed curves. Lehto and Virtanen adopt Newman's definitions in their text on quasiconformal mappings \cite{LV} and they have a related and more general definition of orientation-preserving maps defined on an arbitrary plane domain $G$ where $g$ is said to be \emph{orientation-preserving} on $G$ if the orientation of the boundary of every Jordan domain $D$ with $\overline D \subset G$ is preserved (\cite{LV} page 9). 

Lehto and Virtanen also introduce the concept of the \emph{orientation of a Jordan curve $C$ with respect to one of its complementary domains} $G$ (\cite{LV} page 8). Let $C(z): \T \mapsto C$ be a parametrization of $C$ which defines its orientation and let $\Phi$ be a 
M\"obius transformation which maps $G$ to the bounded component of the complement of $\Phi(C)$ such that $0 \in \Phi(G)$. $C$ is then said to be \emph{positively oriented with respect to} $G$ if the argument of $\Phi \circ C(t)$ increases by $2\pi$ as one traverses $\T$ anticlockwise. Using this, if $G$ is an $n$-connected domain whose boundary consists of $n$ disjoint Jordan curves (what Lehto and Virtanen on page 12 of \cite{LV} refer to as \emph{free boundary curves}), it is easy to apply the above definition to define the orientation of $G$ with respect to each curve which comprises $\partial G$. 

Recall that in Lehto's paper \cite{Leh}, he considers a conformal annulus (ring domain) $D \subset \chat$ bounded by two Jordan curves $C_1$ and $C_2$. If $\phi$ is a homeomorphism of $C_1 \cup C_2$ into the plane, then the curves $\phi(C_1)$,  $\phi(C_2)$ will bound another conformal annulus which we call $D'$. If, under the mapping $\phi$, the positive orientations of $C_1$ and $C_2$ with respect to $D$ correspond to the positive orientations of $\phi(C_1)$ and $\phi(C_2)$ with respect to $D'$, then $\phi$ is called an \emph{admissible boundary function} for $D$. 

\begin{lemma}[Lehto \cite{Leh}]
\label{LehtoLemma}
Let $D$ be a conformal annulus in $\chat$ bounded by the Jordan curves $C_1$ and $C_2$ and let $w_h: \chat \mapsto \chat$, $h = 1, 2$ be quasiconformal mappings such that the restrictions of $w_h$ to $C_h$, $h=1,2$ constitute an admissible boundary function for $D$. Then there exists a quasiconformal mapping $w$ of $D$ such that $w(z) = w_h(z)$ for $z \in [C_h]$, $h=1,2$ (where for each $h$ $[C_h]$ is the track of the curve $C_h$). 
\end{lemma}

Applying this result to our situation, we have the following.

\begin{lemma}
\label{qcextension}
For $\Omega$, $\Omega'$, $f$, $F$ as above, we can extend the mapping $F$ above to a quasiconformal homeomorphism of $\chat$. 
\end{lemma}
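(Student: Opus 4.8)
The plan is to use Lehto's extension lemma. We have a topological annulus $A$ bounded on the inside by $\gamma = \partial\Omega$ and on the outside by $\Gamma = \partial\Omega'$, and we already know $F$ on $\partial A$: it equals $f$ on $\gamma$ (so $F(\gamma) = f(\gamma)$) and the identity on $\Gamma$. Since $f$ is analytic and injective on a neighborhood of $\overline\Omega$, the curve $f(\gamma)$ is an analytic Jordan curve, and by hypothesis it lies strictly inside $\Gamma$. Thus the two boundary values of $F$ trace out two disjoint analytic Jordan curves, and $F$ restricted to $\partial A$ is an orientation-preserving homeomorphism onto $f(\gamma) \cup \Gamma = \partial\tilde D$, where $\tilde D$ is the conformal annulus between $f(\gamma)$ and $\Gamma$. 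The task is to interpolate this boundary homeomorphism to a quasiconformal homeomorphism $A \to \tilde D$.

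First I would invoke the uniformization of the two annuli. By the classical theory of doubly connected domains, $A$ is conformally equivalent to a round annulus $\{1 < |w| < r_1\}$ and $\tilde D$ to $\{1 < |w| < r_2\}$, via conformal maps that extend to homeomorphisms of the closures (since all four boundary curves are analytic Jordan curves). Transporting the problem through these uniformizations, one reduces to the following: given an orientation-preserving homeomorphism of the pair of boundary circles of a round annulus onto the pair of boundary circles of another round annulus, extend it to a quasiconformal homeomorphism between the annuli. This is precisely the content of Lehto's lemma (see \cite{Leh}): a sense-preserving homeomorphism between the boundaries of two ring domains, mapping each boundary component onto a corresponding boundary component, admits a quasiconformal extension to the interiors. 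Applying it here produces a quasiconformal homeomorphism $G \colon A \to \tilde D$ agreeing with $F$ on $\partial A$.

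Then I would glue: define the global map to be $f$ on $\overline\Omega$, the identity on $\chat \setminus \Omega'$, and $G$ on the closed annulus $\overline A$. The three pieces agree on the overlaps $\gamma$ and $\Gamma$ by construction, so by the gluing lemma for continuous maps the result is a well-defined homeomorphism of $\chat$ (bijectivity follows since the images $\overline{f(\Omega)}$, $\chat \setminus \Omega'$, and $\overline{\tilde D}$ tile $\chat$ with disjoint interiors, using that $f(\gamma)$ inside $\Gamma$ forces $f(\overline\Omega) = \overline{f(\Omega)}$ to be disjoint from $\chat\setminus\Omega'$). For quasiconformality: $f$ is conformal hence $1$-quasiconformal on $\Omega$, the identity is $1$-quasiconformal, $G$ is $K$-quasiconformal for some $K$ on $A$, and the exceptional sets $\gamma \cup \Gamma$ where pieces are glued are analytic arcs, hence removable for quasiconformality (a homeomorphism that is quasiconformal off a finite union of analytic curves is globally quasiconformal). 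So the glued map is quasiconformal on $\chat$ with dilatation supported in $\overline A = \overline{\Omega'} \setminus \Omega$, as desired.

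The main obstacle is purely a matter of bookkeeping in invoking Lehto's result with the correct hypotheses — one must verify that the boundary correspondence is genuinely a sense-preserving homeomorphism matching boundary components correctly, which hinges on the geometric hypothesis that $f(\gamma)$ lies inside $\Gamma$ (this is what guarantees $f(\gamma)$ and $\Gamma$ bound a genuine annulus $\tilde D$ and that the inner/outer boundary components correspond under $F$). Everything else (conformal uniformization of the analytic-boundary annuli, the removability of analytic arcs, the gluing of the pieces) is standard quasiconformal machinery; no delicate estimates are needed since at this stage we only need \emph{some} finite dilatation, not control of its size.
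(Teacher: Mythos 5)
Your approach is essentially the same as the paper's: invoke Lehto's extension lemma to fill in the annular gap, then glue with $f$ and the identity and observe that the resulting global homeomorphism is quasiconformal because the glue curves are analytic (the paper cites Rickman's Lemma for this last point, which serves the same role as the removability-of-analytic-arcs principle you use).

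Where the proposal comes up short is in the one place you label ``purely a matter of bookkeeping'': the verification that $F|_{\partial D}$ is an \emph{admissible} boundary correspondence in Lehto's sense, i.e.\ that the positive orientation of $\gamma$ with respect to $D$ is carried to the positive orientation of $f(\gamma)$ with respect to $\tilde D$. You assert that this ``hinges on the geometric hypothesis that $f(\gamma)$ lies inside $\Gamma$,'' but that hypothesis only gives the \emph{topological} matching of boundary components (inner $\to$ inner, outer $\to$ outer); it says nothing about sense. The orientation fact actually hinges on $f$ being \emph{holomorphic and injective}, which forces $f$ to carry the bounded side of $\gamma$ to the bounded side of $f(\gamma)$. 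The paper's proof spends essentially all of its effort on precisely this: after uniformizing and reducing to a self-map of the unit circle $\tilde f = \chi_2 \circ f \circ \chi_1^{-1}$ with $\tilde f(1)=1$, it computes $\tilde f'(1) = \omega_\theta(1,0)$ from the polar representation, then rules out $\omega_\theta < 0$ (which would mean $f$ maps the inside of $\gamma$ to the outside of $f(\gamma)$) by the argument principle. Without some such argument your ``sense-preserving'' assertion is unsupported, and Lehto's lemma cannot be invoked. So the gap is real but isolated: you correctly identify what has to be checked, you just misattribute \emph{why} it holds and therefore do not actually check it.
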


\begin{proof} In order to apply Lehto's lemma above, we need to verify two things: firstly that $f$ (and the identity) can be extended as quasiconformal mappings from $\chat$ to itself  and second that we have an admissible pair of mappings on $\partial A = \partial (\Omega' \setminus \overline \Omega)$ according to Lehto's definition given above. 

First note that, in view of the argument principle, $f$, being univalent, is an orientation-preserving mapping on a neighbourhood of $\overline \Omega$. Using Satz II.8.1 in \cite{LV}, $f$ (and trivially the identity) can be extended as a quasiconformal mapping of $\C$ to itself. Using either Theorem VII.11.1 in \cite{New} or the Orientierungssatz on page 9 of \cite{LV}, the above extension can be easily extended to an orientation-preserving homeomorphism of $\chat$ which is then readily seen to be a quasiconformal mapping of $\chat$ to itself as follows easily from Satz I.8.1 in \cite{LV}.

Both $f$ and the identity preserve the positive orientations of $\gamma$ and $\Gamma$, respectively. In addition, since $f(\gamma)$ lies inside $\Gamma$, it follows that the orientations of $\gamma$ and $\Gamma$ with respect to $A$ are the same as those of $\tilde A$. To be precise, let $\gamma$ be positively oriented with respect to $A$ and let
$\tfrac{1}{A}$, $\tfrac{1}{\tilde A - f(0)}$ denote the images of $A$, $\tilde A$ respectively under $\tfrac{1}{z}$, $\tfrac{1}{z - f(0)}$ respectively. Since $A$ lies in the unbounded complementary component of $\gamma$, it follows from the above definition of the orientation of a boundary curve for a domain that the winding number of $\frac{1}{\gamma}$ about points of $\tfrac{1}{A}$ is $1$ so that the winding number of $\gamma$ about $0$ (which lies inside $\gamma$) is $-1$. By the argument principle $f(0)$ lies inside $f(\gamma)$ and, since $f$ is orientation-preserving, the winding number of $f(\gamma)$ about $f(0)$ is also $-1$. 

A simple calculation then shows that the winding number of $\tfrac{1}{f(\gamma) - f(0)}$ about $0$ and thus also about points in $\tfrac{1}{\tilde A - f(0)}$ is also $1$. This shows that $f$ and thus $F$ preserve the positive orientations of $\gamma$, $f(\gamma)$ with respect to $A$ and $\tilde A$ respectively. Since $F$ is the identity on $|\Gamma|$, it trivially preserves the positive orientation of $\Gamma$ with respect to $A$ and $\tilde A$ (both of which lie inside $\Gamma$) and, with this, we have shown the hypotheses of Lemma \ref{LehtoLemma} above from \cite{Leh} are met.

Lemma \ref{LehtoLemma}  now allows us to extend $F$ to a quasiconformal mapping on the conformal annulus $A = \Omega' \setminus \overline \Omega$ such that this extension agrees with the original values of $F$ on the boundary and maps $A$ to $\tilde A$. We can then use Satz I.8.3 in \cite{LV} on the removeability of analytic arcs or, remembering that 
$f$ is defined on a neighbourhood of $\overline \Omega$ while the identity is defined on all of $\chat$, twice invoke Rickman's Lemma (e.g. \cite{DH2} Lemma 2)  to conclude that the resulting homeomorphism of $\chat$ is quasiconformal.  \end{proof}

We can summarize the above in the following useful definition.

\begin{definition}
\label{admissiblepair}
If $f$, $F$, $\gamma$, $\Gamma$, and $A$ are all as above, with $F$ an admissable boundary function for $A$, we will say that $(f,Id)$ is an {\rm admissible pair} on $(\gamma,\Gamma)$. 
\end{definition}

Recall we have $P_\lambda= \lambda z(1-z)$ where $\lambda= e^{\frac{2\pi i (\sqrt{5}-1)}{2}}$.  Let $\calK_\lambda$ be the filled Julia set for $P_\lambda$, and let $U_\lambda$ be the corresponding Siegel disc containing $0$.  Let $\kappa \ge 1$ and set $P=P_{\kappa}=\frac{1}{\kappa}P_\lambda(\kappa z) =\lambda z - \lambda \kappa z^2$.  Then, if $\calK$ is the filled Julia set for $P$, we have $\calK\subset D(0,\frac{2}{\kappa})$.  Let $U$ be the Siegel disk for $P$ and note that $U= \{z \in \C \text{ : } z=\frac{w}{\kappa}\text{ for some } w\in U_\lambda\}$. Now choose the Jordan domains $\Omega$, $\Omega'$ above such that $\calK \subset \Omega \subset {\overline \Omega} \subset \Omega' \subset \mathrm{D}(0,\frac{2}{\kappa})$, where from above $\frac{2}{\kappa}<2$ is an escape radius for $P$. \label{RestrictionforPIL}

Let $(f,Id)$ be an admissable pair where $f$, $F$, $\gamma$, $\Gamma$, and $A$ are all as above. In view of Lemma \ref{qcextension}, $F$ can be extended to a quasiconformal homeomorphishm of $\chat$ 
and we let $\mu_F$ denote the complex dilatation of $F$. Next let $N \in \N$ and, for each $0 \le m \le N$, set $\mu_m^N:= (P^{N-m})^*\mu_F$ i.e. $\mu_m^N(z) = \mu_{F\circ P^{N-m}}(z)$. Let $\phi_N^N:=F$ and, for $0\leq m \leq N-1$, let $\phi_m^N$ be the unique normalized solution of the Beltrami equation for $\mu_m^N$ which satisfies $\phi_m^N(z) = z + \mathcal{O}(\frac{1}{|z|})$ near $\infty$ (see e.g. Theorem I.7.4 in \cite{CG}).  For $0 \leq m \leq N$, let 
$${\tilde P}_m^N(z)= \phi_m^N \circ P \circ (\phi_{m-1}^N)^{-1}(z).$$
Then for each $m$, ${\tilde P}_m^N$ is an analytic degree 2 branched cover of $\C$ which has a double pole at $\infty$ and no other poles.  Thus ${\tilde P}_m^N$ is a quadratic polynomial and the fact that each $\phi_m^N$ is tangent to the identity at $\infty$ ensures that the the leading coefficient of ${\tilde P}_m^N$ is $-\lambda \kappa$ and thus has absolute value $\kappa$. Let $\alpha_m^N:=\phi_m^N(0)$.  Since the dilatation of $\phi_m^N$ is zero on $\chat \setminus {\overline {\mathrm D}}(0,\frac{2}{\kappa})$, we know $\phi_m^N$ is univalent on this set. Thus $\frac{1}{\phi_m^N(1/z)}$ is univalent on ${\mathrm D}(0,\frac{\kappa}{2})$ and is tangent to the identity at $0$. It follows from the Koebe one-quarter theorem (Theorem \ref{Koebe}) and the injectivity of $\phi_m^N$ that $|\alpha_m^N|\leq 4\frac{2}{\kappa}=\frac{8}{\kappa}$. 

Define $\psi_m^N(z):= \phi_m^N(z)-\alpha_m^N$.  Then for each $0 \leq m \leq N$, if we define
\begin{align}
\label{PmnDef}
P_m^N(z)= \psi_m^N \circ P \circ (\psi_{m-1}^N)^{-1}(z)
\end{align}
we have that $P_m^N$ is a quadratic polynomial whose leading coefficient is again $-\lambda \kappa$ and thus has absolute value $\kappa$. Moreover, $P_m^N$ fixes $0$ as it is ${\tilde P}_m^N$ composed with suitably chosen (uniformly bounded) translations.  We now turn to calculating bounds on the coefficients of each $P_m^N$.  

\begin{lemma}
\label{17plusk}
Any sequence formed from the polynomials $P_m^N(z)$ for $0\leq m \leq N$ as above is a $(17+\kappa)$-bounded sequence of polynomials.  
\end{lemma}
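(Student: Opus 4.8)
The plan is to estimate the coefficients of each $P_m^N$ directly from the fact that $P_m^N$ is a quadratic polynomial fixing $0$, by reading off its leading coefficient and its derivative at $0$ from geometric data that we already control. Write $P_m^N(z) = a\, z^2 + b\, z$ (no constant term, since $P_m^N(0)=0$). Then $b = (P_m^N)'(0)$ and, to bound $a$, it suffices to bound $P_m^N$ on a fixed circle together with $b$; equivalently, one may locate the other fixed point or the critical value. The key point is that everything is a bounded quasiconformal perturbation of the \emph{explicit} map $P(z) = \lambda z(1-\kappa z)$, for which the leading coefficient is $-\lambda\kappa$ and the multiplier at $0$ is $\lambda$.

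First I would recall what is already established: $\psi_m^N = \phi_m^N - \alpha_m^N$ with $|\alpha_m^N| \le 8/\kappa$, each $\phi_m^N$ is normalized ($\phi_m^N(z) = z + \mathcal{O}(1/|z|)$ at $\infty$) and, crucially, has zero dilatation outside $\overline{\mathrm D}(0, 2/\kappa)$, hence is univalent there and equals the identity to highest order at $\infty$. So on $|z| > 2/\kappa$ we may expand $\phi_m^N(z) = z + c_1/z + c_2/z^2 + \cdots$, and applying the area theorem / Koebe-type bounds to the univalent map $z \mapsto 1/\phi_m^N(1/z)$ on $\mathrm D(0,\kappa/2)$ gives $|c_1| \le (2/\kappa)^2$ and similar crude bounds on the lower coefficients. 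Now $P_m^N = \psi_m^N \circ P \circ (\psi_{m-1}^N)^{-1}$. Conjugating by the translations only shifts things by the uniformly bounded $\alpha$'s, so it is enough to work with $\tilde P_m^N = \phi_m^N \circ P \circ (\phi_{m-1}^N)^{-1}$ and then adjust. Since $\phi_m^N(z) = z + \mathcal O(1/z)$ and likewise for $(\phi_{m-1}^N)^{-1}$, while $P$ has leading coefficient $-\lambda\kappa$, composing near $\infty$ shows $\tilde P_m^N$ has leading coefficient exactly $-\lambda\kappa$ as well (the normalization at $\infty$ is preserved under the conjugation up to the degree-two scaling, so in fact the leading coefficient of $\tilde P_m^N z^2$ matches that of $P$). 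Thus the quadratic coefficient of every $P_m^N$ has modulus $\kappa$, which is $\le 17 + \kappa$ and also bounded below by $\kappa$.

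For the linear coefficient $b = (P_m^N)'(0)$: I would use that $0$ is fixed and that $(\psi_{m-1}^N)^{-1}(0) = $ (the point $\phi_{m-1}^N$ sends to $\alpha_{m-1}^N$, i.e.) $0$ is not quite fixed by $P$ unless $\phi$'s fix it appropriately — so instead estimate $b$ via $P_m^N$ restricted to a circle of radius comparable to $1/\kappa$ around $0$ using the two-variable Cauchy estimate: $b = \frac{1}{2\pi i}\oint \frac{P_m^N(z)}{z^2}\,dz$ over a suitable circle, and $a$ similarly with $z^{-3}$. To make this work I need a uniform upper bound for $|P_m^N|$ on such a circle; this follows because $\psi_{m-1}^N$, $P$, $\psi_m^N$ all map uniformly bounded sets to uniformly bounded sets — the $\phi_m^N$ are normalized solutions of Beltrami equations with dilatation supported in $\overline{\mathrm D}(0,2/\kappa)$ and so, by the standard compactness for such normalized solutions, carry $\overline{\mathrm D}(0, R)$ into $\overline{\mathrm D}(0, R')$ with $R'$ depending only on $R$ and $\kappa$ (not on $m$ or $N$). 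Tracking the constants through $P(z) = \lambda z(1 - \kappa z)$ and the shift by the $\alpha$'s of size $\le 8/\kappa$ then yields the explicit bound $17 + \kappa$ on all coefficients.

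The main obstacle I anticipate is getting the \emph{explicit} constant $17 + \kappa$ rather than merely ``some constant depending on $\kappa$.'' This requires being careful about where each contribution comes from: the factor $\kappa$ is the leading coefficient $|-\lambda\kappa|$; the remaining $17$ must absorb the effect of the translations ($|\alpha_m^N| \le 8/\kappa$, appearing at most a bounded number of times), the distortion of $\phi_m^N$ and $(\phi_{m-1}^N)^{-1}$ near the relevant scale, and the Cauchy-estimate contour radius. I would choose the contour radius (something like $1/\kappa$ or a fixed multiple thereof, noting $\K \subset \mathrm D(0, 2/\kappa)$) so that the arithmetic closes, and present only the final tally rather than each intermediate inequality. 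Everything else — admissibility, normality, that $P_m^N$ is genuinely a polynomial — has already been done, so this is purely a bookkeeping estimate.
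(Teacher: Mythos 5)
Your treatment of the leading coefficient and the constant term agrees with the paper: the constant term vanishes by construction, and the leading coefficient equals $-\lambda\kappa$ exactly, read off from the normalization $\phi_m^N(z) = z + \mathcal{O}(1/|z|)$ at $\infty$. The paper obtains the linear coefficient by the very same device: substituting $(\psi_{m-1}^N)^{-1}(z) = z + \alpha_{m-1}^N + \mathcal{O}(1/|z|)$ into $P(w)=\lambda w(1-\kappa w)$ and then applying $\psi_m^N(w) = w - \alpha_m^N + \mathcal{O}(1/|w|)$, one finds that the coefficient of $z$ is \emph{exactly} $\lambda - 2\lambda\kappa\alpha_{m-1}^N$, whose modulus is at most $1 + 2\kappa\cdot\frac{8}{\kappa} = 17$. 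There is no residual error term to estimate because $P_m^N$ is already known to be a quadratic polynomial, so the asymptotic expansion at $\infty$ determines the three coefficients on the nose.

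Your alternative route for the linear coefficient --- the Cauchy integral $b = \frac{1}{2\pi i}\oint_{|z|=r} P_m^N(z)z^{-2}\,dz$ with $r \sim 1/\kappa$ --- has a genuine gap that you yourself flag but do not close. To use that formula you need a uniform numerical bound on $\max_{|z|=r}|P_m^N(z)|$, and the circle $|z| = r \sim 1/\kappa$ lies inside $\mathrm{D}(0,2/\kappa)$, precisely the region where the dilatation of the $\phi_m^N$'s is supported and where they are merely quasiconformal. The ``standard compactness for normalized solutions'' you invoke would then give a bound depending not only on $\kappa$ but also on the $L^\infty$-norm of the Beltrami data, which in turn depends on the interpolation region between $\gamma$ and $\Gamma$ and on the function $f$ being implemented; it is not a fixed constant, and in any case one would only get ``bounded by some constant,'' not by $17 + \kappa$. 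The lemma requires the explicit bound $17 + \kappa$ independently of $f$, $\gamma$, $\Gamma$, $m$, and $N$ because this single uniform bound is used repeatedly across Phases I and II, for many different $f$'s, to certify that the final infinite sequence of quadratics is genuinely a bounded sequence. The fix is to apply to the linear coefficient the same expansion-at-$\infty$ argument you already use for the leading coefficient: since outside $\mathrm{D}(0,2/\kappa)$ every map is conformal with an explicit normalization, the linear coefficient is a closed-form expression in $\lambda$, $\kappa$, and $\alpha_{m-1}^N$, and the Koebe bound $|\alpha_{m-1}^N|\le 8/\kappa$ you already established finishes the estimate without ever going near the support of the dilatation.
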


\begin{proof}
By the construction \eqref{PmnDef} above, the  leading coefficient has absolute value $\kappa$ while the constant term is zero.  Now, for $|z|$ sufficiently large
\begin{eqnarray*}
P_m^N(z) &=&\lambda \left ( z+\alpha_{m-1}^N+\mathcal{O}\left (\frac{1}{|z|}\right ) \!\right ) \!\left (1-\kappa z-\kappa \alpha_{m-1}^N+\mathcal{O}\left (\frac{1}{|z|}\right )\!\right )\\
& & - \alpha_{m}^N + \mathcal{O}\left (\frac{1}{|P\circ (\psi_{m-1}^N)^{-1}(z))|}\right ),
\end{eqnarray*}
and one sees easily that the $\mathcal{O}\left (\frac{1}{|P\circ (\psi_{m-1}^N)^{-1}(z))|}\right )$ term is actually $\mathcal{O}\left (\!\frac{1}{|z|^2}\!\right )$. Therefore the coefficient of the linear term is $\lambda-2 \lambda \kappa \alpha_{m-1}^N$, and thus is bounded in modulus by $1+2\cdot1\cdot \kappa \cdot \frac{8}{\kappa}=17$. Lastly, since $\kappa \ge 1$, $\tfrac{1}{17 + \kappa} < \kappa < 17 + \kappa$ and so we have indeed 
constructed a $17+ \kappa$-bounded sequence of polynomials (as defined near the start of Section 1.1), proving the lemma as desired. \end{proof}

\begin{lemma}
\label{psi0N-1est}
$\psi_0^N$ and $(\psi_0^N)^{-1}$ both converge uniformly to the identity on $\C$ (with respect to the Euclidean metric).
\end{lemma}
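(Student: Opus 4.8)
The plan is to exploit that, as $N \to \infty$, the Beltrami coefficients $\mu_0^N$ go to zero in a strong sense: their $L^\infty$ norms are all equal to the fixed number $k := \|\mu_F\|_\infty < 1$, while their supports shrink to a set of vanishing area. Granting this, $\phi_0^N$ converges to the identity locally uniformly on $\chat$ by the usual compactness of families of $K$-quasiconformal maps together with uniqueness in the measurable Riemann mapping theorem (see \cite{CG}); a short maximum-principle argument then promotes this to uniform convergence on $\C$ in the Euclidean metric; and the statements about $\psi_0^N$ and $(\psi_0^N)^{-1}$ fall out immediately.

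For the shrinking of supports, recall $\mu_0^N = (P^N)^*\mu_F$, so $|\mu_0^N(z)| = |\mu_F(P^N(z))|$ for a.e.\ $z$; hence $\|\mu_0^N\|_\infty = \|\mu_F\|_\infty = k$ and $\mathrm{supp}\,\mu_0^N \subseteq (P^N)^{-1}(\mathrm{supp}\,\mu_F)$. Since $\K \subseteq \Omega$ while $F=f$ is holomorphic on $\overline\Omega$ and $F=\mathrm{id}$ off $\Omega'$, we have $\mathrm{supp}\,\mu_F \subseteq \overline{\Omega'}\setminus\Omega$, a compact subset of the basin of infinity of $P$; so the Green's function $G$ of $P$ satisfies $0 < c \le G \le C < \infty$ there. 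Using the functional equation $G\circ P = 2G$,
\[ \mathrm{supp}\,\mu_0^N \;\subseteq\; \{z : c/2^N \le G(z) \le C/2^N\} \;=\; \{G \le C/2^N\}\setminus\{G < c/2^N\}. \]
As $\epsilon \downarrow 0$ the bounded sets $\{G \le \epsilon\}$ and $\{G < \epsilon\}$ both decrease to the finite-area set $\K = \{G = 0\}$, so by continuity of Lebesgue measure from above $\mathrm{area}\{G \le C/2^N\}$ and $\mathrm{area}\{G < c/2^N\}$ both tend to $\mathrm{area}(\K)$; hence $\mathrm{area}(\mathrm{supp}\,\mu_0^N) \to 0$, and in particular $\mu_0^N \to 0$ in measure.

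Now each $\phi_0^N$ is $K$-quasiconformal with $K = (1+k)/(1-k)$, fixes $\infty$, and carries the normalization $\phi_0^N(z) = z + \mathcal{O}(1/|z|)$ there, so $\{\phi_0^N\}$ is precompact for local uniform convergence; any subsequential limit is conformal, since along a suitable sub-subsequence the dilatations converge to $0$ almost everywhere, and hence, being a conformal automorphism of $\chat$ fixing $\infty$ with expansion $z + \mathcal{O}(1/z)$, it must be the identity. Thus $\phi_0^N \to \mathrm{id}$ locally uniformly on $\chat$. To upgrade this to uniform convergence on $\C$, fix $R_1 < 2/\kappa$ with $\K \subset \mathrm{D}(0,R_1)$; since $\{G \le \epsilon\}\downarrow \K$, for all large $N$ we have $\mathrm{supp}\,\mu_0^N \subseteq \{G \le C/2^N\} \subseteq \mathrm{D}(0,R_1)$, so $\phi_0^N$ is conformal on $\{|z| > R_1\}$ and $\phi_0^N(z) - z$ is holomorphic there with value $0$ at $\infty$. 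Choosing $R_2 \in (R_1, 2/\kappa)$, the maximum principle on $\{|z| \ge R_2\}\cup\{\infty\}$ gives $\sup_{|z|\ge R_2}|\phi_0^N(z)-z| = \sup_{|z|=R_2}|\phi_0^N(z)-z|$, which tends to $0$ by local uniform convergence on the compact disc $\overline{\mathrm{D}(0,R_2)}$; together with $\sup_{|z|\le R_2}|\phi_0^N(z)-z|\to 0$ this yields $\|\phi_0^N - \mathrm{id}\|_{\infty,\C}\to 0$.

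Finally, $\alpha_0^N = \phi_0^N(0) \to 0$, so $\psi_0^N = \phi_0^N - \alpha_0^N \to \mathrm{id}$ uniformly on $\C$; and since each $\psi_0^N$ is a homeomorphism of $\C$, for every $w$ we have $|(\psi_0^N)^{-1}(w) - w| = |(\psi_0^N - \mathrm{id})\bigl((\psi_0^N)^{-1}(w)\bigr)| \le \|\psi_0^N - \mathrm{id}\|_{\infty,\C}$, whence $(\psi_0^N)^{-1} \to \mathrm{id}$ uniformly on $\C$ as well. I expect the crux to be the support estimate — the observation that, via $G\circ P = 2G$, iterated pullback squeezes $\mathrm{supp}\,\mu_F$ into ever-thinner shells around $\K$ of area tending to $0$ — while the only genuinely technical step is the passage from local uniform convergence to Euclidean-uniform convergence on all of $\C$, which relies on conformality near $\infty$ and the hydrodynamic normalization.
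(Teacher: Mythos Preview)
Your argument is correct and follows the same overall strategy as the paper: the pulled-back Beltrami data $\mu_0^N$ is supported in Green's shells that collapse onto $\K$, forcing $\phi_0^N\to\mathrm{id}$, after which the passage to $\psi_0^N$ and to the inverses is handled exactly as you do. The only real difference is in how the convergence $\phi_0^N\to\mathrm{id}$ is obtained: the paper simply invokes the continuous-dependence theorem for normalized solutions of the Beltrami equation (Theorem~7.5 in \cite{CG}, Lemma~1 in \cite{Ahl}) to get uniform convergence on $\C$ in one stroke, whereas you give a self-contained two-step argument (normal-family compactness for $K$-quasiconformal maps to get local uniform convergence, then the maximum principle using conformality and the hydrodynamic normalization near $\infty$ to upgrade to uniform convergence on all of $\C$). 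Your route is more explicit and avoids the black-box citation, at the cost of a little extra work; the paper's is shorter but leans on the reference.
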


\begin{proof}
Recall that $\Gamma$ is the boundary of $\Omega'$ and that we chose $\calK \subset \Omega \subset {\overline \Omega} \subset \Omega' \subset \mathrm{D}(0,\frac{2}{\kappa})$. Let $G(z)$ be the Green's function for $P$ and set $h:= \sup_{z \in \Gamma}G(z)$.  Then $\mathrm{supp}\, \mu_m^N \subset \{z: 0 < G(z) \leq h \cdot 2^{m-N} \}$ and so $\mathrm{supp}\, \mu_0^N \subset \{z: 0 < G(z) \leq h \cdot 2^{-N} \}$. Thus $\mu_0^N \rightarrow 0$ everywhere as $N \rightarrow \infty$.  By Theorem I.7.5 on page 24 of \cite{CG} (see also Lemma 1 on page 93 of \cite{Ahl}), we have that $\phi_0^N$ and $(\phi_0^N)^{-1}$ both converge uniformly to the identity on $\C$ (recall that the unique solution for $\mu \equiv 0$ is the identity in view of the uniqueness part of the measurable Riemann mapping theorem for solving the Beltrami equation e.g. Theorem I.7.4 on page 22 of \cite{CG}). Finally, $\alpha_0^N=\phi_{0}^N(0) \rightarrow 0$ as $N\rightarrow \infty$, and since $\psi_0^N=\phi_0^N(z)-\alpha_0^N$, the result follows. \end{proof}

\begin{figure}[htb]
	\label{PILShrink}
	\begin{center}
		\scalebox{.807}{\includegraphics[frame]{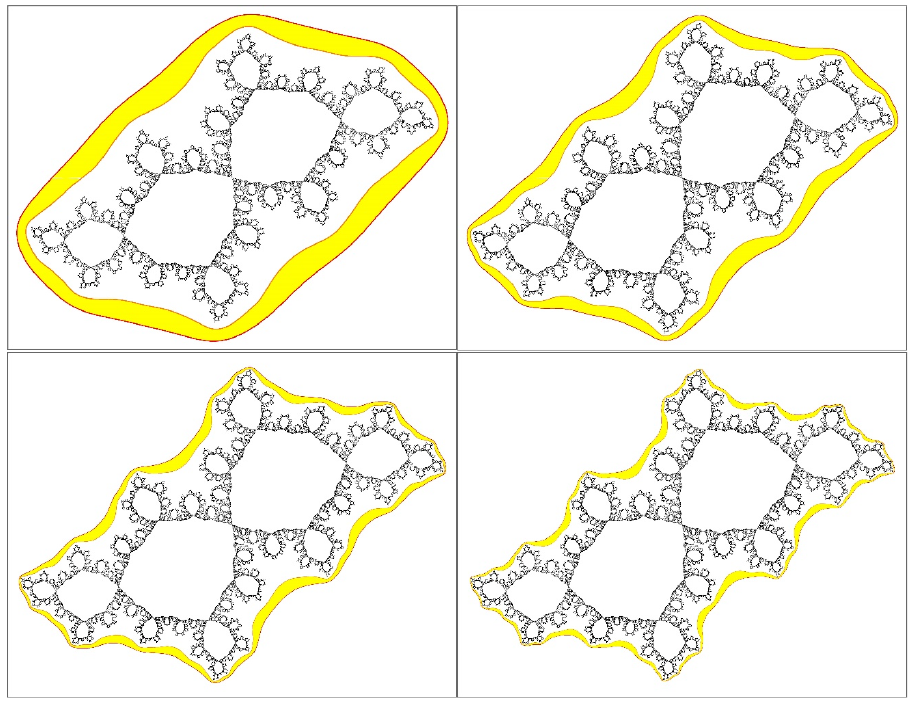}}
		\caption{Supports of Dilatations Converging to Zero Almost Everywhere}
	\end{center}
\end{figure}

The support of each $\mu_0^N$ is contained the basin of infinity for $P$, $A_{\infty}$.  Since we had $2^{-N}\inf_{z \in \gamma}G(z)>0$, $\psi_0^N$ is analytic on a neighborhood of $\overline{U}$.  Then if we define $U^N :=\psi_0^N(U)$, we have that $(\psi_0^N)^{-1}$ is analytic on a neighborhood of $\overline{U^N}$.  We now prove two fairly straightforward technical lemmas. 

\begin{lemma}
\label{uno}
$(U^N,0)\rightarrow (U,0)$ in the \Car topology. 
\end{lemma}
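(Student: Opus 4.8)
The plan is to verify the three conditions in the definition of \Car convergence (Definition \ref{CarConv}) directly, using the fact, established in Lemma \ref{psi0N-1est}, that the normalized solutions $\psi_0^N$ and their inverses $(\psi_0^N)^{-1}$ converge uniformly to the identity on $\C$ with respect to the Euclidean metric. Since $U^N = \psi_0^N(U)$ and each $U^N$ as well as $U$ are bounded subsets of $\mathrm D(0,\tfrac{2}{\kappa})$ with base point $0$, we may work throughout with the Euclidean metric rather than the spherical one without loss. First, the base points: $\psi_0^N(0) = \phi_0^N(0) - \alpha_0^N = 0$ for every $N$, so condition 1 of Definition \ref{CarConv} is immediate.

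Next, for condition 2, let $K \subset U$ be compact. Then $d := \mathrm{dist}(K, \partial U) > 0$. By Lemma \ref{psi0N-1est}, choose $N_0$ so that $\|(\psi_0^N)^{-1} - \mathrm{Id}\|_{\infty} < d/2$ for all $N \ge N_0$; I would also want $\|\psi_0^N - \mathrm{Id}\|_\infty$ small, say $< d/4$. I claim $K \subset U^N$ for all large $N$: given $w \in K$, I want to exhibit a preimage of $w$ in $U$ under $\psi_0^N$. Since $\psi_0^N$ is a homeomorphism of $\C$ close to the identity, the point $\zeta = (\psi_0^N)^{-1}(w)$ satisfies $|\zeta - w| < d/2$, hence $\mathrm{dist}(\zeta, \partial U) > d/2 > 0$, so $\zeta \in U$ and therefore $w = \psi_0^N(\zeta) \in \psi_0^N(U) = U^N$. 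This gives $K \subset U^N$ for all $N \ge N_0$, which is condition 2 (and in fact a uniform-in-$N$ version of it).

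For condition 3, suppose $N$ is a connected open set containing $0$ with $N \subset U^{N_j}$ for infinitely many indices $N_j \to \infty$; I must show $N \subset U$. Fix $w \in N$ and a compact connected set $L \subset N$ containing both $0$ and $w$ (e.g. a polygonal arc, thickened slightly, or just the arc itself). For each such $N_j$ we have $L \subset U^{N_j} = \psi_0^{N_j}(U)$, so $(\psi_0^{N_j})^{-1}(L) \subset U$. By uniform convergence of $(\psi_0^{N_j})^{-1}$ to the identity, $(\psi_0^{N_j})^{-1}(L) \to L$ in the Hausdorff metric, and since each of these sets lies in the open set $U$, a standard argument shows $L \subset \overline{U}$; to get $L \subset U$ rather than $L \subset \overline U$ one notes that $\mathrm{dist}((\psi_0^{N_j})^{-1}(w), \partial U) \ge 0$ combined with $(\psi_0^{N_j})^{-1}(w) \to w$ forces $\mathrm{dist}(w,\partial U) \ge 0$, which only gives $w \in \overline U$; the cleaner route is: if some $w \in N$ lay in $\partial U$ or outside $\overline U$, then since $N$ is open it contains a small disc $\mathrm D(w,r)$, and for large $j$ the set $(\psi_0^{N_j})^{-1}(\mathrm D(w,r))$ contains $\mathrm D(w, r/2)$ by the preimage argument used in condition 2 applied in reverse, yet this disc must lie in $U$, contradicting $w \notin U$ once $r/2$ exceeds $\mathrm{dist}(w, U)$ is impossible — so instead one simply observes $\mathrm D(w,r/2) \subset U$ forces $w \in \overline U$, and repeating with the open set $N$ (not just $\{w\}$) shows $N \subset U$. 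Thus condition 3 holds and $(U^N, 0) \to (U,0)$ in the \Car topology.

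The main obstacle, and the only place requiring genuine care, is condition 3: one must pass from ``$L$ is the Hausdorff limit of sets lying in the open set $U$'' to ``$L \subset U$'', which is \emph{false} in general for arbitrary sequences of subsets of $U$ but \emph{true} here because the maps $(\psi_0^{N_j})^{-1}$ are homeomorphisms of the whole plane converging uniformly to the identity, so images and preimages of open sets are eventually nested as needed. The clean way to run this is exactly the preimage-of-a-disc computation from condition 2: if $\mathrm D(w,r) \subset N \subset U^{N_j}$, then for $j$ large $\mathrm D(w, r/2) \subset U$, and letting $r$ shrink along points of $N$ gives $N \subset U$. Everything else is a routine unwinding of definitions together with Lemma \ref{psi0N-1est}.
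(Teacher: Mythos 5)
Your proof is correct, but it takes a genuinely different route from the paper's. The paper observes that $\psi_0^N \circ \psi^{-1}$ is (up to a rotation tending to the identity) the normalized Riemann map of $U^N$, notes via Proposition 3.2 of \cite{Com10} that these converge locally uniformly on $\D$ to $\psi^{-1}$, and then invokes the equivalence between Carath\'eodory convergence and local uniform convergence of normalized covering maps (Theorem \ref{theorem1.2}). In contrast, you verify the three conditions of Definition \ref{CarConv} directly from the uniform convergence of $\psi_0^N$ and $(\psi_0^N)^{-1}$ to the identity supplied by Lemma \ref{psi0N-1est}. The paper's route is short and delegates the set-theoretic work to a quoted theorem; yours is longer but elementary and self-contained, needing only the definition and basic Euclidean distance estimates. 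Both are sound. One remark on exposition: your treatment of condition 3 is more circuitous than it needs to be --- you first set up a Hausdorff-limit argument, observe correctly that it yields only $L \subset \overline U$, partially abandon it, and then circle around to the right idea. The disc argument you end with is all you need and should be stated once, cleanly: for $w \in N$ choose $r>0$ with $\mathrm D(w,r) \subset N$; for $j$ large along the subsequence one has $\|\psi_0^{N_j} - \mathrm{Id}\|_\infty < r/2$, so $\psi_0^{N_j}(\mathrm D(w,r/2)) \subset \mathrm D(w,r) \subset U^{N_j} = \psi_0^{N_j}(U)$, whence $\mathrm D(w,r/2) \subset U$ and in particular $w \in U$.
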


\begin{proof}
Define $\psi^{-1}: \D \rightarrow U$ to be the unique inverse Riemann map from $\D$ to $U$ satisfying $\psi^{-1}(0)=0$, $(\psi^{-1})'(0)>0$. By Lemma \ref{psi0N-1est} above, $\psi_{0}^N \circ \psi^{-1} $ converges locally uniformly to $Id \circ\psi^{-1}$ on $\D$.  The result then follows from Theorem \ref{theorem1.2} in view of the fact that by the above result $(\psi_{0}^N)'(0) \to 1$ as $N \to \infty$ (so that the argument of $(\psi_{0}^N)'(0)$ converges to $0$ as $N \to \infty$). \end{proof}

\begin{lemma}
\label{psi0N-1hd}
For any $\epsilon>0$ and any relatively compact subset $A$ of $U$, there exists an $N_0$ such that
\begin{align*}
|(\psi_0^N)^{\natural}(z)-1|&<\epsilon \\
|((\psi_0^N)^{-1})^{\natural}(z)-1|&<\epsilon
\end{align*}
for all $z$ in $A$, $N\geq N_0$.
\end{lemma}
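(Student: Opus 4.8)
The plan is to deduce the hyperbolic statement from the Euclidean convergence already established in Lemma \ref{psi0N-1est} together with the \Car convergence $(U^N,0) \to (U,0)$ from Lemma \ref{uno}, and to unwind the definition of the hyperbolic derivative. Recall that, with the convention fixed at the end of Section 2.2, $f^\natural$ always means $f^\natural_{U,U}$, so here we must be slightly careful: $\psi_0^N$ maps (a neighbourhood of $\overline U$ and in particular) $U$ onto $U^N$, which is \emph{not} $U$. The natural object to control is therefore $(\psi_0^N)^\natural_{U,U^N}(z) = (\psi_0^N)'(z)\, \sigma_{U^N}(\psi_0^N(z))/\sigma_U(z)$, where $\sigma_D$ denotes the hyperbolic density of $D$; once we know this tends to $1$ locally uniformly on $U$, the statement for $(\psi_0^N)^\natural$ in the sense $(\psi_0^N)^\natural_{U,U}$ (if that is what is intended) follows by comparing $\sigma_{U^N}$ and $\sigma_{U}$ on compact sets, which is exactly what \Car convergence gives.

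First I would fix $\epsilon > 0$ and a relatively compact open $A \subset U$, and choose a slightly larger relatively compact open $A'$ with $\overline A \subset A' \Subset U$. By Lemma \ref{uno}, $(U^N,0) \to (U,0)$ in the \Car topology, so by Theorem \ref{theorem1.2} the normalized inverse Riemann maps converge locally uniformly; equivalently (e.g. via the standard fact that \Car convergence of pointed simply connected domains forces the hyperbolic densities to converge uniformly on compacta), $\sigma_{U^N} \to \sigma_U$ uniformly on $\overline{A'}$, and all $\sigma_{U^N}$ are bounded above and below there for $N$ large. Next, by Lemma \ref{psi0N-1est}, $\psi_0^N \to \mathrm{Id}$ uniformly on $\C$; since all the relevant maps are analytic on a fixed neighbourhood of $\overline U$ (the support of $\mu_0^N$ lies in $\{G \le h 2^{-N}\}$, which is disjoint from a fixed neighbourhood of $\overline U$), Cauchy's estimates on a disc of fixed radius give $(\psi_0^N)' \to 1$ uniformly on $\overline A$. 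Combining: on $\overline A$ we have $(\psi_0^N)'(z) \to 1$, $\psi_0^N(z) \to z$ with $\psi_0^N(z) \in \overline{A'}$ eventually, and $\sigma_{U^N}(\psi_0^N(z)) \to \sigma_U(z)$ (using uniform convergence of $\sigma_{U^N}$ to $\sigma_U$ on $\overline{A'}$ together with continuity of $\sigma_U$ and $\psi_0^N(z) \to z$), while $\sigma_U(z)$ is bounded below on $\overline A$; hence $(\psi_0^N)^\natural(z) \to 1$ uniformly on $A$, which gives the first inequality for all $N \ge N_0$. The second inequality follows the same way, or more quickly from the chain rule \eqref{chainrule}: since $(\psi_0^N)^{-1}\circ \psi_0^N = \mathrm{Id}$ has hyperbolic derivative identically $1$, we get $((\psi_0^N)^{-1})^\natural(\psi_0^N(z)) = 1/(\psi_0^N)^\natural(z)$, and as $\psi_0^N$ is a homeomorphism carrying $A$ over a fixed relatively compact subset of $U$ eventually, controlling the reciprocal on $A$ controls $((\psi_0^N)^{-1})^\natural$ on the (slightly shrunk, but still containing $A$ for large $N$) image set; a routine adjustment of the compact set handles the bookkeeping.

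The main obstacle is purely the bookkeeping of \emph{which} domain's hyperbolic metric is being used where: $\psi_0^N$ does not preserve $U$, so the cleanest route is to prove convergence of the genuinely natural quantity $(\psi_0^N)^\natural_{U,U^N}$ and then transfer to $(\psi_0^N)^\natural_{U,U}$ via the uniform convergence $\sigma_{U^N} \to \sigma_U$ on compacta coming from Lemma \ref{uno} and Theorem \ref{theorem1.2}. Everything else — the Euclidean estimates on $\psi_0^N$ and its derivative, the boundedness of densities on compact subsets of $U$ — is immediate from the lemmas already in place.
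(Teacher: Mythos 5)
Your proof is correct and its core computation is the same as the paper's: write $(\psi_0^N)^\natural(z) = (\psi_0^N)'(z)\,\sigma(\psi_0^N(z))/\sigma(z)$, show $(\psi_0^N)' \to 1$ uniformly on $A$ (via Cauchy estimates, as you say), show $\sigma(\psi_0^N(z))/\sigma(z) \to 1$ uniformly on $A$ (via the uniform convergence $\psi_0^N \to \mathrm{Id}$ together with uniform continuity and positivity of $\sigma$ on a fixed relatively compact neighbourhood of $A$), and multiply. The main difference is that the paper never leaves the hyperbolic metric of $U$: by the convention stated at the end of Section 2.2, $f^\natural$ always means $f^\natural_{U,U}$, so $(\psi_0^N)^\natural(z)$ is by definition $(\psi_0^N)'(z)\,\sigma_U(\psi_0^N(z))/\sigma_U(z)$ — perfectly well-defined once $\psi_0^N(z) \in U$, which Lemma \ref{psi0N-1est} guarantees for $z \in A$ and $N$ large (the paper makes this precise by placing $\psi_0^N(A)$ inside a fixed $\tilde A \Subset U$). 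Your detour through $(\psi_0^N)^\natural_{U,U^N}$ and the density comparison $\sigma_{U^N} \to \sigma_U$ via Lemma \ref{uno} and Theorem \ref{theorem1.2} is therefore extra machinery that the convention lets you avoid; it is not wrong, but it answers a question the paper has already foreclosed notationally. Your chain-rule shortcut for $((\psi_0^N)^{-1})^\natural$ is a nice simplification (the paper just says "the proof is similar"), and the compact-set bookkeeping you flag there does go through as you indicate.
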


\begin{proof}
Let $\mathrm{d}\rho_U=\sigma(z)|\mathrm{d}z|$, where the hyperbolic density $\sigma$ is continuous on $U$ (e.g. \cite{KL} Theorem 7.2.2) and bounded away from $0$ on any relatively compact subset of $U$. For each $N$, $\psi_0^N$ is analytic on a neighbourhood of $\overline U$ while 
by Lemma \ref{psi0N-1est}, $(\psi_0^N)^{-1}$ is analytic on any relatively compact subset of $U$ for $N$ sufficiently large, so that by the same result both $(\psi_0^N)'$ and $((\psi_0^N)^{-1})'$ converge uniformly to $1$ on $A$. Since $A$ is a relatively compact subset of $U$, there exists $\delta_0>0$ such that $U$ contains a Euclidean $2\delta_0$-neighborhood of $A$. Let $\tilde A$ denote a Euclidean $\delta_0$ neighbourhood of $A$, so that $\tilde A$ is still a relatively compact subset of $U$. By Lemma \ref{psi0N-1est} again, we can choose $N_0$ large enough such that $\psi_0^N(A)\subset \tilde{A}$ for all $N\geq N_0$.  Then, since $\sigma$ is continuous on the relatively compact subset $A$ of $U$, there exists $\eta>0$ such that $|\sigma|>\eta$ on $A$.  Then for $z \in A$, using the uniform continuity of $\sigma$ on the relatively compact subset $\tilde{A}$ of $U$, 
\begin{align*}
(\psi_0^N)^{\natural}(z)&= \frac{(\psi_0^N)'(z)\sigma(\psi_0^N(z))}{\sigma(z)} 
\end{align*}
converges uniformly to 1 on $A$, as desired.  The proof for $((\psi_0^N)^{-1})^{\natural}$ is similar. \end{proof}

\section{Statement and Proof of the Polynomial Implementation Lemma}

Recall that we had defined $P_m^N(z)= \psi_m^N \circ P \circ (\psi_{m-1}^N)^{-1}(z)$ so that we have defined $P_m^N$ for $0 \leq m \leq N$.  Recall also that we have a sequence $\{n_k\}_{k=1}^\infty$ for which the subsequence $\{P^{\circ n_k}\}_{k=1}^\infty$ converges uniformly to the identity on compact subsets of $U$ (in fact, we can choose $\{n_k \}_{k=1}^{\infty}$ to be the Fibonacci sequence e.g. Problem C-3. on page 244 of \cite{Mil}).    

Define $Q_{n_k}^{n_k}(z)= P_{n_k}^{n_k}\circ P_{n_k -1}^{n_k}\circ \cdots \cdots \circ P_{2}^{n_k}\circ P_{1}^{n_k}(z)$ and note that this simplifies so that $Q_{n_k}^{n_k}(z)=\psi_{n_k}^{n_k}\circ P^{\circ n_k} \circ (\psi_{0}^{n_k})^{-1}(z)$. Essentially the same argument as in the proof of Lemma \ref{psi0N-1hd} allows us to prove the following:

\begin{lemma}
\label{Pnkhd}
For any $\epsilon>0$ and any relatively compact subset $A$ of $U$, there exists $k_0$ such that
\begin{align*}
|(P^{\circ n_k})^{\natural}(z)-1|&<\epsilon
\end{align*}
for all $z$ in $A$, $k\geq k_0$.
\end{lemma}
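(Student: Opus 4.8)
The plan is to imitate the proof of Lemma \ref{psi0N-1hd}, replacing the family $\psi_0^N$ by the family $P^{\circ n_k}$, and using the fact (recalled just above the statement) that $P^{\circ n_k}$ converges uniformly to the identity on compact subsets of $U$. The key point is that, just as in Lemma \ref{psi0N-1hd}, a Euclidean estimate (uniform convergence of $P^{\circ n_k}$ to the identity) together with uniform continuity of the hyperbolic density on a slightly larger relatively compact set upgrades to the desired hyperbolic estimate $|(P^{\circ n_k})^\natural - 1| < \epsilon$.

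First I would write $\mathrm d\rho_U = \sigma(z)\,|\mathrm dz|$, noting that $\sigma$ is continuous and strictly positive on $U$, hence bounded above and below away from $0$ on any relatively compact subset. Given the relatively compact $A \subset U$, pick $\delta_0 > 0$ with $U$ containing a Euclidean $2\delta_0$-neighbourhood of $A$, and let $\tilde A$ be the $\delta_0$-neighbourhood of $A$, still relatively compact in $U$. Since $P^{\circ n_k} \to \mathrm{Id}$ uniformly on the compact set $\overline{\tilde A}$, Cauchy estimates (Weierstrass) give $(P^{\circ n_k})' \to 1$ uniformly on $A$; also, for $k$ large, $P^{\circ n_k}(A) \subset \tilde A$, so that the composition $\sigma(P^{\circ n_k}(z))$ makes sense and, since $P^{\circ n_k}(z) \to z$ uniformly on $A$ and $\sigma$ is uniformly continuous on $\tilde A$, we get $\sigma(P^{\circ n_k}(z)) \to \sigma(z)$ uniformly on $A$. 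Letting $\eta > 0$ be a lower bound for $\sigma$ on $\tilde A$, the formula
\begin{align*}
(P^{\circ n_k})^{\natural}(z) = \frac{(P^{\circ n_k})'(z)\,\sigma(P^{\circ n_k}(z))}{\sigma(z)}
\end{align*}
then shows the numerator converges uniformly to $\sigma(z)$ while the denominator is $\sigma(z) \ge \eta$, so the whole quotient converges uniformly to $1$ on $A$. Choosing $k_0$ so that the error is below $\epsilon$ for $k \ge k_0$ finishes the proof.

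The only mild subtlety — and the nearest thing to an obstacle — is making sure the composed term $\sigma(P^{\circ n_k}(z))$ is legitimate, i.e. that $P^{\circ n_k}(A)$ lies inside the set $\tilde A$ on which $\sigma$ is (uniformly) continuous and bounded below; but this is immediate from the uniform convergence $P^{\circ n_k} \to \mathrm{Id}$ once $\tilde A$ is chosen slightly larger than $A$, exactly as in Lemma \ref{psi0N-1hd}. Everything else is the elementary fact that a uniformly convergent sequence of holomorphic functions has uniformly convergent derivatives on slightly smaller sets, combined with uniform continuity of $\sigma$. In short, the lemma is a verbatim transcription of the argument for Lemma \ref{psi0N-1hd}, which is why the authors simply remark that ``essentially the same argument'' applies.
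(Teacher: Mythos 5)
Your proposal is correct and is essentially the same argument the paper has in mind: the paper literally says ``essentially the same argument as in the proof of Lemma~\ref{psi0N-1hd}'' serves as the proof, and you have carried that transcription out faithfully, including the one genuine checkpoint (that $P^{\circ n_k}(A)$ lands inside the slightly larger compact set $\tilde A$ for $k$ large so that $\sigma(P^{\circ n_k}(z))$ is controlled). The only trivial simplification available to you, which you could have noted, is that since each $P^{\circ n_k}$ is a polynomial there is no analyticity issue to verify at all, unlike for $\psi_0^N$; but that does not change the argument.
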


\vspace{.2cm}
We now state the Polynomial Implementation Lemma.  It is by means of this lemma that we create all polynomials constructed in the proofs of Phases I and II. First we make the definition that, for a relatively compact set $A$ of $U$ and $\delta >0$, the set $\{z \in U: \rho_U(z, A) < \delta\}$ (where $ \rho_U(z, A)$ is the hyperbolic distance in $U$ from $z$ to $A$ as specified in Definition \ref{HypDist}) is called the \emph{$\delta$-neighbourhood} of $A$. Observe that such a neighbourhood is again a relatively compact subset of $U$.

\begin{lemma}
\label{PIL}
(The Polynomial Implementation Lemma) Let $P_\lambda$, $U_\lambda$, $\kappa$, $P$, $U$, $\{n_k\}_{k=1}^\infty$, $\Omega$, $\Omega'$, $\gamma$, $\Gamma$, and $f$ be as above where in addition we also require $f(0) = 0$. Suppose $A\subset U$ is relatively compact and $\delta$, $M$ are positive such that, if $\hat{A}$ is the $\delta$-neighborhood of $A$ with respect to $\rho_U$ as above, then we have $f(\hat A) \subset U$ and $\|f^{\natural} \|_{\hat{A}}\leq M$. Then, for all $\epsilon$ positive, there exists $k_0 \ge 1$ (determined by $\kappa$, the curves $\gamma$, $\Gamma$, the function $f$, as well as $A$, $\delta$, $M$, and $\epsilon$) such that for each $k_1 \ge k_0$ there exists a (17+$\kappa$)-bounded finite sequence of quadratic polynomials $\{P_m^{n_{k_1}}\}_{m=1}^{n_{k_1}}$ (which also depends on $\kappa$, $\gamma$, $\Gamma$, $f$, $A$, $\delta$, $M$, $\epsilon$ as well as $k_1$) such that $Q_{n_{k_1}}^{n_{k_1}}$ is univalent on $A$ and

\begin{enumerate}
\item $\rho_U(Q_{n_{k_1}}^{n_{k_1}}(z), f(z))< \epsilon$ for all $z \in A$,

\item $\|(Q_{n_{k_1}}^{n_{k_1}})^{\natural} \|_{A}\leq M(1+\epsilon)$,

\item $Q_{n_{k_1}}^{n_{k_1}}(0)=0$.
\end{enumerate}

\end{lemma}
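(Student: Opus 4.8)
The plan is to assemble the lemma from the machinery already developed, with the only new analytic content being the local uniform convergence of the quasiconformal corrections $\psi_m^{n_k}$ on the relevant compact set and the resulting hyperbolic-derivative estimate. First I would fix $M$, $\epsilon$, $\delta$, and the relatively compact open set $A \subset U$ with $\|f^\natural\|_{\hat A} \le M$, where $\hat A$ is the $\delta$-neighbourhood of $A$ in $\rho_U$. The quadratic polynomials $P_m^{n_k}$ and the composition $Q_{n_k}^{n_k} = \psi_{n_k}^{n_k} \circ P^{\circ n_k} \circ (\psi_0^{n_k})^{-1}$ have already been defined; Lemma \ref{17plusk} gives that any sequence formed from them is $(17+\kappa)$-bounded, which is conclusion 3 of the list except for the normalization $Q_{n_k}^{n_k}(0)=0$, and that normalization is immediate since each $P_m^{n_k}$ fixes $0$ by construction. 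So the real work is conclusions 1 and 2, together with univalence of $Q_{n_k}^{n_k}$ on $A$.

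For univalence: since $f$ is injective on a neighbourhood of $\overline\Omega \supset \K$ and $A \subset U \subset \K$, the map $F$ (hence each $\phi_m^{n_k}$, being a homeomorphism of $\chat$ that agrees with $\phi_m^{n_k}\circ P \circ \cdots$ conjugacy-wise) restricts to an injection on the relevant sets; more precisely I would note that $\psi_{n_k}^{n_k} \circ F \circ (\psi_0^{n_k})^{-1} $ unwinds, via the conjugacy relations $P_m^{n_k} = \psi_m^{n_k}\circ P \circ (\psi_{m-1}^{n_k})^{-1}$, to show $Q_{n_k}^{n_k} = \psi_{n_k}^{n_k}\circ P^{\circ n_k}\circ (\psi_0^{n_k})^{-1}$ on $A$, and on $A$ (a compact subset of the Siegel disc) $P^{\circ n_k}$ is injective for all large $k$ because it is uniformly close to the identity there (the Fibonacci subsequence), while $\psi_0^{n_k}$ and $\psi_{n_k}^{n_k}$ are injective near $\overline U$ resp. its image. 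Composing injections gives univalence of $Q_{n_k}^{n_k}$ on $A$ for $k$ large.

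For conclusion 1, I would write $Q_{n_k}^{n_k}(z) = \psi_{n_k}^{n_k}\big(P^{\circ n_k}((\psi_0^{n_k})^{-1}(z))\big)$ and compare with $f(z) = \psi_{n_k}^{n_k}\big(F\big((\psi_0^{n_k})^{-1}(z)\big)\big)$ — the last equality because, by the conjugacy, $\psi_{n_k}^{n_k} = \phi_{n_k}^{n_k} - \alpha_{n_k}^{n_k}$ and $\phi_{n_k}^{n_k} = F$, while on $\Omega$ we have $F = f$; one must check $(\psi_0^{n_k})^{-1}(z) \in \Omega$ for $z \in A$ and $k$ large, which follows from Lemma \ref{psi0N-1est} since $A \subset \K \subset \Omega$ is compact and $(\psi_0^{n_k})^{-1} \to \mathrm{Id}$ uniformly. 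Then the triangle inequality in $\rho_U$ reduces the estimate to three pieces, each controlled by the hyperbolic M-L estimate (Lemma \ref{hyperbolicML}): the difference between $(\psi_0^{n_k})^{-1}$ and the identity, measured in $\rho_U$ and small by Lemma \ref{psi0N-1hd}; the difference between $P^{\circ n_k}$ and the identity on a compact set slightly larger than $A$, small by Lemma \ref{Pnkhd}; and the difference between $\psi_{n_k}^{n_k}$ and the identity on the image — here I would need the analogue of Lemmas \ref{psi0N-1est}/\ref{psi0N-1hd} for $\psi_{n_k}^{n_k}$, which is where the main obstacle lies and which I address below. For conclusion 2, the chain rule for hyperbolic derivatives (equation \ref{chainrule}) factors $(Q_{n_k}^{n_k})^\natural$ as a product of the hyperbolic derivatives of $(\psi_0^{n_k})^{-1}$, $P^{\circ n_k}$, and $\psi_{n_k}^{n_k}$ (all taken $U \to U$ after restricting), evaluated along the composition; the first is within $\epsilon'$ of $1$ by Lemma \ref{psi0N-1hd}, the second within $\epsilon'$ of $1$ by Lemma \ref{Pnkhd}, and the third is bounded by $M(1+\epsilon')$-ish using $\|f^\natural\| \le M$ on the slightly larger set $\hat A$ together with closeness of $\psi_{n_k}^{n_k}$ to the identity; choosing $\epsilon'$ small relative to $\epsilon$ gives $\|(Q_{n_k}^{n_k})^\natural\|_A \le M(1+\epsilon)$.

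The main obstacle is handling the outer correction $\psi_{n_k}^{n_k}$, since unlike $\psi_0^{n_k}$ it is \emph{not} close to the identity on a fixed domain — its dilatation is supported on $\{0 < G \le h\}$, which does not shrink. The resolution is that we only need to control $\psi_{n_k}^{n_k}$ where it is actually applied, namely on the image $P^{\circ n_k}((\psi_0^{n_k})^{-1}(A))$, which for large $k$ is contained in a fixed compact subset $A' \subset\subset U$; and on such a set $\psi_{n_k}^{n_k}$ is conformal (its dilatation lives in the basin of infinity, disjoint from $\overline U$). The key point is then to track $\psi_{n_k}^{n_k}$ relative to $f$: from $Q_{n_k}^{n_k} = \psi_{n_k}^{n_k}\circ P^{\circ n_k}\circ(\psi_0^{n_k})^{-1}$ and $f = \psi_{n_k}^{n_k}\circ F \circ(\psi_0^{n_k})^{-1}$, the factor $\psi_{n_k}^{n_k}$ is common, so in the difference it only enters through its \emph{variation} over the tiny set separating $P^{\circ n_k}\circ(\psi_0^{n_k})^{-1}(z)$ from $F\circ(\psi_0^{n_k})^{-1}(z)$ — and there one bounds $|(\psi_{n_k}^{n_k})^\natural|$ on $A'$ and invokes Lemma \ref{hyperbolicML} once more. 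Thus one never needs $\psi_{n_k}^{n_k}$ globally close to the identity; one needs its hyperbolic derivative bounded on a fixed compact set and the separation small, both of which hold. I would state and prove this boundedness of $\|(\psi_{n_k}^{n_k})^\natural\|_{A'}$ as a short auxiliary claim (using that the $\phi_m^N$ form a normal family, being univalent off a fixed disc and normalized at $\infty$, together with the Car-convergence $(U^{n_k},0)\to(U,0)$ from Lemma \ref{uno} transported by the conjugacy to the image side), and then the three-term estimate closes.
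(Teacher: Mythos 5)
There is a genuine gap, and it traces to a misidentification of the role of the outer map $\psi_{n_k}^{n_k}$. You treat this factor as a conformal correction that is "not close to the identity" and whose hyperbolic derivative you need to bound separately via a normality argument. But in fact $\phi_{n_k}^{n_k} = F$ by definition, $F = f$ on $\overline{\Omega}$, and $\alpha_{n_k}^{n_k} = F(0) = f(0) = 0$ (the lemma, as applied throughout the paper, is only used for $f$ fixing $0$), so $\psi_{n_k}^{n_k} = f$ on all of $\overline{\Omega} \supset U \supset \hat A$. The "main obstacle" you describe does not exist: the outer map is literally the target function $f$, and its hyperbolic derivative on $\hat A$ is bounded by $M$ by hypothesis — no separate normality claim is needed.

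This misidentification propagates into a concrete error in your decomposition for conclusion 1. You claim $f(z) = \psi_{n_k}^{n_k}\bigl(F\bigl((\psi_0^{n_k})^{-1}(z)\bigr)\bigr)$, citing $\phi_{n_k}^{n_k} = F$ and $F = f$ on $\Omega$. Substituting those very identities yields $f\bigl(f\bigl((\psi_0^{n_k})^{-1}(z)\bigr)\bigr)$, which is $f$ composed with itself, not $f(z)$; the asserted equality is false. The subsequent three-term reduction (cancelling a "common factor" $\psi_{n_k}^{n_k}$ between $Q_{n_k}^{n_k}$ and this spurious expression, then measuring its variation between $P^{\circ n_k}(\cdot)$ and $F(\cdot)$) is therefore built on a faulty identity. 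The correct and much shorter route, which the paper takes, is the substitution $\psi_{n_k}^{n_k} = f$ in $Q_{n_k}^{n_k} = \psi_{n_k}^{n_k} \circ P^{\circ n_k} \circ (\psi_0^{n_k})^{-1}$ followed by a two-term triangle inequality,
\begin{align*}
\rho_U(Q_{n_k}^{n_k}(z), f(z)) &\le \rho_U\bigl(f \circ P^{\circ n_k} \circ (\psi_0^{n_k})^{-1}(z), f \circ P^{\circ n_k}(z)\bigr) + \rho_U\bigl(f \circ P^{\circ n_k}(z), f(z)\bigr),
\end{align*}
with each term controlled by pushing the smallness of $(\psi_0^{n_k})^{-1} - \mathrm{Id}$ and $P^{\circ n_k} - \mathrm{Id}$ through $f$ via the hyperbolic M--L estimates, using the bound $\|f^\natural\|_{\hat A} \le M$ together with the auxiliary estimate that $(P^{\circ n_k})^\natural$ is close to $1$ on the slightly enlarged set. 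The chain-rule computation for conclusion 2 then reads off directly with $f^\natural$ in the outer slot, giving $M(1+\tfrac{\epsilon}{3})^2 < M(1+\epsilon)$, again with no separate control of $\psi_{n_k}^{n_k}$ required. The rest of your proposal — the use of Lemmas \ref{psi0N-1est}, \ref{psi0N-1hd}, \ref{Pnkhd}, the chain rule \eqref{chainrule}, and the observation that $Q_{n_k}^{n_k}(0)=0$ by construction — matches the paper's proof, but the treatment of the outer factor needs to be corrected as above.
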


Before embarking on the proof, a couple of remarks: first this result is set up so that the subsequence of iterates $\{n_k\}_{k=1}^\infty$ used is always the same. Although we do not require this, it is convenient as it allows us to apply the theorem to approximate many functions simultaneously (which may be of use in some future application) and use the same number of polynomials in each of the compositions we obtain. Second, one can view this result as a weak form of our main theorem (Theorem \ref{thetheorem}), in that it allows to to approximate a single element of $\calS$ with arbitrary accuracy using a finite composition of quadratic polynomials.

\begin{proof}
Let $\epsilon,\delta$ be as above and without loss of generality take $\epsilon<\min \{\delta,1\}$. By Lemma \ref{lemma4.3} the Euclidean and hyperbolic metrics are equivalent on compact subsets of $U$ and we can then we can use Lemma \ref{psi0N-1est} to pick $k_0$ sufficiently large so that for all $k_1 \ge k_0$
\begin{align}
\label{psi-1app}
\rho_U((\psi_0^{n_{k_1}})^{-1}(z),z)< \frac{\epsilon}{3(M+1)}, \hspace{1cm} z\in A.
\end{align}
This also implies that if we let $\check A$ be the $\tfrac{\delta}{2}$-neighbourhood of $A$ in $U$, then, since $\epsilon < \delta$,  
\begin{align}
\label{psi-1loc}
(\psi_0^{n_{k_1}})^{-1}(A) \subset \check{A}.  
\end{align}

Next, by Lemma \ref{psi0N-1hd}, we can make $k_0$ larger if needed such that for all $k_1 \ge k_0$
\begin{align}
\label{psi-1hd}
|((\psi_0^{n_{k_1}})^{-1})^{\natural}(z)-1|< \frac{\epsilon}{3}, \hspace{1cm} z \in  A.
\end{align}

From above, since $\{P^{\circ n_k}\}_{k=1}^\infty$ converges locally uniformly to the identity on $U$ (with respect to the Euclidean metric), using Lemma \ref{lemma4.3}, we can again make $k_0$ larger if necessary to ensure for all $k_1 \ge k_0$ that
\begin{align}
\label{Papp}
\rho_U(P^{\circ n_{k_1}}(z),z)<\frac{\epsilon}{3(M+1)}. \hspace{1cm} z \in \check{A}.
\end{align}
This also implies 
\begin{align}
\label{Ploc}
P^{\circ n_{k_1}}(\check{A}) \subset \hat{A}.
\end{align}

By Lemma \ref{Pnkhd}, we can again make $k_0$ larger if needed such that for all $k_1 \ge k_0$
\begin{align}
\label{Phd}
|(P^{\circ n_{k_1}})^{\natural}(z)-1|< \frac{\epsilon}{3},\hspace{1cm} z \in \check{A}.
\end{align}
We remark that this is the last of our requirements on $k_0$ and we are now in a position to establish the dependencies of $k_0$ on $\kappa$, $\gamma$, $\Gamma$, $f$, $A$, $\delta$, $M$, $\epsilon$ in the statement. To be precise, the requirements on $k_0$ in \eqref{psi-1app} depend on  $\kappa$, $\gamma$, $\Gamma$, $f$, $A$, $M$, and $\epsilon$ (but not $\delta$) while those in \eqref{psi-1hd} depend on $\kappa$, $\gamma$, $\Gamma$, $f$, $A$, and $\epsilon$ (but not $\delta$ or $M$). 
Note that the dependency of these two estimates on the curves $\gamma$, $\Gamma$, or equivalently on the domains $\Omega$, $\Omega'$, (which in turn depend on the scaling factor $\kappa$) as well as the function $f$ arises from the quasiconformal interpolation performed with the aid of Lemma \ref{qcextension} which is clearly dependent on these curves and this function. 
Further, the requirements on $k_0$ in \eqref{Papp} depend on  $\kappa$, $A$, $\delta$, $M$, and $\epsilon$ (but not $\gamma$, $\Gamma$, or $f$) while those in \eqref{Phd} depend on $\kappa$, $A$, $\delta$, and $\epsilon$ (but not $\gamma$, $\Gamma$, $f$, or $M$). Finally, for the remaining estimates, \eqref{psi-1loc} is a direct consequence of \eqref{psi-1app} while \eqref{Ploc} follow immediately from  \eqref{Papp} so that none of these three introduces any further dependencies. 

Now fix $k_1 \ge k_0$ arbitrary and let the finite sequence $\{P_m^{n_{k_1}}\}_{m=1}^{k_1}$ be constructed according to the sequence $\{n_k \}_{k=1}^{\infty}$ specified at the start of this section and the prescription given in \eqref{PmnDef}. Note that this sequence is then $(17+\kappa)$-bounded in view of Lemma \ref{17plusk}. By construction $Q_{n_k}^{n_k}(0)=0$ for every $k$ so that \emph{(3)} in the statement above will be automatically satisfied. 

Now \eqref{PmnDef}, \eqref{psi-1loc}, \eqref{Ploc}, the univalence of $P$ on $U$, and the univalence of $f$ on a neighbourhood of $\calK$ imply that $Q_{n_{k_1}}^{n_{k_1}}$ is univalent on $A$. Now let $z \in A$ and, using \eqref{psi-1app}, consider a geodesic segment $\gamma$ connecting $z$ to ${(\psi_{0}^{n_{k_1}})}^{-1}(z)$ which, since $\epsilon < \delta$, has length smaller than $\tfrac{\epsilon}{3}$. Since $\epsilon < \min\{\delta, 1\}$, $\tfrac{\epsilon}{3}$ is in turn smaller than $\tfrac{\delta}{2}$ and so, by the definition of $\check A$, we have $[\gamma] \subset \check A$. This allows us to apply \eqref{psi-1app},  \eqref{Phd} and the hyperbolic M-L estimates (Lemma \ref{hyperbolicML}) for $P^{\circ n_{k_1}}$ to conclude that the length of $P^{\circ n_{k_1}}(\gamma)$ is at most $(1 + \tfrac{\epsilon}{3})\tfrac{\epsilon}{3(M +1)}$ which is smaller than $\tfrac{\delta}{2}$ since $\epsilon < \min\{\delta, 1\}$. As $[\gamma] \subset \check A$, by \eqref{Ploc}, $[P^{\circ n_{k_1}}(\gamma)] \subset \hat A$ and we are than able to apply the hyperbolic M-L estimates for $f$ since 
 by hypothesis we have $|f^{\natural}(z)|\leq M$ on $\hat{A}$.

 In a similar manner, if instead we consider a geodesic segment connecting $z$ to $P^{\circ n_{k_1}}(z)$, then, since $\epsilon < \delta$, by \eqref{Papp} this segment again has length less than $\tfrac{\delta}{2}$ and starts at $z \in A$, whence it lies inside $\check A \subset \hat A$ and we are again able to apply the hyperbolic M-L estimates for $f$ to this segment. Using the triangle inequality and applying the estimates \eqref{psi-1app} - \eqref{Phd} (except \eqref{psi-1hd}) as well as $|f^{\natural}(z)|\leq M$ on $\hat{A}$ from the statement, for each $z \in A$, since $P^{\circ n_{k_1}} \circ (\psi_{0}^{n_{k_1}})^{-1}(z) \in \hat A$ and $f(\hat A) \subset U$ by hypothesis, we have
\begin{align*}
\rho_U(Q_{n_{k_1}}^{n_{k_1}}(z),f(z)) &= \rho_U(\psi_{n_{k_1}}^{n_{k_1}}\circ P^{\circ n_{k_1}} \circ (\psi_{0}^{n_{k_1}})^{-1}(z),f(z)) \\
&\leq \rho_U(f\circ P^{\circ n_{k_1}} \circ (\psi_{0}^{n_{k_1}})^{-1}(z),f\circ P^{\circ n_{k_1}}(z))\\
&+ \rho_U(f\circ P^{\circ n_{k_1}}(z),f(z)) \\
&<M\! \left (1+\frac{\epsilon}{3}\right )\!\left (\frac{\epsilon}{3(M+1)}\right )+M \!\left (\frac{\epsilon}{3(M+1)}\right ) \\
&<\epsilon
\end{align*}
(recall that we assumed $\epsilon <1$) which proves \emph{(1)}.  Also, using the chain rule \eqref{chainrule} for the hyperbolic derivative, the estimate $|f^{\natural}(z)|\leq M$ on $\hat{A}$, and \eqref{psi-1loc}, \eqref{psi-1hd}, \eqref{Ploc}, \eqref{Phd}, for each $z \in A$,
\begin{align*}
|((Q_{n_{k_1}}^{n_{k_1}})^{\natural})(z)| &= |f^{\natural}(P^{\circ n_{k_1}}\circ (\psi_0^{n_{k_1}})^{-1}(z))\cdot (P^{\circ n_{k_1}})^{\natural}((\psi_0^{n_{k_1}})^{-1}(z))\cdot ((\psi_{0}^{n_{k_1}})^{-1})^{\natural}(z)| \\
&\leq M\! \left (1+\frac{\epsilon}{3}\right )\! \left (1+\frac{\epsilon}{3}\right ) \\
&<M(1+\epsilon)
\end{align*}
again using $\epsilon < 1$ at the end which proves \emph{(2)} as desired.  \end{proof}

\chapter{Phase I}

\section{Setup}

We begin by finding a suitable disk on which $f \circ g^{-1}$ is defined for arbitrary $f,g \in \calS$.

\begin{lemma}
\label{fcircgdefined}
If $f,g \in \calS$, then $f\circ g^{-1}$ is defined on $\mathrm{D}(0,\tfrac{1}{12})$ and 
$$(f\circ g^{-1})(\mathrm{D}(0,\tfrac{1}{12}))\subset \mathrm{D}(0,\tfrac{1}{3}).$$  
\end{lemma}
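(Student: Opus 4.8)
The plan is to use the Koebe one-quarter theorem together with the growth part of the distortion theorems (Theorem~\ref{distortion}) to control where $g^{-1}$ is defined and how far it can move points, and then to bound $|f|$ on the resulting disc.

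First I would locate a disc on which $g^{-1}$ is defined. By the Koebe one-quarter theorem, $g(\D) \supset \mathrm{D}(0,\tfrac14)$, so $g^{-1}$ is certainly defined on $\mathrm{D}(0,\tfrac14)$. Next I want to know that $g^{-1}$ maps $\mathrm{D}(0,\tfrac{1}{12})$ into a disc $\mathrm{D}(0,r)$ for some $r$ strictly less than $1$, so that $f$ (which is defined on all of $\D$) can be composed with it. For this, note that if $w \in \mathrm{D}(0,\tfrac{1}{12})$ and $z = g^{-1}(w) \in \D$, then by the lower bound in the growth theorem $|w| = |g(z)| \ge \tfrac{|z|}{(1+|z|)^2}$. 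The function $t \mapsto \tfrac{t}{(1+t)^2}$ is increasing on $[0,1]$, so $|z|$ must be bounded by the value $t_0 \in (0,1)$ with $\tfrac{t_0}{(1+t_0)^2} = \tfrac{1}{12}$; solving gives $t_0 = \tfrac13$ (since $\tfrac{1/3}{(4/3)^2} = \tfrac{1/3}{16/9} = \tfrac{3}{16}$... let me recompute). Actually one wants to pick the constant $\tfrac{1}{12}$ precisely so that this works out cleanly: the value of $t/(1+t)^2$ at $t = \tfrac13$ is $\tfrac{1/3}{16/9} = \tfrac{3}{16}$, which is larger than $\tfrac1{12}$, so in any case $|z| \le \tfrac13$ follows (more conservatively, any explicit bound $<1$ suffices); I would just record the clean inequality $|g^{-1}(w)| \le \tfrac13$ for $|w| \le \tfrac{1}{12}$, verifying the elementary numerical inequality $\tfrac{1/3}{(1+1/3)^2} \ge \tfrac{1}{12}$ and monotonicity. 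This shows $g^{-1}(\mathrm{D}(0,\tfrac{1}{12})) \subset \mathrm{D}(0,\tfrac13) \subset \D$, so $f \circ g^{-1}$ is defined on $\mathrm{D}(0,\tfrac{1}{12})$.

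Finally, for the image bound, apply the upper bound in the growth theorem to $f$: for $z$ with $|z| \le \tfrac13$ we get $|f(z)| \le \tfrac{|z|}{(1-|z|)^2} \le \tfrac{1/3}{(2/3)^2} = \tfrac{1/3}{4/9} = \tfrac34$. Hmm, that gives $\tfrac34$, not $\tfrac13$, so either the claimed target disc uses a different intermediate radius or the constants are chosen more tightly. The right move is to run the argument with the specific constant $\tfrac{1}{12}$: one needs the intermediate radius $t_0$ (the bound on $|g^{-1}(w)|$) to satisfy $\tfrac{t_0}{(1-t_0)^2} \le \tfrac13$, and simultaneously $\tfrac{t_0}{(1+t_0)^2} \ge \tfrac{1}{12}$. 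A value like $t_0 = \tfrac15$ works: $\tfrac{1/5}{(4/5)^2} = \tfrac{1/5}{16/25} = \tfrac{5}{16} < \tfrac13$ and $\tfrac{1/5}{(6/5)^2} = \tfrac{1/5}{36/25} = \tfrac{5}{36} > \tfrac{1}{12}$. So I would prove $|g^{-1}(w)| \le \tfrac15$ for $|w| \le \tfrac{1}{12}$ via monotonicity of $t \mapsto t/(1+t)^2$ and the inequality $\tfrac{5}{36} \ge \tfrac{1}{12}$, then conclude $|f(g^{-1}(w))| \le \tfrac{1/5}{(1-1/5)^2} = \tfrac{5}{16} < \tfrac13$.

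The only real obstacle is bookkeeping the elementary numerical inequalities so the chosen intermediate radius simultaneously satisfies the two competing constraints (Koebe/lower-growth bound forcing it not too small, upper-growth bound forcing the final image inside $\mathrm{D}(0,\tfrac13)$); there is no conceptual difficulty, just a careful choice of constants and invocation of the monotonicity of $t/(1\pm t)^2$ on $(0,1)$.
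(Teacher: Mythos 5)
Your argument is correct, but it takes a genuinely different route from the paper's. After the common opening step (Koebe implies $g^{-1}$ is defined on $\mathrm{D}(0,\tfrac14) \supset \mathrm{D}(0,\tfrac{1}{12})$), the paper introduces the rescaled function $h(w) := 4g^{-1}(\tfrac{w}{4})$, checks $h \in \calS$, and applies the \emph{upper} growth bound from Theorem~\ref{distortion} to $h$ to get $|g^{-1}(z)| \le \tfrac{3}{16}$; a second application of the upper growth bound to $f$ then gives $|f(g^{-1}(z))| \le \tfrac{48}{169} < \tfrac13$. You instead apply the \emph{lower} growth bound directly to $g$: from $\tfrac{|z|}{(1+|z|)^2} \le |g(z)| \le \tfrac{1}{12}$ together with the monotonicity of $t \mapsto t/(1+t)^2$ on $[0,1]$, you deduce $|g^{-1}(w)| \le \tfrac15$ (since $\tfrac{1/5}{(6/5)^2} = \tfrac{5}{36} \ge \tfrac{1}{12}$), and then the upper growth bound for $f$ gives $|f(g^{-1}(w))| \le \tfrac{5}{16} < \tfrac13$. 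Your route avoids constructing and verifying the auxiliary Schlicht function $h$, at the cost of a looser intermediate bound ($\tfrac15$ versus $\tfrac{3}{16}$) and a slightly looser final bound ($\tfrac{5}{16} \approx 0.3125$ versus $\tfrac{48}{169} \approx 0.284$); since $\tfrac13 \approx 0.333$, both comfortably suffice. The only thing I would tidy is the exploratory detour where you first try $t_0 = \tfrac13$ before settling on $t_0 = \tfrac15$: the finished proof should simply state and verify the two numerical inequalities $\tfrac{1/5}{(6/5)^2} \ge \tfrac{1}{12}$ and $\tfrac{1/5}{(4/5)^2} < \tfrac13$, together with the monotonicity fact (whose derivative computation $\tfrac{d}{dt}\tfrac{t}{(1+t)^2} = \tfrac{1-t}{(1+t)^3} > 0$ on $(0,1)$ is worth one line since it is not quoted in the paper).
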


\begin{proof}
Let $f,g \in \calS$.  By the Koebe one-quarter theorem (Theorem \ref{Koebe}) we have $\mathrm{D}(0,\frac{1}{4}) \subset g(\D)$ so $g^{-1}$ is defined on $\mathrm{D}(0,\frac{1}{4})$.   Then if $h(w):=4g^{-1}(\frac{w}{4})$ for $w \in \D$ we have that $h\in \calS$ and $g^{-1}(z) = \frac{1}{4}h(4z)$ for $z \in \mathrm{D}(0,\frac{1}{4})$, where $z=\tfrac{w}{4}$.  Thus if $|z|\leq \frac{1}{12}$, we have $|w| \leq \frac{1}{3}$ and by the distortion theorems (Theorem \ref{distortion}) we have that $|h(w)|\leq \frac{3}{4}$ and $|g^{-1}(z)|\leq \frac{3}{16} < 1$ so that in particular $f \circ g^{-1}(z)$ exists. Then, using the distortion theorems again, if $z \in \mathrm{D}(0,\frac{1}{12})$ we have that $(f \circ g^{-1})(z) \leq \frac{48}{169}<\frac{1}{3}$.  Thus $f \circ g^{-1}$ is defined on $\mathrm{D}(0,\frac{1}{12})$ for all $f,g \in \calS$ and maps $\mathrm{D}(0,\frac{1}{12})$ into $\mathrm{D}(0,\frac{1}{3})$ as required.  \end{proof}

In the proof of Phase I we will scale the filled Julia set for the polynomial \label{overviewpi1}$P_\lambda(z)=\lambda z (1-z)$ where $\lambda= e^{\frac{2\pi i (\sqrt{5}-1)}{2}}$ so that the filled Julia set is a subset of $\mathrm{D}(0,\frac{1}{12})$.  We are then able to apply $f\circ g^{-1}$ for $f,g \in \calS$ which are then defined on this filled Julia set.  We wish to find a suitable subdomain of this scaled filled Julia set so that we may control the size of the hyperbolic derivative $(f\circ g^{-1})^{\natural}$ on that subdomain.  There are two strategies for doing this: one can either consider a small hyperbolic disk in the Siegel disc, or one can scale $P_\lambda$ so that the scaled filled Julia set lies inside a small Euclidean disc about $0$.  We found the second option more convenient, as it allows us to consider an arbitrarily large hyperbolic disk inside the scaled Siegel disc on which $|(f\circ g^{-1})'|$ is tame and $|(f\circ g^{-1})^{\natural}|$ thus easier to control.  Lemmas \ref{modz2} through \ref{fcircghdlemma} deal with finding a suitable scaling which allows us to obtain good estimates for $|(f\circ g)^{\natural}|$.  

\begin{lemma}
\label{modz2}
There exists $K_1>0$ such that for all $f,g \in \calS$, if $|z|\leq \frac{1}{24}$, then 
\begin{align*}
|(f\circ g^{-1})(z)-z|\leq K_1|z|^2.
\end{align*}
\end{lemma}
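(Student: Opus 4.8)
The plan is to set $\phi := f \circ g^{-1}$, which by Lemma \ref{fcircgdefined} is holomorphic on $\mathrm{D}(0,\tfrac1{12})$ and maps this disc into $\mathrm{D}(0,\tfrac13)$, and then to show that $\phi$ agrees with the identity to first order at the origin with a uniformly controlled quadratic remainder, so that the desired estimate comes out with a constant depending only on the radii $\tfrac1{12}$, $\tfrac13$, $\tfrac1{24}$ appearing above.

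First I would record the first-order data at $0$. Since $g \in \calS$ we have $g(0)=0$ and $g'(0)=1$, hence $g^{-1}(0)=0$ and $(g^{-1})'(0)=1/g'(0)=1$; combined with $f(0)=0$, $f'(0)=1$ and the chain rule this gives $\phi(0)=0$ and $\phi'(0)=1$. Therefore $h(z):=\phi(z)-z$ vanishes to order at least two at the origin, and $H(z):=h(z)/z^2$ extends to a function holomorphic on all of $\mathrm{D}(0,\tfrac1{12})$.

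Next I would bound $H$ via the maximum modulus principle. On $\mathrm{D}(0,\tfrac1{12})$ one has the crude estimate $|h(z)|\le|\phi(z)|+|z|<\tfrac13+\tfrac1{12}=\tfrac5{12}$. For any $z$ with $|z|\le\tfrac1{24}$ and any $r\in(|z|,\tfrac1{12})$, applying the maximum principle to $H$ on $\overline{\mathrm{D}(0,r)}$ gives $|H(z)|\le\sup_{|w|=r}|h(w)|/r^2\le(5/12)/r^2$; letting $r\uparrow\tfrac1{12}$ yields $|H(z)|\le(5/12)\cdot 144=60$. Hence $|f\circ g^{-1}(z)-z|=|H(z)|\,|z|^2\le 60\,|z|^2$ whenever $|z|\le\tfrac1{24}$, so the lemma holds with $K_1=60$ (any larger constant works equally well).

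There is no serious obstacle here; the only points needing a little care are the verification that $\phi'(0)=1$ — which is precisely where the normalization $f'(0)=g'(0)=1$ of the Schlicht class enters — and the fact that Lemma \ref{fcircgdefined} only supplies $|\phi|<\tfrac13$ on the \emph{open} disc, which is why the bound is obtained by a limiting argument over radii $r\uparrow\tfrac1{12}$ rather than directly on the circle $|w|=\tfrac1{12}$. One could equivalently phrase the last step using Cauchy's estimates on the Taylor coefficients $c_n$ of $\phi$ (with $c_0=0$, $c_1=1$), dominating $\sum_{n\ge2}|c_n|\,|z|^{n}$ on $|z|\le\tfrac1{24}$ by a geometric series, but the maximum-principle argument is the most economical.
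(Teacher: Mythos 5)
Your proof is correct, and it takes a genuinely different and more economical route than the paper. The paper's argument rescales to $h(w) := 12\,(f\circ g^{-1})(w/12)\in\calS$, then invokes the Littlewood coefficient bound $|a_n|\le e n^2$ for Schlicht functions to control $|h'(w)-1|$ by a convergent power series on $|w|\le\tfrac12$, and finally integrates along the radius $[0,w]$ to recover the quadratic bound before scaling back. This is essentially the ``Cauchy estimate / coefficient'' alternative you mention at the end. Your primary argument bypasses any coefficient information about $\calS$: it only needs that $\phi=f\circ g^{-1}$ is bounded by $\tfrac13$ on $\mathrm D(0,\tfrac1{12})$ (from Lemma \ref{fcircgdefined}) together with the normalization $\phi(0)=0$, $\phi'(0)=1$, after which the Schwarz-type removal of the double zero plus the maximum principle on $H(z)=(\phi(z)-z)/z^2$ gives the bound directly, with the cleaner explicit constant $K_1=60$ (versus the paper's $K_1=12 e\sum_{n\ge2} n^3 2^{2-n}$). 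Both establish the lemma; your version is shorter and relies on weaker inputs, while the paper's version makes the role of the Schlicht coefficient bounds explicit — neither choice affects downstream use, since only the existence of \emph{some} uniform $K_1$ is used later (e.g.\ in Lemma \ref{fcircgid}).
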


\begin{proof}
Let $f,g \in \calS$.  By Lemma \ref{fcircgdefined} the function $f\circ g^{-1}$ is defined on $\mathrm{D}(0,\frac{1}{12})$.  Let $w\in \D$, $z=\frac{1}{12}w$, so that $z \in \mathrm{D}(0,\frac{1}{12})$, and define $h(w)=12(f\circ g^{-1})(\frac{w}{12})$ so that $h \in \calS$.  
Then, letting $w + \sum_{n=2}^\infty {a_n w^n}$ denote the Taylor series about $0$ for $h$ and setting $K_0 = e \sum_{n=2}^{\infty}n^3 \frac{1}{2^{n-2}}$, if $|w|\leq \frac{1}{2}$ we have 
\begin{align*}
|h'(w)-1|&=\left|w\sum_{n=2}^{\infty}na_nw^{n-2} \right| \\
&\leq |w|\sum_{n=2}^{\infty}n|a_n||w|^{n-2} \\
&\leq |w| e\sum_{n=2}^{\infty}n^3\frac{1}{2^{n-2}} \\
&=K_0|w|
\end{align*}
where we used that $|a_n|\leq en^2$ as $h\in \calS$ (see e.g. Theorem I.1.8 in \cite{CG}).  Let $\gamma=[0,w]$ be the radial line segment from $0$ to $w$.  Then, if $|w|\leq \frac{1}{2}$,
\begin{align*}
|h(w)-w|&=\left|\int_{\gamma}\left[ h'(\zeta)-1 \right] \mathrm{d}\zeta\right| \\
&\leq K_0|w|\int_{\gamma}|\mathrm{d}\zeta| \\
&=K_0|w|^2.
\end{align*}
Then if $|z|\leq \frac{1}{24}$ (so that $|w|\leq \frac{1}{2}$), a straightforward calculation shows
\begin{align*}
|(f\circ g^{-1})(z)-z|&\leq 12K_0|z|^2,
\end{align*}
from which the lemma follows on setting $K_1=12K_0$.
\end{proof}

Recall \label{overviewpi2}$P_\lambda = \lambda z(1-z)$ and the corresponding Siegel disc $U_\lambda$.  Now fix $R>0$ arbitrary and let ${\tilde U}_R$ denote $\Delta_{U_\lambda}(0,R)$, the hyperbolic disc of radius $R$ about $0$ in $U_\lambda$. Let $\psi_\lambda:{U_\lambda}\rightarrow \D$ be the unique Riemann map satisfying $\psi_\lambda(0)=0$, $\psi_\lambda'(0)>0$.  \label{tilder0} Let $\tilde{r}_0=\tilde{r}_0(R):=\mathrm{d}(\partial {\tilde U}_R,\partial {U_\lambda})$, the Euclidean distance from $\partial {\tilde U}_R$ to $\partial U_\lambda$. Similarly to in the last chapter, for $\kappa>0$ arbitrary, set $P:=\frac{1}{\kappa}{P_\lambda}(\kappa z)$ and note that $P$ obviously depends on $\kappa$.  Then, if $\calK=\calK(\kappa)$ is the filled Julia set for $P$, we have $\calK\subset \mathrm{D}(0,\frac{2}{\kappa})$. Let $U= \{z: \kappa z \in U_\lambda\}$ be the corresponding  Siegel disc for $P$ and set $U_R=\Delta_{U}(0,R)$.  Define $\psi(z) := \psi_\lambda(\kappa z)$ and observe that $\psi$ is the unique Riemann map from $U$ to $\D$ satisfying $\psi(0)=0$, $\psi'(0)>0$. \label{r0} Lastly, define $r_0=r_0(\kappa,R):=\mathrm{d}(\partial U_R, \partial U)$ and note $r_0=\frac{\tilde {r}_0}{\kappa}$. Observe that $\tilde{r}_0$ and $r_0$ are decreasing in $R$ while we must have $\tilde{r}_0 \le 2$. In what follows, let $P_\lambda$, $U_\lambda$, $\psi_\lambda$, $P$, $U$, $\psi$, $\tilde{r}_0$ and $r_0$ be fixed. For the moment, we let $\kappa >0$ be arbitrary. We will, however, be fixing a lower bound on $\kappa$ in the lemmas which follow.

\begin{lemma}
\label{localdistortion}
(Local Distortion) For all $\kappa,\, R_0>0$, there exists $C_0=C_0(R_0)$ depending on $R_0$ (in particular, $C_0$ is independent of $\kappa$) which is increasing, real-valued, and (thus) bounded on any bounded subset of $[0,\infty)$ such that, if $U_{R_0}$ and $r_0 = r_0(\kappa, R_0)= \mathrm{d}(\partial U_{R_0}, \partial U)$ are as above and $z_0\in \overline U_{R_0}$, $z\in U$ with $|z-z_0|\leq s<r_0$, we have 
\begin{enumerate}
\item $|\psi(z)-\psi(z_0)|\leq \frac{C_0\frac{s}{r_0}}{(1-\frac{s}{r_0})^2},$ 

\item $\frac{1-\frac{s}{r_0}}{(1+\frac{s}{r_0})^3}\leq|\frac{\psi'(z)}{\psi'(z_0)}|\leq\frac{1+\frac{s}{r_0}}{(1-\frac{s}{r_0})^3}.$
\end{enumerate}
\end{lemma}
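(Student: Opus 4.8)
The plan is to reduce the statement to the classical distortion theorems (Theorem \ref{distortion}) by conjugating with a suitable affine rescaling so that everything takes place on the unit disc. Fix $R_0 > 0$ and a point $z_0 \in \overline{U_{R_0}}$. Since $|z - z_0| \le s < r_0 = \dist(\partial U_{R_0}, \partial U)$, the Euclidean disc $\mathrm{D}(z_0, r_0)$ is contained in $U$, and in particular $\psi$ is defined and univalent on it. The idea is to renormalize: consider the map
\begin{align*}
G(w) := \frac{\psi(z_0 + r_0 w) - \psi(z_0)}{r_0 \, \psi'(z_0)}, \qquad w \in \D,
\end{align*}
which is holomorphic and univalent on $\D$ with $G(0) = 0$ and $G'(0) = 1$, i.e. $G \in \calS$. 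Applying Theorem \ref{distortion} to $G$ at the point $w = (z - z_0)/r_0$, whose modulus is $s/r_0 < 1$, immediately yields the displayed bounds on $|G(w)|$ and $|G'(w)|$. Translating these back through the definition of $G$ gives $|\psi(z) - \psi(z_0)| \le |r_0 \psi'(z_0)| \cdot \frac{s/r_0}{(1 - s/r_0)^2}$ and $\frac{1 - s/r_0}{(1 + s/r_0)^3} \le |\psi'(z)/\psi'(z_0)| \le \frac{1 + s/r_0}{(1 - s/r_0)^3}$, where for the second chain we also use $G'(w) = \psi'(z_0 + r_0 w)/\psi'(z_0)$.

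The statement of part 2 is then exactly what we have, with no constant needed. For part 1, it remains to absorb the factor $|r_0 \psi'(z_0)|$ into a constant $C_0 = C_0(R_0)$ that is \emph{independent of $\kappa$}. This is the one point requiring care. Recall $\psi(z) = \psi_\lambda(\kappa z)$, so $\psi'(z_0) = \kappa \, \psi_\lambda'(\kappa z_0)$, and $r_0 = \tilde r_0/\kappa$; hence $r_0 \psi'(z_0) = \tilde r_0 \, \psi_\lambda'(\kappa z_0)$. Now $\kappa z_0 \in \overline{\tilde U_{R_0}} = \overline{\Delta_{U_\lambda}(0, R_0)}$, which is a \emph{fixed} compact subset of $U_\lambda$ (depending only on $R_0$), and $\psi_\lambda$ is holomorphic on $U_\lambda$, so $|\psi_\lambda'|$ is bounded on $\overline{\tilde U_{R_0}}$ by a constant depending only on $R_0$. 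Combined with $\tilde r_0 \le 2$, this gives $|r_0 \psi'(z_0)| \le 2 \sup_{\overline{\tilde U_{R_0}}} |\psi_\lambda'| =: C_0(R_0)$, which is manifestly independent of $\kappa$. To see that $C_0$ may be taken increasing in $R_0$, note that $\overline{\tilde U_{R_0}}$ increases with $R_0$, so the supremum of $|\psi_\lambda'|$ over it is nondecreasing; replacing $C_0$ by its nondecreasing envelope if necessary (or simply taking the stated expression, which is already monotone) gives a real-valued increasing function, bounded on bounded subsets of $(0, \infty)$ since $\overline{\tilde U_{R_0}}$ stays relatively compact in $U_\lambda$ for $R_0$ in any bounded range.

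I expect the main (mild) obstacle to be precisely the bookkeeping around $\kappa$-independence in part 1: one must resist the temptation to bound things in the rescaled picture for $P$ and instead push the estimate back to the fixed polynomial $P_\lambda$, where the relevant domain $\overline{\tilde U_{R_0}}$ and Riemann map $\psi_\lambda$ do not move. Everything else — the reduction to $\calS$ and the application of the distortion theorems — is routine. One should also record at the start that $G$ is genuinely in $\calS$: univalence of $G$ on $\D$ follows from univalence of $\psi$ on $\mathrm{D}(z_0, r_0) \subset U$, and the normalizations $G(0) = 0$, $G'(0) = 1$ are immediate from the definition; the hypotheses $z_0 \in \overline{U_{R_0}}$, $|z - z_0| \le s < r_0$ guarantee both $z_0 + r_0 w$ and the evaluation point $z$ lie in $U$ as needed.
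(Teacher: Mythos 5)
Your argument is correct and follows essentially the same route as the paper's proof: you rescale $\psi$ by the affine map $w \mapsto z_0 + r_0 w$ and normalize to land in $\calS$ (your $G$ is the paper's $\phi$), apply the distortion theorems, and then push the bound $|r_0\psi'(z_0)| = \tilde r_0\,|\psi_\lambda'(\kappa z_0)| \le 2\sup_{\overline{\tilde U_{R_0}}}|\psi_\lambda'|$ back to the unscaled polynomial $P_\lambda$ to obtain a $\kappa$-independent constant $C_0(R_0)$, exactly as the paper does. If anything, your write-up makes the $\kappa$-bookkeeping more explicit than the paper's.
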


\begin{proof}
Set $C_0=C_0(R_0)=2\max_{z\in{\overline{\tilde U}_{R_0}}}|\psi_\lambda '(z)| = \tfrac{2}{\kappa}\max_{z\in{\overline{U}_{R_0}}}| \psi'(z)|$.  Then $C_0(R_0)$ does not depend on $\kappa$ and is clearly increasing in $R_0$ and therefore bounded on any bounded subinterval of $[0,\infty)$. For $z \in {\mathrm D}(z, r_0)$, set $\zeta=\frac{z-z_0}{r_0}$ and note that, if we define $\phi(\zeta):=\frac{\psi(r_0\zeta+z_0)-\psi(z_0)}{r_0\psi'(z_0)}$, we have that $\phi \in \calS$.  Applying the distortion theorems (Theorem \ref{distortion}) to $\phi$ we see
\begin{align*}
|\phi(\zeta)|&\leq\frac{|\zeta|}{(1-|\zeta|)^2} \\
&\leq\frac{\frac{s}{r_0}}{(1-\frac{s}{r_0})^2}
\end{align*}
from which we can conclude (using $r_0 = \tfrac{\tilde r_0}{\kappa}$ and $\tilde{r}_0 \le 2$)
$$
|\psi(z)-\psi(z_0)| \le \frac{\frac{s}{r_0}}{(1-\frac{s}{r_0})^2}\cdot C_0
$$
which proves \emph{(1)}.  For \emph{(2)} we again apply the distortion theorems to $\phi$ and observe
\begin{align*}
\frac{1-\frac{s}{r_0}}{(1+\frac{s}{r_0})^3}\leq\frac{1-|\zeta|}{(1+|\zeta|)^3}\leq |\phi'(\zeta)|\leq\frac{1+|\zeta|}{(1-|\zeta|)^3} \leq\frac{1+\frac{s}{r_0}}{(1-\frac{s}{r_0})^3}, 
\end{align*}
from which 2. follows as $\phi'(\zeta)=\frac{\psi'(z)}{\psi'(z_0)}$.
\end{proof}

\begin{lemma}
\label{fcircgid}
For any  $R_0 > 0$ and $\eta>0$ there exists $\kappa_0=\kappa_0(R_0, \eta) \ge 48$ such that, for all $\kappa\geq \kappa_0$, $f,g\in \calS$ and $z\in U$, 
\begin{align*}
|(f\circ g^{-1})(z)-z|\leq \eta r_0.
\end{align*}
where $r_0 = r_0(\kappa, R_0) = \mathrm{d}(\partial U_{R_0}, \partial U)$ is as above. In particular, this holds for $z \in \overline U_{R_0}$.
\end{lemma}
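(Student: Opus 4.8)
The plan is to combine the quadratic estimate of Lemma~\ref{modz2} with the scaling identities $r_0 = \tilde r_0/\kappa$ and $\K \subset \mathrm{D}(0,\tfrac{2}{\kappa})$, exploiting the fact that $|z|^2$ decays like $\kappa^{-2}$ while $r_0$ decays only like $\kappa^{-1}$; thus for $\kappa$ large the quadratic error is small \emph{compared to} $r_0$.

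First I would impose $\kappa_0 \ge 48$, so that for every $\kappa \ge \kappa_0$ we have $\tfrac{2}{\kappa} \le \tfrac{1}{24}$. Since $U$ is the Siegel disc for $P$ it is contained in the filled Julia set $\K$, and $\K \subset \mathrm{D}(0,\tfrac{2}{\kappa}) \subset \mathrm{D}(0,\tfrac{1}{24})$; in particular every $z \in U$ satisfies $|z| < \tfrac{2}{\kappa} \le \tfrac{1}{24}$. By Lemma~\ref{fcircgdefined} the map $f\circ g^{-1}$ is defined on $\mathrm{D}(0,\tfrac{1}{12}) \supset U$, and Lemma~\ref{modz2} applies to give, for all $f,g\in\calS$ and all $z\in U$,
\begin{align*}
|f\circ g^{-1}(z)-z| \;\le\; K_1|z|^2 \;<\; K_1\Bigl(\tfrac{2}{\kappa}\Bigr)^2 \;=\; \frac{4K_1}{\kappa^2}.
\end{align*}

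Next I would compare this with $\eta r_0$. Recalling that $r_0 = \tilde r_0/\kappa$ with $\tilde r_0 = \tilde r_0(R) > 0$ fixed (depending only on the fixed radius $R$, not on $\kappa$), the bound $\tfrac{4K_1}{\kappa^2} \le \eta r_0 = \tfrac{\eta \tilde r_0}{\kappa}$ holds precisely when $\kappa \ge \tfrac{4K_1}{\eta \tilde r_0}$. Hence it suffices to take
\begin{align*}
\kappa_0 = \kappa_0(\eta) := \max\Bigl\{\,48,\ \tfrac{4K_1}{\eta \tilde r_0}\,\Bigr\},
\end{align*}
and then $|f\circ g^{-1}(z)-z| \le \eta r_0$ for all $\kappa \ge \kappa_0$, all $f,g\in\calS$ and all $z\in U$. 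The final assertion is immediate: for any $R_0>0$ the set $\overline{U_{R_0}}=\overline{\Delta_U(0,R_0)}$ is a relatively compact subset of $U$, so the estimate for $z\in U$ already covers $z\in\overline{U_{R_0}}$.

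There is no real obstacle here; the only point requiring care is bookkeeping the two different powers of $\kappa$ — making sure $\kappa_0 \ge 48$ so that Lemma~\ref{modz2} is applicable, and then noting that the quadratic gain in $|z|^2$ outpaces the single power of $\kappa$ hidden in $r_0$.
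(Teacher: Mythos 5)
Your proof is correct and follows essentially the same argument as the paper: apply Lemma~\ref{modz2} on $U \subset \mathrm{D}(0,\tfrac{2}{\kappa})$, observe that the resulting bound $4K_1/\kappa^2$ decays one order faster in $\kappa$ than $\eta r_0 = \eta\tilde r_0/\kappa$, and take $\kappa_0 = \max\{48, 4K_1/(\eta\tilde r_0)\}$. The paper's proof is verbatim the same computation, so there is nothing to add.
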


\begin{proof}
Fix $\kappa_0\geq 48$.  By Lemma \ref{modz2} we have, on $U\subset \mathrm{D}(0,\frac{2}{\kappa})\subset \mathrm{D}(0,\frac{1}{24})$, that $|(f\circ g^{-1})(z)-z|<K_1|z|^2$ for some $K_1>0$ (note that $f\circ g^{-1}$ is defined on $U$ by Lemma \ref{fcircgdefined}).  So $|(f\circ g^{-1})(z)-z|<\tfrac{4K_1}{\kappa^2}$ since $|z|<\frac{2}{\kappa}$.  Then make $\kappa_0$ larger if necessary to ensure that $\tfrac{4K_1}{\kappa^2} \le \eta r_0  = \eta \tfrac{\tilde r_0}{\kappa}$ for all $\kappa\geq \kappa_0$ (where we recall that $\tilde r_0 = \tilde r_0(R_0) = \mathrm{d}(\partial {\tilde U}_{R_0},\partial U_\lambda) = \kappa r_0$).  In fact, $\kappa_0=\max\{48, \frac{4K_1}{\eta \tilde r_0}\}$ will suffice and since $\tilde r_0$ depends only on $R_0$, we have the correct dependencies for $\kappa_0$ and the proof is complete. \end{proof}

Lemmas \ref{modz2} through \ref{fcircgid} are technical lemmas that assist in proving the following result which will be essential for controlling the hyperbolic derivative of $\psi$:

\begin{lemma}
\label{fcircghdestimates}
Given $R_0  > 0$, there exists $\kappa_0= \kappa_0(R_0) \ge 48$ such that for all $\kappa\geq \kappa_0$, $f,g\in \calS$, and $z\in \overline U_{R_0}$, $(f\circ g^{-1})(z) \in U$ and
\begin{enumerate}
\item $\frac{1-|\psi(z)|^2}{1-|\psi((f\circ g^{-1})(z))|^2}\leq \frac{10}{9},$ 

\item $\frac{|\psi'((f\circ g^{-1})(z))|}{|\psi'(z)|}\leq \frac{9}{8}.$
\end{enumerate}
\end{lemma}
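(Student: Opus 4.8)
The plan is to combine the three preceding technical lemmas (Lemmas \ref{modz2}, \ref{localdistortion}, \ref{fcircgid}) in a routine but careful way. The key point is that $f \circ g^{-1}$ moves points of $\overline{U_{R_0}}$ by only a small multiple of $r_0$, so the Local Distortion Lemma applies with a small value of $s/r_0$, and then both quotients are controlled by elementary estimates. First I would apply Lemma \ref{fcircgid} with a sufficiently small $\eta$ (to be pinned down) to obtain $\kappa_0 = \kappa_0(R_0, \eta) \ge 48$ such that for all $\kappa \ge \kappa_0$, all $f, g \in \calS$ and all $z \in \overline{U_{R_0}}$ we have $|f \circ g^{-1}(z) - z| \le \eta r_0$; in particular, choosing $\eta < 1$, the image point $w := (f \circ g^{-1})(z)$ lies in $U$ (it is within Euclidean distance $r_0 = \dist(\partial U_{R_0}, \partial U)$ of $\overline{U_{R_0}}$), so $\psi(w)$ makes sense. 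Now set $z_0 := w$, $z := z$ in Lemma \ref{localdistortion}, so that $s := |z - z_0| = |z - (f\circ g^{-1})(z)| \le \eta r_0$, i.e. $s/r_0 \le \eta$; note $z_0 = w \in \overline{U_{R_0}}$ as just observed, so the hypothesis $z_0 \in \overline{U_{R_0}}$ is met (here one takes $R_0$ itself as the ``$R_0$'' in Lemma \ref{localdistortion}, after possibly enlarging $R_0$ slightly, or one simply applies the lemma with a marginally larger radius --- this is a harmless adjustment).

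For part 2., part 2. of Lemma \ref{localdistortion} gives directly
\begin{align*}
\frac{|\psi'((f\circ g^{-1})(z))|}{|\psi'(z)|} = \frac{|\psi'(z_0)|}{|\psi'(z)|} \le \frac{1 + s/r_0}{(1 - s/r_0)^3} \le \frac{1+\eta}{(1-\eta)^3},
\end{align*}
and one checks that $\frac{1+\eta}{(1-\eta)^3} \le \tfrac{9}{8}$ for all sufficiently small $\eta$ (e.g. $\eta = \tfrac{1}{30}$ works since $\tfrac{1+1/30}{(1-1/30)^3} = \tfrac{31/30}{(29/30)^3} \approx 1.107 < 1.125$); fix such an $\eta$. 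For part 1., write $\psi(z) = a$ and $\psi(w) = b$, so $|a|, |b| < 1$ and, by part 1. of Lemma \ref{localdistortion}, $|a - b| = |\psi(z) - \psi(z_0)| \le \frac{(s/r_0) C_0}{(1 - s/r_0)^2} \le \frac{\eta C_0(R_0)}{(1-\eta)^2} =: \epsilon_1$, which is small since $\eta$ is small (here $C_0(R_0)$ is the fixed constant from Lemma \ref{localdistortion}, independent of $\kappa$). Moreover both $a$ and $b$ lie in a fixed compact subdisc of $\D$: indeed $z \in \overline{U_{R_0}} = \overline{\Delta_U(0,R_0)}$ means $\rho_U(0,z) \le R_0$, hence $\rho_\D(0, \psi(z)) \le R_0$, so $|a| \le r_* := \tanh(R_0/2) < 1$, and similarly $|b| \le r_*$ since $w \in \overline{U_{R_0'}}$ for a slightly larger $R_0'$ (absorbing the small displacement); thus $1 - |a|^2, 1 - |b|^2 \ge 1 - r_*^2 > 0$ are bounded below by a fixed positive constant. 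Then
\begin{align*}
\left| \frac{1 - |a|^2}{1 - |b|^2} - 1 \right| = \frac{\bigl| |b|^2 - |a|^2 \bigr|}{1 - |b|^2} \le \frac{(|a| + |b|)\,|a - b|}{1 - r_*^2} \le \frac{2 r_* \,\epsilon_1}{1 - r_*^2},
\end{align*}
which can be made less than $\tfrac{1}{9}$ by taking $\eta$ small enough; fixing such an $\eta$ (compatible with the one chosen for part 2.) gives $\frac{1 - |\psi(z)|^2}{1 - |\psi((f\circ g^{-1})(z))|^2} \le \tfrac{10}{9}$, as required. The final $\kappa_0 = \kappa_0(R_0)$ is then the one produced by Lemma \ref{fcircgid} for this fixed $\eta$ (and this $R_0$), and it is $\ge 48$ by construction.

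The only mildly delicate point --- and the one I expect to need the most care --- is the bookkeeping around radii: the displaced point $w = (f\circ g^{-1})(z)$ lies not in $\overline{U_{R_0}}$ but in a slightly larger hyperbolic disc, so one must either (i) state the lemma's conclusion for $z$ in a marginally smaller disc, or (ii) run the argument with an auxiliary radius $R_0' > R_0$ and note that the constants $C_0(R_0')$, $r_*' = \tanh(R_0'/2)$ are still fixed and finite, then shrink $\eta$ accordingly. Either way, the estimates above are unaffected in character. Everything else is an elementary computation with the distortion bounds and the elementary inequality $\left| \tfrac{1-|a|^2}{1-|b|^2} - 1\right| \le \tfrac{(|a|+|b|)|a-b|}{1-|b|^2}$, so the proof is short once the radius issue is handled cleanly.
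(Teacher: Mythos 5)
Your overall plan is the same as the paper's: combine Lemma \ref{fcircgid} (to get a small displacement $s \le \eta r_0$) with the Local Distortion Lemma \ref{localdistortion}, choosing $\eta$ small enough in terms of the fixed constant $C_0(R_0)$ and the fixed bound $c_{R_0} = \tanh(R_0/2)$ on $|\psi|$. Your treatment of part 1 via the identity $\left|\tfrac{1-|a|^2}{1-|b|^2}-1\right| \le \tfrac{(|a|+|b|)|a-b|}{1-|b|^2}$ is a clean and entirely valid alternative to the paper's route, which instead applies the mean value theorem to $\log(1-t)$.

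There are, however, three concrete errors in the execution, all traceable to one unnecessary choice: you take $z_0 := w = (f\circ g^{-1})(z)$ and $z := z$, whereas the paper takes $z_0$ to be the original point of $\overline{U_{R_0}}$ and $z$ to be its image under $f \circ g^{-1}$. First, with your assignment the hypothesis $z_0 \in \overline{U_{R_0}}$ of Lemma \ref{localdistortion} is not met (you only know $w \in U$), which is exactly the ``marginally larger radius'' nuisance you flag at the end; with the paper's assignment this disappears because the original point is in $\overline{U_{R_0}}$ by hypothesis, while the conclusion only needs $z = w \in U$. Second, your inequality $\tfrac{|\psi'(z_0)|}{|\psi'(z)|} \le \tfrac{1+s/r_0}{(1-s/r_0)^3}$ does not follow from the lemma as stated: the lemma bounds $|\psi'(z)/\psi'(z_0)|$ from above, so the reciprocal is bounded by $\tfrac{(1+s/r_0)^3}{1-s/r_0}$, not the formula you wrote; with the paper's assignment of roles the quoted formula is exactly what you get, no reciprocal needed. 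Third, the numerical check is wrong: $\tfrac{1+1/30}{(1-1/30)^3} = \tfrac{31\cdot 30^2}{29^3} \approx 1.144 > \tfrac{9}{8}$, so $\eta = \tfrac{1}{30}$ does not work (and $\tfrac{(31/30)^3}{29/30} \approx 1.141$ also fails); the paper uses $\eta_2 = \tfrac{1}{40}$, which gives $\approx 1.106 < \tfrac{9}{8}$. All of this is repairable by swapping $z_0 \leftrightarrow z$ and taking $\eta$ slightly smaller, at which point your argument reduces to the paper's up to the choice of elementary estimate in part 1.
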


\begin{proof} For $R > 0$, set $C_R:= \frac{e^R-1}{e^R+1}$. Then, if  we fix $z_0 \in \overline U_{R_0}$ we have that $|\psi(z_0)|\leq c_{R_0}$ (recall that $\rho_{\D}(0,z)=\log \frac{1+|z|}{1-|z|}$ for $z \in \D$).  Thus $c_{R_0}<1$ and 
\begin{align}
\label{MVTEstimate1}
1-|\psi(z_0)|^2  \ge 1-c_{R_0}^2>0.
\end{align} As in the proof of Lemma \ref{localdistortion}, set $C_0=C_0(R_0)=2\max_{z\in{\overline{\tilde U}_{R_0}}}|{\tilde \psi_{\lambda}}'(z)|$ Let $0<\eta_1=\eta_1(R_0)<\frac{1}{2}$ be such that 
\begin{align}
\label{Eta1Estimate}
\frac{C_0 \eta_1}{(1-\eta_1)^2}\leq \frac{1}{2} (\log 10 - \log 9)(1-c_{R_0 + \log 3}^2)
\end{align}
and note that $\eta_1$ depends only on $R_0$.  Using Lemma \ref{fcircgid}, we can pick $\kappa_1=\kappa_1(R_0, \eta_1) = \kappa_1(R_0) >0$ such that, if $\kappa\geq \kappa_1$, then $|(f\circ g^{-1})(z)-z|<\eta_1 r_0$ on $U \supset \overline U_{R_0}$ (recall the definitions of $\tilde{r_0} = \tilde{r_0}(R_0)$ and $r_0 = r_0(\kappa, R_0)$ given before Lemma \ref{localdistortion}).

Now set $s:=|(f\circ g^{-1})(z_0)-z_0|$. We have $|(f\circ g^{-1})(z_0)-z_0|=s<\eta_1 r_0<\frac{r_0}{2}$ as $\eta_1<\frac{1}{2}$. Then, recalling the definition of $r_0 = \mathrm{d}(\partial U_{R_0}, \partial U)$, we have $(f\circ g^{-1})(z_0) \in {\mathrm D}(z_0, \tfrac{r_0}{2}) \subset {\mathrm D}(z_0, r_0) \subset U$ as in the statement so that in particular $\psi(f\circ g^{-1})(z_0)$ is well-defined. Again using $\rho_{\D}(0,z)=\log \frac{1+|z|}{1-|z|}$ for $z \in \D$ combined with the Schwarz lemma for the hyperbolic metric, we must have that $(f\circ g^{-1})(z_0) \in \overline \Delta_U(z_0, \log 3)$. By the triangle inequality for the hyperbolic metric, 
$(f\circ g^{-1})(z_0) \in \overline \Delta_U(0, R_0 + \log 3) = \overline U_{R_0 + \log 3}$ so that
$|\psi(f\circ g^{-1})(z_0)| \le C_{R_0 + \log 3}$. Then, similarly to \eqref{MVTEstimate1} above, 
\begin{align}
\label{MVTEstimate2}
1-|\psi((f\circ g^{-1})(z_0))|^2 \ge 1-c_{R_0 + \log 3}^2 > 0. 
\end{align}
We may then apply \emph{(1)} of Lemma \ref{localdistortion} and \eqref{Eta1Estimate} above to see that
\begin{align*}
|\psi(z_0)-\psi((f\circ g^{-1})(z_0))|&\leq \frac{C_0\frac{s}{r_0}}{(1-\frac{s}{r_0})^2} \\
&\leq \frac{C_0\eta_1}{(1-\eta_1)^2} \\
&\leq \frac{1}{2} (\log 10 - \log 9)(1-c_{R_0 + \log 3}^2).
\end{align*}
Thus, using the reverse triangle inequality (and the fact that $\psi$ is a Riemann mapping to the unit disc) we see that 
\begin{align*}
|(1-|\psi(z_0)|^2)-(1-|\psi((f\circ g^{-1})(z_0))|^2)|< (\log 10 - \log 9)(1-c_{R_0 + \log 3}^2).
\end{align*}

Making use of \eqref{MVTEstimate1}, \eqref{MVTEstimate2}, noting that $C_R$ is an increasing function of $R$, and applying the mean value theorem to the logarithm function on the interval $[1-c_{R_0 + \log3}^2,\infty)$ we have 
\begin{align*}
|\log(1-|\psi(z_0)|^2)-\log(1-|\psi((f\circ g^{-1})(z_0))|^2)|<\log 10-\log 9
\end{align*}
from which \emph{(1)} follows easily.  For \emph{(2)}, let $0<\eta_2<1$ (e.g. $\eta_2 = \tfrac{1}{35}$) be such that 
$$\frac{1+\eta_2}{(1-\eta_2)^3}<\frac{9}{8}.$$

By Lemma \ref{fcircgid}, using the same $R_0 > 0$ as above, we can pick $\kappa_2=\kappa_2(R_0, \eta_2) = \kappa_2(R_0)>48$ such that for all $\kappa\geq \kappa_2$, if $z\in U \supset \overline U_{R_0}$ 
\begin{align*}
|(f\circ g^{-1})(z)-z|<\eta_2r_0.
\end{align*}

Using the same $z_0\in \overline U_{R_0}$ as above, in a similar way to how we used \emph{(1)} of Lemma \ref{localdistortion} above, we can apply \emph{(2)} of the same result to see that 
\begin{align*}
\frac{|\psi'((f\circ g^{-1})(z_0))|}{|\psi'(z_0)|}\leq \frac{9}{8}
\end{align*}
as desired.  The result follows if we set $\kappa_0=\kappa_0(R_0)= \max\{\kappa_1(R_0),\kappa_2(R_0)\}$. \end{proof}

\begin{lemma}
\label{fcircgd}
For all $\kappa\geq \kappa_0:=576$, for any $f,g \in \calS$ and $z \in {\overline U}$, 
\begin{align*}
|(f\circ g^{-1})'(z)|\leq \frac{6}{5}.
\end{align*}
\end{lemma}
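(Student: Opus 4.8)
The plan is to reduce the estimate to a single application of the distortion theorems to a rescaled Schlicht function, exploiting the fact that the choice $\kappa_0 = 576$ forces $\overline U$ to lie in a very small disc about the origin.

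First I would recall the rescaling already used in the proof of Lemma \ref{modz2}: given $f, g \in \calS$, set $h(w) := 12\,(f\circ g^{-1})(w/12)$. This function belongs to $\calS$. Indeed, by (the proof of) Lemma \ref{fcircgdefined}, $g^{-1}$ is defined and univalent on $\mathrm{D}(0,\tfrac1{12})$ with $g^{-1}(\mathrm{D}(0,\tfrac1{12})) \subset \mathrm{D}(0,\tfrac3{16}) \subset \D$, so $f \circ g^{-1}$ is univalent on $\mathrm{D}(0,\tfrac1{12})$; hence $h$ is univalent on $\D$, it fixes $0$ since $f(0) = g^{-1}(0) = 0$, and $h'(0) = (f\circ g^{-1})'(0) = f'(0)\,(g^{-1})'(0) = 1$.

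Next I would use the chain rule in the form $(f\circ g^{-1})'(z) = h'(12z)$, which is immediate from the definition of $h$, together with the location of $\overline U$. Since $\kappa \ge \kappa_0 = 576$ we have $\overline U \subset \K \subset \mathrm{D}(0,\tfrac2\kappa)$, so for $z \in \overline U$ the argument $w = 12z$ satisfies $|w| \le \tfrac{24}{\kappa} \le \tfrac{24}{576} = \tfrac1{24}$. In particular $w$ lies well inside $\D$, so the upper distortion estimate of Theorem \ref{distortion} applies to $h \in \calS$ and gives
$$
|(f\circ g^{-1})'(z)| \;=\; |h'(12z)| \;\le\; \frac{1 + \tfrac1{24}}{\bigl(1 - \tfrac1{24}\bigr)^3} \;=\; \frac{25 \cdot 576}{23^3} \;=\; \frac{14400}{12167} \;<\; \frac65 .
$$

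There is no genuine obstacle here; the only point requiring care is the final numerical inequality $\tfrac{14400}{12167} < \tfrac65$, i.e.\ $72000 < 73002$, which holds with a small but positive margin. The value $\kappa_0 = 576$ is chosen precisely so that $|12z| \le \tfrac1{24}$ on $\overline U$, which is the threshold at which the distortion bound stays below $\tfrac65$; any larger $\kappa$ only improves the estimate.
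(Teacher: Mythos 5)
Your proof is correct and follows essentially the same route as the paper: both rescale to $h(w)=12(f\circ g^{-1})(w/12)\in\calS$, observe that $\kappa\ge 576$ forces $|12z|\le\tfrac1{24}$ on $\overline U$, and apply the upper distortion bound of Theorem \ref{distortion} to obtain $\tfrac{25\cdot 24^2}{23^3}<\tfrac{6}{5}$. The only difference is cosmetic — you spell out the chain rule $(f\circ g^{-1})'(z)=h'(12z)$ and the verification that $h\in\calS$, which the paper leaves implicit.
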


\begin{proof}
As in the proof of Lemma \ref{modz2}, define $h(w)=12(f\circ g^{-1})(\frac{w}{12})$. Note that $h$ is defined on all of $\D$ by Lemma \ref{fcircgdefined} and that $h\in \calS$. Let $z=\frac{w}{12}$.  Using the distortion theorems (Theorem \ref{distortion}), we have that, for $z \in {\mathrm D}(0, \tfrac{1}{12})$,
\begin{align}
\label{fcircgderw}
|(f\circ g^{-1})'(z)|\leq \frac{1+|12z|}{(1-|12z|)^3}.
\end{align}

If $\kappa\geq \kappa_0$ we have that $\mathrm{D}(0,\frac{2}{\kappa})\subset\mathrm{D}(0,\frac{2}{\kappa_0})=\mathrm{D}(0,\frac{1}{288})$.  Let $z \in \overline{U}$ and, since ${\overline U}\subset \calK \subset \mathrm{D}(0,\frac{2}{\kappa})\subset \mathrm{D}(0,\frac{1}{288})$, we have $|z|<\frac{1}{288}$ for $\kappa\geq \kappa_0$.  Thus the right hand side of \eqref{fcircgderw} is less than $\frac{25\cdot 24^2}{23^3}$, which in turn is less than $\frac{6}{5}$ for all $\kappa\geq \kappa_0$ as desired.  \end{proof}

As all the previous lemmas hold for all $\kappa$ sufficiently large, applying them in tandem in the next result is valid.  In general, each lemma may require a different choice of $\kappa_0$, but we may choose the maximum so that all results hold simultaneously.  The purpose of Lemmas \ref{fcircghdestimates} and \ref{fcircgd} is to prove the following:

\begin{lemma}
\label{fcircghdlemma}
Given $R_0 > 0$, there exists $\kappa_0= \kappa_0(R_0) \ge 576$  such that, for all $\kappa \geq \kappa_0$, for any  $f,g \in \calS$, and  $z \in \overline U_{R_0}$, $(f\circ g^{-1})(z) \in U$ and
\begin{align*}
|(f\circ g^{-1})^{\natural}(z)|\leq \frac{3}{2}.
\end{align*}
 \end{lemma}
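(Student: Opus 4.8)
The plan is to combine the two previous lemmas, Lemma \ref{fcircghdestimates} and Lemma \ref{fcircgd}, via the chain rule for the hyperbolic derivative. Recall that here $f^{\natural}$ means the hyperbolic derivative $f_{U,U}^{\natural}$ taken with respect to the hyperbolic metric $\mathrm{d}\rho_U = \sigma_U(z)|\mathrm{d}z|$ of the Siegel disc $U$. Since $\psi : U \to \D$ is the Riemann map fixing $0$, conformal invariance of the hyperbolic metric gives $\sigma_U(z) = \sigma_{\D}(\psi(z))\,|\psi'(z)|$, where $\sigma_{\D}(w) = \tfrac{2}{1-|w|^2}$ is the density of the hyperbolic metric of $\D$. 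Therefore, unwinding the definition \eqref{hypderiv}, for $w = f\circ g^{-1}(z) \in U$ we have
\begin{align*}
|(f\circ g^{-1})^{\natural}(z)| = |(f\circ g^{-1})'(z)| \cdot \frac{\sigma_U(w)}{\sigma_U(z)} = |(f\circ g^{-1})'(z)| \cdot \frac{|\psi'(w)|}{|\psi'(z)|} \cdot \frac{1-|\psi(z)|^2}{1-|\psi(w)|^2}.
\end{align*}

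The three factors on the right are controlled by exactly the three estimates already in hand. First, fix $R_0 = R$ and choose $\kappa_0 = \kappa_0(R)$ to be the maximum of the value $576$ from Lemma \ref{fcircgd} and the value $\kappa_0(R_0)$ from Lemma \ref{fcircghdestimates} (applied with $R_0 = R$); this is legitimate since, as noted just before the statement, all the earlier lemmas hold for all sufficiently large $\kappa$, so one simply takes the maximum of the finitely many thresholds. Then for any $\kappa \ge \kappa_0$, any $f,g \in \calS$ and any $z \in \overline{U_R}$: Lemma \ref{fcircgd} gives $|(f\circ g^{-1})'(z)| \le \tfrac{6}{5}$ (it even holds on all of $\overline{U}$); part 2. of Lemma \ref{fcircghdestimates} gives $\tfrac{|\psi'(w)|}{|\psi'(z)|} \le \tfrac{9}{8}$; and part 1. of Lemma \ref{fcircghdestimates} gives $\tfrac{1-|\psi(z)|^2}{1-|\psi(w)|^2} \le \tfrac{10}{9}$.

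Multiplying these bounds together yields
\begin{align*}
|(f\circ g^{-1})^{\natural}(z)| \le \frac{6}{5}\cdot\frac{9}{8}\cdot\frac{10}{9} = \frac{3}{2},
\end{align*}
which is exactly the claim. One small point requiring care: the chain-rule identity above presupposes that $w = f\circ g^{-1}(z)$ actually lies in $U$, so that $\psi(w)$ and $\sigma_U(w)$ make sense; but this is already guaranteed by the argument inside the proof of Lemma \ref{fcircghdestimates} (for $z_0 \in \overline{U_{R_0}}$ one has $|f\circ g^{-1}(z_0) - z_0| < \tfrac{r_0}{2}$, and $r_0 = \mathrm{d}(\partial U_R, \partial U)$, forcing $f\circ g^{-1}(z_0) \in U$), so no new work is needed. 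Honestly, there is no real obstacle here — the lemma is designed precisely so that the three constants $\tfrac65$, $\tfrac98$, $\tfrac{10}{9}$ telescope to $\tfrac32$; the only thing to be careful about is bookkeeping the value of $\kappa_0$ so that all three input lemmas are simultaneously in force, and correctly citing that $w \in U$.
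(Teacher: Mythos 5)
Your proof is correct and is essentially identical to the paper's: the same decomposition of $|(f\circ g^{-1})^{\natural}(z)|$ into the three factors $\tfrac{1-|\psi(z)|^2}{1-|\psi(w)|^2}$, $\tfrac{|\psi'(w)|}{|\psi'(z)|}$, and $|(f\circ g^{-1})'(z)|$, bounded respectively by $\tfrac{10}{9}$, $\tfrac{9}{8}$, $\tfrac{6}{5}$ via Lemmas \ref{fcircghdestimates} and \ref{fcircgd}, with $\kappa_0$ taken to be the maximum of the thresholds so that both input lemmas hold simultaneously. Your additional remark that $w = f\circ g^{-1}(z)$ lies in $U$ is a fair point of care, though as you note it is already handled inside the proof of Lemma \ref{fcircghdestimates}.
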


\begin{proof}
Applying Lemmas \ref{fcircghdestimates} and \ref{fcircgd} to the definition \eqref{hypderiv} of the hyperbolic derivative taken with respect to the hyperbolic metric of $U$, and letting $\kappa_0$ be the maximum of the two lower bounds on $\kappa$ in these lemmas, \label{kappa0} we have that there exists a $\kappa_0 \ge 576$ depending on $R_0$ such that, for all $\kappa\geq \kappa_0$, and $z \in \overline U_{R_0}$, $(f\circ g^{-1})(z) \in U$ and
\begin{align*}
|(f\circ g^{-1})^{\natural}(z)|&=\frac{1-|\psi(z)|^2}{1-|\psi((f\circ g^{-1})(z))|^2}\cdot\frac{2\cdot|\psi'((f\circ g^{-1})(z))|}{2\cdot|\psi'(z)|}\cdot |(f\circ g^{-1})'(z)| \\
&\leq \frac{10}{9}\cdot\frac{9}{8}\cdot\frac{6}{5} \\
&= \frac{3}{2}
\end{align*}
as desired.  
\end{proof}

\section{Statement and Proof of Phase I}

\begin{lemma}
\label{PhaseI}
(Phase I) Let $P_\lambda$, $U_\lambda$, $\kappa$, $P$, and $U$ be as above. Let $R_0 > 0$ be given and let $\tilde{U}_{R_0}$,  and $U_{R_0}$ also be as above. Then, there exists $\kappa_0=\kappa_0(R_0) \ge 576$ such that, for all $\kappa\geq \kappa_0$, $\epsilon>0$, and $N \in \N$, if $\{f_i\}_{i=0}^{N+1}$ is a collection of mappings with $f_i \in \calS$ for $i=0,1,2,\cdots \cdots, N+1$ with $f_0=f_{N+1}= Id$, there exists an integer $M_N$ and a $(17+\kappa)$-bounded finite sequence $\{P_m\}_{m=1}^{(N+1)M_N}$ of quadratic polynomials both of which depend on  $R_0$, $\kappa$, $N$, the functions  $\{f_i\}_{i=0}^{N+1}$, and $\epsilon$ such that, for each $1\leq i \leq N+1$,

\begin{enumerate}

\item $Q_{i M_N}(0)=0$,  

\item $Q_{iM_N}$ is univalent on $U_{2R_0}$,

\item $\rho_{U}(f_i(z), Q_{iM_N}(z))<\epsilon$ on $U_{2R_0}$,

\item $\|Q_{iM_N}^{\natural} \|_{U_{R_0}}\leq 7$.

\end{enumerate}

\end{lemma}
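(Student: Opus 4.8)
The plan is to build the composition $\{P_m\}_{m=1}^{(N+1)M_N}$ in $N+1$ successive blocks, each of length $M_N$, where the $i$-th block is a polynomial composition produced by the Polynomial Implementation Lemma (Lemma \ref{PIL}) that carries (an approximation of) $f_{i-1}$ on $U_{2R_0}$ over to (an approximation of) $f_i$ on a slightly smaller set. More precisely, observe that since $f_0 = f_{N+1} = \mathrm{Id}$, the target composition $Q_{iM_N}$ should approximate $f_i$ itself (not a telescoping product), so the natural move is: having arranged that $Q_{(i-1)M_N}$ approximates $f_{i-1}$, apply Lemma \ref{PIL} to the transition map $g_i := f_i \circ f_{i-1}^{-1}$, which is defined and univalent on a neighbourhood of $\calK$ once $\kappa$ is large (here I would invoke Lemma \ref{fcircgdefined} to see $g_i$ is defined on $\mathrm{D}(0,\tfrac1{12}) \supset \calK$, and Lemma \ref{fcircghdlemma} to get $|g_i^\natural| \le \tfrac32$ on $\overline{U_R}$ for a suitable radius $R$). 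The $i$-th block is then the $(17+\kappa)$-bounded finite sequence of quadratics whose composition $Q^{n_{k_i}}_{n_{k_i}}$ approximates $g_i$ closely in the hyperbolic metric on the relevant subdomain; composing it after $Q_{(i-1)M_N}$ gives $Q_{iM_N} \approx g_i \circ f_{i-1} = f_i$. To make all blocks the same length $M_N$, I would use the remark after Lemma \ref{PIL}: the index $k_0$ in that lemma may be taken uniform over the finitely many transition maps $g_1, \dots, g_{N+1}$ (they all satisfy the same hyperbolic-derivative bound $M = \tfrac32$ on the same set), so one may pick a single $k_1 \ge k_0$ for all of them and set $M_N := n_{k_1}$.

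The key steps, in order: (1) Fix $R_0$; choose $\kappa_0 = \kappa_0(R_0) \ge 576$ large enough that Lemma \ref{fcircghdlemma} applies with $R$ taken slightly larger than $4R_0$ (so that all the intermediate and image sets $U_{2R_0}, U_{4R_0}$ and their hyperbolic neighbourhoods lie in the region where $|g_i^\natural| \le \tfrac32$), and large enough that each $g_i = f_i \circ f_{i-1}^{-1}$ is univalent on a neighbourhood of $\overline{\Omega}$ for suitable Jordan domains $\Omega, \Omega'$ with $\calK \subset \Omega \subset \overline\Omega \subset \Omega' \subset \mathrm{D}(0,\tfrac{2}{\kappa})$ and $g_i(\gamma)$ inside $\Gamma$ — this is where smallness of $\calK$ (guaranteed by large $\kappa$) is used. (2) Apply Lemma \ref{PIL} to each $g_i$ with the compact set $A = U_{2R_0}$, a fixed hyperbolic neighbourhood parameter $\delta$, bound $M = \tfrac32$, and a small accuracy $\epsilon_0$ to be chosen; this yields a common $k_0$, and for any common $k_1 \ge k_0$ a $(17+\kappa)$-bounded sequence of $n_{k_1}$ quadratics for each $i$. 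Set $M_N = n_{k_1}$. (3) Verify inductively that $Q_{iM_N}$ satisfies items 1–5: item 1 is immediate since every $Q^{n_{k_1}}_{n_{k_1}}$ fixes $0$; items 2,3 follow from the univalence assertion of Lemma \ref{PIL} together with the inclusion $g_i(U_{2R_0}) \subset U_{4R_0}$ (which holds because $g_i$ is close to the identity, by Lemma \ref{fcircgid}, hence moves $U_{2R_0}$ only a tiny Euclidean, hence hyperbolic, distance), plus the error estimate; items 4,5 follow by a telescoping argument using the hyperbolic M-L estimates (Lemma \ref{hyperbolicML}) and the chain rule \eqref{chainrule} for $\natural$-derivatives, propagating the per-block error and the per-block derivative bound $M(1+\epsilon_0) = \tfrac32(1+\epsilon_0)$.

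The main obstacle is the error bookkeeping across the $N+1$ blocks — ensuring both that the accumulated hyperbolic error in item 4 stays below $\epsilon$ and that the accumulated multiplicative error in the $\natural$-derivative in item 5 stays below $7$. For item 5, the $i$-th block contributes a factor at most $\tfrac32(1+\epsilon_0)$ relative to the approximation of $g_i$, but since $Q_{iM_N}$ approximates $f_i$ rather than a long product, the bound should really be $\|Q_{iM_N}^\natural\|_{U_{R_0}} \le \|f_i^\natural\|_{U_{2R_0}} + (\text{small error})$, and since $f_i \in \calS$ maps $U \subset \calK \subset \mathrm{D}(0,\tfrac{2}{\kappa})$ into a tiny disc, the same distortion-theorem estimates behind Lemma \ref{fcircghdlemma} give $\|f_i^\natural\| \le \tfrac32$, with room to spare up to $7$ — so the delicate point is rather to re-derive the estimate for the composed object directly on the nested domains and confirm the loss from the finitely many $\epsilon_0$-approximations can be absorbed. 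Concretely I would choose $\epsilon_0$ small enough (depending on $N$ and $\epsilon$) that $(N+1)$ applications of the triangle inequality in the hyperbolic metric of $U$ keep the total below $\epsilon$, using that on the relevant compact sets the maps $f_i$ and the block compositions all have uniformly bounded $\natural$-derivatives so that errors propagate boundedly; then set $M_N = M_N(\epsilon, N)$ accordingly. The only genuinely subtle geometric input is that no domain loss occurs across blocks — because here, unlike in Phase II, the transition maps $g_i$ are $C^1$-close to the identity and the target sets $U_{2R_0} \subset U_{4R_0}$ have a definite hyperbolic buffer, so the nesting $U_{2R_0} \subset (\text{domain of univalence of the }i\text{-th block}) $ is automatic for large $\kappa$.
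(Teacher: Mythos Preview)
Your overall strategy matches the paper's: decompose into $N{+}1$ blocks, apply the Polynomial Implementation Lemma to each transition map $g_i=f_i\circ f_{i-1}^{-1}$ (using Lemmas \ref{fcircgdefined} and \ref{fcircghdlemma} to control $g_i$), take a common block length $M_N$, and run an induction on $i$ to track the hyperbolic error. So the architecture is right.

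There is, however, one concrete slip that would break the induction as you have written it. You apply Lemma \ref{PIL} with $A=U_{2R_0}$. But the $i$-th block is not applied to points of $U_{2R_0}$; it is applied to points of $Q_{(i-1)M_N}(U_{2R_0})$, which by your own item 3 lies in $U_{4R_0}$, not in $U_{2R_0}$. Hence the approximation $\rho_U\big(\text{block}_i(w),\,g_i(w)\big)<\epsilon_0$ and the univalence of $\text{block}_i$ must be available for all $w$ in (a neighbourhood of) $U_{4R_0}$, not just $U_{2R_0}$. The paper fixes this by taking $A=U_{5R_0}$ (with $\hat A=U_{5R_0+1}$), and correspondingly invokes Lemma \ref{fcircghdlemma} on $\overline{U_{5R_0+1}}$ so that $\|g_i^{\natural}\|_{\hat A}\le \tfrac32$. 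Once $A$ is enlarged this way, your inductive telescoping goes through; the paper also chooses the per-block accuracy geometrically small, $\epsilon/3^{N+1}$, precisely so that the Lipschitz factor $\tfrac32$ at each of the $N{+}1$ steps is absorbed and the error at stage $i$ is $<\epsilon/3^{N+1-i}$.

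For item 5 your instinct is correct --- one should compare $Q_{iM_N}^{\natural}$ directly to $f_i^{\natural}$ rather than multiply block bounds --- but the execution needs more than the chain rule. The paper obtains $|Q_{iM_N}'-f_i'|$ on $U_{R_0}$ from item 4 via Cauchy's integral formula over $\partial U_{\frac32 R_0}$, and then controls the density ratio $\sigma_U(Q_{iM_N}(z))/\sigma_U(z)$ using uniform continuity of $\sigma_U$ on $U_{4R_0}$ together with Lemmas \ref{fcircghdestimates} and \ref{fcircgd}; the three pieces combine to give $|Q_{iM_N}^{\natural}|\le \tfrac32+\tfrac32+4=7$. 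Your sketch does not indicate this Cauchy/density step, and it is where the specific constant $7$ comes from.
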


Before proving this result, we remark first  that the initial function $f_0 = Id$ in the sequence $\{f_i\}_{i=0}^{N+1}$ does not actually get approximated. The reason we included this function was purely for convenience as this allowed us to describe all the functions being approximated in the proof using the Polynomial Implementation Lemma (Lemma \ref{PIL}) as $f_{i+1} \circ f_i^{-1}$, $0 \le i \le N$. 

Second, we can view this result as a weak form of our main theorem in that it allows to to approximate finitely many elements of $\calS$ with arbitrary accuracy using a finite composition of quadratic polynomials. Phase I is thus intermediate in strength between the Polynomial Implementation Lemma (Lemma \ref{PIL}) and our main result (Theorem \ref{thetheorem}).

\begin{proof} {\bf Step 1:} Setup 

Without loss of generality, make $\epsilon$ smaller if necessary to ensure $\epsilon<R_0$. \label{epsilonbound1} Let $\kappa_0= \kappa_0(R_0) \ge 576$ be as in the statement of Lemma \ref{fcircghdlemma} so that the conclusions of this lemma as well as those of Lemmas \ref{fcircghdestimates} and \ref{fcircgd} also hold.  Then for all $\kappa\geq \kappa_0$ we have $U \subset \calK \subset \mathrm{D}(0,\frac{2}{\kappa})\subset \mathrm{D}(0,\frac{1}{288}) \subset \mathrm{D}(0,\frac{1}{12})$.  Note that the last inclusion implies that, if $f,g \in \calS$, then $f\circ g^{-1}$ is defined on $U$ in view of Lemma \ref{fcircgdefined}.  

{\bf Step 2:} Application of the Polynomial Implementation Lemma.  

First apply Lemma \ref{fcircghdlemma} with $5R_0 + 1$ replacing $R_0$  so that, for all $\kappa\geq \kappa_0$, if $f,g \in \calS$, we have $(f\circ g^{-1})(U_{5R_0 +1}) \subset U$ and
\begin{align}
\label{fcircghd}
\| (f \circ g^{-1})^{\natural}  \|_{U_{5R_0}}\leq \| (f \circ g^{-1})^{\natural}  \|_{U_{5R_0+1}} \le\frac{3}{2}.
\end{align} 
Note that, by Lemmas \ref{stupidfuckinglemma} and \ref{hyperbolicML}, since $U_{2R_0}$ is then hyperbolically convex, this implies 
\begin{align}
\label{fcircgloc}
(f \circ g^{-1})(U_{2R_0}) &\subset U_{3R_0}.
\end{align}
We observe that, since $Id \in \calS$, in particular we have $f(U_{2R_0})\subset U_{3R_0}$ for all $f \in \calS$. 

Fix $\kappa \ge \kappa_0$ and for each $0 \leq i \leq N$, using \eqref{fcircghd}, apply the Polynomial Implementation Lemma (Lemma \ref{PIL}), with $\Omega=\mathrm{D}(0,\frac{1}{24})$, $\Omega'=\mathrm{D}(0,\frac{1}{2})$, $\gamma=\mathrm{C}(0,\frac{1}{24})$, $\Gamma=\mathrm{C}(0,\frac{1}{2})$ (where both of these circles are positively oriented with respect to the round annulus of which they form the boundary), $f=f_{i+1}\circ f_{i}^{-1}$, $A=U_{5R_0}$, $\delta = 1$ (and hence ${\hat{A}}= U_{5R_0+1}$), $M = \tfrac{3}{2}$, and $\epsilon$ replaced with $\frac{\epsilon}{3^{N}}$. Note that $f(0)=0$ and that, in view of Lemma \ref{fcircgdefined}, $f$ is analytic and injective on a neighbourhood of $\overline \Omega$ and maps $\gamma$ inside $\mathrm{D}(0,\frac{1}{3})$ which lies inside $\Gamma$ so that $(f, Id)$ is indeed admissable pair on $(\gamma, \Gamma)$ in the sense given in Definition \ref{admissiblepair} in Chapter 3 on the Polynomial Implementation Lemma which then allows us to obtain a quasiconformal homeomorphism of $\chat$ using Lemma \ref{qcextension}.  

Let $M_N$ be the maximum of the integers $n_{k_0}$ in the statement of Lemma \ref{PIL} for each of the $N+1$ applications of this lemma above. Note that each $k_0$ depends on $\kappa$, the curves $\gamma=\mathrm{C}(0,\frac{1}{24})$, $\Gamma=\mathrm{C}(0,\frac{1}{2})$, and the individual function $f = f_{i+1}\circ f_{i}^{-1}$ being approximated, as well as 
$A$, $\delta$, the upper bound $M$ on the hyperbolic derivative (which in our case by \eqref{fcircghd} is $\tfrac{3}{2}$ for every function we are approximating) and finally $\epsilon$. Thus, $M_N$,
in addition to $N$, then also depends on $R_0$, $\kappa$, the finite sequence of functions $\{f_i\}_{i=0}^{N+1}$, and, finally, (recalling that here we have $\gamma=\mathrm{C}(0,\frac{1}{24})$, $\Gamma=\mathrm{C}(0,\frac{1}{2})$ and $A=U_{5R_0}$, $\delta = 1$, $M = \tfrac{3}{2}$), $\epsilon$. From these $N+1$ applications, we also then obtain 
(after a suitable and obvious labelling) a finite $(17 + \kappa)$-bounded sequence $\{P_m\}_{m=1}^{(N+1)M_N}$ such that each $Q_{iM_N,(i+1)M_N}$ is univalent on $U_{5R_0}$ and we have, for each $0 \le i \le N$ and each $z \in U_{5R_0}$,
\begin{align}
\label{pilapp}
\rho_U(Q_{iM_N,(i+1)M_N}(z),f_{i+1}\circ f_{i}^{-1}(z) )<\frac{\epsilon}{3^N}.
\end{align}
It also follows from Lemma \ref{PIL} that each $Q_{iM_N,(i+1)M_N}$ depends on $N$,  $R_0$, $\kappa$, the functions $f_i$, $f_{i+1}$, and $\epsilon$ so that we obtain the correct dependencies for $M_N$ and 
$\{P_m\}_{m=1}^{(N+1)M_N}$ in the statement. In addition, by \emph{(3)} of Lemma \ref{PIL},
$Q_{iM_N,(i+1)M_N}(0)=0$, for each $i$, proving \emph{(1)} in the statement above.

{\bf Step 3:} Estimates on the compositions $\{Q_{iM_N}\}_{i=1}^{N+1}$  

We use the following claim to prove \emph{(2)} and \emph{(3)} in the statement (note that we do not require 2. of the claim below for this, but we will need it in proving \emph{(4)} later).
\begin{claim}
\label{PhIClaim}For each $1\leq j \leq N+1$, we have that $Q_{jM_N}$ is univalent on $U_{2R_0}$ and, for each $z \in U_{2R_0}$,
\begin{align*}
1.& \; \rho_U(Q_{jM_N}(z),f_{j}(z))<\frac{\epsilon}{3^{N+1-j}}, \\
2.& \; \rho_U(Q_{jM_N}(z),0)<4 R_0.
\end{align*}
\end{claim}
Note that the error in this polynomial approximation for $j=1$ is the smallest as this error needs to pass through the greatest number of subsequent mappings.  

\begin{claimproof} We prove the claim by induction on $j$.  Let $z \in U_{2R_0}$.  For the base case, we have that univalence and 1. in the claim follow immediately from our applications of the Polynomial Implementation Lemma and in particular from \eqref{pilapp} (with $j = i+1 = 1$ so that $i=0$) since  $f_0= Id$.  For 2.,  using 1. (or \eqref{pilapp} above) and \eqref{fcircgloc}, compute
\begin{align*}
\rho_U(Q_{M_N}(z),0) &\leq \rho_U(Q_{M_N}(z),f_{1}(z))+\rho_U(f_{1}(z),0) \\
&<\frac{\epsilon}{3^{N}}+3R_0 \\
&<4R_0,
\end{align*}
which completes the proof of the base case since we had assumed $\epsilon < R_0$. Now suppose the claim holds for some $1\leq j <N+1$.  Then 
\begin{align*}
\rho_U(Q_{(j+1)M_N}(z),f_{j+1}(z) ) \leq 
\rho_U(Q_{jM_N,(j+1)M_N}\circ Q_{jM_N}(z),(f_{j+1}\circ f_{j}^{-1})\circ Q_{jM_N}(z)) \\
+\rho_U((f_{j+1}\circ f_{j}^{-1})\circ Q_{jM_N}(z),(f_{j+1}\circ f_{j}^{-1})\circ f_j(z) ).
\end{align*}

Now $Q_{jM_N}(z) \in U_{4R_0} \subset U_{5R_0}$ by the induction hypothesis, so \eqref{pilapp} implies that the first term on the right hand side in the inequality above is less than $\frac{\epsilon}{3^N}$. Again by the induction hypothesis, $Q_{jM_N}(z) \in U_{4R_0}\subset U_{5R_0}$ while we also have $f_j(z)\in U_{3R_0}\subset U_{5R_0}$ by \eqref{fcircgloc}.  Thus \eqref{fcircghd}, the hyperbolic convexity of $U_{5R_0}$ from Lemma \ref{stupidfuckinglemma}, Lemma \ref{hyperbolicML}, and the induction hypothesis imply that the second term in the inequality is less than $\frac{3}{2}\cdot\frac{\epsilon}{3^{N+1-j}}$.  Thus we have $\rho_U(Q_{(j+1)M_N}(z),f_{j+1}(z) ) < \frac{\epsilon}{3^{N+1-(j+1)}}$, proving the first part of the claim.  

Also, using what we just proved, \eqref{fcircgloc}, and our assumption that $\epsilon < R_0$,
\begin{align*}
\rho_U(Q_{(j+1)M_N}(z),0) &\leq \rho_U(Q_{(j+1)M_N}(z),f_{j+1}(z))+\rho_U(f_{j+1}(z),0) \\
&<\frac{\epsilon}{3^{N+1-(j+1)}}+3R_0 \\
&<4R_0
\end{align*}
which proves 2. in the claim.  Univalence of $Q_{(j+1)M_N}$ follows by hypothesis as $Q_{jM_N}(U_{2R_0})\subset U_{4R_0}$ while $Q_{(j+1)M_n,jM_N}$ is univalent on $A=U_{5R_0}\supset U_{4R_0}$ by the Polynomial Implementation Lemma as stated immediately before \eqref{pilapp}. This completes the proof of the claim, from which \emph{(2)} and \emph{(3)} in the statement of Phase I follow easily. \end{claimproof}

{\bf Step 4:} Proof of \emph{(4)} in the statement.

To finish the proof, we need to give a bound on the size of the hyperbolic derivatives of the compositions $Q_{iM_N}$, $1 \le i \le N+1$. It will be of essential importance to us later that this bound not depend on the number of functions being approximated, the reason being that, in the inductive construction in Lemma \ref{mediuminductionlemma}, the error from the prior application of Phase II (Lemma \ref{PhaseII}) needs to pass through all these  compositions while remaining small. This means that the estimate on the size of the hyperbolic derivative in part \emph{(2)} of the statement of Lemma \ref{PIL} is too crude for our purposes and so we have to proceed with greater care.

Let $\mathrm{d}\rho_U(z)$ be the hyperbolic length element in $U$ and write $\mathrm{d}\rho_U(z)=\sigma_U(z)|\mathrm{d}z|$, where the hyperbolic density $\sigma_U$ is continuous and positive on $U$ (e.g. \cite{KL} Theorem 7.2.2) and therefore uniformly continuous on $U_{4R_0}$, as $U_{4R_0}$ is relatively compact in $U$.  Let $\sigma = \sigma(R_0) > 0$ be the infimum of $\sigma_U$ on $U_{4R_0}$ so that 
\begin{align}
\label{sigmabound}
\sigma_U(z) \ge \sigma, \quad z \in U_{4R_0}.
\end{align}

Let $z \in U_{2R_0}$ and observe that, since $\kappa \ge \kappa_0 \ge 576$, $U \subset {\mathrm D}(0, \tfrac{1}{288}) \subset \D$. Then \emph{(3)} in the statement together with the Schwarz lemma for the hyperbolic metric (e.g. \cite{CG} Theorem 4.1 or 4.2) give, for $1 \leq i \leq N+1$, $\rho_{\D}(Q_{iM_N}(z),f_i(z))\leq \rho_{U}(Q_{iM_N}(z),f_i(z))<\epsilon$.  If $\gamma$ is a geodesic segment in $\D$ from $Q_{iM_N}(z)$ to $f_i(z)$, we see that 
\begin{align*}
\epsilon&>\rho_{U}(Q_{iM_N}(z),f_i(z)) \\
&\geq \rho_{\D}(Q_{iM_N}(z),f_i(z)) \\
&= \int_{\gamma}\mathrm{d}\rho_{\D} \\
&= \int_{\gamma}\frac{2|\mathrm{d}w|}{1-|w|^2} \\
&\geq \int_{\gamma}2|\mathrm{d}w| \\
&=2l(\gamma) \\
&\geq 2|Q_{iM_N}(z)-f_i(z)|
\end{align*}
and so, in particular, 
\begin{align}
\label{qifiepsilon}
|Q_{iM_N}(z)-f_i(z)|<\epsilon.
\end{align}

Now suppose further that $z \in U_{R_0}$ and set 
\begin{align}
\label{distboundary}
\delta_0 = \delta_0(R_0) = \min_{w\in \partial U_{R_0}}\mathrm{d}(w,\partial U_{\frac{3}{2}R_0}),
\end{align}
where $\mathrm{d}(\cdot \,, \cdot)$ denotes Euclidean distance. By Theorem VII.9.1 in \cite{New}, the  winding number of $\partial U_{\frac{3}{2}R_0}$ (suitably oriented) around $z$ is $1$. Then, using Corollary IV.5.9 in \cite{Con} together with the standard distortion estimates in Theorem \ref{distortion} and \eqref{qifiepsilon} above, we obtain
\begin{align*}
|Q_{iM_N}'(z)|&\leq |f_i'(z)|+|Q_{iM_N}'(z)-f_i'(z)| \\
&= |f_i'(z)|+\left| \frac{1}{2\pi i} \int_{\partial U_{\frac{3}{2}R_0}}\frac{Q_{iM_N}(w)-f_i(w)}{(w-z)^2}\mathrm{d}w\right| \\
&\leq \frac{1+\frac{1}{288}}{(1-\frac{1}{288})^3}+\frac{\epsilon}{2\pi\delta_0^2}l(\partial U_{\frac{3}{2}R_0})
\end{align*}
where $l(\partial U_{\frac{3}{2}R_0})$ is the Euclidean length of $\partial U_{\frac{3}{2}R_0}$.  By making $\epsilon$ smaller if needed, we can thus ensure, for $z \in U_{R_0}$, that 
\begin{align}
\label{Qder}
|Q_{iM_N}'(z)|\leq \frac{3}{2}.
\end{align}  

We can make $\epsilon$ smaller still if needed to guarantee that, if $z,w \in U_{4R_0}$, and $|z-w|<\epsilon$, then, by uniform continuity of $\sigma_U$ on $U_{4R_0}$,
\begin{align}
\label{hypdensigma}
|\sigma_U(z)-\sigma_U(w)|<\sigma.
\end{align}

Note that both \eqref{Qder}, \eqref{hypdensigma} above required us to make $\epsilon$ smaller, but these  requirements depended only on $R_0$ and in particular not on the sequence of polynomials we have constructed. Although this means we may possibly need to run the earlier part of the argument again to find a new integer $M_N$ and then construct a new polynomial sequence $\{P_m \; : \; 1 \leq m \leq (N+1)M_N, \; 0 \leq i \leq N \}$, our requirements on $\epsilon$ above will then automatically be met. Alternatively, these requirements on $\epsilon$ could be made before the sequence is constructed. However, we decided to make them here for the sake of convenience.  \label{epsilonbound2}

If $z \in U_{R_0}$ we then have 
\begin{align*}
|Q_{iM_N}^{\natural}(z)|&\leq |f_i^{\natural}(z)| + |Q_{iM_N}^{\natural}(z)-f_i^{\natural}(z)| \\
&=|f_i^{\natural}(z)| + \left|\frac{\sigma_U(Q_{iM_n}(z))}{\sigma_U(z)}Q_{iM_N}'(z)-\frac{\sigma_U(f_{i}(z))}{\sigma_U(z)}f_{i}'(z)\right| \\
&\leq |f_i^{\natural}(z)| + \left|\frac{\sigma_U(Q_{iM_n}(z))}{\sigma_U(z)}Q_{iM_N}'(z)-\frac{\sigma_U(f_{i}(z))}{\sigma_U(z)}Q_{iM_N}'(z)\right| + \\ & \hspace{2.15cm} \left|\frac{\sigma_U(f_i(z))}{\sigma_U(z)}Q_{iM_N}'(z)-\frac{\sigma_U(f_{i}(z))}{\sigma_U(z)}f_{i}'(z)\right|. \\
\end{align*}

We need to bound each of the three terms on the right hand side of the above inequality.  Recall that, as $g=Id \in \calS$, we have that $|f_i^{\natural}(z)|\leq \frac{3}{2}$ by \eqref{fcircghd}.  For the second term, by \eqref{fcircgloc}, \eqref{sigmabound}, \eqref{qifiepsilon}, \eqref{Qder}, \eqref{hypdensigma}, and 2. in Claim \ref{PhIClaim}, we have 
\begin{align*}
\left|\frac{\sigma_U(Q_{iM_n}(z))}{\sigma_U(z)}Q_{iM_N}'(z)-\frac{\sigma_U(f_{i}(z))}{\sigma_U(z)}Q_{iM_N}'(z)\right| =& \\
& \hspace{-2.5cm}\frac{1}{|\sigma_U(z)|}\cdot |Q_{iM_N}'(z)|\cdot |\sigma_U(Q_{iM_N}(z))-\sigma_U(f_{i}(z))| \\
& \hspace{-2.8cm}\leq \frac{1}{\sigma}\cdot \frac{3}{2}\cdot \sigma = \frac{3}{2}.\\
\end{align*}

For the third and final term, recall that we chose $\kappa_0 = \kappa_0(R_0)$ sufficiently large to ensure that the conclusions of Lemmas \ref{fcircghdestimates} and  \ref{fcircgd} hold. We can then apply Lemmas \ref{fcircghdestimates} and \ref{fcircgd}, together with \eqref{Qder} to obtain that
\begin{align*}
\left|\frac{\sigma_U(f_i(z))}{\sigma_U(z)}Q_{iM_N}'(z)-\frac{\sigma_U(f_{i}(z))}{\sigma_U(z)}f_{i}'(z)\right|&\leq \left|\frac{\sigma_U(f_{i}(z))}{\sigma_U(z)}\right|\cdot (|Q_{iM_N}'(z)|+|f_{i}'(z)|) \\
&\leq \frac{10}{9}\cdot \frac{9}{8}\left (\frac{3}{2}+\frac{6}{5} \right) \\
&<  4.
\end{align*}

Thus
\begin{align*}
|Q_{iM_N}^{\natural}(z)|&\leq \frac{3}{2}+\frac{3}{2}+4 =7
\end{align*}
as desired.  \end{proof}

\chapter{PhaseII}

The approximations in Phase I inevitably involve errors and the correction of these errors is the purpose of Phase II. However, this correction comes at a price in that it is only valid on a domain which is smaller than that on which the error itself is originally defined; in other words there is an unavoidable loss of domain. There are two things here which work in our favour and stop this getting out of control: the first is the Fitting Lemma (Lemma \ref{FittingLemma}) which shows us that loss of domain can be controlled and in fact diminishes to zero as the size of the error to be corrected tends to zero, while the second is that the accuracy of the correction can be made arbitrarily small, which allows us to control the errors in subsequent approximations.

\label{overviewpii1}We will be interpolating functions between Green's lines of a scaled version of the polynomial $P_\lambda= \lambda z(1-z)$ where $\lambda= e^{\frac{2\pi i (\sqrt{5}-1)}{2}}$.  If we denote the corresponding Green's function by $G$, we will want to be able to choose $h$ small enough so that the regions between the Green's lines $\{z: G(z) =h \}$ and $\{z: G(z) =2h \}$ are small in a sense to be made precise later. This will eventually allow us to control the loss of domain.  On the other hand, we will want $h$ to be large enough so that, if we distort the inner Green's line $\{z: G(z) =h \}$ slightly (with a suitably conjugated version of that same error function), the distorted region between them will still be a conformal annulus which will then allow us to invoke the Polynomial Implementation Lemma (Lemma \ref{PIL}).  However, first we must prove several technical lemmas.

\section{Setup and the Target and Fitting Lemmas}

\vspace{.2cm}
We begin this section with continuous versions of Definition \ref{CarConv} of \Car convergence and of local uniform convergence and continuity on varying domains (Definition 3.1 in \cite{Com10}).

\begin{definition}
\label{ContCon}
Let $\calW=\{(W_{h},w_{h}) \}_{h \in I}$ be a sequence of pointed domains indexed by a non-empty set $I\subset \R$.  We say that $\calW$ {\rm varies continuously in the \Car topology} at $h_0 \in I$ or is {\rm continuous at} $h_0$ if, for any sequence $\{h_n\}_{n=1}^\infty$ in $I$ tending to $h_0$, $(W_{h_n},w_{h_n})\rightarrow (W_{h_0},w_{h_0})$ as $n \to \infty$.  If this property holds for all $h \in I$, we say $\calW$ {\rm varies continuously in the \Car topology over} $I$.

For each $h \in I$, let $g_h$ be an analytic function defined on $W_h$. If $h_0 \in I$ and $\calW$ is continuous at $h_0$ as above, we say $g_h$ {\rm converges locally uniformly to $g_{h_0}$ on $W_{h_0}$} if, for every compact subset $K$ of $W_{h_0}$ and every  
sequence $\{h_n\}_{n=1}^\infty$ in $I$ tending to $h_0$, $g_{h_n}$ converges uniformly to $g_{h_0}$ uniformly on $K$ as $n \to \infty$. 

Finally, if we let $\mathcal G = \{g_h\}_{h \in I}$ be the corresponding family of functions, we say that $\mathcal G$ is continuous at $h_0 \in I$ if $g_h$ converges locally uniformly to $g_{h_0}$ on $W_{h_0}$ as above. If this property holds for all $h \in I$, we say $\mathcal G$ {\rm is continuous over} $I$.
\end{definition}

\begin{definition}
Let $I \subset \R$ be non-empty and let $\{\gamma_h \}_{h\in I}$ be a family of Jordan curves indexed over $I$.  We say that $\{\gamma_h \}_{h\in I}$ is a {\rm continuously varying family of Jordan curves over $I$} if we can find a continuous function $F: \mathbb{T}\times I \rightarrow \C$ which is injective in the first coordinate such that, for each $h \in I$ fixed, $F(z,h)$ is a parameterization of $\gamma_h$.  
\end{definition}

Recall that a Jordan curve $\gamma$ divides the plane into exactly two complementary components whose common boundary is $[\gamma]$ (e.g. \cite{Mun} Theorem 8.13.4 or \cite{New} Theorem V.10.2). It is well known that we can use winding numbers to distinguish between the two complementary components of $[\gamma]$. More precisely, we can 
parametrize (i.e. orient) $\gamma$, such that $n(\gamma, z) = 1$ for those points in the bounded complementary component of $\C \setminus [\gamma]$ while $n(\gamma, z) = 0$ for those points in the unbounded complementary component (e.g. Corollary 2 to Theorem VII.8.7 combined with Theorem VII.9.1 in \cite{New}).

\begin{lemma}
\label{jordancurvestodomainscts}
Let $I \subset \R$ be non-empty and $\{\gamma_h \}_{h\in I}$ be a continuously varying family of Jordan curves indexed over $I$. For each $h \in I$ let $W_h$ be the Jordan domain which is the bounded component of $\chat \setminus [ \gamma_h ]$, and let $w:I\rightarrow \C$ be continuous with $w(h) \in W_h$ for all $h$. Then the family $\{(W_h,w(h)) \}_{h \in I}$ varies continuously in the \Car topology over $I$.  
\end{lemma}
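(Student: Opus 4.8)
The plan is to check directly, for an arbitrary $h_0 \in I$ and an arbitrary sequence $\{h_n\}$ in $I$ with $h_n \to h_0$, the three defining conditions of \Car convergence in Definition \ref{CarConv}; since every $W_h$ is a (non-degenerate) Jordan domain, the limit $(W_{h_0}, w(h_0))$ is non-degenerate and the degenerate clause of that definition does not enter. Condition 1, namely $w(h_n) \to w(h_0)$ spherically, is immediate from the continuity of $w$. The key preliminary fact underlying conditions 2 and 3 is that $[\gamma_{h_n}] \to [\gamma_{h_0}]$ in the Hausdorff metric: writing $\gamma_h = F(\cdot, h)$ for the given parametrization and using that $F$ is uniformly continuous on the compact set $\T \times (\{h_n\}_n \cup \{h_0\})$, we get $\sup_{t \in \T} |F(t,h_n) - F(t,h_0)| \to 0$, and hence $\sup_{y \in [\gamma_{h_n}]} \mathrm{dist}(y, [\gamma_{h_0}]) \to 0$.

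For condition 2, given a compact set $K \subset W_{h_0}$, I would first replace it by a \emph{connected} compact set: a standard covering argument (cover $K$ by finitely many closed Euclidean discs contained in $W_{h_0}$ and join each to $w(h_0)$ by an arc in $W_{h_0}$) produces a connected compact $K' \subset W_{h_0}$ with $K \cup \{w(h_0)\} \subset K'$. Since $\mathrm{dist}(K', [\gamma_{h_0}]) > 0$, the Hausdorff convergence above gives $K' \cap [\gamma_{h_n}] = \emptyset$ for all large $n$ and also keeps $\mathrm{dist}(w(h_0), [\gamma_{h_n}])$ bounded below by some $\epsilon > 0$; for large $n$ we moreover have $|w(h_n) - w(h_0)| < \epsilon/2$, so the segment $L_n = [w(h_n), w(h_0)]$ is disjoint from $[\gamma_{h_n}]$ as well. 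Then $K' \cup L_n$ is connected, disjoint from $[\gamma_{h_n}] = \partial W_{h_n}$, and meets $W_{h_n}$ (it contains $w(h_n)$), so, $\C \setminus [\gamma_{h_n}]$ having exactly two components, it lies entirely in $W_{h_n}$, whence $K \subset W_{h_n}$.

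For condition 3, suppose $N$ is connected and open, $w(h_0) \in N$, and $N \subset W_{h_n}$ for all $n$ in some infinite set $\mathcal N$ (so in particular $N$ is bounded and lies in $\C$). If $N \not\subset W_{h_0}$, then since $N$ is connected, meets $W_{h_0}$ at $w(h_0)$, meets $\C \setminus \overline{W_{h_0}}$, and $\C \setminus [\gamma_{h_0}]$ is the disjoint union of these two open sets, $N$ must meet $[\gamma_{h_0}]$; choose $q = F(t_0, h_0) \in N$. By continuity of $F$ at $(t_0, h_0)$ we have $F(t_0, h_n) \to q$, so $F(t_0, h_n) \in N$ for all large $n$; picking such an $n \in \mathcal N$ gives $F(t_0, h_n) \in N \subset W_{h_n}$, which is absurd since $F(t_0, h_n) \in [\gamma_{h_n}] = \partial W_{h_n}$ is disjoint from the open set $W_{h_n}$. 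Hence $N \subset W_{h_0}$, which is condition 3.

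The only real subtlety, and the step I expect to require the most care, is the passage in condition 2 from ``$K$ avoids $[\gamma_{h_n}]$'' to ``$K$ is contained in $W_{h_n}$'': a priori $K$ could have pieces in both complementary components of $[\gamma_{h_n}]$, and it is precisely to rule this out that one enlarges $K$ to a connected set $K'$ and tethers it to the moving base points $w(h_n)$ through the segments $L_n$. Everything else is soft point-set topology (the Jordan curve theorem, connectedness, and uniform continuity on a compact set).
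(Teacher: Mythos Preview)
Your proof is correct and reaches the same conclusion as the paper, but the mechanics differ in an interesting way. For condition~2, the paper uses winding numbers: it shows that the straight-line homotopy between $\gamma_{h_0}$ and $\gamma_{h_n}$ takes place in $\C \setminus K$, so $n(\gamma_{h_n}, z) = n(\gamma_{h_0}, z) = 1$ for every $z \in K$, which places $K$ inside $W_{h_n}$ without ever needing to enlarge $K$ or mention the base points. Your route---enlarging $K$ to a connected $K'$ containing $w(h_0)$ and then tethering to $w(h_n)$ via the segment $L_n$---is more elementary in that it avoids winding numbers entirely and relies only on the two-component structure from the Jordan curve theorem; the price is the extra bookkeeping with $K'$ and $L_n$, which, as you note, is exactly where the content lies. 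For condition~3, the paper connects an arbitrary $z \in N$ to $w(h_0)$ by a path in $N$, observes that a tube around this path avoids all $[\gamma_{h_n}]$ and hence (by Hausdorff convergence) avoids $[\gamma_{h_0}]$, so $z$ and $w(h_0)$ are in the same component of $\C \setminus [\gamma_{h_0}]$. Your argument by contradiction---pushing a boundary point $F(t_0,h_0) \in N$ to $F(t_0,h_n) \in N \cap [\gamma_{h_n}]$---is shorter and uses only pointwise continuity of $F$. One small wording issue: the clause ``meets $\C \setminus \overline{W_{h_0}}$'' is not literally what $N \not\subset W_{h_0}$ gives you, but the conclusion $N \cap [\gamma_{h_0}] \neq \emptyset$ follows regardless (if $N$ misses $[\gamma_{h_0}]$ then connectedness forces $N \subset W_{h_0}$), so the argument stands.
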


\begin{proof}
The continuity of $w$ implies \emph{(1)} of \Car convergence in the sense of Definitions \ref{CarConv}, \ref{ContCon} above.  For \emph{(2)}, fix $h_0 \in I$, let $K \subset W_{h_0}$ be compact, and let $z \in K$.  Set $\delta:={\mathrm d}(K,\partial W_{h_0})$. By the uniform continuity of $F$ on compact subsets of $\mathbb{T}\times I$,  we can find $\eta >0$ such that, for each $h \in I$ with $|h - h_0| < \eta$, 
$$|\gamma_h(t) - \gamma_{h_0}(t)| < \frac{\delta}{2}, \qquad \mbox{for all} \: t\in \T$$
and $\gamma_h$ is thus homotopic to $\gamma_{h_0}$ in $\C \setminus K$. We observe that we have not assumed that $I \cap (h_0-\eta, h_0 + \eta)$ is an interval, so we may not be able to use the parametrization induced by $\gamma_h(z)$ to make the homotopy. However, using the above, it is a routine matter to construct the desired homotopy using convex linear combinations. By the above remark on winding numbers and Cauchy's theorem, one then obtains
$$n(\gamma_h,w)= n(\gamma_{h_0},w) = 1, \qquad \mbox{for all} \: w \in K.$$
Thus, if $|h - h_0| < \eta$, then $K \subset W_h$ and \emph{(2)} of \Car convergence follows readily from this. 

To show \emph{(3)} of \Car convergence, let $\{h_n\}$ be any sequence in $I$ which converges to $h_0$ and suppose $N$ is an open connected set containing $w(h_0)$ such that $N\subset W_{h_n}$ for infinitely many $n$.  Without loss of generality we may pass to a subsequence to assume that $N\subset W_{h_n}$ for all $n$.  Let $z \in N$ and connect $z$ to $w(h_0)$ by a curve $\eta$ in $N$.  As $[\eta]$ is compact, there exists $\delta>0$ such that a Euclidean $\delta$-neighborhood of $[\eta]$ is contained in $N$ and thus avoids $\gamma_{h_n}$ for all $n$.  By the continuity of $F$, this neighborhood also avoids $\gamma_{h_0}$.  Since $w(h_0)$ and $z$ are connected by $\eta$ which avoids $\gamma_{h_0}$, they are in the same region determined by $\gamma_{h_0}$ so that $n(\gamma_{h_0},z)=n(\gamma_{h_0},w(h_0))$. On the other hand, since by hypothesis $w(h_0) \in W_{h_0}$, by Corollary 2 to Theorem VII.8.7 combined with Theorem VII.9.1 in \cite{New},
$n(\gamma_{h_0},w(h_0))=1$ whence $z \in W_{h_0}$. As $z$ is arbitrary, we have $N \subset W_{h_0}$ and \emph{(3)} of \Car convergence and the result then follow.   
\end{proof}

Recall that a Riemann surface is said to be \emph{hyperbolic} if its universal cover is the unit disc $\D$. For a simply connected domain $U \subset \C$, this is equivalent to $U$ being a proper subset of $\C$. The next lemma makes use of the following definition, originally given in \cite{Com11} for families of pointed domains of finite connectivity. Recall that, for a domain $U \subset \C$, we use the notation $\delta_U(z)$ for the Euclidean distance from a point $z$ in $U$ to the boundary of $U$.

\begin{definition}(\cite{Com11}Definition 6.1)
\label{BoundedContainment}
Let $\mathcal V = \{(V_\alpha, v_\alpha)\}_{\alpha \in A}$ be a family of hyperbolic simply connected domains and let $\mathcal U = \{(U_\alpha, u_\alpha)\}_{\alpha \in A}$ be another family of hyperbolic simply connected domains indexed over the same set $A$ where $U_\alpha \subset V_\alpha$ for each $\alpha$. We say that $\mathcal U$ is  {\rm bounded above and below} or just {\rm bounded} in $\mathcal V$ with constant $K \ge 1$ if 

\vspace{-.1cm}

\begin{enumerate}
\item $U_\alpha$ is a subset of $V_\alpha$ which lies within hyperbolic distance at most $K$ about $v_\alpha$ in $V_\alpha$;

\vspace{.2cm}
\item \[ \delta_{U_\alpha} (u_\alpha) \ge \frac{1}{K}  \delta_{V_\alpha} (u_\alpha).\]

\end{enumerate}
In this case we write $\pt \sqsubset \mathcal U \sqsubset \mathcal V$.
\end{definition}

The essential point of this definition is that the domains of the family $\mathcal U$ are neither too large nor too small in those of the family $\mathcal V$. For families of pointed domains of higher connectivity, two extra conditions are required relating to certain hyperbolic geodesics of the family $\mathcal U$. See \cite{Com11} for details.

\begin{lemma}
\label{domainstoexthypradcts}
Let $I \subset \R$ be non-empty, $\mathcal U=\{(U_h,v_h) \}_{h\in I}$ be a sequence of pointed Jordan domains, and $\mathcal V=\{(V_h,v_h) \}_{h\in I}$ be a sequence of pointed hyperbolic simply connected domains with the same base points, both indexed over $I$.  If $\pt \sqsubset \mathcal U \sqsubset \mathcal V$, $\mathcal V$ varies continuously in the \Car topology over $I$, and $\partial U_h$ is a continuously varying family of Jordan curves on $I$, then $R^{ext}_{(V_h,v_h)}U_h$ is continuous on $I$.
\end{lemma}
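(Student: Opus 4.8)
The plan is to fix $h_0 \in I$ and a sequence $h_n \to h_0$ in $I$, and show $R^{ext}_{(V_{h_n}, v_{h_n})} U_{h_n} \to R^{ext}_{(V_{h_0}, v_{h_0})} U_{h_0}$. By Lemma \ref{equivalenthypradformulation}, part 2 (applicable since the $U_h$ are Jordan domains, hence simply connected, and $U_h \subsetneq V_h$), we have $R^{ext}_{(V_h, v_h)} U_h = \sup_{z \in \partial U_h} \rho_{V_h}(v_h, z)$, so it suffices to control this supremum. The two main ingredients are: (i) the hyperbolic metrics $\rho_{V_{h_n}}$ converge to $\rho_{V_{h_0}}$ locally uniformly on $V_{h_0}$ — this follows from the \Car convergence $(V_{h_n}, v_{h_n}) \to (V_{h_0}, v_{h_0})$ together with Theorem \ref{theorem1.2}, since the normalized covering maps $\pi_{h_n}$ converge locally uniformly to $\pi_{h_0}$, and hyperbolic densities transform via these maps; and (ii) the boundary curves $\partial U_{h_n}$ converge uniformly to $\partial U_{h_0}$ in the sense afforded by the continuously-varying-family parametrization $F(\cdot, h)$.

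For the \textbf{lower bound} (lower semicontinuity), $\liminf_n R^{ext}_{(V_{h_n}, v_{h_n})} U_{h_n} \ge R^{ext}_{(V_{h_0}, v_{h_0})} U_{h_0}$: pick any $z_0 \in \partial U_{h_0}$, say $z_0 = F(t_0, h_0)$; then $z_n := F(t_0, h_n) \in \partial U_{h_n}$ converges to $z_0$. The condition $\pt \sqsubset \mathcal U \sqsubset \mathcal V$ guarantees (via part 1 of Definition \ref{BoundedContainment}) that all the points $z_n$ stay within a fixed hyperbolic distance $K$ of $v_{h_n}$ in $V_{h_n}$, so they lie in a region which, by \Car convergence, is eventually trapped in a fixed compact subset of $V_{h_0}$; on such a compact set the $\rho_{V_{h_n}}$ converge uniformly to $\rho_{V_{h_0}}$ and the $v_{h_n}$ converge to $v_{h_0}$, so $\rho_{V_{h_n}}(v_{h_n}, z_n) \to \rho_{V_{h_0}}(v_{h_0}, z_0)$. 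Hence $\liminf_n R^{ext}_{(V_{h_n}, v_{h_n})} U_{h_n} \ge \rho_{V_{h_0}}(v_{h_0}, z_0)$, and taking the supremum over $z_0 \in \partial U_{h_0}$ gives the claim.

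For the \textbf{upper bound} (upper semicontinuity), $\limsup_n R^{ext}_{(V_{h_n}, v_{h_n})} U_{h_n} \le R^{ext}_{(V_{h_0}, v_{h_0})} U_{h_0}$: for each $n$ choose $z_n = F(t_n, h_n) \in \partial U_{h_n}$ nearly realizing the supremum. Again by condition 1 of $\pt \sqsubset \mathcal U \sqsubset \mathcal V$, these points lie within hyperbolic distance $\le K$ of $v_{h_n}$, hence (by \Car convergence, as above) eventually in a fixed compact $L \subset V_{h_0}$; passing to a subsequence, $t_n \to t^*$ and so $z_n = F(t_n, h_n) \to F(t^*, h_0) =: z^* \in \partial U_{h_0}$, with $z^* \in L \subset V_{h_0}$. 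Uniform convergence of $\rho_{V_{h_n}} \to \rho_{V_{h_0}}$ on $L$ then gives $\rho_{V_{h_n}}(v_{h_n}, z_n) \to \rho_{V_{h_0}}(v_{h_0}, z^*) \le R^{ext}_{(V_{h_0}, v_{h_0})} U_{h_0}$, which bounds the $\limsup$ along the subsequence; since every subsequence has a further subsequence with this property, the full $\limsup$ is controlled.

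The \textbf{main obstacle} is ingredient (i) combined with the uniform trapping of the relevant boundary points in a fixed compact subset of $V_{h_0}$: one must verify carefully that local uniform convergence of the covering maps (Theorem \ref{theorem1.2}) upgrades to genuine local uniform convergence of the hyperbolic \emph{densities} $\sigma_{V_{h_n}}$ — which requires also controlling the derivatives $\pi_{h_n}'$ and using that the $\pi_{h_n}$ are covering maps so the density pushes down unambiguously — and that the hyperbolic-boundedness hypothesis $\pt \sqsubset \mathcal U \sqsubset \mathcal V$ (specifically that $\sup_{z \in \partial U_h} \rho_{V_h}(v_h, z) \le K$ for all $h$, which is exactly part 1 of Definition \ref{BoundedContainment} via Lemma \ref{equivalenthypradformulation}) is what prevents the supremizing boundary points from escaping toward $\partial V_{h_0}$, where control of the metric would be lost. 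Condition 2 of Definition \ref{BoundedContainment} is not needed here; only condition 1 enters.
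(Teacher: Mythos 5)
Your proposal is correct and follows essentially the same approach as the paper's proof: reduce to a supremum over $\partial U_h$ via Lemma \ref{equivalenthypradformulation}, parametrize boundary points via $F$, use compactness of $\T$ to extract a convergent subsequence of near-maximizers, and use the boundedness hypothesis $\pt \sqsubset \calU \sqsubset \calV$ together with \Car convergence of $\mathcal V$ to keep the relevant boundary points in a fixed compact subset of $V_{h_0}$. The one place the paper is more direct is the step you flag as the main obstacle: rather than working with the hyperbolic distances $\rho_{V_{h_n}}$ on $V_{h_0}$ and arguing about local uniform convergence of densities, the paper uniformizes by the normalized Riemann maps $\phi_h : V_h \to \D$, so that $\rho_{V_h}(v_h, z) = \rho_\D(0, \phi_h(z))$; Theorem \ref{theorem1.2} gives local uniform convergence of $\phi_h$ to $\phi_{h_0}$ directly, $\rho_\D$ is a fixed function, and joint continuity of $\psi(t,h) := \phi_h(F(t,h))$ on $\T \times I$ then does all the remaining work, dissolving the density-convergence subtlety entirely. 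Your split into lower and upper semicontinuity is a cleaner formal organization than the paper's single subsequence-plus-contradiction argument, but the underlying content is the same.
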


Before embarking on the proof, we observe that, since both families $\mathcal U$ and $\mathcal V$ have the same basepoints, it follows from Lemma \ref{jordancurvestodomainscts} and the fact that $\mathcal V$ varies continuously in the \Car topology over $I$, that $\mathcal U$ also  varies continuously in the \Car topology over $I$. However, we do not need to make use of this in the proof below.

\begin{proof}
As $\partial U_h$ is a continuously varying family of Jordan curves, let $F:\mathbb{T} \times I\rightarrow \mathbb{C}$ be a continuous mapping, injective in the first coordinate where, for each $h$ fixed, $F(t,h)$ is a parametrization of $\partial U_h$. We first need to uniformize the domains $V_h$ by mapping to the unit disc $\D$ where we can compare hyperbolic distances directly. So let
$\phi_h$ be the unique normalized Riemann map from $V_h$ to $\D$ satisfying $\phi_h(v_h)=0$, $\phi_h'(v_h)>0$.

Since $\pt \sqsubset \calU \sqsubset \calV$ there exists $K \ge 1$ such that $R^{ext}_{(V_h,v_h)}U_h\leq K$ and thus $\phi_h(U_h)\subset \Delta_\D(0,K) = \mathrm{D}\left (0, \tfrac{e^K-1}{e^K+1}\right )$. Also, for any $h_0 \in I$, we know from Theorem  \ref{theorem1.2} that $\phi_h$ converges to $\phi_{h_0}$ locally uniformly on $V_{h_0}$ as $h \to h_0$ since $(V_h,v_h)\rightarrow (V_{h_0},v_{h_0})$ in the sense of Definition \ref{ContCon} above. Now, set ${\tilde \phi}(z,h)=\phi_h(z)$.

\begin{claim}
For all $h_0 \in I$ and $z_0 \in V_{h_0}$, ${\tilde \phi}(z,h)$ is jointly continuous in $z,h$ on a suitable neighborhood of $(z_0,h_0)$. 
\end{claim} 

\begin{claimproof} Let $\epsilon>0$.  Let $\{h_n \}$ be a sequence in $I$ which converges to $h_0$ and $\{z_n \}$ be a sequence in $V_{h_0}$ which converges to $z_0$. Using \emph{(1)} of \Car convergence (Definition \ref{CarConv}) and the fact that $V_{h_0}$ is open, we have that $z_n \in V_{h_n}$ for all sufficiently large $n$.  Then, for $n$ sufficiently large so that $z_n$ and $h_n$ are sufficiently close to $z_0$ and $h_0$, respectively, since  $\phi_h$ converges to $\phi_{h_0}$ locally uniformly on $V_{h_0}$ and $\phi_{h_0}$ is continuous, we have 
\begin{align*}
|{\tilde \phi}(z_n,h_n)-{\tilde \phi}(z_0,h_0)|&=|\phi_{h_n}(z_n)-\phi_{h_0}(z_0)| \\
&\leq |\phi_{h_n}(z_n)-\phi_{h_0}(z_n)|+|\phi_{h_0}(z_n)-\phi_{h_0}(z_0)| \\
&<\frac{\epsilon}{2}+\frac{\epsilon}{2} \\
&=\epsilon
\end{align*}
which proves the claim. \end{claimproof}

Using this claim, if we now define $\psi(t,h):={\tilde \phi}(F(t,h),h)$, we have that $\psi(t,h)$ is jointly continuous in $t$ and $h$ on $\T \times I$. 

Now let $h_0 \in I$ be arbitrary and let $\{h_n\}$ be any sequence in $I$ which converges to $h_0$. If we write $R_n=R^{ext}_{(V_{h_n},v_{h_n})}U_{h_n}$ and $R_0=R^{ext}_{(V_{h_0},v_{h_0})}U_{h_0}$, we then wish to show that $R_n \rightarrow R_0$ as $n \to \infty$. As $\pt \sqsubset \calU \sqsubset \calV$, we may choose a subsequence $\{R_{n_k} \}$ which converges using Definition \ref{BoundedContainment} to some finite limit in $[0,K]$.  If we can show that the limit is $R_0$, we will have completed the proof. In view of Lemma \ref{equivalenthypradformulation}, for each $k$, we have that $R_{n_k}$ is attained at some $z_{n_k}\in \partial U_{n_k}$, so we may write $R_{n_k}=\rho_{V_{h_{n_k}}}(v_{h_{n_k}},z_{n_k})=\rho_\D(0,{\tilde \phi}(z_{n_k},h_{n_k}))$.  Now $z_{n_k}=F(t_{n_k},h_{n_k})$ for some $t_{n_k}\in \mathbb{T}$, so $R_{n_k}=\rho_\D(0,\psi(t_{n_k},h_{n_k}))$. As $h_{n_k}\rightarrow h_0$, applying the compactness of $\T$ and passing to a further subsequence if necessary, we have that $(t_{n_k},h_{n_k})\rightarrow (t_0,h_0)$ for some $t_0 \in \mathbb{T}$.  Observe that there is no loss of generality in passing to such a further subsequence.  

\begin{claim}
$R_0 = \rho_{\D}(0,\psi(t_{0},h_{0})) = \rho_{V_{h_0}}(v_{h_0}, F(t_0, h_0))$ 
\end{claim}

\begin{claimproof}Suppose not. Since $\partial U_h$ is a continuously varying family of Jordan curves on $I$, $F(t, h_0) \in \partial U_{h_0}$ for any $t \in \T$. In view of Lemma \ref{equivalenthypradformulation}, this means that the external hyperbolic radius for $U_{h_0}$ is not attained at  $F(t_0, h_0)$ and so 
there must exist $\tilde t_0 \in \mathbb{T}$ such that $\rho_{V_{h_0}}(v_{h_0}, F(t_0, h_0)) < \rho_{V_{h_0}}(v_{h_0}, F(\tilde t_0, h_0))$ i.e. $\rho_\D(0, \psi(t_0, h_0)) < \rho_\D(0, \psi(\tilde t_0, h_0))$ whence
$|\psi(t_0,h_0)|<|\psi({\tilde t_0}, h_0)|$. Choose a sequence $\{({\tilde t_{n_k}},h_{n_k}) \}$ in $\mathbb{T}\times I$ which converges to $({\tilde t_0}, h_0)$.  Then by joint continuity of $\psi$ there exists a $k_0 \in \N$ such that for all $k\geq k_0$ we have that $|\psi({\tilde t_{n_k}},h_{n_k})|>|\psi(t_{n_k},h_{n_k})|$, which contradicts the fact that $R_{n_k}=\rho_{\D}(0,\psi(t_{n_k},h_{n_k}))$, again using Lemma \ref{equivalenthypradformulation}. This completes the proof of both the claim and the lemma. \hspace{3.2cm}\end{claimproof}  \end{proof}

\begin{figure}[htb]
\label{siegeldiscjuliaset1}
\scalebox{.85}{\includegraphics{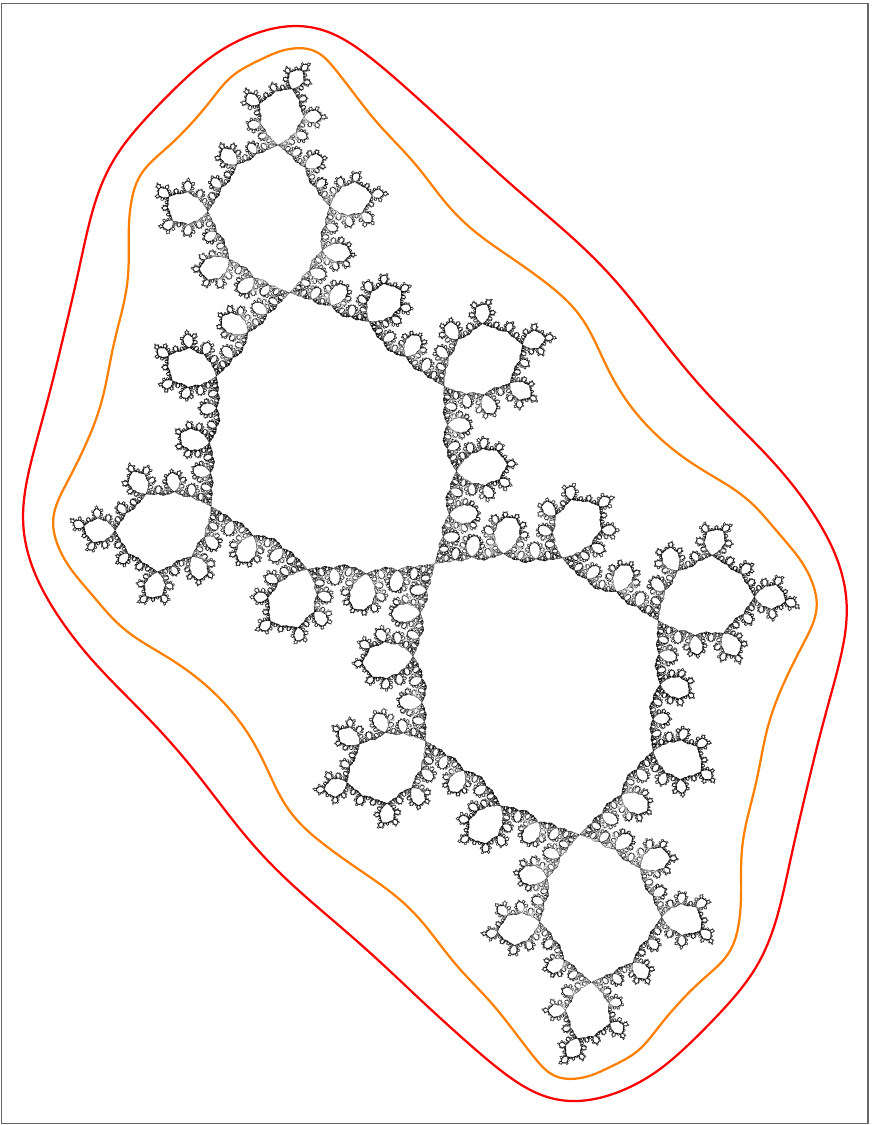}}
\begin{picture}(0.01,0.01)
     \put(-50,110){\color{orange}$\partial V_h$}
     \put(-80,105){\color{red}$\partial V_{2h}$}
 \end{picture}
\caption{The filled Julia Set $\calK$ for $P$ with the Green's Lines $\partial V_h = \{z: G(z)=h \}$ and $\partial V_{2h} = \{z: G(z)=2h \}$}
\end{figure}

\label{PhaseIIKappa}
Recall that we had 
$P_\lambda = \lambda z(1-z)$ where $\lambda= e^{\frac{2\pi i (\sqrt{5}-1)}{2}}$. For $\kappa \ge 1$ we then defined $P=\frac{1}{\kappa}{P_\lambda }(\kappa z)$ and let $G$ be the Green's function for this polynomial.  For each $h > 0$ set $V_h:=\{z \in \C \; : \; G(z)<h \}$ - see Figure 2 below for an illustration showing two of these domains.   

\begin{lemma}
\label{bdryvhctsjordancurves}
The family $\{\partial V_h \}_{h>0}$ gives a continuously varying family of Jordan curves.
\end{lemma}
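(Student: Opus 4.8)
The plan is to reduce everything to the B\"ottcher coordinate of $P$ at infinity. First I would record the structural facts: $P$ is affinely conjugate, via $z \mapsto \kappa z$, to $P_\lambda = {\tilde P} = \lambda z(1-z)$, which has a Siegel disc about $0$ (the rotation number $(\sqrt 5 - 1)/2$ being Diophantine, by Siegel's theorem), so $P$ itself has a Siegel disc $U \ni 0$; in particular $0 \in U \subset \K$ and the finite critical point of $P$ has bounded orbit. Since a quadratic polynomial whose critical orbit is bounded has connected Julia set — a disconnected quadratic Julia set is a Cantor set whose only Fatou component is the basin of infinity, incompatible with the presence of the Siegel disc — the filled Julia set $\K$ is connected. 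Hence, by the classical theorem of B\"ottcher, Douady and Hubbard (see e.g. \cite{Mil}), the B\"ottcher coordinate of $P$ extends to a conformal isomorphism $B : \A \to \{w : |w| > 1\}$ of the basin of infinity onto the exterior of the closed unit disc, conjugating $P$ to $w \mapsto w^2$, and the Green's function satisfies $G(z) = \log|B(z)|$ for $z \in \A$, while $G \equiv 0$ on $\K$.

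Next I would fix $h > 0$ and read off the geometry of $V_h$. From the description of $G$ we get $V_h = \K \cup B^{-1}(\{1 < |w| < e^h\})$, so $\{G \le h\} = \K \cup B^{-1}(\{1 < |w| \le e^h\})$ is a compact continuum whose complement in $\chat$ is $B^{-1}(\{|w| > e^h\}) \cup \{\infty\}$, a connected set; thus $\{G \le h\}$ is a full continuum and its boundary is $\partial V_h = \{G = h\} = B^{-1}(\{|w| = e^h\})$. As $B^{-1}$ is conformal on a neighbourhood of the round circle $\{|w| = e^h\}$, this set $\partial V_h$ is an (indeed analytic) Jordan curve, $V_h$ is its bounded complementary component, and $0 \in \K \subset V_h$, so $0$ lies inside $\partial V_h$ for every $h > 0$.

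It then remains to produce the continuous parametrization required by the definition. Identifying $\T$ with the unit circle $\{|t| = 1\}$, I would set $F : \T \times (0,\infty) \to \C$, $F(t,h) := B^{-1}(e^h t)$. Continuity of $F$ on $\T \times (0,\infty)$ is immediate from the holomorphy of $B^{-1}$ on $\{|w| > 1\}$ together with the continuity of $(t,h) \mapsto e^h t$; and for each fixed $h$ the map $t \mapsto e^h t$ is a bijection of $\T$ onto $\{|w| = e^h\}$, which $B^{-1}$ carries bijectively onto $\partial V_h$, so $F(\cdot, h)$ is injective and parametrizes $\partial V_h$. This exhibits $\{(\partial V_h, 0)\}_{h>0}$ as a continuously varying family of Jordan curves (with the constant choice $h \mapsto 0$ of interior base points, as needed for the subsequent application of Lemma \ref{jordancurvestodomainscts}).

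The only genuinely non-formal point in all of this is the connectivity of $J(P)$: it is exactly what guarantees that $B$ is globally defined on $\A$, and hence that \emph{every} equipotential $\{G = h\}$ with $h > 0$ is a true Jordan curve rather than, say, a figure-eight through an escaping critical point. Once that is established, the rest is a routine unwinding of the standard structure of the B\"ottcher coordinate.
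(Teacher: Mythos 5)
Your proof is correct and follows essentially the same route as the paper's: both parametrize $\partial V_h$ by pulling back the circle $\{|w|=e^h\}$ under the inverse B\"ottcher coordinate, and both observe that continuity of this parametrization in $(t,h)$ is immediate. Your version merely makes explicit the justification (connectivity of $\K$, via the presence of the Siegel disc) for why the B\"ottcher map is globally defined on the basin of infinity, a point the paper's one-line proof takes for granted.
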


\begin{proof}
Let $P$ be as above, let $\calK$ be the filled Julia set for $P$, and let $\phi:\chat \setminus \calK \rightarrow \chat \setminus {\overline \D}$ be the associated B\"ottcher map.  Then the map $F:\mathbb{T}\times (0,\infty)\rightarrow \C$, $F(e^{i\theta},h)\mapsto \phi^{-1}(e^{h+i\theta})$ is the desired mapping which yields a continuously varying family of Jordan curves.  
\end{proof}

\begin{lemma}
\label{v2htoucar}
$(V_h,0)\rightarrow (U,0)$ as $h\rightarrow 0_+$. 
\end{lemma}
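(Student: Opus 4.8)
The plan is to verify directly the three conditions defining \Car convergence (Definition \ref{CarConv}); since the index set here is the interval $(0,\infty)$, by Definition \ref{ContCon} it is enough to prove $(V_{h_n},0)\to(U,0)$ for an arbitrary sequence $h_n\to 0_+$. I would begin by recording two elementary facts. First, since the Green's function $G$ of $P$ vanishes identically on the filled Julia set $\K$, we have $\K\subset V_h$ for every $h>0$. Second, since $G\geq 0$ with $G=0$ precisely on $\K$, we have $\bigcap_{h>0}V_h=\{z:G(z)\leq 0\}=\K$. Note also that each $V_h$ is a Jordan domain containing $0$ by Lemma \ref{bdryvhctsjordancurves} and the discussion following it, while $U$, being the Siegel disc about $0$, is a simply connected domain containing $0$, so all of these are genuine pointed domains.

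Now fix $h_n\to 0_+$. Condition 1 of Definition \ref{CarConv} holds trivially since every base point is $0$. For condition 2, any compact $K\subset U$ satisfies $K\subset U\subset\K\subset V_{h_n}$ for \emph{every} $n$. The content is condition 3: suppose $N$ is a connected open set with $0\in N$ and $N\subset V_{h_n}$ for all $n$ in some infinite set $S\subset\N$. As a subsequence of a null sequence, $\{h_n\}_{n\in S}$ has infimum $0$, so $N\subset\bigcap_{n\in S}V_{h_n}=\{z:G(z)\leq 0\}=\K$, and since $N$ is open, $N\subset\mathrm{int}\,\K$. To finish I would invoke that $\mathrm{int}\,\K$ lies in the Fatou set $F(P)$ of $P$: since $\K$ is bounded and forward invariant, $\{P^{\circ k}\}_{k\geq 1}$ is uniformly bounded on $\K$ and hence normal on $\mathrm{int}\,\K$ by Montel's theorem. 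Thus $N$ is a connected subset of $F(P)$ containing $0\in U$; since $U$ is the Fatou component of $P$ containing $0$ and distinct Fatou components are disjoint, $N\subset U$. This establishes all three conditions, so $(V_{h_n},0)\to(U,0)$, and as the null sequence was arbitrary, $(V_h,0)\to(U,0)$ as $h\to 0_+$.

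The only step requiring any thought is condition 3 — precisely the observation that an open connected subset of $\K$ through $0$ must lie inside the Siegel disc rather than in some larger part of $\K$, which is exactly where $\mathrm{int}\,\K\subset F(P)$ enters. If one prefers, the same argument can be packaged via the \Car kernel and Theorem \ref{EquivCarConv}: $U\subset V_{h_n}$ for all $n$, while the computation above shows that any domain contained in cofinitely many $V_{h_n}$ lies in $\mathrm{int}\,\K$ and hence in $U$; thus $U$ is the \Car kernel of $\{(V_{h_n},0)\}$ and of every subsequence, which is what Theorem \ref{EquivCarConv} requires.
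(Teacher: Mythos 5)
Your proposal is correct and is essentially the paper's own argument: both reduce the problem to two observations, namely that $U\subset\K\subset V_{h}$ for every $h>0$ (handling one inclusion and condition 2) and that any connected open set through $0$ contained in cofinitely many $V_{h_n}$ with $h_n\to 0_+$ must lie in $\mathrm{int}\,\K\subset F(P)$ and hence in the Fatou component $U$. The only cosmetic difference is that you verify the three conditions of Definition \ref{CarConv} directly, whereas the paper phrases the same argument through the \Car kernel and Theorem \ref{EquivCarConv} — a reformulation you explicitly note at the end.
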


\begin{proof}
By appealing to Definitions \ref{CarConv}, \ref{ContCon} and Theorem \ref{EquivCarConv}, we can make use of the \Car kernel version of \Car convergence to prove this. So let $h_n$ be any sequence of positive numbers such that $h_n\rightarrow 0$ as $n \to \infty$. From above, we will then be done if we can show that  the \Car Kernel of $\{(V_{h_n},0) \}_{n=1}^\infty$ as well as that of every subsequence of this sequence of pointed domains is $U$.

Let $\{(V_{h_{n_k}},0) \}_{k=1}^{\infty}$ be an arbitrary subsequence of $\{(V_{h_n},0) \}_{n=1}^{\infty}$ (which could possibly be all of  $\{(V_{h_n},0) \}_{n=1}^\infty$) and let 
$W$ be the \Car Kernel of this subsequence $\{(V_{h_{n_k}},0) \}_{k=1}^{\infty}$. Since $U \subset V_{h}$ for every $h >0$, clearly $U\subseteq W$. To show containment in the other direction, let $z\in W$ be arbitrary and construct a path $\gamma$ from $0$ to $z$ in $W$. By definition of $W$ as the \Car kernel 
of the domains $\{(V_{h_{n_k}},0) \}_{k=1}^{\infty}$, the track $[\gamma]$ is contained in $V_{h_{n_k}}$ for all $k$ sufficiently large.  From this it follows that the iterates of $P$ are bounded on $W$ which immediately implies that $W \subset \calK$.  Since $W$ is open, $W \subset \text{int } \calK$. Moreover, since $W$ is connected, $W$ is then contained in a Fatou component for $P$ and, since $0 \in W$, $W \subseteq U$.  Since we have already shown $U \subseteq W$, we have $W=U$ as desired. 
\end{proof} 

\label{overviewpii2}As in the discussion in the proof of Phase I in Chapter 4 just before Lemma \ref{localdistortion}, let $\calK$ be the filled Julia set for $P$ and let $U$ be the Siegel disc about $0$ for $P$.  Again, for $R>0$, define $U_R:= \Delta_U(0,R)$.  

For the remainder of this chapter we will be working extensively with these hyperbolic discs $U_R$ of radius $R$ about $0$ in $U$. At this point we choose $0 < r_0$ and restrict ourselves to $R \ge r_0$ (we will also impose an upper bound on $R$ just before stating the Target Lemma (Lemma \ref{TargetLemma})).\label{r0Def}

\label{overviewpii3}Again let $\psi:U\rightarrow \D$ be the unique normalized Riemann map from $U$ to $\D$ satisfying $\psi(0)=0$, $\psi'(0)>0$.  For $h > 0$ let $\psi_{2h}:V_{2h}\rightarrow \D$ be the unique normalized Riemann map from $V_{2h}$ to $\D$ satisfying $\psi_{2h}(0)=0$, $\psi_{2h}'(0)>0$.  Set
${\tilde R}=R_{(V_{2h},0)}^{int}U_R$ and define ${\tilde V_{2h}}=\Delta_{V_{2h}}(0,{\tilde R})$. Let \label{phi2h}$\phi_{2h}:{\tilde V_{2h}}\rightarrow V_{2h}$ be the unique conformal map from ${\tilde V_{2h}}$ to $V_{2h}$ normalized so that $\phi_{2h}(0)=0$ and $\phi_{2h}'(0)>0$. An important fact to note is that ${\tilde V_{2h}}$ is round in the conformal coordinates of $V_{2h}$, i.e. $\psi_{2h}(\tilde V_{2h})$ is a disc (about $0$). This is an essential point we will be making use of later in the `up' portion of Phase II. 
We now prove a small lemma concerning this disc ${\tilde V_{2h}}$.

\begin{lemma} For $R \ge r_0$, we have the following:
\label{hypradv2hbddbelow}
\begin{enumerate}
\item There exists $d_0$, \label{dnaught} determined by $\kappa$ and $r_0$ such that 
\begin{align*}
\mathrm{d}(0,\partial U_R)\geq d_0
\end{align*}
(where where $\mathrm{d}(0,\partial U_R)$ denotes the Euclidean distance from $0$ to $\partial U_R$),

\item Given any finite upper bound $h_0 \in (0,\infty)$, there exists $\rho_0$, \label{rhonaught} determined by $r_0$ and $h_0$, such that, for all $h\in (0,h_0]$, we have that the hyperbolic radius $R_{(V_{2h},0)}{\tilde V_{2h}}$  of ${\tilde V_{2h}}$ in $V_{2h}$ about $0$ satisfies 
\begin{align*}
R_{(V_{2h},0)}{\tilde V_{2h}}\geq \rho_0.
\end{align*}
\end{enumerate}
\end{lemma}

\begin{proof}
Since $R \ge r_0$, we have that $\partial U_R$ is the image under $\psi^{-1}$ of the circle ${\mathrm C}(0, s)$ in $\D$ where $s \ge s_0 := \tfrac{e^{r_0} - 1}{e^{r_0} + 1}$. \emph{(1)} then follows on applying the Koebe one-quarter theorem (Theorem \ref{Koebe}).

For \emph{(2)}, using Lemma \ref{equivalenthypradformulation}, since $\partial{\tilde V_{2h}}$ is the hyperbolic `incircle' about 0 of $\partial U_R$ in the hyperbolic metric of $V_{2h}$, we have that for all $h\in (0,h_0]$, there exists $z_h \in \partial{\tilde V_{2h}} \cap \partial U_R$. By \emph{(1)} we have $|z_h|\geq d_0$.  On the other hand, as the domains $\{V_{2h} \}_{h\in (0,h_0] }$ are increasing in $h$, there exists $D_0$ depending only on $\kappa$ and $h_0$ such that for all $z\in U$, and for all $h \in (0,h_0]$, we have $\delta_{V_{2h}}(z)\leq D_0$ (where $\delta_{V_{2h}}(z)$ is the Euclidean distance from $z$ to $\partial V_{2h}$).  Letting $\rho_h$ be the hyperbolic radius about 0 of ${\tilde V_{2h}}$ in $V_{2h}$, we have 
\begin{align*}
\rho_h = \int_{\gamma}\mathrm{d}\rho_{V_{2h}(z)},
\end{align*}
where $\gamma$ is a geodesic segment in $V_{2h}$ from $0$ to $z_h$.  Then, using Lemma \ref{lemma4.3}, we have 
\begin{align*}
\rho_h &= \int_{\gamma}\mathrm{d}\rho_{V_{2h}(z)} \\
&\geq \frac{1}{2}\int_{\gamma}\frac{1}{\delta_{V_{2h}}(z)}|\mathrm{d}z| \\
&\geq \frac{1}{2D_0}l(\gamma) \\
&\geq \frac{1}{2D_0}|z_h| \\
&\geq \frac{d_0}{2D_0},
\end{align*}
from which the desired lower bound follows by setting $\rho_0=\frac{d_0}{2D_0}$ (note that in the above we use $l$ to denote Euclidean arc length). Finally, the fact that $\rho_0$ does not depend on the scaling factor $\kappa$ follows immediately by the conformal invariance of the hyperbolic metric of $V_{2h}$ with respect to (Euclidean) scaling. \end{proof}

Now define ${\tilde V_h}:= \phi_{2h}^{-1}(V_h)$ and recall that ${\tilde V_{2h}}=\phi_{2h}^{-1}(V_{2h})$.  Further, define \label{rcheck}${\check R}(h):=R_{(V_{2h},0)}^{ext}V_h$ and note that the function ${\check R}(h)$ does not depend on the scaling factor $\kappa$, while by conformal invariance we have ${\check R}(h) = R_{(\tilde V_{2h},0)}^{ext}\tilde V_h$. 

\begin{lemma}
\label{rcheckcts}	
${\check R}(h)$ is continuous on $(0,\infty)$. 
\end{lemma}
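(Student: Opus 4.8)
The plan is to deduce this from the general continuity result for external hyperbolic radii, Lemma \ref{domainstoexthypradcts}, applied with the ``outer'' family $\mathcal V = \{(V_{2h},0)\}$ and the ``inner'' family $\mathcal U = \{(V_h,0)\}$. Since continuity is a local property, it suffices to fix $h_0 \in (0,\infty)$ and prove continuity of $\check R$ on some closed bounded subinterval $I = [a,b]$ with $a < h_0 < b$. Crucially, we are free to shrink $I$, and it will be convenient to arrange $b < 2a$ (for instance $a = \tfrac34 h_0$, $b = \tfrac43 h_0$), since this is exactly what will make the bounded-containment condition uniform.

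First I would check the hypotheses of Lemma \ref{domainstoexthypradcts} other than bounded containment. Each $V_h$ is a Jordan domain (its boundary is the Green's line $\partial V_h$), each $V_{2h}$ is bounded, hence a hyperbolic simply connected domain, and both contain the common base point $0$, as $0 \in \K$ and $G(0)=0$. By Lemma \ref{bdryvhctsjordancurves}, $\{\partial V_h\}_{h \in I}$ is a continuously varying family of Jordan curves; composing the parametrization used there with the continuous map $h \mapsto 2h$ shows the same for $\{\partial V_{2h}\}_{h \in I}$. Lemma \ref{jordancurvestodomainscts}, applied with the constant base-point map $w(h)\equiv 0$, then gives that both $\mathcal U$ and $\mathcal V$ vary continuously in the \Car topology over $I$.

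The remaining and main point is to verify $\pt \sqsubset \mathcal U \sqsubset \mathcal V$ over $I$, i.e. to produce a single constant $K \ge 1$ meeting the two requirements of Definition \ref{BoundedContainment}. For condition 1, I would use monotonicity of $V_h$ in $h$ together with $b < 2a$: if $h \in [a,b]$ and $z \in V_h$, then $z \in V_b \subset V_{2a} \subset V_{2h}$, and since $V_{2a}\subset V_{2h}$ forces $\rho_{V_{2h}} \le \rho_{V_{2a}}$ on $V_{2a}$, we obtain $R^{ext}_{(V_{2h},0)}V_h \le R^{ext}_{(V_{2a},0)}V_b$; the latter is finite because $\overline{V_b}$ is compact and contained in the open set $V_{2a} = \{G<2a\}$ (as $b<2a$), by the remark following Definition \ref{intexthypraddef}. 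This gives the uniform bound $K_1$. For condition 2, note $\delta_{V_h}(0) = \mathrm{dist}(0,\partial V_h)$ is nondecreasing in $h$, so for $h\in[a,b]$ we have $\delta_{V_h}(0) \ge \delta_{V_a}(0) > 0$ while $\delta_{V_{2h}}(0) \le \delta_{V_{2b}}(0) < \infty$, whence $\delta_{V_h}(0) \ge K_2^{-1}\,\delta_{V_{2h}}(0)$ with $K_2 = \delta_{V_{2b}}(0)/\delta_{V_a}(0)$. Taking $K = \max\{K_1,K_2,1\}$ establishes $\pt \sqsubset \mathcal U \sqsubset \mathcal V$.

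With every hypothesis in place, Lemma \ref{domainstoexthypradcts} yields that $h \mapsto R^{ext}_{(V_{2h},0)}V_h = \check R(h)$ is continuous on $I$, in particular at $h_0$; as $h_0 \in (0,\infty)$ was arbitrary, $\check R$ is continuous on $(0,\infty)$. The only step demanding any real care is the bounded-containment verification --- specifically, shrinking the interval so that $b < 2a$ so that $\overline{V_b}$ is compactly contained in $V_{2a}$; everything else is a direct appeal to the machinery already assembled in this section.
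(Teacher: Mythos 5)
Your proof is correct and takes essentially the same route as the paper: apply Lemma \ref{domainstoexthypradcts} to $\mathcal U = \{(V_h,0)\}$ and $\mathcal V = \{(V_{2h},0)\}$, using Lemmas \ref{bdryvhctsjordancurves} and \ref{jordancurvestodomainscts} to get \Car continuity, and then check bounded containment on closed subintervals. Your device of shrinking the interval so that $b<2a$, so that $\overline{V_b}\subset V_{2a}$ and Schwarz gives a clean uniform bound on the external radius, is a slightly more explicit way of justifying the bounded-containment claim that the paper asserts (over arbitrary closed bounded intervals) with only a passing reference to Lemma \ref{lemma4.3}; since continuity is local, this is perfectly adequate and arguably cleaner.
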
	

\begin{proof}
This follows easily from Lemmas \ref{jordancurvestodomainscts}, \ref{domainstoexthypradcts}, and \ref{bdryvhctsjordancurves}. Note that it follows easily from Lemmas \ref{lemma4.3} and \ref{bdryvhctsjordancurves} that the family $(V_h,0)$ is bounded above and below in the family $(V_{2h},0)$ where $h$ is allowed to range over over any closed bounded subset $I$ of $(0, \infty)$.
\end{proof}

Further, we have

\begin{lemma}
\label{exthypradgoestoinfty}
${\check R}(h) \rightarrow \infty$ as $h \rightarrow 0_+$.
\end{lemma}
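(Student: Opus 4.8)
The plan is to recall that $\check R(h) = R^{ext}_{(V_{2h},0)}V_h$ and, by conformal invariance, also equals $R^{ext}_{(\tilde V_{2h},0)}\tilde V_h$. The statement we want is that these external hyperbolic radii blow up as $h \to 0_+$. The natural strategy is to combine the convergence $(V_h,0) \to (U,0)$ from Lemma \ref{v2htoucar} (which also gives $(V_{2h},0)\to(U,0)$, replacing $h$ by $2h$) with Theorem \ref{theorem1.2} on the local uniform convergence of normalized Riemann maps, and then to observe that in the limit domain $U$, the "external radius of $U$ in $U$" is infinite — this is exactly the remark after Definition \ref{intexthypraddef}, that $R^{ext}_{(V,u)}U = \infty$ when $U=V$. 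So heuristically, as $h\to 0$, both $V_h$ and $V_{2h}$ converge to $U$, and the external hyperbolic radius of the inner one measured in the outer one should tend to $\infty$.

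To make this rigorous I would argue by contradiction. Suppose $\check R(h)$ does not tend to $\infty$; then there is a sequence $h_n \to 0_+$ and a finite bound $B$ with $\check R(h_n) \le B$ for all $n$. Fix any compact set $K \subset U$. Since $U$ is a hyperbolic domain exhausted by its hyperbolic discs about $0$, I can choose $K$ large enough that the hyperbolic disc $\Delta_U(0, B+1)$ is contained in the interior of $K$ — more precisely, pick $K = \overline{\Delta_U(0, B+2)}$, which is a compact subset of $U$. By Lemma \ref{v2htoucar} applied to the sequences $\{h_n\}$ and $\{2h_n\}$, together with property 2. of \Car convergence (Definition \ref{CarConv}), we have $K \subset V_{h_n}$ and $K \subset V_{2h_n}$ for all large $n$. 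Now use Theorem \ref{theorem1.2}: the normalized covering (here Riemann) maps $\psi_{2h_n}^{-1}: \D \to V_{2h_n}$ converge locally uniformly on $\D$ to $\psi^{-1}:\D\to U$, and hence (by Hurwitz / the inverse function part of that theorem, or directly) the hyperbolic metric densities of $V_{2h_n}$ converge locally uniformly on $U$ to that of $U$. Consequently $\rho_{V_{2h_n}}(0,z) \to \rho_U(0,z)$ uniformly for $z$ in the compact set $K$.

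Pick a specific point $z_* \in U$ with $\rho_U(0, z_*) = B + \tfrac{3}{2}$, so $z_* \in K$. For $n$ large, $z_* \in V_{h_n}$ and $\rho_{V_{2h_n}}(0, z_*) > B + 1$. But $z_* \in V_{h_n}$ means $z_*$ lies in the closure of $V_{h_n}$, so $\check R(h_n) = R^{ext}_{(V_{2h_n},0)}V_h{}_n = \sup_{z \in V_{h_n}} \rho_{V_{2h_n}}(0,z) \ge \rho_{V_{2h_n}}(0, z_*) > B + 1 > B$, contradicting $\check R(h_n)\le B$. This completes the proof.

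The main obstacle is the justification that $\rho_{V_{2h_n}}(0,z) \to \rho_U(0,z)$ uniformly on compacta: one needs the hyperbolic density of $V_{2h_n}$ to converge locally uniformly to that of $U$, which follows from the local uniform convergence of the normalized inverse Riemann maps in Theorem \ref{theorem1.2} together with the formula $\sigma_{V_{2h_n}} = (\sigma_\D \circ \psi_{2h_n}) \cdot |\psi_{2h_n}'|$ and the fact that $\psi_{2h_n} = (\psi_{2h_n}^{-1})^{\circ -1}$ converges locally uniformly on $U$ by the "inverse branch" part of that same theorem; integrating along geodesics then gives convergence of distances, uniformly on compacta since all quantities are continuous and the convergence of densities is uniform there. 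Everything else is routine once this convergence is in hand.
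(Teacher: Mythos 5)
Your proof is correct, but it takes a genuinely different route from the paper's. The paper argues directly: it picks $z \in \partial V_h$, runs a hyperbolic geodesic in $V_{2h}$ from $0$ to $z$, notes that it must cross $\partial U$ at some $w$, and then bounds $\rho_{V_{2h}}(0,w)$ from below by integrating $\tfrac{1}{2}\tfrac{|\mathrm{d}\zeta|}{\delta_{V_{2h}}(\zeta)}$ along the segment from $0$ to $w$, using the Hausdorff proximity of $\partial V_{2h}$ to $\partial U$ to control $\delta_{V_{2h}}$; this produces an explicit lower bound of the form $\log(\tilde d_0 + \epsilon) - \log \epsilon$, which blows up as $\epsilon \to 0_+$. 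Your argument instead proceeds by contradiction via the \Car machinery: it invokes Lemma \ref{v2htoucar} to get $(V_{h_n},0) \to (U,0)$ and $(V_{2h_n},0) \to (U,0)$, so a fixed $z_*$ of prescribed $\rho_U$-distance from $0$ eventually lies in $V_{h_n}$, and Theorem \ref{theorem1.2} (both the convergence of the normalized Riemann maps on $\D$ and the convergence of inverse branches on $U$) yields $\rho_{V_{2h_n}}(0,z_*) \to \rho_U(0,z_*)$, forcing $\check R(h_n)$ past any proposed bound $B$. Your approach is shorter and more conceptual, recycling results the paper has already established, and it sidesteps all the explicit geodesic-length estimation; what the paper's direct computation buys in exchange is a quantitative lower bound for $\check R(h)$ in terms of the Hausdorff distance between $\partial U$ and $\partial V_{2h}$, which is not strictly needed for this lemma but is in the same spirit as the estimates the paper uses in Lemmas \ref{hypradv2hbddbelow} and \ref{targetlemma}.
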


\begin{proof} By Lemma \ref{v2htoucar} and Theorem \ref{theorem1.2}, $\psi_{2h}$ converges locally uniformly on $U$ to $\psi$ as $h \to 0_+$ (in the sense given in Definition \ref{ContCon}) where we recall that $\psi_{2h}$ and $\psi$ are the suitably normalized Riemann maps from $V_{2h}$ and $U$ respectively to the unit disc (these were introduced in the discussion before Lemma \ref{hypradv2hbddbelow}).

Now let $R > 0$ be large and let $z \in \partial U_R$ where $U_R$ is the hyperbolic disc of radius $R$ about $0$ in $U$ introduced above. From the above, we then have that $\rho_{V_{2h}}(0, z) \ge R-1$ for all $h$ sufficiently small so that by the definition of external hyperbolic radius (Definition \ref{intexthypraddef}) we must have $R_{(V_{2h},0)}^{ext}U_R \ge R-1$. Since $U_R \subset U \subset V_h$, we must have ${\check R}(h):=R_{(V_{2h},0)}^{ext}V_h \ge R_{(V_{2h},0)}^{ext}U \ge R_{(V_{2h},0)}^{ext}U_R \ge R-1$. The result then follows on letting $R$ tend to infinity. \end{proof}

At this point we choose $0 < r_0 < R_0 \le \tfrac{\pi}{2}$ and restrict ourselves to $R \in [r_0, R_0]$. The upper bound $\tfrac{\pi}{2}$ is chosen so that the disc $U_R$ as well as its image under any conformal mapping whose domain of definition contains $U$ is star-shaped (about the image of $0$ - see Lemma \ref{starshaped}).\label{R0Def}

Given $\epsilon_1>0$, using the hyperbolic metric of $U$, construct a $2\epsilon_1$ open neighbourhood of $\partial{\tilde V_{2h}}$ which we will denote by ${\hat N}$. We now fix our upper bound $h_0$ on the value of the Green's function $G(z)$. \label{h0set}
. Recall the lower bound $\rho_0$ on the hyperbolic radius about $0$ of $\tilde V_{2h}$ in $V_{2h}$ as in Part \emph{(2)} of the statement of Lemma \ref{hypradv2hbddbelow}. 
Recall also the scaling factor $\kappa$ and that $U \subset \mathrm{D}(0,\frac{2}{\kappa})$. We now state and prove one of the most important lemmas we need to prove Phase II (Lemma \ref{PhaseII}).

\begin{lemma}
	\label{TargetLemma}
	(Target Lemma) There exist an upper bound ${\tilde \epsilon_1}\in(0,\frac{\rho_0}{2})$ and a continuous function $T:(0,{\tilde \epsilon_1}]\rightarrow (0, \infty)$, both of which are determined by $h_0$ and $r_0$ such that, for all $h\in(0,h_0]$ and $R \in [r_0,  R_0]$, we have
	 
	\begin{enumerate}
		\item $R^{int}_{({\tilde V_{2h}},0)}({\tilde V_{2h}} \setminus {\hat N})\geq T(\epsilon_1)$ for all $\epsilon_1 \in (0,{\tilde \epsilon_1}]$, 
		
		\item $T(\epsilon_1) = \tfrac{1}{2}\log \frac{1}{\epsilon_1} + C_0$ on $(0,{\tilde \epsilon_1}]$ where $C_0 = C_0(h_0, r_0)$, so that in\\ particular we have
		
		\item $T(\epsilon_1)\rightarrow \infty$ as $\epsilon_1\rightarrow 0_+$.
	\end{enumerate}
	\end{lemma}

Before embarking on the proof, we remark that part \emph{(1)} in the statement of Lemma \ref{TargetLemma} above will help us to interpolate in the `during' portion of Phase II. Part \emph{(3)} will be vital for the Fitting Lemma (Lemma \ref{FittingLemma}); it allows us to conclude that $h\rightarrow 0$ as $\epsilon_1 \rightarrow 0_+$ (see the statement of the Fitting Lemma) which is key to controlling the inevitable loss of domain incurred in correcting the errors in our approximations from Phase I (Lemma \ref{PhaseI}). We observe that, even though we require $R \in [r_0,  R_0]$, the upper bound $R_0$ does not appear in the dependencies for $\tilde \epsilon_1$ and the function $T$ above. The reason for this is that we apply the upper bound $R_0 \le \tfrac{\pi}{2}$ in the proof which eliminates the dependence on $R_0$. Lastly, we observe that, although the domain $\tilde V_{2h}$ by definition will depend on $R$ (as will the mapping $\phi_{2h}: \tilde V_{2h} \mapsto V_{2h}$), $\tilde \epsilon_1$ and $T(\epsilon_1)$ do not depend on $R$ since we are obtaining estimates which work simultaneously for all $R \in [r_0,  R_0]$. 

\begin{proof}
We first deduce the existence of ${\tilde \epsilon_1}$.  Regarding the upper bound $\frac{\rho_0}{2}$ on $\tilde \epsilon_1$ in the statement: we note that, if $\epsilon_1$ is too large, then we would actually have ${\tilde V_{2h}} \subset {\hat N}$ so that ${\tilde V_{2h}} \setminus {\hat N}=\emptyset$.  Recall that, by Part \emph{(2)} of Lemma \ref{hypradv2hbddbelow}, we have that $\rho_0$ is such that for all $R \in [r_0, R_0]$ and $h \in (0, h_0]$, we have $R_{(V_{2h},0)}{\tilde V_{2h}}\geq \rho_0$.  Using the Schwarz Lemma for the hyperbolic metric (e.g. \cite{CG} Theorem I.4.1 or I.4.2), we see that $R^{int}_{(U,0)}{\tilde V_{2h}} \geq R_{(V_{2h},0)}{\tilde V_{2h}}\geq \rho_0$, so setting ${\tilde \epsilon_1}:=\frac{\rho_0}{4} < \frac{\rho_0}{2}$ implies that, if $\epsilon_1 \le \tilde \epsilon_1$, then ${\tilde V_{2h}} \setminus {\hat N}\neq\emptyset$. Note that, in view of Lemma \ref{hypradv2hbddbelow}, since $\rho_0$ depends on $r_0$ and $h_0$, the quantity $\tilde \epsilon_1$ inherits these dependencies. \label{FirstEpsilonOne}

Recall the lower bound $d_0  = d_0(\kappa, r_0)$ from Part \emph{(1)} of the statement of Lemma \ref{hypradv2hbddbelow} for which we have $\textit{d}(0,\partial U_R)\geq d_0$ so that, if $\xi \in \partial U_R$, then $|\xi|\geq d_0$.  With the distortion theorems in mind, applied to $\psi_{2h}^{\circ-1}$, we define 
\begin{align*}
r_1&:=\frac{e^{\pi/2}-1}{e^{\pi/2}+1}, \\
D_1&:=\left (\frac{1+r_1}{1-r_1}\right )^2=e^\pi.
\end{align*}
Note that $r_1$ is chosen so that $\mathrm{D}(0,r_1)$ has hyperbolic radius $\tfrac{\pi}{2}$ in $\D$, i.e. $\mathrm{D}(0,r_1)=\Delta_{\D}(0,\tfrac{\pi}{2})$.  By the Schwarz Lemma for the hyperbolic metric, since $\tilde V_{2h} \subset U_R$, $U \subset V_{2h}$, and $R\leq R_0 \le \tfrac{\pi}{2}$, we have 
\begin{align*}
R_{(V_{2h},0)}{\tilde V_{2h}} = R^{ext}_{(V_{2h},0)}{\tilde V_{2h}} \le R^{ext}_{(U,0)}{\tilde V_{2h}} \le R^{ext}_{(U,0)}{U_R} = R_{(U,0)}{U_R} = R \le \frac{\pi}{2}
\end{align*}
(recall that ${\tilde V_{2h}}$ and $U_R$ are round in the conformal coordinates of $V_{2h}$, $U$ respectively so that the internal and external hyperbolic radii coincide). By Lemma \ref{equivalenthypradformulation} and the definition of $\tilde V_{2h}$ given before Lemma \ref{hypradv2hbddbelow}, $\partial U_R$ and $\partial \tilde V_{2h}$ meet, and it then follows by comparing the maximum and minimum values of $|\psi_{2h}^{-1}|$ given by the distortion theorems (Theorem \ref{distortion}) that 
\begin{align}
\label{tildev2hbddbelow}
|z|\geq \tfrac{d_0}{D_1} = d_0 e^{-\pi}, \quad \mbox{if} \:\; z \in \partial {\tilde V_{2h}}.
\end{align}

Now suppose $\zeta_0 \in \partial {\tilde V_{2h}}$ and let $\epsilon_1 \in (0, \tilde \epsilon_1]$.  If $\zeta \in \overline \Delta_U(\zeta_0,2\epsilon_1)$, we wish to find an upper bound on the Euclidean distance from $\zeta$ to $\zeta_0$. Let $\gamma_0$ be a geodesic segment in $U$ from $\zeta_0$ to $\zeta$.  Then, using Lemma \ref{lemma4.3} and the fact that $U \subset \mathrm{D}(0,\frac{2}{\kappa})$, we calculate
\begin{align*}
2\epsilon_1 & \ge \int_{\gamma_0}\mathrm{d}\rho_U \\
&\geq\frac{1}{2}\int_{\gamma_0} \frac{|\mathrm{d}w|}{\delta_U(w)} \\
&\geq \frac{\kappa}{4}\int_{\gamma_0}|\mathrm{d}w| \\
&=\frac{\kappa}{4} \,l(\gamma_0) \\
&\geq \frac{\kappa}{4}|\zeta-\zeta_0|
\end{align*}
where $l(\gamma_0)$ is (as usual) the Euclidean arc length of $\gamma_0$.  Thus $|\zeta-\zeta_0| \le \frac{8}{\kappa}\epsilon_1$.  As $\zeta_0$, $\zeta$ were arbitrary, this implies that 
\begin{align}
\label{8kineq}
\overline \Delta_U(\zeta_0,2\epsilon_1)\subset  \overline{\mathrm{D}}\left (\zeta_0,\frac{8}{\kappa}\epsilon_1 \right ) \quad \mbox{for any} \:\; \zeta_0 \in \partial {\tilde V_{2h}}.
\end{align}

\begin{figure}[htb]
\label{TargetLemmaPicture}
\begin{center}
\scalebox{.405}{\includegraphics[frame]{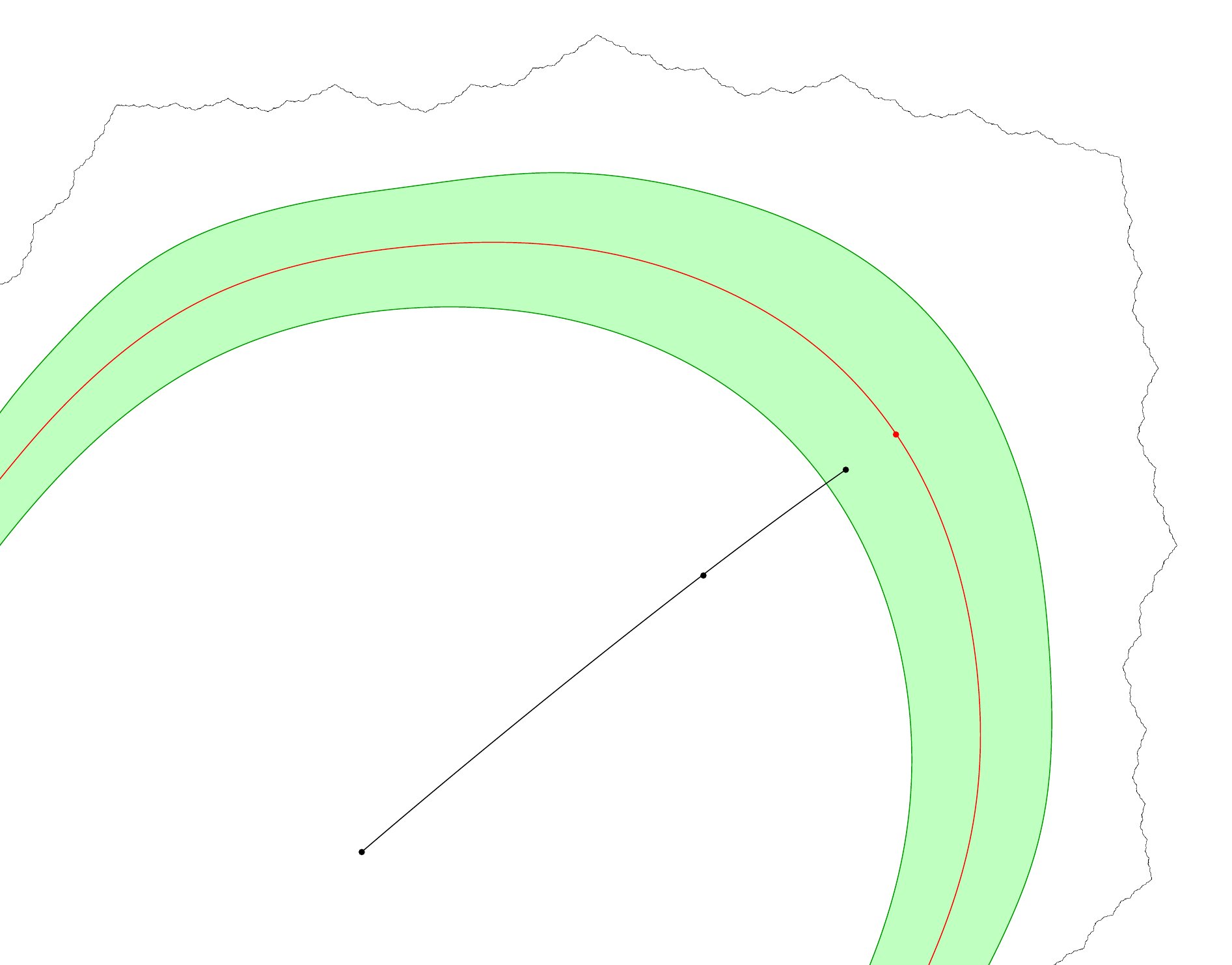}}
\begin{picture}(0.01,0.01)
       \put(-80,42){\small $0$}
       \put(25,132){\small $w$}
       \put(70,165){\small $z_0$}
       \put(91,176){\small \color{red}$z$}
       \put(-18,80){$\gamma$}
       \put(114,120){\color{red}$\partial \tilde V_{2h}$}
       \put(30,225){\large \color{Green}$\hat N$}
       \put(122,220){\LARGE$U$}
      \end{picture}
\caption{Finding a lower bound for $\rho_{{\tilde V_{2h}}}(0,z_0)$}
\end{center}
\end{figure}

Now we aim to specify the value of the function $T(\epsilon_1)$.  Choose a point $z_0 \in \tilde V_{2h} \cap \hat N = \tilde V_{2h} \setminus (\tilde V_{2h} \setminus \hat N)$.  Pick $z \in \partial {\tilde V_{2h}}$ which is closest to $z_0$ in the hyperbolic metric of $U$ (see Figure 4 for an illustration). Then $\rho_U(z_0,z)\leq 2\epsilon_1$ which by  \eqref{8kineq} which we just showed above, implies $|z_0-z|\leq \tfrac{8}{\kappa}\epsilon_1$.  Note that as $|z|\geq \frac{d_0}{D_1}$ by \eqref{tildev2hbddbelow} above, using the reverse triangle inequality, we have that 
\begin{align} 
\label{nhatineq}
|z_0| &\geq \frac{d_0}{D_1}-\frac{8}{\kappa}\epsilon_1.
\end{align}

\label{FirstRestriction}
Note also that, in order to make sure that 
$T(\epsilon_1)$ is defined and positive on $(0, \tilde \epsilon_1]$, it will be essential (because we will be taking the difference of the logs in the two terms in this quantity) that $\frac{d_0}{D_1}-\frac{8}{\kappa}\epsilon_1 > 0$, so 
we may need to make $\tilde \epsilon_1$ smaller if needed so that $\tilde \epsilon_1<\frac{\kappa d_0}{8D_1}$. Since the constant $d_0$ is determined by $\kappa$ and the lower bound $r_0$ for $R$, $\tilde \epsilon_1$ will then be determined by these same constants as well as $h_0$ in view of our earlier discussion on $\tilde \epsilon_1$ above (we will argue later that the dependence on the scaling factor $\kappa$ can be removed). Now let $\gamma$ be a geodesic segment in ${\tilde V_{2h}}$ from $z_0$ to $0$.  If $w \in [\gamma]$, since $|z_0-z|\leq \frac{8}{\kappa}\epsilon_1$ from above, we have 
\begin{align*}
\delta_{{\tilde V_{2h}}}(w)&\leq |w-z| 
\leq |w-z_0|+|z_0-z| 
\leq |w-z_0|+\frac{8}{\kappa}\epsilon_1.
\end{align*}
So, once more using Lemma \ref{lemma4.3}, 
\begin{align*}
\rho_{{\tilde V_{2h}}}(0,z_0) &= \int_{\gamma}\mathrm{d}\rho_{\tilde V_{2h}(w)}\\ 
&\geq \frac{1}{2}\int_{\gamma}\frac{|\mathrm{d}w|}{\delta_{{\tilde V_{2h}}}(w)} \\
&\geq \frac{1}{2}\int_{\gamma}\frac{|\mathrm{d}w|}{|w-z_0| + \frac{8}{\kappa}\epsilon_1}.
\end{align*}

Now parametrize $\gamma$ by 
$w=\gamma(t)=z_0+r(t)e^{i\theta(t)}$, for $t \in [0,1]$ and note that, as $\gamma$ is a geodesic segment in ${\tilde V_{2h}}$ from $z_0$ to $0$, $r(1)e^{i\theta(1)}=-z_0$.  Since $\gamma$ is not self-intersecting, we have $r(t)>0$ for all $t\in (0,1]$. Then, using \eqref{nhatineq},

\begin{align*}
\frac{1}{2}\int_{\gamma}\frac{|\mathrm{d}w|}{|w-z_0|+\frac{8}{\kappa}\epsilon_1} &= \frac{1}{2}\int_0^1\frac{|r'(t)e^{i\theta(t)}+i\theta'(t)r(t)e^{i\theta(t)}|}{r(t)+\frac{8}{\kappa}\epsilon_1} \mathrm{d}t \\
&\geq \frac{1}{2}\int_0^1\frac{|r'(t)|}{r(t)+\frac{8}{\kappa}\epsilon_1} \mathrm{d}t, \\
&\geq \frac{1}{2} \left| \int_0^1\frac{r'(t)}{r(t)+\frac{8}{\kappa}\epsilon_1} \mathrm{d}t \right|\\
&=\frac{1}{2}\int_0^{|z_0|}\frac{1}{u+\frac{8}{\kappa}\epsilon_1}\mathrm{d}u \\
&\geq \frac{1}{2}\int_0^{\frac{d_0}{D_1}-\frac{8}{\kappa}\epsilon_1}\frac{1}{u+\frac{8}{\kappa}\epsilon_1} \mathrm{d}u \\
&= \frac{1}{2}\int_{\frac{8}{\kappa}\epsilon_1}^{\frac{d_0}{D_1}}\frac{1}{x}\mathrm{d}x \\
&= \frac{1}{2}\left (\log\left (\frac{d_0}{D_1}\right )-\log \left (\frac{8}{\kappa}\epsilon_1  \right )\right )\\
&= \frac{\log d_0 - \pi - \log \epsilon_1 - \log 8 + \log \kappa}{2}\\
&= \frac{1}{2}\log \frac{1}{\epsilon_1} + \frac{\log d_0 - \pi - \log 8 + \log \kappa}{2}.\\
\end{align*}

Taking an infimum over all $z_0 \in \tilde V_{2h} \setminus \hat N$ and applying the definition (Definition \ref{intexthypraddef}) of internal hyperbolic radius (which in particular does not require that ${\tilde V_{2h}} \setminus {\hat N}$ be connected) and setting $T(\epsilon_1)=\frac{1}{2}\log \tfrac{1}{\epsilon_1} + \frac{1}{2}(\log d_0 - \pi - \log 8 + \log \kappa) $ (which is strictly positive in view of the definition of $\tilde \epsilon_1$ given in the discussion after \eqref{nhatineq}) gives the desired lower bound on $R^{int}_{({\tilde V_{2h}},0)}({\tilde V_{2h}} \setminus {\hat N})$. \label{DefofT} Explicitly, the function $T(\epsilon_1)$ above is determined by $\kappa$, $r_0$, and $h_0$ (this last being due to the requirement that $\epsilon_1 \le \tilde \epsilon_1$). However, similarly to the end of the proof of Lemma \ref{hypradv2hbddbelow}, we may eliminate the dependence on $\kappa$ (for both $\tilde \epsilon_1$ and $T$) given the conformal invariance of the hyperbolic metric of $\tilde V_{2h}$ with respect to the scaling factor $\kappa$. \end{proof}

Before turning to the Fitting Lemma (Lemma \ref{FittingLemma}), we prove a small lemma from real analysis. 

\begin{lemma}
\label{realanalysislemma}
Let $b >0$ and let $\phi:(0,b] \rightarrow [0,\infty)$ be a continuous function such that $\phi(x)\rightarrow \infty$ as $x \rightarrow 0_+$.  Then, for all $y\ge \min \{\phi(x):x \in (0,b] \}$, if we set 
\begin{align*}
x(y):=\min \{x: \phi(x) = y\},
\end{align*}
we have $x(y)\rightarrow 0$ as $y\rightarrow \infty$.  
\end{lemma}

\begin{proof} We note first that, since $\phi$ is continuous while $\phi(x)\rightarrow \infty$ as $x \rightarrow 0_+$, $\phi$ attains its minimum on $(0,b]$. Also, in view of the intermediate value theorem, for each $y\ge\min\{\phi(x):x \in (0,b] \}$, the set $\{x: \phi(x) = y\}$ is non-empty.  Because $\phi(x)\rightarrow \infty$ as $x \rightarrow 0_+$, the infimum of this set is strictly positive, and, as $\phi$ is continuous, we must have that $\phi(x(y)) = y$ so that this infimum is attained and is in fact a minimum. Suppose now the conclusion is false and that there exists a sequence $\{y_n\}_{n=1}^{\infty}$ such that $y_n \rightarrow \infty$ but $x(y_n)\not\rightarrow 0$ as $n \rightarrow \infty$.  Set $x_n:=x(y_n)$.  Since $x_n\not\rightarrow 0$, we can take a convergent subsequence $\{x_{n_k}\}_{k=1}^{\infty}$ which converges to a limit $x_0 > 0$.  This then leads to a contradiction to the continuity of $\phi$ at $x_0$. 
\end{proof}

Recall the quantity ${\check R}(h):=R_{(V_{2h},0)}^{ext}V_h = R_{(\tilde V_{2h},0)}^{ext}\tilde V_h$ which was introduced before Lemma \ref{rcheckcts} and the $2\epsilon_1$ open neighbourhood $\hat N$ of $\partial \tilde V_{2h}$ which was introduced before the statement of Lemma \ref{TargetLemma}. We now state and prove

\begin{lemma}
\label{FittingLemma}
(The Fitting Lemma) There exists $\tilde \epsilon_1 > 0$  and a function $h: (0 ,\tilde \epsilon_1] \mapsto (0, \infty)$ both of which are determined by $h_0$, $r_0$ for which the following hold:

\begin{enumerate}
\item $\tilde V_{h(\epsilon_1)} \subset \tilde V_{2h(\epsilon_1)} \setminus \hat N$ for each $0 < \epsilon_1 \le \tilde \epsilon_1$,
\item $h(\epsilon_1)\rightarrow 0$ as $\epsilon_1 \rightarrow 0_+$.  
\end{enumerate}
\end{lemma}

\begin{proof}
We first apply the Target Lemma (Lemma \ref{TargetLemma}) to find $\tilde \epsilon_1 > 0$ and a function $T: (0, \tilde \epsilon_1] \mapsto (0, \infty)$ as above, both of which are determined by $h_0$, $r_0$. We now show how to use the function $T$  to define an appropriate value $h$ of the Green's function for which \emph{(1)} above holds which will then allow us to do the interpolation in the `during' part of the proof of Phase II (Lemma \ref{PhaseII}). Our first step is to fix a (possibly) smaller value of $\tilde \epsilon_1$ which still has the same dependencies as in the statement of Lemma  \ref{TargetLemma}. Since by \emph{(3)} in the statement of Lemma  \ref{TargetLemma} $T(\epsilon_1) \to \infty$ as $\epsilon_1 \to 0_+$, we can make $\tilde \epsilon_1$ smaller if needed so as to ensure that 
\begin{align}
\label{ExistenceCriterion}
\min_{0 < \epsilon_1 \le \tilde \epsilon_1} T(\epsilon_1) \:\ge \min_{0 < h \le h_0}{\check R}(h).
\end{align} 
Note that $T(\epsilon_1)$ and ${\check R}(h)$ attain their minimum values above in view of the fact that $T(\epsilon_1)$ is continuous on $(0, \tilde \epsilon_1]$ by Lemma \ref{TargetLemma} and ${\check R}(h)$ is continuous on $(0, h_0]$ by Lemma \ref{rcheckcts} while $T(\epsilon_1) \to \infty$ as $\epsilon_1 \to 0_+$ and ${\check R}(h) \to \infty$ as $h \to 0_+$ by \emph{(3)} in the statement of Lemma  \ref{TargetLemma} and Lemma \ref{exthypradgoestoinfty} respectively.
We now define the function $h$ of the variable $\epsilon_1$ on the interval $(0, \tilde \epsilon_1]$ by setting, for each $0 < \epsilon_1 \le \tilde \epsilon_1$,  
\begin{equation}
\label{defofh1}
h(\epsilon_1) := \min\{h\in (0, h_0] \: : \:{\check R}(h)=T(\epsilon_1) \}.
\end{equation}
Note that in view of \eqref{ExistenceCriterion} above, again since ${\check R}$ is continuous and ${\check R}(h) \to \infty$ as $h \to 0_+$, using the intermediate value theorem, the set of which we are taking the minimum above will be non-empty, and so this function is well-defined. It also follows that the set $\{h\in (0, h_0] \: : \:{\check R}(h)=T(\epsilon_1) \}$ has a positive infimum which, by the continuity of 
${\check R}(h)$, is attained and is thus in fact a minimum and moreover
\begin{align}
\label{defofh2}
{\check R}(h(\epsilon_1)) = T(\epsilon_1).
\end{align}

The right hand side of \eqref{ExistenceCriterion} above depends only on $h_0$. Hence, the upper bound $\tilde \epsilon_1$ and the function $T$ from the statement of Lemma \ref{TargetLemma} will still only depend on $h_0$ and $r_0$ and the dependencies in the statement of that lemma thus remain unaltered. Lastly, we observe that this function $h$ above is then determined by $h_0$, and $r_0$ in view of \eqref{defofh1}.

By \emph{(1)} of the Target Lemma (Lemma \ref{TargetLemma}), for each $0 < \epsilon_1 \le \tilde \epsilon_1$, 
\begin{align}
\label{IntHypRadIneq}
R^{int}_{(\tilde V_{2h}, 0)}(\tilde V_{2h} \setminus {\hat N}) \ge T(\epsilon_1).
\end{align}
On the other hand, by \eqref{defofh2} above, in view of the definition of ${\check R}(h)$ given before Lemma \ref{rcheckcts},
\begin{align}
\label{ExtHypRadIneq}
R^{ext}_{(\tilde V_{2h(\epsilon_1)}, 0)}\tilde V_{h(\epsilon_1)} = T(\epsilon_1).
\end{align}
Thus, using \emph{(1)} of Corollary \ref{HyperbolicAvoidance}, if we set $X = \tilde V_{h(\epsilon_1)}$, $Y = \tilde V_{2h(\epsilon_1)} \setminus {\hat N}$, we have $\tilde V_{h(\epsilon_1)} \subset \tilde V_{2h(\epsilon_1)} \setminus {\hat N}$ (this latter set clearly being closed) and so we obtain \emph{(1)}.

Again by \emph{(3)} of the statement of Lemma \ref{TargetLemma}, $T(\epsilon_1)\rightarrow \infty$ as $\epsilon_1\rightarrow 0_+$. Lemma \ref{exthypradgoestoinfty}, together with the fact that ${\check R}$ is continuous in view of Lemma \ref{rcheckcts}, then ensure that the hypotheses of Lemma \ref{realanalysislemma} are met.  \eqref{defofh1} and Lemma \ref{realanalysislemma} then imply that $h(\epsilon_1)\rightarrow 0$ as $\epsilon_1 \to 0_+$ as desired, which proves \emph{(2)}. 
\end{proof}

As we remarked earlier, the Fitting Lemma will be essential for proving Phase II.  Basically, part \emph{(1)} of the statement says that, for each $0 < \epsilon_1 \le \tilde \epsilon_1$, the domain $\tilde V_{h(\epsilon_1)}$ `fits' inside ${\tilde V_{2h(\epsilon_1)}} \setminus {\hat N}$ which will allow us to apply the Polynomial Implementation Lemma which we will need to correct the error from the Phase I immediately prior to this. 
On the other hand, part \emph{(2)} of the statement says that $h(\epsilon_1) \rightarrow 0_+$ as $\epsilon_1 \rightarrow 0_+$ which, as we will see, is the key to controlling the loss of domain incurred by the correction of the error from the Phase I immediately prior to this and which is the purpose of Phase II.

Observe that getting $\tilde V_h$ to fit inside ${\tilde V_{2h}} \setminus {\hat N}$ as above is easier if the value of $h$ is \emph{large} while ensuring the loss of domain is small requires a value of $h$ which is \emph{small}. Indeed it is the tension between these competing requirements for $h$ which makes proving Phase II so delicate and why the Target and Fitting Lemmas are so essential.  Before we move on to the statement and proof of Phase II, we state one last technical lemma that will be of use to us later.

\begin{lemma}
\label{domainswallowlemma}
Let $D\subset \C$ be a bounded simply connected domain and let $z_0 \in D$. Then for all $\epsilon > 0$ there exists $R_\epsilon>0$ such that if $X$ is any set containing $z_0$ and contained in $D$ such that $R_{(D,z_0)}^{int}X>R_\epsilon$, then $\mathrm{d}(\partial X, \partial D) < \epsilon$.  
\end{lemma}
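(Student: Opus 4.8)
The plan is to derive the statement from two soft facts: the set of points of $D$ lying at Euclidean distance at least $\epsilon$ from $\partial D$ is a compact subset of $D$, and $z \mapsto \rho_D(0,z)$ is a continuous function on $D$ (recall $D$ is bounded, hence hyperbolic, so its hyperbolic density is continuous and strictly positive on $D$, and the induced distance is continuous).

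First I would fix, once and for all, the set $K_\epsilon := \{\, z \in D : \delta_D(z) \ge \epsilon \,\}$, where $\delta_D(z) = \mathrm{d}(z, \partial D)$. Since $\delta_D$ is continuous on $\C$ and $D$ is bounded, $K_\epsilon$ is compact, and since $\delta_D > 0$ precisely on $D$, we have $K_\epsilon \subset D$. If $K_\epsilon = \emptyset$ there is nothing to prove: then every point of $D$ — in particular every boundary point of any domain compactly contained in $D$ — is within Euclidean distance $\epsilon$ of $\partial D$, so the conclusion holds for any value of $R$. So I would assume $K_\epsilon \neq \emptyset$ and set
\[
R := \sup_{z \in K_\epsilon} \rho_D(0,z),
\]
which is finite since $\rho_D(0,\cdot)$ is continuous on $D$ and $K_\epsilon$ is a compact subset of $D$. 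The essential point is that $R$ is manufactured purely from $D$ and $\epsilon$, so it does not depend on the candidate domain $\tilde D$.

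To verify this choice works, I would take any simply connected $\tilde D$ compactly contained in $D$ with $R^{int}_{(D,0)}\tilde D > R$. Compact containment gives $\overline{\tilde D} \subset D$, so $\partial \tilde D$ is a nonempty compact subset of $D$ disjoint from the open set $\tilde D$; that is, $\partial \tilde D \subset D \setminus \tilde D$. Suppose for contradiction that $\mathrm{d}(\partial \tilde D, \partial D) \ge \epsilon$. Then $\delta_D(w) \ge \epsilon$ for every $w \in \partial \tilde D$, so $\partial \tilde D \subset K_\epsilon$; fixing any $w_0 \in \partial \tilde D$ we get $\rho_D(0,w_0) \le R$ by the definition of $R$. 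On the other hand $w_0 \in D \setminus \tilde D$, so by Definition \ref{intexthypraddef} and the hypothesis,
\[
\rho_D(0,w_0) \ge \inf_{z \in D \setminus \tilde D} \rho_D(0,z) = R^{int}_{(D,0)}\tilde D > R,
\]
a contradiction. Hence $\mathrm{d}(\partial \tilde D, \partial D) < \epsilon$, as required.

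I do not expect any genuine obstacle here; the lemma is elementary. The only points needing a little care are that $R$ must be chosen uniformly over all admissible $\tilde D$ (handled by building it from $K_\epsilon$ and $\rho_D$ alone), that $\partial \tilde D$ really lies in $D \setminus \tilde D$ so that the defining infimum of $R^{int}_{(D,0)}\tilde D$ bounds $\rho_D(0,\cdot)$ at its points (this is exactly where compact containment is used), and disposing of the harmless degenerate case $K_\epsilon = \emptyset$. If one preferred an explicit bound, Lemma \ref{lemma4.3} would let one estimate $\rho_D(0,\cdot)$ on $K_\epsilon$ in terms of $\diam D$ and $\epsilon$, but the compactness argument is shorter and entirely sufficient.
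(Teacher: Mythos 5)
Your proof is correct and takes essentially the same route as the paper's: both isolate the compact set $K_\epsilon = D_\epsilon = \{z \in D : \mathrm{d}(z,\partial D) \ge \epsilon\}$, choose $R$ as (an upper bound for) $\sup_{z \in K_\epsilon}\rho_D(0,z)$, and conclude that the hypothesis $R^{int}_{(D,0)}\tilde D > R$ pushes $\partial \tilde D$ off $K_\epsilon$. The only cosmetic difference is that you argue by contradiction while the paper unwinds the containments directly; your phrasing has the minor advantage of absorbing the ``why is the inequality strict'' step that the paper handles by invoking compactness of $\partial\tilde D$ at the end.
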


\begin{proof}
Define $D_{\epsilon}=\{z  \in D \: : \: \mathrm{d}(z,\partial D)\geq \epsilon/2   \}$. Since $D$ is bounded, $D_{\epsilon}$ is a compact subset of $D$ and we can find $R_{\epsilon} > 0$ such that $D_{\epsilon}\subset \Delta_D(z_0, R_{\epsilon})$.  Then, if $X$ is any set containing $z_0$ and  contained in $D$ such that $R_{(D, z_0)}^{int}X \ge R_\epsilon$, by the definition of internal hyperbolic radius (Definition \ref{intexthypraddef}), for every $z \in D \setminus X$ we have $\rho_D(z_0, z) \ge R_\epsilon$ . It then follows that for every $z \in \partial X \cap D = \partial (D \setminus X) \cap D$, we also have $\rho_D(z_0, z) \ge R_\epsilon$ from which it follows that $z \notin D_{\epsilon}$. Since $\partial X = (\partial X \cap D) \cup (\partial X \cap \partial D)$, it follows that $\partial X \subset \{z  \in \overline D \: : \: \mathrm{d}(z,\partial D)< \epsilon/2   \}$ and from  the compactness of the bounded set $\partial X \subset \overline D$ we get $\mathrm{d}(\partial X, \partial D) \le \epsilon/2 < \epsilon$ as desired.
\end{proof}

\section{Statement and Proof of Phase II}

Recall the scaling factor $\kappa\geq 1$ and upper bound $h_0$ on the value of the Green's function from the statement of Lemma \ref{hypradv2hbddbelow}. Recall also the bounds $0 < r_0 < R_0 \le \tfrac{\pi}{2}$ for $R$ and that the upper bound of $\tfrac{\pi}{2}$ was chosen in the discussion before the Target Lemma (Lemma \ref{TargetLemma}) so that $U_R$ as well as its image under any conformal mapping whose domain of definition contains $U$ is star-shaped (about the appropriate image of $0$).

\begin{lemma}
\label{PhaseII}
(Phase II) Let $\kappa$, $h_0$, $r_0$, and $R_0$ be fixed as above. Then there exist an upper bound ${\tilde \epsilon_1}>0$ and a function $\delta:(0, \tilde \epsilon_1]  \rightarrow (0,\frac{r_0}{4})$, with $\delta(x)\rightarrow 0$ as $x\rightarrow 0_+$, both of which are determined by $h_0$, $r_0$, and $R_0$ such that, for all $\epsilon_1\in (0,{\tilde \epsilon_1}]$, there exists an upper bound ${\tilde \epsilon_2}>0$, determined by $\epsilon_1$, $h_0$, and $r_0$, $R_0$, such that, for all $\epsilon_2\in (0,{\tilde \epsilon_2}]$, all $R \in [r_0, R_0]$, and all functions $\calE$ univalent on $U_R$ with $\calE(0)=0$ and $\rho_U(\calE(z),z)<\epsilon_1$ for $z \in U_R$, there exists a $(17+\kappa)$-bounded composition ${\bf Q}$ of quadratic polynomials which depends on $\kappa$, $\epsilon_1$, $\epsilon_2$, $R$, $h_0$, $r_0$, $R_0$, and ${\mathcal E}$ such that 

\begin{itemize}

\item[i)] ${\bf Q}$ is univalent on a neighborhood of $\overline{U}_{R-\delta(\epsilon_1)}$,

\item[ii)] For all $z\in \overline U_{R-\delta(\epsilon_1)}$, we have 
\begin{align*}
\rho_U({\bf Q}(z),\calE(z))<\epsilon_2,
\end{align*} 

\item[iii)] ${\bf Q}(0)=0$.

\end{itemize}

\end{lemma}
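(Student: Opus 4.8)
The plan is to correct the error function $\calE$ by a three-stage composition which I will informally call ``down--during--up''. The obstruction to applying the Polynomial Implementation Lemma (Lemma \ref{PIL}) directly is that $\calE$ is defined only on $U_R \subset U$, whereas the PIL requires a function defined on a Jordan domain containing the whole filled Julia set $\K$. The key device, as sketched in the introduction, is to conjugate $\calE$ into the conformal coordinates of $V_{2h}$, where $\tilde V_{2h}$ is a genuine round disc, and to realize the resulting conjugating maps as (approximate) polynomial compositions via the PIL, controlling the inevitable loss of domain through the Target and Fitting Lemmas.

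First I would use the Fitting Lemma (Lemma \ref{fittinglemma}) and the Target Lemma (Lemma \ref{targetlemma}): given $\epsilon_1$, choose $h = h(T(\epsilon_1)) \le h_0$ so small that $\tilde V_h$ fits strictly inside $\tilde V_{2h} \setminus \hat N$, where $\hat N$ is a $3\epsilon_1$-hyperbolic neighbourhood in $U$ of $\partial \tilde V_{2h}$; since $h \to 0_+$ as $\epsilon_1 \to 0_+$, by Lemma \ref{v2htoucar} and Lemma \ref{domainswallowlemma} this forces $\mathrm d(\partial \tilde V_{2h}, \partial U)$, and hence the hyperbolic loss of domain, to be small — this is what will define $\delta(\epsilon_1)$ with $\delta(\epsilon_1) \to 0_+$, and it must be arranged that $\delta(\epsilon_1) < \tfrac{r_0}{4}$. \textbf{Down:} apply the PIL with the inverse branch of $P^{\circ n_k}$ on $U$ fixing $0$ (equivalently, use the conformal map $\phi_{2h}$ read through $\psi$, $\psi_{2h}$ after the scaling) to get a bounded polynomial composition $\mathbf{Q}_1$ approximating, on (a neighbourhood of) $\overline{U_{R-\delta}}$, the map that transports $U_R$-coordinates into the $\tilde V_{2h}$-coordinates; here the hyperbolic derivative bound needed for the PIL comes from the Schwarz lemma and the fact that $R_0 \le \tfrac{\pi}{2}$. \textbf{During:} the conjugated error $\phi_{2h}^{-1} \circ (\text{extension of }\calE) \circ \phi_{2h}$ is now univalent on a domain containing $\tilde V_h \supset \K$-image and is $\epsilon_1$-close to the identity; interpolate it between the curves $\partial\tilde V_h$ and $\partial\tilde V_{2h}$ exactly as in the setup of the PIL (this is the single place a Löwner chain enters, to write the close-to-identity map as a composition of maps each close to the identity), obtaining a bounded composition $\mathbf{Q}_2$ implementing the corrected error in $\tilde V_{2h}$-coordinates to accuracy $\epsilon_2$. \textbf{Up:} apply the PIL once more to a dilation of $\tilde V_{2h}$ about $0$ by a factor slightly exceeding $1$; since $\tilde V_{2h}$ is round in the conformal coordinates of $V_{2h}$ and $\phi_{2h}(\tilde V_{2h})$, being the image under a univalent map of a disc of hyperbolic radius $< \tfrac{\pi}{2}$, is star-shaped (Lemma \ref{starshaped}), this enlarged domain covers $U_{R-\delta}$ — giving a bounded composition $\mathbf{Q}_3$ that undoes the conjugacy. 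Set $\mathbf{Q} = \mathbf{Q}_3 \circ \mathbf{Q}_2 \circ \mathbf{Q}_1$; each factor fixes $0$ and is $(17+\kappa)$-bounded by Lemma \ref{17plusk}, so iii) and boundedness are immediate, while univalence on a neighbourhood of $\overline{U_{R-\delta(\epsilon_1)}}$ follows by tracking the three domains, giving i).

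For ii) I would chain the estimates: by the triangle inequality and the hyperbolic M--L estimates (Lemma \ref{hyperbolicML}), $\rho_U(\mathbf{Q}(z), \calE(z))$ is bounded by the PIL errors of $\mathbf{Q}_1$, $\mathbf{Q}_2$, $\mathbf{Q}_3$ each amplified by the (uniformly bounded, thanks to the $R_0 \le \tfrac\pi2$ star-shapedness and the bounded-containment of $\tilde V_{2h}$ in $V_{2h}$ from Lemma \ref{hypradv2hbddbelow}) hyperbolic Lipschitz constants of the remaining factors and of $\calE$; choosing $\tilde\epsilon_2$ small (depending on $\epsilon_1$, $\kappa$, $h_0$, $r_0$, $R_0$, since $h$ and $\delta$ already depend on these) makes the total $< \epsilon_2$. \textbf{The main obstacle} I anticipate is the ``up'' step: one must ensure the slightly-expanded copy of $\tilde V_{2h}$ genuinely swallows $U_{R-\delta}$ and that the conjugating map $\mathbf{Q}_3$ can be taken univalent on a full neighbourhood of $\overline{U_{R-\delta(\epsilon_1)}}$ with controlled hyperbolic derivative — this is precisely where the star-shapedness of Siegel-disc images (forced by the constraint $R_0 \le \tfrac{\pi}{2}$) and the lower bound $\rho_0$ on $R_{(V_{2h},0)}\tilde V_{2h}$ from Lemma \ref{hypradv2hbddbelow} are indispensable, and bookkeeping the resulting domain loss so that it is absorbed into a single $\delta(\epsilon_1) < \tfrac{r_0}{4}$ independent of $\epsilon_2$ and of $R \in [r_0,R_0]$ is the delicate part.
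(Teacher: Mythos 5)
Your proposal correctly identifies the high-level strategy — conjugate $\calE$ into coordinates where its support can be pushed outside $\K$, apply the Polynomial Implementation Lemma, and undo the conjugation — and you correctly bring in the Target and Fitting Lemmas, the star-shapedness constraint $R_0 \le \tfrac{\pi}{2}$, and the idea of decomposing the dilation into many small dilations each defined on a neighbourhood of $\K$. However, you have the direction of the conjugation reversed, and this is a fatal flaw because it is precisely the difficulty the conjugation is meant to resolve.

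You write the conjugated error as $\phi_{2h}^{-1} \circ \calE \circ \phi_{2h}$. Since $\phi_{2h}$ maps $\tilde V_{2h}$ onto $V_{2h}$ and $\calE$ is defined only on $U_R$, this composition is defined only for $z$ with $\phi_{2h}(z) \in U_R$, i.e. on $\phi_{2h}^{-1}(U_R) \subset \tilde U \subset U$ — a domain contained in the interior of $\K$. It certainly does not contain $\tilde V_h$ as you assert (since $V_h \supset \K \supset U \supset U_R$, one has $\tilde V_h = \phi_{2h}^{-1}(V_h) \not\subset \phi_{2h}^{-1}(U_R)$). More importantly, it does not contain $\K$, so the Polynomial Implementation Lemma cannot be applied to it: the PIL requires the function to be univalent on a Jordan domain $\Omega$ with $\K \subset \Omega$, and interpolation must happen between curves $\gamma, \Gamma$ lying \emph{outside} $\K$. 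Your proposed interpolation between $\partial \tilde V_h$ and $\partial \tilde V_{2h}$ — both inside the Siegel disc $U$ — is not admissible: pulling back the resulting Beltrami coefficient by $P^{\circ N}$ will not shrink its support to zero measure, because $P$ preserves $U$ and acts as a rotation there, so the support just rotates around without escaping to the Julia set. The correct conjugation is $\hat\calE = \phi_{2h} \circ \calE \circ \phi_{2h}^{-1}$: here $\phi_{2h}^{-1}$ maps $V_h$ into $\tilde V_h \subset U_R$ where $\calE$ is defined, and the Target Lemma ensures $\calE(\tilde V_h) \subset \tilde V_{2h}$ so the outer $\phi_{2h}$ applies; thus $\hat\calE$ is univalent on a neighbourhood of $\overline{V_h} \supset \K$ and $(\hat\calE, \mathrm{Id})$ is admissible on $(\Gamma_h, \Gamma_{2h})$, the Green's lines outside $\K$. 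Consequently the step order is up ($\approx \phi_{2h}$), during ($\approx \hat\calE$), down ($\approx \phi_{2h}^{-1}$), not down–during–up; the L\"owner-chain decomposition into small dilations belongs to the \emph{first} factor $\mathbf{Q}_1 \approx \phi_{2h}$, not the last. Your aside that $\mathbf{Q}_1$ is obtained by ``applying the PIL with the inverse branch of $P^{\circ n_k}$'' also misreads the PIL: the inverse branch of $P$ is internal to the PIL's construction (pullbacks of the Beltrami data), not the target function being approximated.
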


Because we will be using the Polynomial Implementation Lemma repeatedly to construct our polynomial composition, we need to interpolate functions outside of $\calK$, the filled Julia set for $P$.  Indeed, as we saw in the Polynomial Implementation Lemma (Lemma \ref{PIL}), the solutions to the Beltrami equation converge to the identity precisely because the supports of the Beltrami data become small in measure.  However, $\calE$ is only defined on a subset of $U$ and hence we will need to map a suitable subset of $U$ on which $\calE$ is defined to a domain which contains $\calK$, and correct the conjugated error using the Polynomial Implementation Lemma.  The trick to doing this is that we choose our subset of $U$ such that the mapping to blow this subset up to $U$ can be expressed as a high iterate of a map which is defined on the whole of the Green's domain $V_h$, not just on this subset.  This will allow us to interpolate outside $\calK$.  Further, we will then use the Polynomial Implementation Lemma twice more to `undo' the conjugating map and its inverse.  

The two key considerations in the proof are controlling loss of domain (which is measured by the function $\delta$ in the statement above), and showing that the error in our polynomial approximation to the function $\calE$ (measured by the quantity $\epsilon_2$ above) is mild and in particular can be made as small as desired. In controlling loss of domain, one main difficulty will arise in converting between the hyperbolic metrics of different domains, $U$ and $V_{2h}$ and we will deal with this by means of the convergence of the pointed domains $(V_{2h},0)$ to $(U,0)$ in the \Car topology as $h$ tends to zero. One last thing is worth mentioning: since this result involves many functions and quantities which depend on one another, the interested reader is encouraged to make use of the dependency tables in the appendices to help keep track of them.

\afterpage{\clearpage}

\begin{figure}[htbp]
  \centering
  \vspace{0cm}
\includegraphics[scale=0.362]{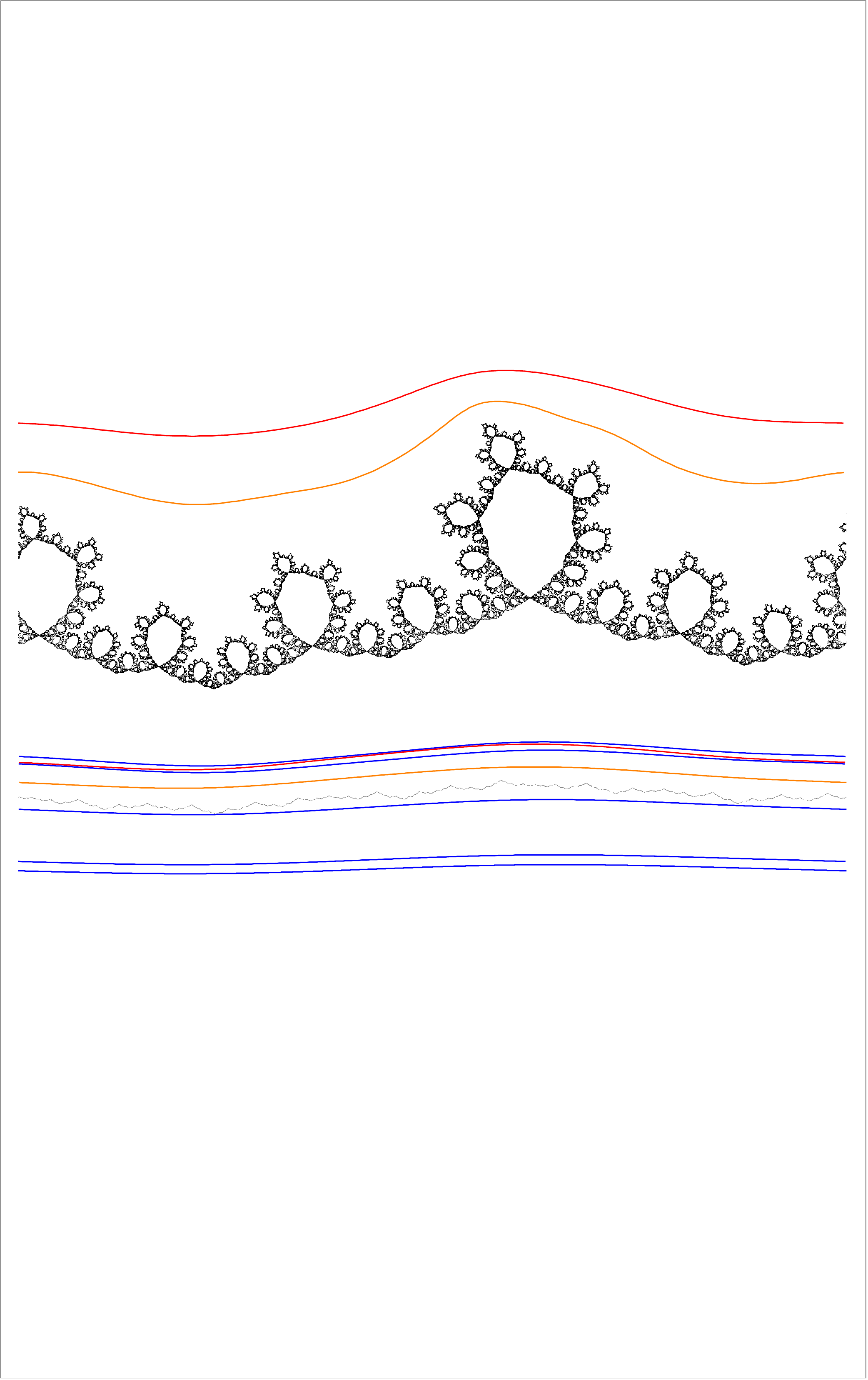}
\caption{The Setup for Phase II in Rotated Logarithmic Coordinates}
  \vspace{0.6cm}

    \unitlength1cm
\begin{picture}(0.01,0.01)
  \put(5.5,16.52){\color{red}\scriptsize$\partial V_{2h}$  }
  \put(-6.05,15.76){\color{orange}\scriptsize$\partial V_{h}$  } 
  \put(5.75,12.65){\scriptsize $\partial U$  } 
  \put(-6.05,11.6){\color{darkblue}\scriptsize $\partial U_R$,  $\partial U_{R'}$} 
  \put(5.5,11.64){\color{red}\scriptsize $\partial \tilde V_{2h}$}
  \put(-6.05,10.35){{\color{orange}\scriptsize $\partial \tilde V_{h}$,}  {\color{gray}\scriptsize $\partial \tilde U$  } }
  \put(5.5,10.43){\color{darkblue}\scriptsize$\partial U_{R''}$  }
  \put(-6.05,10.05){\color{darkblue}\scriptsize $\partial U_{R''-4\epsilon_1}$  }  
  \put(4.9,9.5){\color{darkblue}\scriptsize $\partial U_{R''-5\epsilon_1}$  }  
  \put(-0.1,6.5){\LARGE $U$  } 
 \end{picture}
\end{figure}

\begin{proof}
{\bf Ideal Loss of Domain:}

The techniques for controlling loss of domain will be the Fitting Lemma, and again the fact that $(V_{2h},0)\rightarrow (U,0)$ in the \Car topology as $h$ tends to zero combined with the Target Lemma.  As stated above, we will apply the Polynomial Implementation Lemma to our conjugated version of $\calE$ which will be $\phi_{2h}\circ \calE\circ \phi_{2h}^{-1}$ in what we call the `During' portion of the error calculations. To approximate $\calE$ itself rather than this conjugated version, we then wish to `cancel' the conjugacy, so `During' is bookended by `Up' and `Down' portions, in which we apply the Polynomial Implementation Lemma to get polynomial compositions which are arbitrarily close to $\phi_{2h}$ and $\phi_{2h}^{-1}$, respectively, on suitable domains.  

We begin the proof of Phase II by considering `Ideal Loss of Domain.'  In creating polynomial approximations using Phase I (Lemma \ref{PhaseI}), errors will be created which will have an impact on the loss of domain which occurs.  We first describe the loss of domain that is forced on us before this error is taken into account.  During what follows, the reader might find it helpful to consult Figure 4 below where most of the relevant domains are shown in rotated logarithmic coordinates where the up direction corresponds to increasing distance from the fixed point for $P$ at $0$.

We first turn our attention to controlling loss of domain. Let $R \in [r_0, R_0]$ be arbitrary as in the statement (we consider $R$ for now as varying over the whole of the interval $[r_0, R_0]$ and will fix an (arbitrary) value of $R$ later at the start of the `up' portion of the proof).  Recall the discussion before the statement of Lemma \ref{hypradv2hbddbelow} where we let $\psi:U\rightarrow \D$ be the unique normalized Riemann map from $U$ to $\D$ satisfying $\psi(0)=0$, $\psi'(0)>0$. Recalling the upper bound $h_0$ for the value of the Green's function $G$ for $P$, for $h \in (0, h_0]$ arbitrary, let $\psi_{2h}:V_{2h}\rightarrow \D$ be the unique normalized Riemann map from $V_{2h}$ to $\D$ satisfying $\psi_{2h}(0)=0$, $\psi_{2h}'(0)>0$. Recall also that we had ${\tilde R}=R_{(V_{2h},0)}^{int}U_R$, ${\tilde V_{2h}}=\Delta_{V_{2h}}(0,{\tilde R})$ and $\phi_{2h}:{\tilde V_{2h}}\rightarrow V_{2h}$ which was the unique conformal map from ${\tilde V_{2h}}$ to $V_{2h}$ normalized so that $\phi_{2h}(0)=0$ and $\phi_{2h}'(0)>0$. Now define $R'=R_{(U,0)}^{int}{\tilde V_{2h}}$ and note that the value of this quantity is completely determined by those of $R$ and $h$. We prove the following claim: \label{Rdash}

\begin{claim}
\label{PhaseIIClaim1}
$R-R'\rightarrow 0$ uniformly on $[r_0, R_0]$ as $h \rightarrow 0_+$.
\end{claim}

\begin{claimproof}
By Lemma \ref{v2htoucar}, $(V_{2h},0)\rightarrow(U,0)$ in the \Car topology as $h\rightarrow 0_+$ and thus $\psi_{2h}$ converges locally uniformly to $\psi$ on $U$ in view of Theorem \ref{theorem1.2}.  

Let $\{h_n\}_{n=1}^\infty$ be an arbitrary sequence of positive numbers such that $h_n \rightarrow 0$ as $n \rightarrow \infty$.  By the definitions of ${\tilde V_{2h}}$ and $R'$ and Lemma \ref{equivalenthypradformulation}, there exists $w_{h_n,1} \in \partial{\tilde V_{2h_n}}\cap \partial U_R$ and $w_{h_n,2} \in \partial{\tilde V_{2h_n}}\cap \partial U_{R'}$.  Let $0<s,s'_n,s_n''<1$ be such that $\psi(\partial U_R)=\mathrm{C}(0,s)$, $\psi(\partial U_{R'})=\mathrm{C}(0,s_n')$, and $\psi_{2h_n}(\partial {\tilde V_{2h_n}})=\mathrm{C}(0,s_n'')$.  

Let $\epsilon_0>0$. By the local uniform convergence of $\psi_{2h_n}$ to $\psi$ on $U$, there exists $n_0$, such that for all $n \ge n_0$ 
\begin{align*}
|\psi_{2h_n}(z) - \psi(z)|<\frac{\epsilon_0}{2}, \quad z \in \overline U_{R_0}.
\end{align*}

Thus, for any $n \ge n_0$ and any $R \in [r_0, R_0]$, 
\begin{align*}
|s - s_n''| &= \left ||\psi(w_{h_n,1})| -  |\psi_{2h_n}(w_{h_n,1})| \right| \le |\psi(w_{h_n,1}) - \psi_{2h_n}(w_{h_n,1})| <\frac{\epsilon_0}{2},\\
|s_n'' - s_n'| &= ||\psi_{2h_n}(w_{h_n,2})| - | \psi(w_{h_n,2})|| \le |\psi_{2h_n}(w_{h_n,2}) - \psi(w_{h_n,2})| <\frac{\epsilon_0}{2}
\end{align*}
whence 
\begin{align*}
|s - s_n'| < \epsilon_0.
\end{align*}
Since the sequence $\{h_n\}_{n=1}^\infty$ was arbitrary, the desired uniform convergence then follows on applying the conformal invariance of the hyperbolic metric under $\psi^{-1}$. \end{claimproof}

Now define the {\it Internal Siegel disc}, \label{intsiegeldisc}${\tilde U}:=\phi_{2h}^{-1}(U)$, and set $R''=R_{(U,0)}^{int}{\tilde U}$, noting again that the value of this quantity is completely determined by those of $R$ and $h$.  Next, we show \label{Rdoubledash}

\begin{claim} 
\label{PhaseIIClaim2}
$R-R''\rightarrow 0$  uniformly on $[r_0, R_0]$ as $h \rightarrow 0_+$. 
\end{claim}

\begin{claimproof}
First we show $R_{(V_{2h},0)}^{int}U\rightarrow \infty$ as $h \rightarrow 0_+$ (note that this convergence will be trivially uniform with respect to $R$ on $[r_0, R_0]$ as there is no dependence on $R$). Fix $R_1>0$ and set $X:=U_{R_1}$ and $Y:=U_{R_1+1}$. Then $\psi(X)=\Delta_{\D}(0,R_1)$ and $\psi(Y)=\Delta_{\D}(0,R_1+1)$.  As ${\overline \Delta_{\D}(0,R_1)}\subset \Delta_{\D}(0,R_1+1)$, let $\eta=\mathrm{d}(\partial\Delta_{\D}(0,R_1),\partial\Delta_{\D}(0,R_1+1))>0$.  Now let $z \in \partial Y$ and $w \in \Delta_{\D}(0,R_1)$.  We have that $(V_{2h},0)\rightarrow(U,0)$ as $h\rightarrow 0_+$ in view of Lemma \ref{v2htoucar}, so, by Theorem \ref{theorem1.2}, we have that $\psi_{2h}$ converges to $\psi$ uniformly on compact subsets of $U$ in the sense given in Definition \ref{ContCon}.  Then, for all $h$ sufficiently small, we have 
\begin{align*}
|(\psi_{2h}(z)-w)-(\psi(z)-w)|&= |\psi_{2h}(z)-\psi(z)|\\
&<\frac{\eta}{2} \\
&<\eta \\
&\leq|\psi(z)-w|.
\end{align*}
Thus by Rouch\'e's theorem, since the convergence is uniform and $w \in \Delta_{\D}(0,R_1)$ was arbitrary, $\Delta_{\D}(0,R_1)\subset\psi_{2h}(Y)$.  Then $\psi_{2h}^{-1}(\Delta_{\D}(0,R_1))\subset Y$, so $R^{int}_{(V_{2h},0)}Y\geq R_1$.  We also have that $Y\subset U$ so $R^{int}_{(V_{2h},0)}U\geq R^{int}_{(V_{2h},0)}Y$, and thus $R^{int}_{(V_{2h},0)}U\geq R_1$.  Since $R_1$ was arbitrary, we do indeed have that $R_{(V_{2h},0)}^{int}U\rightarrow \infty$ as $h \rightarrow 0_+$.

For a constant $c > 0$ and a set $X \subset \C$, define the scaled set $cX:= \{z \in \mathbb{C} \: : \: z=cw \text{ for some }w\in X \}$. \label{r2hup} Let $0 < r_{2h} < 1$ be such that $\psi_{2h}({\tilde V_{2h}})=\mathrm{D}(0,r_{2h})$. The quantity $r_{2h}$ then depends on $r_0$, $R_0$, $R$, and $h$ (and thus ultimately on $\epsilon_1$, $R$, $h_0$, $r_0$, and $R_0$ once we make our determination of the function $h = h(\epsilon_1)$ immediately before \eqref{LossofDomain} below), clearly $\frac{1}{r_{2h}}\psi_{2h}({\tilde V_{2h}})=\D$.

By conformal invariance, 
 \begin{align*}
 R_{(\frac{1}{r_{2h}}\psi_{2h} ({\tilde V_{2h}}),0)}^{int}\left (\frac{1}{r_{2h}}\psi_{2h}({\tilde U})\right ) =  R_{(\psi_{2h} ({\tilde V_{2h}}),0)}^{int}(\psi_{2h}({\tilde U})) = R_{({\tilde V_{2h}},0)}^{int}{\tilde U} =R_{(V_{2h},0)}^{int}U .
 \end{align*}
 As $R_{(V_{2h},0)}^{int}U\rightarrow \infty$ as $h \rightarrow 0_+$ from above, it follows that, uniformly on $[r_0, R_0]$,
\begin{align*}
R_{(\frac{1}{r_{2h}}\psi_{2h} ({\tilde V_{2h}}),0)}^{int} \left (\frac{1}{r_{2h}}\psi_{2h}({\tilde U}) \right) =  R_{(V_{2h},0)}^{int}U\rightarrow \infty \quad \mbox{as} \:h \rightarrow 0_+.
\end{align*}

\begin{figure}[htb]

  \includegraphics[width=12.52cm]{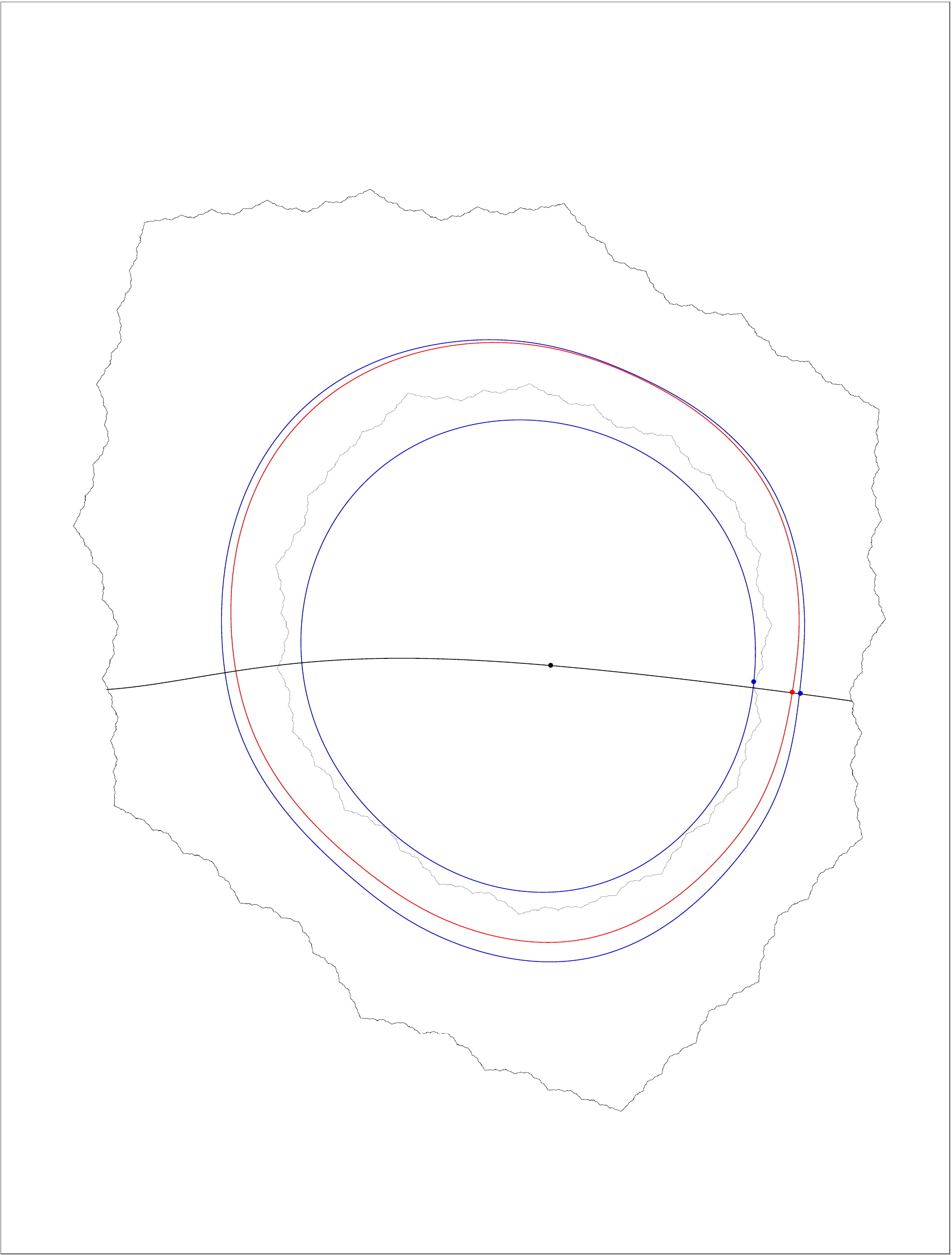}
\begin{picture}(0.01,0.01)
        \put(-210, 228){\footnotesize $\gamma$}
        \put(-159,223.5){\footnotesize $0$}
        \put(-84,216){\color{darkblue}\footnotesize $z$}
        \put(-74,214){\color{red}\footnotesize $w_h$}
         \put(-58,211.5){\color{darkblue}\footnotesize $w$}
     \put(-208,327){\color{red}$\partial \tilde V_{2h}$}
       \put(-221,169){\color{darkblue}$\partial U_{R''}$}
        \put(-264,151){\color{darkblue}$\partial U_{R}$}
      \put(-261,274){\color{gray}$\partial \tilde U$}
       \put(-275,345){\LARGE $U$}
 \end{picture}
\caption{Showing $R - R'' \rightarrow 0$ as $h \to 0_+$}
\end{figure}

We can then apply Lemma \ref{domainswallowlemma} to conclude using $\frac{1}{r_{2h}}\psi_{2h}({\tilde V_{2h}})=\D$ that on letting $h \to 0_+$ we have 
\begin{align*}
\mathrm{d}\left (\partial \left (\frac{1}{r_{2h}}\psi_{2h}({\tilde U})\right ),\partial \left (\frac{1}{r_{2h}}\psi_{2h}({\tilde V_{2h}})\right )\right ) = \mathrm{d}\left (\partial \left (\frac{1}{r_{2h}}\psi_{2h}({\tilde U})\right ),\partial \D\right )\rightarrow 0 
\end{align*}
where again the convergence is uniform on $[r_0, R_0]$. Thus, scaling by $r_h$, we have, again uniformly on $[r_0, R_0]$,
\begin{equation}
\label{boundaryclose}
\mathrm{d}(\partial (\psi_{2h}({\tilde U})),\partial (\psi_{2h}({\tilde V_{2h}})))\rightarrow 0_+\text{ as }h \rightarrow 0_+.
\end{equation}
We observe that, since $r_h$ depends on $R$, this is the first time when the convergence being uniform on $[r_0, R_0]$ is not entirely trivial.

Further, using the Schwarz lemma for the hyperbolic metric (\cite{CG} Theorem I.4.1 or I.4.2), we have that 
\begin{equation}
\label{uniformcontainment}
\psi_{2h}({\tilde U})\subset \psi_{2h}({\tilde V_{2h}})\subset \psi_{2h}(U_R)\subset \psi_{2h}( U_{\tfrac{\pi}{2}})\subset\psi_{2h}(\Delta_{V_{2h}}(0,\tfrac{\pi}{2})) = \Delta_{\D}(0,\tfrac{\pi}{2}),
\end{equation}
(where we use the Schwarz Lemma for the hyperbolic metric for the last inclusion) which shows that both $\psi_{2h}({\tilde U})$ and $\psi_{2h}({\tilde V_{2h}})$ lie within distance $\tfrac{\pi}{2}$ of $0$ within $\D$.  Since $\psi_{2h}^{-1}$ converges to $\psi^{-1}$ uniformly on compact subsets of $\D$ by Theorem \ref{theorem1.2}, using \eqref{boundaryclose}, \eqref{uniformcontainment}, we have that $\mathrm{d}(\partial {\tilde U},\partial {\tilde V_{2h}})\rightarrow 0_+$  uniformly on $[r_0, R_0]$ as $h \rightarrow 0_+$.  Using Lemma \ref{lemma4.3} and the fact that $\tilde U \subset \tilde V_{2h} \subset U_R \subset \Delta_U(0, \tfrac{\pi}{2})$, we see that we can say the same for distances with respect to the hyperbolic metric for $U$ and that 
\begin{align*}
\rho_{U}(\partial {{\tilde U}},\partial {\tilde V_{2h}})\rightarrow 0 \quad \mbox{as} \:\: h \rightarrow 0_+ 
\end{align*}
uniformly on $[r_0, R_0]$.

Fix $\epsilon_0>0$. Using Lemma \ref{equivalenthypradformulation}, pick $z \in \partial {\tilde U}$ such that $\rho_U(0,z)=R''$.  From above, for all $h$ sufficiently small, we can pick $w_h \in \partial {\tilde V_{2h}}$ such that
 \begin{align*}
 \rho_U(z,w_h)<\frac{\epsilon_0}{2}
\end{align*}
(for any $R \in [r_0, R_0]$).  Now let $\gamma$ be the unique geodesic in $U$ passing through $0,w_h$.  As $\gamma$ must eventually leave $U_R$, let $w$ be the first point on $\gamma \cap \partial U_R$ after we pass along $\gamma$ from $0$ to $w_h$.  Then $0$, $w_h$, and $w$ are on the same geodesic and $w_h$ is on the hyperbolic segment $\gamma_U[0,w]$ in $U$ from $0$ to $w$. We now have $\rho_U(0,w)=R$ and $\rho_U(0,w_h)\geq R' = R_{(U,0)}^{int}{\tilde V_{2h}}$ using Lemma \ref{equivalenthypradformulation}.  Then, since $w_h \in \gamma_U[0,w]$, using our Claim \ref{PhaseIIClaim1} above, we have, uniformly on $[r_0, R_0]$,
\begin{align*}
\rho_U(w,w_h)&=\rho_U(0,w)-\rho_U(0,w_h) \\
&\leq R-R' \\
&<\frac{\epsilon_0}{2}
\end{align*}
for $h$ sufficiently small.  Further, we have 
\begin{align*}
R-R'' &= \rho_U(0,w)-\rho_U(0,z) \\
&\leq \rho_{U}(z,w)\\
&\leq \rho_U(z,w_h) + \rho_U(w_h,w) \\
&<\frac{\epsilon_0}{2}+\frac{\epsilon_0}{2} \\
&=\epsilon_0
\end{align*}
for $h$ sufficiently small, and thus $R-R''\rightarrow 0$  as $h\rightarrow 0_+$ while this convergence is uniform on $[r_0, R_0]$ as desired. \hfill \end{claimproof}

By the Fitting Lemma (Lemma \ref{FittingLemma}), there exist $\tilde \epsilon_1 > 0 $ and a function $h$ defined on $(0, \tilde \epsilon_1]$, both of which depend on $h_0$, $r_0$ which we fixed before the statement for which we have (by \emph{(2)} of the statement of this result) that \label{defofh3}
\begin{align}
\label{LossofDomain}
h(\epsilon_1) \rightarrow 0_+ \quad \mbox{as} \: \epsilon_1\rightarrow 0_+.
\end{align}
From this, using using this function and Claim \ref{PhaseIIClaim2} we have that
 \begin{align}
\label{R''0}
R-R''\rightarrow 0 \text{ as }\epsilon_1\rightarrow 0_+ 
\end{align}
while this convergence is uniform on $[r_0, R_0]$.

To conclude this chapter we make our final determination of the upper bound $\tilde \epsilon_1$ and define the function $\delta$ on $(0, \tilde \epsilon_1]$. Using the value of $\tilde \epsilon_1$ above which comes from Lemma \ref{FittingLemma}, for $\epsilon_1 \in (0, \tilde \epsilon_1]$, set 
\begin{align}
\label{deltadef}
\delta(\epsilon_1) := \sup_{[r_0, R_0]}{(R-R'')}+5\epsilon_1
\end{align}
(the justification for this definition of this function will be made clear later). Note that, in view of the above dependencies of the function $h$ on $h_0$ and $r_0$, the function $\delta$ then depends on $h_0$ and the bounds $r_0$, $R_0$ for $R$ (but not on $R$ itself), all of which we regard as fixed in advance. 
It follows from \eqref{R''0} that $\delta(\epsilon_1) \rightarrow 0$ as $\epsilon_1\rightarrow 0_+$ (we remark that this is the point where we require that the convergence above be uniform). We can then make \label{tildeepsilon1}${\tilde \epsilon_1}$ smaller if needed such that
\begin{align}
\label{deltabound}
\sup_{(0, \tilde \epsilon_1]} \delta(\epsilon_1)<\frac{r_0}{4}
\end{align}
which ensures that $U_{R-\delta(\epsilon_1)}\neq \emptyset$. Note from above that $\tilde \epsilon_1$ will therefore also depend on $h_0$ and the bounds $r_0$, $R_0$ for $R$ (in other words, we pick up an extra dependency on $R_0$ from the definition of the value $\delta(\epsilon_1)$ in \eqref{deltadef}), so that we now have the correct dependencies for both the quantity $\tilde \epsilon_1$ and the function $\delta: (0, \tilde \epsilon_1] \mapsto (0, \tfrac{r_0}{4})$ as in the statement. Note in addition that this change in $\tilde \epsilon_1$ may require us to redefine the function $h$ above by restricting its domain of definition. 
Note that restricting $\tilde \epsilon_1$ in this way will not violate \eqref{ExistenceCriterion} in the proof of the Fitting Lemma (Lemma \ref{FittingLemma}) so that we can still define $h(\epsilon_1)$ according to \eqref{defofh1} in the proof and in particular \eqref{defofh2} still holds. In addition, because the function $\delta$ depends on $h_0$, $r_0$, and $R_0$, the redefined function $h$ now depends on $h_0$, $r_0$ and $R_0$ as in the statement. Lastly, note that in particular \eqref{deltabound} implies that 
\begin{equation}
\label{innerdiscnottoosmall}
U_{R - \delta} \supset U_{3r_0/4}.
\end{equation}

{\bf Controlling Error: `Up'} 

Now fix $\epsilon_1 \in (0, \tilde \epsilon_1]$, $h = h(\epsilon_1)$ using the function $h$ introduced before \eqref{LossofDomain} above, and also fix $R \in [r_0, R_0]$ as in the statement. \label{up} Recall from the discussion before Lemma \ref{hypradv2hbddbelow} that we had ${\tilde R}=R_{(V_{2h},0)}^{int}U_R$, ${\tilde V_{2h}}=\Delta_{V_{2h}}(0,{\tilde R})$ and $\phi_{2h}:{\tilde V_{2h}}\rightarrow V_{2h}$ which was the unique conformal map from ${\tilde V_{2h}}$ to $V_{2h}$ normalized so that $\phi_{2h}(0)=0$ and $\phi_{2h}'(0)>0$. Recall also that $\psi_{2h}$ is the unique normalized Riemann map which sends $V_{2h}$ to $\D$. Since $\tilde V_{2h}$ is a hyperbolic disc about $0$ in $V_{2h}$ it follows that, in the conformal coordinates of $V_{2h}$, $\phi_{2h}$ is then a dilation of ${\tilde V_{2h}}$.  To estimate the error in approximating $\phi_{2h}$, we wish to break this dilation into many smaller dilations, and apply the Polynomial Implementation Lemma (Lemma \ref{PIL}) so as to approximate each of these small dilations with a polynomial composition. The key idea here is that conformal dilations by small amounts can have larger domains of definition and, by dilating by a sufficiently small amount, we can ensure this domain of definition includes the filled Julia set and indeed all of the Green's domain $V_h$ which ultimately allows us to apply the Polynomial Implementation Lemma to approximate it to an arbitrarily high degree of accuracy. 

As before, let $r_{2h}\in (0,1)$ be such that $\psi_{2h}({\tilde V_{2h}})=\mathrm{D}(0,r_{2h})$ and recall that $r_{2h}$ depends on $\epsilon_1$, $R$, $h_0$, $r_0$, and $R_0$. Pick \label{sup}$s \in (0,1)$ so that  ${\psi_{2h}(\overline V_h)}\subset \mathrm{D}(0,s)$. Note that $s$ depends immediately on $h$, $\psi_{2h}$, and $r_{2h}$ but does not depend on $\kappa$ by conformal invariance, so that $s$ also depends ultimately on $\epsilon_1$, $R$, $h_0$, $r_0$, and $R_0$. Note also that we must have $s > r_{2h}$ since $V_h \supset U \supset \overline U_R \supset \tilde V_{2h}$.  Now fix \label{Nup}$N$ such that 
\begin{align}
\label{Nineq}
s \sqrt[N]{\frac{1}{r_{2h}}} < \sqrt s.
\end{align}  
and note that this choice of $N$ will depend immediately on $s$ and $r_{2h}$ and thus ultimately on $\epsilon_1$, $R$, $h_0$, $r_0$, $R_0$, from above. However, given the definition of $\tilde V_{2h}$ as the hyperbolic `incircle' of $U_R$ in $V_{2h}$, $r_{2h}$ decreases as $R$ decreases which allows us to eliminate the dependence on $R$ and $R_0$ above so that we can ensure that $N$ depends only on $\epsilon_1$, $h_0$, and $r_0$.

This choice will ensure that our conformal dilations in the composition do not distort $\partial V_h$ so much so that we no longer have a conformal annulus for interpolation when we apply the Polynomial Implementation Lemma.

Next define on $\psi_{2h}^{-1}(\mathrm{D}(0,s))$ the map 
\begin{align}
\label{gdef}
g(z)=\psi_{2h}^{-1}\left(\sqrt[N]{\frac{1}{r_{2h}}}\psi_{2h}(z)\right)
\end{align} 
and note in particular that $g$ is defined and in addition analytic and injective on a neighbourhood of $\overline {V_h}$ as $\psi_{2h}(\overline V_h) \subset \mathrm{D}(0,s)$ by our choice of $s$. Further, since $\psi_{2h}$ fixes $0$, we have $g(0) = 0$ and, given our choice of $N$ in \eqref{Nineq} above, we have 
\begin{align}
\label{PILg}
\overline{g(V_h)}\subset V_{2h}.
\end{align}

By conformal invariance or simply because $g$ corresponds to a dilation by $r_{2h}^{-1/N}$ in the conformal coordinates of $V_{2h}$, recalling that we set $\tilde V_h := \phi_{2h}^{-1}(V_h)$ (immediately before Lemma \ref{rcheckcts}), we must then have that $\psi_{2h}(\tilde U) \subset \psi_{2h}(\tilde V_h) \subset \Delta_\D(0, r_{2h}s)$. Again since $g$ corresponds to a dilation by $r_{2h}^{-1/N}$ in the conformal coordinates of $V_{2h}$, the compositions $g^{\circ j}$, $0 \le j \le N$ (where $g^{\circ \, 0}$ is the identity) are all then defined on $\tilde U$ and in particular we have $g^{\circ N} = \phi_{2h}$ on $\tilde U$. We observe that the functions $g^{\circ j}$ then form (part of) a L\"owner chain on $\tilde U$ in a sense similar to that given in \cite{CDG} (although these authors were working on the unit disc). 

Since $R'' < R \le R_0 \le  \tfrac{\pi}{2}$ is bounded above, the external hyperbolic radius about $0$ of $U_{R''-\epsilon_1}$ inside  $U_{R''}$ (with respect to the hyperbolic metric of this slightly larger domain) can be uniformly bounded above in terms of $\epsilon_1$ and the upper bound $R_0 \le \tfrac{\pi}{2}$ for $R$. By the Schwarz lemma for the hyperbolic metric (e.g. \cite{CG} Theorem I.4.1 or I.4.2), the same is true for the external hyperbolic radius about $0$ of $U_{R''-\epsilon_1}$ inside the larger (than $U_{R''}$) domain $\tilde U$. By conformal invariance under $\phi_{2h}$, the same is also true for the external hyperbolic radius about $0$ of $\phi_{2h}(U_{R''-\epsilon_1})$ inside  $U$. We can then find an upper bound \label{R2}$R_2$ for this external hyperbolic radius 
which depends only on $\epsilon_1$ and the upper bound $R_0$ on $R$. As a result, we have 
\begin{align}
\label{r2bound}
R^{ext}_{(U,0)}\phi_{2h}(U_{R'' - 3\epsilon_1}) < R^{ext}_{(U,0)}\phi_{2h}(U_{R'' - 2\epsilon_1}) < R^{ext}_{(U,0)}\phi_{2h}(U_{R'' - \epsilon_1}) \le R_2.
\end{align}

We note that this upper bound is also in particular independent of $N$ and $h$ (recall that in fact we have $h$ is a function of $\epsilon_1$ in view of \eqref{defofh1} and \eqref{deltabound}). We also note that, at this point, we do not actually require that $R_2$ be independent of $h$. However, we will need this later when we turn to giving upper bound $\tilde \epsilon_2$ for $\epsilon_2$ which has the correct dependencies as listed in the statement. Finally, we note that this upper bound is for the set $U_{R'' - \epsilon_1}$, while all we will need in this section of the proof is a bound on the slightly smaller sets $U_{R'' - 2\epsilon_1}$, $U_{R'' - 3\epsilon_1}$. However, we will need the bound on the larger set when it comes to the `during' part of the proof later on.

Since from above the compositions $g^{\circ j}$, $0 \le j \le N$ are all defined on $\tilde U \supset U_{R'' - 3\epsilon_1}$, we can now set $B := g^{\circ N}(U_{R''-3\epsilon_1})$ and note that, since $g^{\circ N} = \phi_{2h}$ on $U_{R'' - 3\epsilon_1}$, it follows from \eqref{r2bound} above that $g^{\circ N}$ maps $U_{R''-3\epsilon_1}$ inside $U_{R_2}$ which is a relatively compact subset of $U$. 
On the other hand, recalling the normalized Riemann map $\psi$ from $U$ to $\D$ (which was introduced before the start of Lemma \ref{hypradv2hbddbelow}), 
since $R'' \le R_0 \le \tfrac{\pi}{2}$, by Lemma \ref{starshaped}, the set $\psi_{2h}(U_{R''-3\epsilon_1}) = \psi_{2h} \circ \psi^{-1}( \Delta_\D(0, R''-3\epsilon_1))$,  is star-shaped with respect to $0$. Since $g$ corresponds to a dilation by $r_{2h}^{-1/N} >1$ in the conformal coordinates of $V_{2h}$, it therefore follows that the sets $g^{\circ j}(U_{R''-3\epsilon_1})$, $0 \le j \le N$ (which from above are well-defined) are increasing in $j$ and therefore all contained in $B$. Thus any estimate which holds on $B$ will also automatically hold on these sets also.

Now set $A := U_{R_2+1} \supset B$ (we remark that the `extra' $1$ here is due to the fact that $\phi_{2h} = g^{\circ N}$ is a composition of $g$ with itself many times and each of these compositions with $g$ will be approximated so that we need to be able to allow for the total error which arises). The functions $\psi_{2h}^{-1}\left(\sqrt[N]{\frac{1}{r_{2h}}}\psi_{2h}(z)\right)$ clearly converge to the identity locally uniformly on $U$ as $N \to \infty$ (and in particular, for all sufficiently large $N$, are defined on any relatively compact subset of $U$ and map it into another relatively compact subset). 

Since $A$ depends on $R_2$ which from above does not depend on $N$, it follows that, if we fix a constant $K_1 = \tfrac{3}{2}$, we may   
therefore make \label{Nupredef}$N$ larger if needed (without invalidating \eqref{Nineq}) so that, if $\hat A$ is a $1$-hyperbolic neighbourhood of $A$ in the hyperbolic metric of $U$ (which implies that $\hat A = U_{R_2 +2}$), then $g$ still maps $\hat A$ into a relatively compact subset of $U$ and we have 
\begin{align}
\label{ghypder}
\| g^{\natural} \|_{\hat{A}}\leq K_1
\end{align}
where, as usual, we are taking our hyperbolic derivatives with respect to the hyperbolic metric of $U$. Our new choice of $N$ will depend directly on $R_2$, and $g$ in addition to the old dependencies on $\epsilon_1$, $h_0$, and $r_0$ from the discussion after \eqref{Nineq} and so, using \eqref{gdef} and \eqref{r2bound}, ultimately $N$ depends on $\kappa$, $\epsilon_1$, $h_0$, $r_0$, $R_0$, and $R$ (this last being via $r_{2h}$ which we are not allowed to alter at this stage). However, since we are estimating a hyperbolic derivative here, we can eliminate the dependence on $\kappa$ so that $N$ ultimately depends on $\epsilon_1$, $h_0$, $r_0$, $R_0$, and $R$. Note also that the function $g$ defined in \eqref{gdef} will then depend on the six quantities $\kappa$, $\epsilon_1$, $h_0$, $r_0$, $R_0$, and $R$ (and in particular we cannot eliminate the dependence on $\kappa$ since the domain of $g$ depends on $\kappa$ via $\psi_{2h}$).

Note also that by Lemma \ref{stupidfuckinglemma} $\hat A$ is hyperbolically convex which will be useful (though not essential) when we come to apply the hyperbolic M-L estimates (Lemma \ref{hyperbolicML}) later on. Also important to note is that $N$ is fixed from now on which means that we can choose our subsequent approximations using the Polynomial Implementation Lemma with this $N$ in mind.

\label{epsilon2initialbound}
Set $\tilde \epsilon_2 : =1$ and let $0< \epsilon_2 \le \tilde \epsilon_2$ be arbitrary and fixed (note that this upper bound $\tilde \epsilon_2$ is universal, but we will be making further restrictions later in the proof to deduce the upper bound $\tilde \epsilon_2$ with the same dependencies as in the statement). 
Define $\gamma :=\partial V_h$ and $\Gamma := \partial V_{2h}$ (with positive orientations as Jordan curves) and note that, since $g$ is injective and analytic on a neighbourhood of $\overline V_h$ while $\overline{g(V_h)}\subset V_{2h}$ from \eqref{PILg} above, we must have that $g(\gamma)$ lies inside $\Gamma$ (so that $(g, Id)$ is an admissible pair on $(\gamma, \Gamma)$ in the sense given in Definition \ref{admissiblepair}).

Now set $\epsilon$ in the statement of the Polynomial Implementation Lemma (Lemma \ref{PIL}) to be $\frac{\epsilon_2}{3(2K_1)^{N-1}K_2 K_3}$, where $K_2$ and $K_3$ are bounds on hyperbolic derivatives which will be chosen later.  For now we just assume that $K_i>1$ for $i=2,3$ (these are just constants, and we can always choose a larger constant). Note that we have $\frac{\epsilon_2}{3(2K_1)^{N-1}}<1$ which implies that $\epsilon<1$. Further,  note that $\epsilon<\epsilon_2$. Now, since $g(0)=0$, $g(\gamma)$ lies inside $\Gamma$, and we have the estimate \eqref{ghypder} on the hyperbolic derivative of $g$, we can apply the Polynomial Implementation Lemma (Lemma \ref{PIL}), with $\Omega = V_h$, $\Omega' = V_{2h}$, $\gamma=\Gamma_h$, $\Gamma=\Gamma_{2h}$, $f = g$, $A = U_{R_2 + 1}$, $\delta=1$, $M = K_1$, and $\epsilon=\frac{\epsilon_2}{3(2K_1)^{N-1}K_2 K_3}$ as above to $g$ to get $n_{k_0} >0$, and a (17+$\kappa$)-bounded finite sequence of quadratic polynomials $\{Q_m \}_{m=1}^{n_{k_0}}$ such that the composition of these polynomials, $Q_{n_{k_0}}$, is univalent on $A$ and satisfies 
\begin{align}
\label{PILUp1}
\rho_U(Q_{n_{k_0}}(z),g(z))&< \frac{\epsilon_2}{3(2K_1)^{N-1}K_2 K_3} = \epsilon , \hspace{1cm} z\in A, \\
\label{PILUp2}
\|Q^{\natural}_{n_{k_0}}\|_{A}&\leq K_1\left (1+\frac{\epsilon_2}{3(2K_1)^{N-1}K_2 K_3} \right ),\\
\label{PILUpfix0}
Q_{n_{k_0}}(0) &=0.
\end{align}
Note that by Lemma \ref{PIL}, since $M = K_1 = \tfrac{3}{2}$ and $\delta = 1$, $n_{k_0}$ and $Q_{n_{k_0}}$ depend directly on $\kappa$, $K_1$, $\epsilon$, $R_2$, $g$, and $h$ so that one can check that $n_{k_0}$ and $Q_{n_{k_0}}$ ultimately depend on $\kappa$, $\epsilon_1$, $\epsilon_2$, $K_2$, $K_3$, $R$, $h_0$, $r_0$, and $R_0$. 

For $1 \le j \le N$ define $Q_{jn_{k_0}}:=Q^{\circ k}_{n_{k_0}}$.  We prove the following claim, which will allow us to control the error in the `Up' portion of Phase II: 

\begin{claim}
\label{Q1Approx}
For each $1\leq j \leq N$ we have:
\begin{align*}
1.& \; \rho_U(Q_{j n _{k_0}}(z),g^{\circ j}(z) )<\frac{\epsilon_2}{3(2K_1)^{N-j}K_2K_3} < 1, \qquad z \in U_{R''-3\epsilon_1}, \\
2.& \; Q_{j n _{k_0}}(z)\in A, \qquad z \in U_{R''-3\epsilon_1},\\
3. &\; Q_{j n _{k_0}} \quad \mbox{is univalent on}\:\:  U_{R''-3\epsilon_1}.
\end{align*}
\end{claim}

\begin{claimproof}
For the base case $j=1$, recall that, from the definition of $R_2$ given in \eqref{r2bound}, we have that the external hyperbolic radius of $U_{R''-3\epsilon_1} \subset U_{R''-2\epsilon_1}$ inside $\tilde U$ is bounded above by $R_2$. Since $\tilde U \subset U$, by the Schwarz Lemma for the hyperbolic metric (e.g. \cite{CG} Theorem I.4.1 or I.4.2), we have that $U_{R''-3\epsilon_1} \subset U_{R_2} \subset U_{R_2 + 1} = A$. 1. then follows from \eqref{PILUp1}. 

For 2. recall that the sets $g^{\circ j}(U_{R''-3\epsilon_1})$, $0 \le j \le N$ are increasing in $j$ and, in view of \eqref{r2bound}, therefore all contained in $B = g^{\circ N}(U_{R''-3\epsilon_1}) = \phi_{2h}(U_{R''-3\epsilon_1}) \subset U_{R_2}$. Thus $g(z) \in U_{R_2}$ and the result follows from \eqref{PILUp1} on recalling that $\epsilon < 1$ and that $A = U_{R_2 + 1}$ contains a $1$-neighbourhood of $B$ (in the hyperbolic metric of $U$).

Finally, 3. simply follows from the above fact that $Q_{n _{k_0}}$ is univalent on $A$ which we already saw contains $U_{R''-3\epsilon_1}$.

Now assume the claim is true for some $1\leq j <  N$.  For $z \in U_{R''-3\epsilon_1}$ we have 
\begin{multline*}
\rho_U(Q_{(j+1)n_{k_0}}(z),g^{\circ j+1}(z)) \leq \rho_U(Q_{(j+1)n_{k_0}}(z),g\circ Q_{j n_{k_0}}(z))\\ +\rho_U(g\circ Q_{j n_{k_0}}(z),g^{\circ j+1}(z)).
\end{multline*}
Now $Q_{j n_{k_0}}(z)\in A$ by hypothesis so the first term in the inequality above is less than $\epsilon$ by \eqref{PILUp1}.  
In addition to $Q_{j n_{k_0}}(z)\in A$, we also have $g^{\circ j}(z) \in B \subset A$ (we remark that this is a place where we need to make use of the fact that the sets $g^{\circ j}(U_{R''-3\epsilon_1}))$ are increasing in $j$ and thus all contained in $B$) while $\rho_U(Q_{j n_{k_0}}(z),g^{\circ j}(z))<\tfrac{\epsilon_2}{3(2K_1)^{N-j}K_2K_3} < 1$ by hypothesis whence we also have $Q_{j n_{k_0}}(z)$ lies within distance $1$ of $B$ and is thus in $A$. Using \eqref{ghypder} and the hyperbolic convexity of $A$ which follows from Lemma \ref{stupidfuckinglemma}, we see on applying the hyperbolic M-L estimates (Lemma \ref{hyperbolicML}) to $g$ that the 
second term is less than $K_1\tfrac{\epsilon_2}{3(2K_1)^{N-j}K_2 K_3}$.   Thus we have 
\begin{align*}
\rho_U(Q_{(j+1)n_{k_0}}(z),g^{\circ j+1}(z))& < \epsilon+ K_1 \cdot {\frac{\epsilon_2}{3(2K_1)^{N-j}K_2 K_3}} \\
&= \frac{1}{(2K_1)^j}\cdot\frac{\epsilon_2}{3(2K_1)^{N-(j+1)}K_2 K_3}\\
&\hspace{3.4cm}+\frac{1}{2}\cdot\frac{\epsilon_2}{3(2K_1)^{N-(j+1)}K_2 K_3} \\
& < \frac{\epsilon_2}{3(2K_1)^{N-(j+1)}K_2 K_3}\\
& < 1
\end{align*}
which proves 1. in the claim using the fact that $K_1 > 1$ for the second last inequality and $\epsilon_2  \le  1$, $K_1, K_2, K_3 > 1$ for the last inequality above.

Now $Q_{(j+1)n_{k_0}}(U_{R''-3\epsilon_1})$ lies in a 1-neighborhood of $g^{\circ j+1}(U_{R''-3\epsilon_1})$ by 1. above. But $g^{\circ j+1}(U_{R''-3\epsilon_1})\in B$ (where again we note that the sets $g^{\circ j}(U_{R''-3\epsilon_1}))$ are increasing in $j$ and thus all contained in $B$)
while a $1$-neighbourhood of $B$ lies inside $A$ by the definition of $A$ and 
so $Q_{(j+1)n_{k_0}}(z)\in A$ if $z \in U_{R''-3\epsilon_1}$ (note that $j+1\leq N$), which finishes the proof of 2.  To show 3. and see that $Q_{(j+1)n_{k_0}}(z)$ is univalent, we obviously have $Q_{(j+1)n_{k_0}}(z)=Q_{n_{k_0}} \circ Q_{j n_{k_0}}(z)$.  Since by hypothesis we have both that $Q_{j n_{k_0}}$ is univalent on $U_{R''-3\epsilon_1}$ and $Q_{j n_{k_0}}(U_{R''-3\epsilon_1}) \subset A$, while $Q_{n_{k_0}}$ is univalent on $A$ by our application of the Polynomial Implementation Lemma (Lemma \ref{PIL}), we have that $Q_{(j+1)n_{k_0}}$ is univalent on $U_{R''-3\epsilon_1}$.  This completes the proof of the claim. \end{claimproof}

For convenience, set \label{bfQ1}${\bf Q_1} :=Q_{Nn_{k_0}}$ and recall that on $U_{R'' - 3\epsilon_1} \subset \tilde U$ we had $g^{\circ N} = \phi_{2h}$. From above, ${\bf Q_1}$ then depends $\kappa$, $\epsilon_1$, $\epsilon_2$, $K_2$, $K_3$, $R$, $h_0$, $r_0$, and $R_0$ (recall that $N$ depends on $\epsilon_1$, $R$, $h_0$, $r_0$, and $R_0$ while the mapping $\phi_{2h} = g^{\circ N}$ depends on $\kappa$, $\epsilon_1$, $R$, $h_0$, $r_0$, and $R_0$). By 3. of Claim \ref{Q1Approx} above, ${\bf Q_1}$ is univalent on $U_{R''-3\epsilon_1}$ and, on this hyperbolic disc, from 1. of the same claim  and the fact that $g^{\circ N}=\phi_{2h}$ on $U_{R'' - 3\epsilon_1}$, we have (on this set)
\begin{align}
\label{PILQ1app}
\rho_U({\bf Q_1}(z), \phi_{2h}(z))&<\frac{\epsilon_2}{3K_2 K_3},
\end{align}
while, from 2. of this claim and \eqref{PILUpfix0}
\begin{align}
\label{PILQ1loc}
{\bf Q_1}(z)&\in A,\\
\label{PILQ1fix0}
{\bf Q_1}(0) &=0.
\end{align}

The mapping $\phi_{2h}$ obviously maps $U_{R'' - 3\epsilon_1}$ to $\phi_{2h}(U_{R''-3\epsilon_1})$ and, provided the next polynomial in our construction has the desired properties on this set, we will be able to compose in a meaningful way so that the composition also has the desired properties. However, in practice, we are approximating $\phi_{2h}$ with the composition ${\bf Q_1}$ which involves an error and our next step is to show that we can map into the correct set $\phi_{2h}(U_{R''-3\epsilon_1})$ using ${\bf Q_1}$ provided we are wiling to `give up' an extra $\epsilon_1$. First, however, we have the following important estimates which we will need later, especially when it comes to defining the upper bound $\tilde \epsilon_2$ for $\epsilon_2$ to obtain the same dependencies as given in the statement. 

\begin{claim}
\label{etabounds}
There exist \label{etas}$\eta_1, \eta_2  > 0$ depending on $\epsilon_1$, $h_0$, $r_0$, and $R_0$ such that 
\begin{align*}
\eta_1 \le \|( \phi_{2h}^{-1})^{\natural}\|_{U_{R_2 + 2}}\leq \eta_2.
\end{align*}
\end{claim}

\begin{claimproof}
Recall the upper bound $R_2$ on $R^{ext}_{(U,0)}\phi_{2h}(R_{R''-\epsilon_1})$ given in \eqref{r2bound} and the normalized Riemann map $\psi$ from $U$ to  $\D$ which was defined just before Lemma \ref{hypradv2hbddbelow}. $\phi_{2h}^{-1}$ maps $U$ to $\tilde U \subset U$ so that the conjugated mapping $\psi \circ \phi_{2h}^{-1} \circ \psi^{-1}$ is defined on all of $\D$. Using \eqref{deltabound} and \eqref{r2bound} one checks easily that 
\begin{align}
\label{LowerKoebeBound}
\phi_{2h}^{-1}(U) \supset \phi_{2h}^{-1}(U_{R_2+2}) \supset \phi_{2h}^{-1}(\phi_{2h}(U_{R'' - 2\epsilon_1})) = U_{R'' - 2\epsilon_1} \supset U_{3r_0/4}.
\end{align}
Hence $\psi \circ \phi_{2h}^{-1} \circ \psi^{-1}$  maps $\D$ to a domain which is contained in $\D$ and which contains $\Delta_{\D}(0, 3r_0/4)$ and so by 
the Koebe one-quarter theorem  (Theorem \ref{Koebe}) we then obtain (strictly positive) upper and lower bounds for the derivative $|(\psi \circ \phi_{2h}^{-1} \circ \psi^{-1})'(0)|$ which depend only on $h_0$ (because we assumed $h \le h_0$) and $r_0$. Note that in particular these bounds do not depend on the values of $h$ or $R$. 

Since $\psi \circ \phi_{2h}^{-1} \circ \psi^{-1}$ is defined on the whole of the unit disc, on applying the 
distortion theorems (Theorem \ref{distortion}), we obtain strictly positive upper and lower bounds for $|(\psi \circ \phi_{2h}^{-1} \circ \psi^{-1})'|$ on the set $\psi(U_{R_2+2}) = \Delta_{\D}(0, R_2+2)$. Since by \eqref{r2bound} $R_2$ depends on $\epsilon_1$ and $R_0$, these bounds depend on $\epsilon_1$, $h_0$, $r_0$, and $R_0$.

$\phi_{2h}^{-1}$ maps $U_{R_2 + 2}$ inside $\tilde U \subset U_{\pi/2}$ and, as $U_{R_2+2}$ and $U_{\pi/2}$  are both relatively compact subsets of $U$, $|{\psi}'|$ is uniformly bounded above and below away from $0$ on both of these sets. We therefore obtain strictly positive upper and lower bounds for the absolute value of the Euclidean and thus the hyperbolic derivative of $\phi_{2h}^{-1}$ on $U_{R_2 + 2}$. These bounds will depend on $\kappa$, $\epsilon_1$, $h_0$, $r_0$, and $R_0$ (the dependence on the scaling factor $\kappa$ arising via $\psi$). However, since we are estimating hyperbolic derivatives, we can actually eliminate the dependence on $\kappa$ and the claim then follows.
\end{claimproof}

We now define our upper bound \label{epsilon2}$\tilde \epsilon_2$ on $\epsilon_2$ by setting
\begin{align}
\label{e2ub}
\tilde \epsilon_2 = \min \{1, \tfrac{\epsilon_1}{\eta_2}\}
\end{align}

Given the dependencies of $\eta_2$ above (in Claim \ref{etabounds}), this upper bound then depends on $\epsilon_1$, $h_0$, $r_0$, and $R_0$  which is the same as given in the statement.

Now make $\epsilon_2$ smaller if necessary to ensure that $\epsilon_2 \le \tilde \epsilon_2$, (note that this may require us to obtain a new composition ${\bf Q_1}$ as above, but since $\epsilon_2$ and $\tilde \epsilon_2$ in no way depend on ${\bf Q_1}$, there is no danger of circular reasoning). 

\begin{claim}
\label{GivingUp1Epsilon1}
Given $\tilde \epsilon_2$, $0 < \epsilon_2 \le \tilde \epsilon_2$ and ${\bf Q_1}$ as above, we have
\begin{align}
{\bf Q_1}(U_{R''-4\epsilon_1})\subset \phi_{2h}(U_{R''-3\epsilon_1}).
\end{align}
\end{claim}

\begin{claimproof}
Let $z \in U_{R''-4\epsilon_1}$, $w \in \partial U_{R''-3\epsilon_1}$ be arbitrary and note that $\rho_U(z,w) \ge \epsilon_1$ while both $\phi_{2h}(z)$ and $\phi_{2h}(w)$ lie inside $U_{R_2} = \Delta_U(0,R_2)$ in view of \eqref{r2bound}. As $\phi_{2h}$ is a homeomorphism, we also have that $\phi_{2h}(z)\in \mathrm{int} \,\phi_{2h}(U_{R''-3\epsilon_1})$ while $\phi_{2h}(w)\in \partial \phi_{2h}(U_{R''-3\epsilon_1})$. Lemma \ref{stupidfuckinglemma} ensures that $\Delta_U(0,R_2+2)$ is hyperbolically convex and so we may apply the hyperbolic M-L estimates (Lemma \ref{hyperbolicML}) using Claim \ref{etabounds} on  $\Delta_U(0,R_2+2)$ to $\phi_{2h}^{-1}$. Thus we have $\rho_U(\phi_{2h}(z),\phi_{2h}(w)) \ge  \frac{\epsilon_1}{\eta_2}$, which implies the hyperbolic distance from $\phi_{2h}(z)$ to $\partial(\phi_{2h}(U_{R''-3\epsilon_1}))$ is at least $\frac{\epsilon_1}{\eta_2}$.

Again let $z \in U_{R''-4\epsilon_1}$ be arbitrary.  We then have using $K_2, K_3 > 1$, (\ref{PILQ1app}), and \eqref{e2ub} that 
\begin{align*}
\rho_U({\bf Q_1}(z),\phi_{2h}(z))&<\frac{\epsilon_2}{3K_2 K_3} \\
&<\epsilon_2 \\
&\le\frac{\epsilon_1}{\eta_2}
\end{align*}
and since the hyperbolic distance from $\phi_{2h}(z)$ to $\partial(\phi_{2h}(U_{R''-3\epsilon_1}))$ is at least $\frac{\epsilon_1}{\eta_2}$ from above, it follows that ${\bf Q_1}(z)$ misses $\partial \phi_{2h}(U_{R''-3\epsilon_1})$. 
Additionally, as ${\bf Q_1}(U_{R'' - 4\epsilon_1})$ is connected in view of 3. of Claim \ref{Q1Approx} above while ${\bf Q_1}(0) = 0 \in \phi_{2h}(U_{R'' - 3\epsilon_1})$, it follows, since $z \in U_{R'' - 4\epsilon_1}$ was arbitrary, that 
${\bf Q_1}(U_{R'' - 4\epsilon_1}) \subset \phi_{2h}(U_{R''-3\epsilon_1})$ and the claim follows as desired. \end{claimproof}\\

{\bf Controlling Error: `During'} 

\label{during}
Recall that at the start of the last section we fixed a value of $\epsilon_1$ in $(0, \tilde \epsilon_1]$ which in turn fixed the value of $h = h(\epsilon_1)$ and that we also fixed a value of $R \in [r_0, R_0]$. We now fix a function $\calE$ as in the statement which is defined and univalent on $U_R$ with $\calE(0) = 0$ and $\rho_U(\calE(z),z)<\epsilon_1$ for $z \in U_R$. Note that, in addition to $R$, $\calE$ will also depend on $r_0$, $R_0$ (via $R$) and also on $\kappa$, the latter arising from the fact that $\calE$ is defined on the set $U_R$ which depends on $\kappa$. Recall the quantity ${\check R}(h):=R_{(V_{2h},0)}^{ext}V_h = R_{(\tilde V_{2h},0)}^{ext}\tilde V_h  $ introduced before the statement of Lemma \ref{rcheckcts} and the function $T: (0, \tilde \epsilon_1] \mapsto (0, \infty)$ which was introduced in the statement of the Target Lemma (Lemma \ref{TargetLemma}) and which served as a lower bound for $R^{int}_{({\tilde V_{2h}},0)}({\tilde V_{2h}} \setminus {\hat N})$ (where ${\hat N}$ was a $2\epsilon_1$-neighbourhood of $\partial \tilde V_{2h}$ with respect to the hyperbolic metric of $U$). 

Now ${\tilde U}\subset {\tilde V_h}$ (recall that $\tilde V_h = \phi_{2h}^{-1}(V_h)$ was introduced immediately before Lemma \ref{rcheckcts}) while in \eqref{defofh1}, \eqref{defofh2}, and \eqref{deltabound} we chose $h = h(\epsilon_1)$ (where for convenience we suppress the dependence of $h$ on $\epsilon_1$) as small as possible so that $\check R(h) = T(\epsilon_1)$ (cf. \eqref{defofh2}). By \emph{(1)} of the Fitting Lemma (Lemma \ref{FittingLemma}) we have $\tilde V_h \subset \tilde V_{2h} \setminus {\hat N}$ (this latter set clearly being closed). Hence the $2\epsilon_1$-neighbourhood $\hat N$ of $\partial \tilde V_{2h}$ avoids $\tilde V_h$ (and hence also the smaller set $\tilde U$). Thus, an  $\epsilon_1$-neighborhood (in the hyperbolic metric of $U$) of the closure $\overline {\tilde V_h}$ avoids an  $\epsilon_1$-neighborhood of $\partial \tilde V_{2h}$.  In particular, by the hypotheses on $\calE$ in the statement, $\calE(\partial \tilde V_h)$ is a simple closed curve which lies inside $\partial \tilde V_{2h}$. 

\label{Ehat}
Next on $\phi_{2h}( \tilde V_{2h} \setminus {\hat N})$ define ${\hat \calE}=\phi_{2h}\circ \calE \circ \phi_{2h}^{-1}$. Since by the hypotheses on $\calE$ we must have from above that $\calE( \tilde V_{2h} \setminus {\hat N}) \subset \tilde V_{2h}$, it follows that ${\hat \calE}$ is well-defined on $\phi_{2h}( \tilde V_{2h} \setminus {\hat N})$. $\hat \calE$ then depends immediately on the six quantities $\kappa$, $\epsilon_1$, $h$, $R$ (these last two among other things being via the domain $\tilde V_{2h}$), $\phi_{2h}$, and $\calE$ from which one can deduce (e.g. by using the tables in the appendices) that $\hat \calE$ ultimately depends on $\kappa$, $\epsilon_1$, $R$, $h_0$, $r_0$, $R_0$, and $\calE$. As before let $\gamma = \partial V_h$, $\Gamma = \partial V_{2h}$ as positively oriented Jordan curves. Then, from above and again by the hypotheses on $\calE$, since $\hat \calE$ is defined on $\phi_{2h}( \tilde V_{2h} \setminus {\hat N})$ which contains $\gamma = \partial V_h$, we have from above that ${\hat \calE}(\gamma)$ lies inside $\Gamma$ (and so $({\hat \calE}, Id)$ is an admissible pair on $(\gamma,\Gamma)$ in the sense given in Definition \ref{admissiblepair} in Chapter 3 on the Polynomial Implementation Lemma). Lastly, since $\phi_{2h}$ and $\calE$ both fix $0$, we must have that ${\hat \calE}(0) = 0$.

Since $\hat N$ is a $2 \epsilon_1$-neighbourhood of $\partial \tilde V_{2h}$ while $\tilde V_{2h} \supset \tilde U \supset U_{R''}$, it follows that $U_{R''-2\epsilon_1} \subset \tilde V_{2h} \setminus {\hat N}$ and so $\phi_{2h}(U_{R''-2\epsilon_1}) \subset \phi_{2h}( \tilde V_{2h} \setminus {\hat N})$. By the definition of ${\hat \calE}: = \phi_{2h} \circ {\calE} \circ \phi_{2h}^{-1}$ and the hypotheses on $\calE$ in the statement,
\begin{align}
\label{errormove0}
{\calE}(U_{R''-2\epsilon_1})\subset U_{R''-\epsilon_1}
\end{align}
and so from above
\begin{align}
\label{errormove1}
{\hat \calE}(\phi_{2h}(U_{R''-2\epsilon_1}))\subset \phi_{2h}(U_{R''-\epsilon_1}).
\end{align}
Hence, since ${\hat \calE}$ maps the relatively compact subset $\phi_{2h}(U_{R''-2\epsilon_1})$ to another relatively compact subset of $U$, we can fix the value of\label{K2} $1 < K_2 < \infty$ such that 
\begin{align}
\label{ehathd}
|{\hat \calE}^{\natural}(z)|\leq K_2, \hspace{1cm} z \in \phi_{2h}(U_{R''-2\epsilon_1})
\end{align}
where as usual we take our hyperbolic derivative with respect to the hyperbolic metric of $U$. $K_2$ depends immediately on $\epsilon_1$, $R''$, $\phi_{2h}$, and $\hat \calE$. Using the chain rule \eqref{chainrule} for the hyperbolic derivative, it follows from \eqref{r2bound}, Claim \ref{etabounds}, \eqref{errormove0}, and the hypotheses on $\calE$ in the statement that $K_2$ can be bounded uniformly in terms of $\kappa$, $\epsilon_1$, $h_0$, $r_0$, $R_0$, the function $\calE$ as well as the particular value of $R$ (since $\calE$ is defined on all of $U_R$ in the statement). However, the dependence on $\kappa$ (arising via $\phi_{2h}$) can be eliminated since we are estimating a hyperbolic derivative. We also observe that this is the one point where we employ the full force of \eqref{r2bound} and require an upper bound on the external hyperbolic radius of $ \phi_{2h}(U_{R''-\epsilon_1})$ and not just $\phi_{2h}(U_{R''-2\epsilon_1})$ or some smaller set.

Note in particular that this bound has nothing to do with the existence of the composition ${\mathbf Q_1}$ from the last section, and so there is no danger of circular reasoning in fixing the bound $K_2$ at this point.
Note also that this doesn't affect our earlier assertion that $\epsilon<1$ in the previous section on controlling the error for `up'. On the other hand, the same argument as used in the proof of Claim \ref{GivingUp1Epsilon1} shows that if we set $\delta_0 = \tfrac{\epsilon_1}{\eta_2}$ where $\eta_2$ is the upper bound on the hyperbolic derivative of $\phi_{2h}^{-1}$ from Claim \ref{etabounds}, then 
a $\delta_0$-hyperbolic neighbourhood in $U$ of $\phi_{2h}(U_{R''-3\epsilon_1})$ is contained in $\phi_{2h}(U_{R''-2\epsilon_1})$ while $\delta_0$ depends on $\epsilon_1$, $h_0$, and $r_0$, $R_0$. 

Since ${\hat \calE}(\gamma)$ lies inside $\Gamma$ while ${\hat \calE}(0)=0$, using \eqref{ehathd}, we can then apply the Polynomial Implementation Lemma (Lemma \ref{PIL}) with $\Omega = V_h$, $\Omega' = V_{2h}$, $\gamma=\Gamma_h$, $\Gamma=\Gamma_{2h}$, $f = {\hat \calE}$, $A = \phi_{2h}(U_{R''-3\epsilon_1})$, $\delta = \delta_0$, $M = K_2$ and 
$\epsilon = \tfrac{\epsilon_2}{3K_3}$ to construct a (17+$\kappa$)-bounded composition of quadratic polynomials, \label{bfQ2}${\bf Q_2}$, univalent on $\phi_{2h}(U_{R''-3\epsilon_1})$ such that 
\begin{align}
\label{PILQ2app}
\rho_U({\bf Q_2}(z),{\hat \calE}(z))&<\frac{\epsilon_2}{3K_3}, \quad z \in \phi_{2h}(U_{R''-3\epsilon_1}),\\
\label{PILQ2hd}
\| {\bf Q_2}^{\natural}\|_{\phi_{2h}(U_{R''-3\epsilon_1})}&\leq {K_2} \left (1+\frac{\epsilon_2}{3K_3} \right ),\\
\label{PILQ2fix0}
{\bf Q_2}(0) &=0,
\end{align}
where the bound $K_3 > 1$ is to be fixed in the next section. From the statement of Lemma \ref{PIL}, the composition ${\bf Q_2}$ depends directly on $\kappa$, $\epsilon_1$, $\epsilon_2$, $K_2$, $K_3$, $\eta_2$, $\phi_{2h}$, $R''$ (via the set $ \phi_{2h}(U_{R''-3\epsilon_1})$, and $\hat \calE$. From this one checks (e.g. using the tables) that  ${\bf Q_2}$ depends ultimately on 
on $\kappa$, $\epsilon_1$, $\epsilon_2$, $K_3$, $R$, $h_0$, $r_0$, $R_0$, and the function $\calE$.\\

{\bf Controlling Error: `Down'} 

Recall \eqref{errormove1} where we had that that ${\hat \calE}(\phi_{2h}(U_{R''-2\epsilon_1}))\subset \phi_{2h}(U_{R''-\epsilon_1})$. In exactly the same way, we have
\begin{align}
\label{errormove2}
{\hat \calE}(\phi_{2h}(U_{R''-3\epsilon_1}))\subset \phi_{2h}(U_{R''-2\epsilon_1}).
\end{align}
Recall also that from \eqref{r2bound} we had that $R^{ext}_{(U,0)}\phi_{2h}(U_{R''-2\epsilon_1}) \le R_2$. Also, by \eqref{PILQ2app} and \eqref{errormove2}, we have that ${\bf Q_2}(\phi_{2h}(U_{R''-3\epsilon_1}))$ is contained in an $\frac{\epsilon_2}3K_3$-neighborhood of $\phi_{2h}(U_{R''-2\epsilon_1})$ (using the hyperbolic metric of $U$).  Thus  
\begin{align*}
R^{ext}_{(U,0)}{\bf Q_2}(\phi_{2h}(U_{R''-3\epsilon_1}))&\leq  R_2+\frac{\epsilon_2}{3K_3} \\
& < R_2+\epsilon_2,
\end{align*}
(recall that we assumed $K_3 > 1$) and so
\begin{align}
\label{PILQ3A}
R^{ext}_{(U,0)}{\bf Q_2}(\phi_{2h}(U_{R''-3\epsilon_1}))\leq  R_2+1
\end{align} 
as $\epsilon_2<1$ using \eqref{e2ub}. Thus ${\bf Q_2}(\phi_{2h}(U_{R''-3\epsilon_1}))\subset U_{R_2+1}\subset U_{R_2 +2}\subset {\overline U}\subset V_{2h}$ while $\phi_{2h}^{-1}$ maps $U_{R_2+2} \subset U$ inside $\phi_{2h}^{-1}(U)= \tilde U$ which is compactly contained in $U$. Using Claim \ref{etabounds}, if we set \label{K3}$K_3 = \max\{\eta_2, \tfrac{3}{2}\}$
so that $K_3 > 1$, we have that
\begin{align}
\label{phi2h-1hd}
|(\phi_{2h}^{-1})^{\natural}(z)|\leq K_3, \hspace{1cm} z\in U_{R_2+2}.  
\end{align}

Note that, in view of \eqref{r2bound} and Claim \ref{etabounds}, $K_3$ depends on $\epsilon_1$, $h_0$, and $r_0$, $R_0$. Again, note that this bound has nothing to do with the existence of the compositions ${\mathbf Q_1}$, ${\mathbf Q_2}$ from the last sections, and so there is no danger of circular reasoning in fixing the bound $K_3$ at this point. 
Further, $\phi_{2h}^{-1}$ is analytic and injective on a neighbourhood of $\overline V_h$ and maps $\partial V_h$ inside $U \subset V_{2h}$ so that, if we set $\gamma = \partial V_h$, $\Gamma = \partial V_{2h}$ as positively oriented Jordan curves, we have that $\phi_{2h}^{-1}(\gamma)$ lies inside $\Gamma$. Thus
$(\phi_{2h}^{-1}, Id)$ is easily seen to be an admissible pair on $(\gamma,\Gamma)$ as in Definition \ref{admissiblepair} and we also have that $\phi_{2h}^{-1}(0)=0$. Using \eqref{phi2h-1hd}, we can then apply the Polynomial Implementation Lemma (Lemma \ref{PIL}) with $\Omega = V_h$, $\Omega' = V_{2h}$, $\gamma = \Gamma_h$, $\Gamma = \Gamma_{2h}$, $f = \phi_{2h}^{-1}$, $A = U_{R_2+1}$ (so that $\hat A = U_{R_2+2}$), $\delta = 1$, $M = K_3$, and $\epsilon = \frac{\epsilon_2}{3}$ to construct a (17+$\kappa$)-bounded quadratic polynomial composition \label{bfQ3}${\bf Q_3}$ that is univalent on $U_{R_2+1}$ for which we have
\begin{align}
\label{PILQ3app}
\rho_U({\bf Q_3}(z),\phi_{2h}^{-1}(z))&<\frac{\epsilon_2}{3}, \qquad z\in U_{R_2+1},\\
\label{PILQ3hd}
\| {\bf Q_3}^{\natural}\|_{U_{R_2+1}}&\leq K_3 \left (1+\frac{\epsilon_2}{3} \right ),\\
\label{PILQ3fix0}
{\bf Q_3}(0) &=0.
\end{align}
Note that by Lemma \ref{PIL}, since $\delta = 1$, ${\bf Q_3}$ depends directly on $\kappa$, $K_3$, $\epsilon$, $R_2$, $h$ (via the curves $\partial V_h$, $\partial V_{2h}$) and $\phi_{2h}$ so that one can check that ${\bf Q_3}$ ultimately depends on $\kappa$, $\epsilon_1$, $\epsilon_2$, $R$, $h_0$, $r_0$, and $R_0$. \\

{\bf Concluding the Proof of Phase II} 

Now, as ${\bf Q_1}$, ${\bf Q_2}$, and ${\bf Q_3}$ were all constructed using the Polynomial Implementation Lemma, they are all (17+$\kappa$)-bounded compositions of quadratic polynomials. Next define the ($17+\kappa$)-bounded composition  
\begin{align}
\label{Qdef}
{\bf Q}:={\bf Q_3}\circ{\bf Q_2}\circ{\bf Q_1}
\end{align}
$\bf Q$ then has the correct coefficient bound of $17 + \kappa$ as in the statement and, checking the dependencies of each of the compositions ${\bf Q_i}$, $i =1, 2, 3$ as well as those of the constants $K_2$, $K_3$, one see that ${\bf Q}$ depends on $\kappa$, $\epsilon_1$, $\epsilon_2$, $R$, $h_0$, $r_0$, $R_0$, and $\calE$ which is the same as given in the statement. 

Using the definitions of the compositions ${\bf Q_1}$, ${\bf Q_2}$, ${\bf Q_3}$ (on pgs. \pageref{bfQ1}, \pageref{bfQ2}, and \pageref{bfQ3}, respectively), 3. of Claim \ref{Q1Approx}, and  Claim \ref{GivingUp1Epsilon1}, \eqref{PILQ3A} (respectively) we showed the following: 
	
\begin{enumerate}
\item  ${\bf Q_1}$ is univalent on $U_{R''-3\epsilon_1} \supset U_{R''-4\epsilon_1}$ and ${\bf Q_1}(U_{R''-4\epsilon_1})\subset \phi_{2h}(U_{R''-3\epsilon_1})$,

\item ${\bf Q_2}$ is univalent on $\phi_{2h}(U_{R''-3\epsilon_1})$  and ${\bf Q_2}(\phi_{2h}(U_{R''-3\epsilon_1})) \subset U_{R_2 + 1}$, 

\item  ${\bf Q_3}$ is univalent on $U_{R_2 + 1}$.
\end{enumerate}

Combining these three observations, and recalling the definition of $\delta(\epsilon_1) = \sup_{[r_0, R_0]}{(R-R'')} +5\epsilon_1$ which we set in \eqref{deltadef} at the end of the section on ideal loss of domain, we see that the composition  ${\bf Q}$ is univalent on $U_{R''-4\epsilon_1}$ therefore
univalent on a neighborhood of $\overline U_{R''-5\epsilon_1} \supset \overline U_{R -\delta(\epsilon_1)}$ (this is the reason why the function $\delta: (0, \tilde \epsilon_1] \mapsto (0, \tfrac{r_0}{4})$ was defined the way it was and in particular why we needed to include an `extra' $\epsilon_1$ in our definition of $\delta$), which gives \emph{i)} in the statement. As all compositions were created with the Polynomial Implementation Lemma, we have using \eqref{PILQ1fix0}, \eqref{PILQ2fix0}, \eqref{PILQ3fix0} that ${\bf Q}(0)=0$ which gives \emph{iii)} in the statement.  

The last thing we need to do is then establish \emph{ii)} in the statement. Recall that in \eqref{deltabound} we chose ${\tilde \epsilon_1}$ sufficiently small such that in particular $\delta(\epsilon_1)<\frac{r_0}{4}$, which ensured that $U_{R-\delta(\epsilon_1)}\neq \emptyset$.

Then for $z \in \overline U_{R-\delta(\epsilon_1)} \subset \overline U_{R''-5\epsilon_1} \subset U_{R''-4\epsilon_1}$, we have
\begin{align}
\notag
\rho_U({\bf Q}(z),\calE(z))\leq &\rho_U({\bf Q_3}\circ{\bf Q_2}\circ{\bf Q_1}(z),\phi_{2h}^{-1}\circ{\bf Q_2}\circ{\bf Q_1}(z)) + \\
\notag
&\rho_U(\phi_{2h}^{-1}\circ{\bf Q_2}\circ{\bf Q_1}(z),\phi_{2h}^{-1}\circ{\hat \calE}\circ{\bf Q_1}(z)) + \\
\label{PIITRI}
&\rho_U(\phi_{2h}^{-1}\circ{\hat \calE}\circ{\bf Q_1},\calE(z)).
\end{align}
We now estimate the three terms on the right hand side of the inequality above.  We have that $z\in \overline U_{R''-5\epsilon_1} \subset U_{R''-4\epsilon_1}$, so ${\bf Q_1}(z)\in \phi_{2h}(U_{R''-3\epsilon_1})$ by Claim \ref{GivingUp1Epsilon1}.  Then ${\bf Q_2}\circ{\bf Q_1}(z)\in U_{R_2+1}$ by \eqref{PILQ3A}.  Thus 
\begin{align}
\label{PIITRI1}
\rho_U({\bf Q_3}\circ{\bf Q_2}\circ{\bf Q_1}(z),\phi_{2h}^{-1}\circ{\bf Q_2}\circ{\bf Q_1}(z))< \frac{\epsilon_2}{3}
\end{align}
by \eqref{PILQ3app}.  For the second term, we still have ${\bf Q_1}(z)\in \phi_{2h}(U_{R''-3\epsilon_1})$ and ${\bf Q_2}\circ{\bf Q_1}(z)\in U_{R_2+1}\subset U_{R_2 +2}$ as above.  Also, we have ${\hat \calE}\circ{\bf Q_1}(z) \in \phi_{2h}(U_{R''-2\epsilon_1})\subset U_{R_2}\subset U_{R_2 +2}$ by  \eqref{r2bound} and \eqref{errormove2}.  Thus, using the hyperbolic convexity lemma (Lemma \ref{stupidfuckinglemma}) and the hyperbolic M-L estimates (Lemma \ref{hyperbolicML}) applied to $\phi_{2h}^{-1}$ on $ U_{R_2 +2}$, 
by \eqref{PILQ2app} and \eqref{phi2h-1hd}, we have 
\begin{align}
\notag
\rho_U(\phi_{2h}^{-1}\circ{\bf Q_2}\circ{\bf Q_1}(z),\phi_{2h}^{-1}\circ{\hat \calE}\circ{\bf Q_1}(z))&< K_3\cdot\frac{\epsilon_2}{3K_3} \\
\label{PIITRI2}
&<\frac{\epsilon_2}{3}.
\end{align}

For the third term we note that $\calE(z)=\phi_{2h}^{-1}\circ{\hat \calE}\circ\phi_{2h}$ on the set $\tilde V_{2h} \setminus \hat{N} \supset \tilde V_h \supset \overline {\tilde U}  \supset \overline U_{R'' - 5\epsilon_1} \supset \overline U_{R-\delta(\epsilon_1)}$ so that $\calE$ and $\phi_{2h}^{-1}\circ{\hat \calE}\circ{\bf Q_1}$ differ in the first mapping of the composition.  We still have ${\bf Q_1}(z)\in \phi_{2h}(U_{R''-3\epsilon_1})$ by Claim \ref{GivingUp1Epsilon1}, and clearly $\phi_{2h}(z)\in \phi_{2h}(\overline U_{R''-5\epsilon_1})\subset \phi_{2h}(U_{R''-3\epsilon_1})$. We need to take care to ensure that we have at least a local version of hyperbolic convexity when it comes to applying the hyperbolic M-L estimates for ${\hat \calE}$ and $\phi_{2h}^{-1}$. 
By \eqref{PILQ1app}  (and the fact that $K_2, K_3 > 1$) we have that ${\bf Q_1}(z) \in \Delta_U (\phi_{2h}(z), \epsilon_2)$. Since by \eqref{e2ub} $\epsilon_2 \le \tilde \epsilon_2 \le 1$, it follows from \eqref{r2bound} that this hyperbolic disc is in turn contained in $U_{R_2 + 1}$. 

Recall that by Claim \ref{etabounds} we had $\eta_2$ depending only on $\epsilon_1$, $h_0$, $r_0$, and $R_0$ for which we had in particular $\|(\phi_{2h}^{-1})^{\natural}\|_{\Delta_U(0,R_2+2)}\leq \eta_2$. If we now apply  the hyperbolic convexity lemma (Lemma \ref{stupidfuckinglemma}) and the hyperbolic M-L estimates (Lemma \ref{hyperbolicML}) for the function $\phi_{2h}^{-1}$ on the ball $\Delta_U (\phi_{2h}(z), \epsilon_2)$, we have that 
 $\phi_{2h}^{-1} (\Delta_U (\phi_{2h}(z),\epsilon_2)) \subset  \Delta_U(z, \eta_2 \epsilon_2) \subset  \Delta_U(z, \epsilon_1)$, the last inclusion following from \eqref{e2ub} which implies that $\epsilon_2 \le \tilde \epsilon_2 \le \tfrac{\epsilon_1}{\eta_2}$. Thus $\phi_{2h}^{-1} (\Delta_U (\phi_{2h}(z),\epsilon_2)) \subset \Delta_U(z, \epsilon_1) \subset U_{R'' - 4 \epsilon_1} \subset  U_{R'' - 3 \epsilon_1}$ so that $ \Delta_U (\phi_{2h}(z), \epsilon_2) \subset \phi_{2h}(U_{R'' - 3 \epsilon_1})$. We also know using \eqref{ehathd} that $|{\hat \calE}^{\natural}|$ is bounded above on $ \phi_{2h}(U_{R''-2\epsilon_1}) \supset  \phi_{2h}(U_{R''-3\epsilon_1}) \supset \Delta_U (\phi_{2h}(z), \epsilon_2)$. 
 
Thus by \eqref{r2bound} and \eqref{errormove2}, we have ${\hat \calE}(\Delta_U (\phi_{2h}(z), \epsilon_2)) \subset {\hat \calE}(\phi_{2h}(U_{R''-3\epsilon_1})) \subset \phi_{2h}(U_{R''-2\epsilon_1})\subset U_{R_2}\subset  U_{R_2 + 2}$ so that in particular ${\hat \calE}(\phi_{2h}(z))$ and ${\hat \calE}({\bf Q_1}(z))$ both lie in $U_{R_2 +2}$ while we know know $|(\phi_{2h}^{-1})^{\natural}|$ is bounded above on $ U_{R_2 + 2}$ using \eqref{phi2h-1hd}.  Then, using \eqref{PILQ1app}, \eqref{ehathd}, \eqref{phi2h-1hd}, and combining the hyperbolic convexity lemma (Lemma \ref{stupidfuckinglemma})  and the hyperbolic M-L estimates (Lemma \ref{hyperbolicML}), applied first to  ${\hat \calE}$ on $\Delta_U (\phi_{2h}(z),\epsilon_2) \subset  \phi_{2h}(U_{R''-2\epsilon_1})$ and then to $\phi_{2h}^{-1}$ on $U_{R_2+2}$, we have
\begin{align}
\notag
\rho_U(\phi_{2h}^{-1}\circ{\hat \calE}\circ{\bf Q_1},\calE(z))&<K_3\cdot K_2\cdot \frac{\epsilon_2}{3K_2 K_3} \\
\label{PIITRI3}
&<\frac{\epsilon_2}{3}.
\end{align}
Finally, using \eqref{PIITRI}, \eqref{PIITRI1}, \eqref{PIITRI2}, and \eqref{PIITRI3}, we have
\begin{align*}
\rho_U({\bf Q}(z),\calE(z))<\epsilon_2
\end{align*}
which establishes \emph{ii)} in the statement and completes the proof of Phase II.
\end{proof}

Before going on to the next chapter, we close with a couple of observations. It is possible if one wishes to find a bound on the absolute value of the hyperbolic derivative of the composition ${\bf Q}$ above on $\overline U_{R'' - \delta(\epsilon_1)}$ which is uniform in terms of the constants $\kappa$, $\epsilon_1$, $\epsilon_2$, $h_0$, $r_0$, and $R_0$ (the hardest part of this is controlling the hyperbolic derivative of ${\bf Q_1}$ which can best be done using \eqref{PILQ1app} and Claim \ref{etabounds} combined with Lemma \ref{lemma4.3} and the version of Cauchy's integral formula for derivatives - e.g. \cite{Con} Corollary IV.5.9). 

However, we do not actually require estimates on the size of ${\bf Q}^\natural$. The reason for this is that the purpose of Phase II is to correct the error from a previous Phase I (Lemma \ref{PhaseI}) approximation which essentially resets the error we need to keep track of. However, as we saw, this Phase II correction itself generates an error which is then passed through the next Phase I approximation. In order to control this, then, we do need an estimate on the hyperbolic derivative of the Phase I composition (which is \emph{(4)} in the statement of Phase I).

\chapter{Proof of the Main Theorem}

In this chapter we prove Theorem \ref{thetheorem}.  The proof of the theorem will follow from a large inductive argument. First, however, we need one more technical lemma. Recall the Siegel disc $U$ for $P$ and that, for $R > 0$, $U_R = \Delta_U(0,R)$ is used to denote the hyperbolic disc of radius $R$ about $0$ with respect to the hyperbolic metric of $U$.
\begin{lemma}
\label{thejordancurveargument}
(The Jordan Curve Argument) Let $U$ and $U_R$ be as above.  Given $0<\epsilon<R$, suppose $g$ is a univalent function defined on a neighbourhood of $\overline U_R$ such that $g(0)=0$ and $\rho_U(g(z),z) \le \epsilon$ on $\partial U_R$.  Then $g(U_R)\supset U_{R-\epsilon}$.
\end{lemma}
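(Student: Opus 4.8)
The plan is to use a standard argument-principle / Rouché-type argument, working in the hyperbolic metric of $U$ where $U_R = \Delta_U(0,R)$ is a genuine round disc after uniformizing.

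First I would uniformize by the Riemann map $\psi : U \to \mathbb{D}$ with $\psi(0) = 0$, $\psi'(0) > 0$. Since $\psi$ is a hyperbolic isometry, $\psi(U_R) = \Delta_{\mathbb{D}}(0,R) = \mathrm{D}(0,r)$ where $r = \tanh(R/2)$, and $\psi(\partial U_R)$ is the circle $\mathrm{C}(0,r)$. Replacing $g$ by $\tilde g := \psi \circ g \circ \psi^{-1}$, the hypotheses become: $\tilde g$ is univalent on a neighbourhood of $\overline{\mathrm{D}(0,r)}$, $\tilde g(0) = 0$, and $\rho_{\mathbb{D}}(\tilde g(w), w) < \epsilon$ for $w \in \mathrm{C}(0,r)$ (here I use that $\psi$ is an isometry so the hyperbolic displacement is preserved). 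The conclusion $g(U_R) \supset U_{R-\epsilon}$ is equivalent to $\tilde g(\mathrm{D}(0,r)) \supset \Delta_{\mathbb{D}}(0, R - \epsilon)$.

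Next, the key estimate: for $w \in \mathrm{C}(0,r)$, the condition $\rho_{\mathbb{D}}(\tilde g(w), w) < \epsilon$ forces $\rho_{\mathbb{D}}(0, \tilde g(w)) > \rho_{\mathbb{D}}(0, w) - \epsilon = R - \epsilon$ by the triangle inequality, so $\tilde g(\mathrm{C}(0,r))$ lies entirely outside the closed hyperbolic disc $\overline{\Delta_{\mathbb{D}}(0, R - \epsilon)}$. Now fix any point $a \in \Delta_{\mathbb{D}}(0, R - \epsilon)$. Consider the closed curve $\tilde g \circ \gamma$ where $\gamma$ is $\mathrm{C}(0,r)$ positively oriented. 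I would show the winding number $n(\tilde g \circ \gamma, a) = 1$: since $\tilde g(\gamma)$ misses $\overline{\Delta_{\mathbb{D}}(0,R-\epsilon)}$ and in particular misses the segment from $0$ to $a$, the winding number of $\tilde g(\gamma)$ about $a$ equals that about $0$; and the winding number of $\tilde g(\gamma)$ about $0$ is $1$ because $\tilde g$ is univalent near $\overline{\mathrm{D}(0,r)}$ with $\tilde g(0) = 0$, so $\tilde g$ maps $\mathrm{D}(0,r)$ univalently onto a Jordan domain containing $0$ with boundary $\tilde g(\gamma)$ (orientation preserved since univalent holomorphic maps are orientation-preserving). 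By the argument principle, $n(\tilde g \circ \gamma, a) = 1$ means $a \in \tilde g(\mathrm{D}(0,r))$. Since $a$ was an arbitrary point of $\Delta_{\mathbb{D}}(0, R-\epsilon)$, we get $\tilde g(\mathrm{D}(0,r)) \supset \Delta_{\mathbb{D}}(0, R - \epsilon)$, and pulling back by $\psi^{-1}$ (another isometry) gives $g(U_R) \supset U_{R - \epsilon}$.

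The main obstacle — really the only subtle point — is making the winding-number / "misses the segment $[0,a]$" argument rigorous: one needs that $\tilde g(\gamma)$ avoiding $\overline{\Delta_{\mathbb{D}}(0,R-\epsilon)}$ (a hyperbolically convex, hence connected and simply connected, set containing both $0$ and $a$) implies $0$ and $a$ lie in the same complementary component of $\mathbb{C} \setminus [\tilde g(\gamma)]$, so the winding numbers agree. This follows since the complement of $[\tilde g \circ \gamma]$ is open and $\overline{\Delta_{\mathbb{D}}(0,R-\epsilon)}$ is connected and disjoint from it, hence lies in one component; combined with continuity of winding number on components (e.g. \cite{Con} Theorem IV.4.4) this gives $n(\tilde g\circ \gamma, a) = n(\tilde g \circ \gamma, 0) = 1$. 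Everything else is the routine observation that Riemann maps are isometries for the hyperbolic metric and that univalent holomorphic maps preserve orientation.
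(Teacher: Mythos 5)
Your proof is correct and takes essentially the same approach as the paper's: both hinge on the observation that $g(\partial U_R)$ is a Jordan curve avoiding $U_{R-\epsilon}$, combined with the fact that $0 = g(0)$ lies inside it and $U_{R-\epsilon}$ is connected. You make the winding-number bookkeeping explicit (after uniformizing to $\D$), while the paper invokes the Jordan curve theorem directly in $U$, but these are the same argument.
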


\begin{proof}
The function $g$ is a homeomorphism and is bounded on $\overline U_R$, so that it maps $\partial U_R$ to $g(\partial U_R)=\partial(g(U_R))$ which is a Jordan curve in $\C$, while $U_R$ gets mapped to the bounded complementary component of this Jordan curve in view of the Jordan curve theorem (e.g. \cite{Mun} Theorem 13.4 or \cite{New} Theorem V.10.2). Then $0 = g(0)$ lies in $g(U_R)$ and thus inside $\partial (g(U_R))$ and, since this curve avoids $U_{R-\epsilon}$, all of the connected set $U_{R-\epsilon}$ lies inside $\partial (g(U_R))$.  Hence $U_{R-\epsilon}\subset g(U_R)$.  
\end{proof}

\begin{lemma} \label{mediuminductionlemma}
There exist 
\begin{description}
\item[a)] a sequence of positive real numbers $\{\epsilon_k \}_{k=1}^{\infty}$ which converges to $0$, 
\item[b)]  a sequence $\{J_i\}_{i=1}^{\infty}$ of natural numbers, a positive constant $\kappa_0 \ge 576$ and a sequence of compositions of quadratic polynomials $\{{\bf Q^i} \}_{i=1}^{\infty}$,
\item[c)] a sequence of strictly decreasing hyperbolic radii $\{R_i \}_{i=0}^{\infty}$, and 
\item[d)] a sequence of strictly increasing hyperbolic radii $\{S_i \}_{i=0}^{\infty}$, 
\end{description}
such that 

\begin{enumerate}
\item For each $i\geq 0$, $S_i<\frac{1}{10}<\frac{1}{5}<R_i$, 
\item For each $i \ge 1$, ${\bf Q^i}$ is a composition of $J_i$ (17+$\kappa_0$)-bounded quadratic polynomials with ${\bf Q^i}(0)=0$,
\item For each $i \ge 1$, ${\bf Q^i}\circ \cdots \cdots \circ {\bf Q^1}(U_{\frac{1}{20}})\subset U_{S_i}\subset U_{\frac{1}{10}} $, and 
\item For each $i \ge 1$ and $1 \leq m \le J_i$, if ${\bf Q_{m}^{i}}$ denotes the partial composition of the first $m$ quadratics of ${\bf Q^i}$, then, for all $f \in \calS$ and for all $i =2k+1$ odd, there exists $1\leq m_k \leq J_i$ such that, for all $z\in U_{\frac{1}{20}}$, we have 
\begin{align*}
\rho_U({\bf Q^i_{m_k}}\circ {\bf Q^{i-1}}\circ \cdots \cdots \circ {\bf Q^1},f(z))<\epsilon_{k+1}.
\end{align*}
\end{enumerate}
\end{lemma}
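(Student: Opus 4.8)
The plan is to build the three sequences $\{\epsilon_i\}$, $\{R_i\}$, $\{S_i\}$ and the compositions $\{{\bf Q^i}\}$ simultaneously by induction on $i$, alternating an application of Phase I (Lemma \ref{PhaseI}) with an application of Phase II (Lemma \ref{PhaseII}). The key bookkeeping device is that Phase II, applied to an error function $\calE$ with $\rho_U(\calE(z),z)<\epsilon_1$ on $U_R$, produces a polynomial composition correcting the error to within $\epsilon_2$ on $U_{R-\delta(\epsilon_1)}$, where $\delta(\epsilon_1)\to 0_+$ as $\epsilon_1\to 0_+$ \emph{and} $\epsilon_2$ is completely at our disposal. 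The geometric-series trick is: choose the $\epsilon_i$ decreasing so fast that the accumulated domain losses $\sum_i \delta(\epsilon_i)$ converge and leave a residual disc of radius bounded below, say by $\tfrac15$, while simultaneously $R_i\downarrow$ something $>\tfrac15$ and $S_i\uparrow$ something $<\tfrac1{10}$.

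First I would fix, once and for all, the lower/upper bounds $r_0,R_0$ for the radii (with $\tfrac15 \le r_0 < R_0 \le \tfrac{\pi}{2}$, say $R_0 = \tfrac{\pi}{2}$ and $r_0$ slightly less than $\tfrac15$), and the parameters $h_0$ and $\kappa\ge\kappa_0$. Phase II then supplies the function $\delta$ on $(0,\tilde\epsilon_1]$ and, for each $\epsilon_1$, an upper bound $\tilde\epsilon_2(\epsilon_1)$. I would then choose $\epsilon_1 = \epsilon_1^{(1)}$ small enough that $\sum_{j\ge 1}\delta(\epsilon_1/2^{j}) < \tfrac12(R_0 - \tfrac15)$ — this is possible since $\delta\to 0_+$ — and set the target accuracy sequence so that at stage $i$, the error fed into Phase II is $\epsilon_i$, with $\epsilon_{i+1} \le \min\{\tilde\epsilon_2(\epsilon_i), \epsilon_i/2, \tfrac12\epsilon_i\cdot(\text{a hyperbolic-derivative factor})\}$; the last factor is needed because the residual Phase-I error $\epsilon_{i+1}$ must survive being pushed forward through later compositions (cf.\ the argument in Step 3 of the proof of Phase I, where errors get multiplied by a bounded hyperbolic derivative each time they pass through a subsequent map). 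Concretely one sets $R_0$ at the start, $R_i = R_{i-1} - \delta(\epsilon_i)$ (so $R_i\downarrow R_\infty > \tfrac15$), and chooses $S_i\uparrow S_\infty<\tfrac1{10}$ after verifying, via the hyperbolic M--L estimates and the fact that each ${\bf Q^i}$ is close to the identity on $U_{1/20}$, that the orbit of $\overline U_{1/20}$ stays trapped; condition \emph{1.} is then immediate.

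At stage $i$: having built ${\bf Q^1},\dots,{\bf Q^{i-1}}$ with the composition $G_{i-1} := {\bf Q^{i-1}}\circ\cdots\circ{\bf Q^1}$ univalent on $\overline U_{R_{i-2}}$ and $\epsilon_i$-close in $\rho_U$ to some target $f$ from the current $\epsilon_i$-net on a disc of radius $\ge R_{i-1} + \text{buffer}$ — here I use that $\calS$ admits a finite $\epsilon$-net (stated in Section 2.1) and enumerate the nets with $\epsilon = \epsilon_{i+1}$ — I apply Phase I to approximate the finite collection of net functions (inserting $\mathrm{Id}$ at the ends as required) and Phase II to correct the residual error from the previous stage, obtaining ${\bf Q^i}$, a $(17+\kappa_0)$-bounded composition of $J_i$ quadratics, univalent on a neighbourhood of $\overline U_{R_{i-1}}$. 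The Jordan Curve Argument (Lemma \ref{thejordancurveargument}) then gives ${\bf Q^i}(\overline U_{R_{i-1}})\supset U_{R_i}$, yielding \emph{2.}; the trapping estimate for the base point disc gives \emph{3.}; and \emph{4.} follows by tracking, for each $f\in\calS$, a net function $f_k$ with $\rho_U(f_k,f)<\epsilon_{i+1}/2$ on $U_{1/20}$ together with the guarantee from Phase I that some partial composition ${\bf Q^i_m}$ lands within $\epsilon_{i+1}/2$ of $f_k$ after passing through $G_{i-1}$, using the hyperbolic M--L estimates to bound the distortion of the error as it passes through $G_{i-1}$ (which is why the hyperbolic-derivative bounds in conclusion \emph{5.} of Phase I and conclusions \emph{ii.}, \emph{iii.} of Phase II are needed, and why the error bound at stage $i$ was chosen with a derivative-factor safety margin).

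The main obstacle, and the step requiring the most care, is the \emph{simultaneous} satisfaction of the competing monotonicity requirements: the domain-loss bound forces $\epsilon_i\to 0$ fast, but making $\epsilon_i$ smaller forces more quadratics $J_i$ (hence longer compositions), which in turn amplifies accumulated errors when the residual is pushed through $G_{i-1}$ and also risks pushing the trapped disc $U_{1/20}$ outward past $U_{1/10}$. Resolving this is a matter of choosing the $\epsilon_i$ recursively \emph{after} Phase I/II have fixed all the relevant constants ($\delta$, $\tilde\epsilon_2$, the hyperbolic-derivative bounds $\le 7$ from Phase I), so that at each stage the needed smallness is a finite condition on already-determined data; the convergence $\sum\delta(\epsilon_i)<\infty$ and the geometric decay of the push-forward errors then close the induction. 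One must also double-check that the hyperbolic radii $R_0$ etc.\ remain within $[r_0,R_0]$ throughout so that Phase II is legitimately applicable at every stage, and that $\tfrac15 < R_i$ and $\tfrac1{10} > S_i$ hold uniformly, which is exactly what the a priori choice of the series budget guarantees.
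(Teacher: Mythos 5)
Your proposal is correct and takes essentially the same route as the paper: alternate Phase I (approximating a finite $\epsilon_i$-net of $\calS$) with Phase II (correcting the accumulated error $(\mathbf{Q^{i}}\circ\cdots\circ\mathbf{Q^1})^{\circ-1}$), choosing the $\epsilon_i$ recursively so that both $\sum_i\delta(\epsilon_i)$ and the additional error-driven radius losses are geometrically summable and small, and concluding via the Jordan curve argument, the hyperbolic M--L estimates with the Phase I bound $C=7$, and the finite-net property of $\calS$. Your bookkeeping formula $R_i=R_{i-1}-\delta(\epsilon_i)$ oversimplifies --- the paper's per-block loss is $\bigl(\tfrac43+\tfrac{1}{3C}\bigr)\epsilon_i+\delta(\epsilon_i)$ because the Phase I accuracy $\tfrac{\epsilon_i}{3}$, the size $\epsilon_i$ of the error fed into Phase II, and the Phase II output accuracy $\tfrac{\epsilon_{i+1}}{3C}$ must all be absorbed alongside $\delta(\epsilon_i)$ --- but you anticipate exactly this under the ``hyperbolic-derivative factor safety margin,'' so the geometric-budget argument is sound in spirit.
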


Let $J_i$ be the integers and ${\bf Q}^i$ the polynomial compositions from {\bf b)} of the statement above. For $i=0$, set $T_0 = 0$ and, for each $i \ge 1$, set $T_i = \sum_{j=1}^i J_j$. Given this, we define a sequence $\Pm$ in the following natural way: for $m \ge 1$, let $i \ge 1$ be the largest index such that $T_{i-1} < m$ so that $T_{i-1} < m \le T_i = T_{i-1} + J_i$. Then simply let $P_m$ be the $(m - T_{i-1})$th quadratic in the composition ${\bf Q}^i$ (which is a composition of $J_i$ quadratic polynomials). 

The next lemma then follows as an immediate corollary (using \emph{(2)}, \emph{(3)}, and \emph{(4)} above):

\begin{lemma} \label{smallinductionlemma}
There exists a sequence of quadratic polynomials $\{P_m\}_{m=1}^{\infty}$ such that the following hold:
\begin{enumerate}
\item $\{P_m\}_{m=1}^{\infty}$ is (17+$\kappa_0$)-bounded, 
\item $Q_m(U_{\frac{1}{20}})\subset U_{\frac{1}{10}}$ for infinitely many $m$, 
\item For all $f\in \calS$, there exists a subsequence $\{Q_{m_k}\}_{k=1}^{\infty}$ which converges uniformly to $f$ on $U_{\frac{1}{20}}$ as $k \rightarrow \infty$.  
\end{enumerate}
\end{lemma}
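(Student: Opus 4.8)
\emph{Proof sketch (plan).} The plan is to produce $\{P_m\}_{m=1}^\infty$ simply by concatenating the finite blocks ${\bf Q^1}, {\bf Q^2}, \ldots$ delivered by Lemma \ref{mediuminductionlemma} into a single infinite list of quadratic polynomials, and then to read off the three conclusions by elementary index bookkeeping. Concretely, I would set $N_0 := 0$ and $N_i := J_1 + \cdots + J_i$ for $i \ge 1$, and, for $N_{i-1} < m \le N_i$, let $P_m$ be the $(m - N_{i-1})$-th quadratic occurring in ${\bf Q^i}$. Since part (b) of Lemma \ref{mediuminductionlemma} guarantees that every such quadratic is $(17+\kappa_0)$-bounded, conclusion 1 is immediate. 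Writing $Q_m := P_m \circ \cdots \circ P_1$ as in Section 1, this construction yields the identities $Q_{N_i} = {\bf Q^i}\circ\cdots\circ{\bf Q^1}$ and, for $1 \le \ell \le J_i$, $Q_{N_{i-1}+\ell} = {\bf Q^i_\ell}\circ{\bf Q^{i-1}}\circ\cdots\circ{\bf Q^1}$, where ${\bf Q^i_\ell}$ denotes the partial composition of the first $\ell$ quadratics of ${\bf Q^i}$; these compositions are legitimate because property 3 of Lemma \ref{mediuminductionlemma} keeps ${\bf Q^{i-1}}\circ\cdots\circ{\bf Q^1}(\overline{U_{1/20}})$ inside $U_{S_{i-1}} \subset U_{1/10} \subset U_{R_{i-1}}$, on a neighbourhood of which ${\bf Q^i}$ is univalent by property 2.

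Conclusion 2 then follows at once by taking $m = N_i$: property 3 of Lemma \ref{mediuminductionlemma} gives $Q_{N_i}(\overline{U_{1/20}}) \subset U_{S_i} \subset U_{1/10}$ for every $i \ge 1$, so this inclusion holds for infinitely many $m$. For conclusion 3, fix $f \in \calS$. For each $i \ge 1$, property 4 of Lemma \ref{mediuminductionlemma} supplies an index $\ell_i \in \{1, \ldots, J_i\}$ with $\rho_U(Q_{N_{i-1}+\ell_i}(z), f(z)) < \epsilon_{i+1}$ for all $z \in U_{1/20}$. Setting $m_i := N_{i-1} + \ell_i$ and using $\ell_i \le J_i$, one has $m_{i+1} \ge N_i + 1 > N_{i-1} + \ell_i = m_i$, so $\{Q_{m_i}\}_{i=1}^\infty$ is a genuine subsequence of $\{Q_m\}_{m=1}^\infty$ along which $\sup_{z \in U_{1/20}} \rho_U(Q_{m_i}(z), f(z)) < \epsilon_{i+1} \to 0$.

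The only point requiring more than bookkeeping — and hence the step I expect to be the main (though still routine) obstacle — is upgrading this $\rho_U$-convergence to ordinary locally uniform convergence, for which one must verify that the images $Q_{m_i}(U_{1/20})$ remain in a fixed relatively compact subset of $U$. I would obtain this from the distortion bound $\|f^\natural\|_{\overline{U_{1/20}}} \le \tfrac{3}{2}$ (Lemma \ref{fcircghdlemma} applied with $g = \mathrm{Id} \in \calS$, legitimate since $\kappa_0 \ge 576$ was chosen large enough in Lemma \ref{mediuminductionlemma}, and which in particular already shows $f$ maps $U_{1/20}$ into $U$): combined with the hyperbolic convexity of $U_{1/20}$ (Lemma \ref{stupidfuckinglemma}), the hyperbolic M--L estimates (Lemma \ref{hyperbolicML}), and $f(0)=0$, this gives $f(\overline{U_{1/20}}) \subset \overline{U_{3/40}}$, so for all large $i$ the sets $Q_{m_i}(\overline{U_{1/20}})$ lie inside $\overline{U_{3/40+1}}$, a relatively compact subset of $U$. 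On this set $\rho_U$ and the Euclidean metric are comparable by Lemma \ref{lemma4.3}, whence $\sup_{U_{1/20}} |Q_{m_i} - f| \to 0$, i.e. $Q_{m_i} \rightrightarrows f$ on $U_{1/20}$. This exhausts the argument, the substantive work having already been absorbed into Lemma \ref{mediuminductionlemma}.
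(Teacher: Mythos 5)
Your proof is correct and matches the paper's intended argument: the paper offers no separate proof of Lemma \ref{smallinductionlemma}, stating only that it ``follows as an immediate corollary'' of Lemma \ref{mediuminductionlemma}, and what you have written is exactly the block-concatenation and index bookkeeping that this phrase elides, together with the (mild, and worth making explicit) upgrade from $\rho_U$-convergence to ordinary uniform convergence via the containment in a relatively compact subset of $U$ and Lemma \ref{lemma4.3}. Nothing to correct.
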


\begin{proof}[Proof of Lemma \ref{mediuminductionlemma}]

We begin by fixing the values of the constants in the statements of Phases I and II (Lemmas \ref{PhaseI} and \ref {PhaseII}). Starting with Phase II, let $h_0 = 1$ be the maximum value for the Green's function $G$ and 
let $r_0 = \tfrac{1}{20}$, $R_0 = \tfrac{1}{4} < \tfrac{\pi}{2}$ be the upper and lower bounds for the hyperbolic radii we consider in applying Phase II.  
We will also use $R_0 = \tfrac{1}{4}$ when we apply Phase I and we set $\kappa = \kappa_0 = \kappa_0(\tfrac{1}{4}) \ge 576$ for both Phases I and II.

Let $C:=7$ be the bound on the hyperbolic derivative from part \emph{(4)} of the statement of Phase I and let {$\tilde \epsilon_1 > 0$ and $\delta(x)$ be the function defined on $(0, \tilde \epsilon_1]$} measuring loss of hyperbolic radius from the statement of Phase II, both of which are determined by the values of $h_0$, $r_0$, and $R_0$ which we have just fixed. The proof of Lemma \ref{mediuminductionlemma} will follow quickly from the following claim, which we prove by induction. 

\begin{claim}
\label{inductionclaim}
There exist inductively defined infinite sequences of positive real numbers $\{\epsilon_k \}_{k=1}^{\infty}$, $\{\eta_k \}_{k=1}^{\infty}$, and $\{\sigma_k \}_{k=1}^{\infty}$, sequences of hyperbolic radii $\{R_i \}_{i=0}^{\infty}$ and $\{S_i \}_{i=0}^{\infty}$, integers $\{J_i \}_{i=1}^{\infty}$, and polynomial compositions $\{{\bf Q^i} \}_{i=1}^{\infty}$ such that, for each $n \in \N$, the following hold.

\begin{enumerate}

\item[\namedlabel{ind:i}{i)}] The sequences $\{\epsilon_k \}_{k=1}^{n}$, $\{\eta_k \}_{k=1}^{n}$, and $\{\sigma_k \}_{k=1}^{n}$ satisfy \\
\[  \eta_k= \left\{
\begin{array}{ll}
     \frac{4\epsilon_1}{3} + \delta(\epsilon_1),  & i=1, \\
     (\frac{4}{3}+\frac{1}{3C})\epsilon_k + \delta(\epsilon_k), & 2 \leq k \leq n, \\
\end{array} 
\right. \]

\[ \hspace{-1.2cm}\sigma_k= \left\{
\begin{array}{ll}
     \frac{4\epsilon_1}{3},   & i=1, \\
     (\frac{4}{3}+\frac{1}{3C})\epsilon_k,  & 2 \leq k \leq n, \\
\end{array} 
\right. \]
where in addition we require that $0< \epsilon_k < \sigma_k<\eta_k<\frac{1}{40\cdot 2^k}$ and that $\epsilon_k \le \tilde \epsilon_1$ for each $1\leq k \leq n$.

\item[\namedlabel{ind:ii}{ii)}] The sequence $\{R_i \}_{i=0}^{2n-1}$ is strictly decreasing and is given by $R_0=\frac{1}{4}$,\\ $R_1=\frac{1}{4}-\frac{\epsilon_1}{3}$, and 
\[  \hspace{-.4cm}R_i= \left\{
\begin{array}{ll}
     \frac{1}{4}-(\sum_{j=1}^{k}\eta_j) -\frac{\epsilon_{k+1}}{3C},  & i=2k \; \mathrm{ for \; some } \; 1\leq k \leq n-1, \\
     &\\
     \frac{1}{4}-(\sum_{j=1}^{k}\eta_j) - (\frac{1}{3}+\frac{1}{3C})\epsilon_{k+1}, & i=2k+1 \: \mathrm{ for \; some }\\ &   1\leq k \leq n-1.\\
\end{array} 
\right. \]

The sequence $\{S_i \}_{i=0}^{2n-1}$ is strictly increasing and is given by $S_0=\frac{1}{20}$,\\ $S_1=\frac{1}{20}+\frac{\epsilon_1}{3}$, and 
\[  S_i= \left\{
\begin{array}{ll}
     \frac{1}{20}+(\sum_{j=1}^{k}\sigma_j) +\frac{\epsilon_{k+1}}{3C},  & i=2k \; \mathrm{ for \; some } \; 1\leq k \leq n-1, \\
     &\\
     \frac{1}{20}+(\sum_{j=1}^{k}\sigma_j) + (\frac{1}{3}+\frac{1}{3C})\epsilon_{k+1}, & i=2k+1 \; \mathrm{ for \; some } \\ &1\leq k \leq n-1.\\
\end{array} 
\right. \]

\item[\namedlabel{ind:iii}{iii)}] $\frac{1}{20} \le S_i<\frac{1}{10}<\frac{1}{5}<R_i \le \frac{1}{4}$ for each $0 \leq i \leq 2n-1$.  

\item[\namedlabel{ind:iv}{iv)}] For each $1\leq i \leq 2n-1$, ${\bf Q^i}$ is a (17+$\kappa_0$)-bounded composition of $J_i$ quadratic polynomials with ${\bf Q^i}(0)=0$.   

\item[\namedlabel{ind:v}{v)}] For each $1 \le i \le 2n-1$, the branch of $({\bf Q^i})^{-1}$ which fixes $0$ is well-defined and univalent on $U_{R_i}$ and maps $U_{R_i}$ inside $U_{R_{i-1}}$. The branch of $({\bf Q^i}\circ \cdots \cdots \circ {\bf Q^2} \circ {\bf Q^1})^{-1}$ which fixes $0$ is then also well-defined and univalent on $U_{R_{i}}$.  

\item[\namedlabel{ind:vi}{vi)}] For each $1\leq i \leq 2n-1$, ${\bf Q^i}$ is univalent on $U_{S_{i-1}}$ and
\begin{align*}
{\bf Q^i}(U_{S_{i-1}}) \subset U_{S_{i}}.
\end{align*}
Thus ${\bf Q^i} \circ \cdots \cdots \circ {\bf Q^1}$ is univalent on $U_{\frac{1}{20}}$ and
\begin{align*}
{\bf Q^i} \circ \cdots \cdots \circ {\bf Q^1}(U_{\frac{1}{20}})\subset U_{S_{i}}\subset U_{\frac{1}{10}}. 
\end{align*}

\item[\namedlabel{ind:vii}{vii)}] If $i=2k$  with $1\leq k \leq n-1$ is even, and $z \in U_{R_{i-1}-\delta(\epsilon_k)}$, 

\begin{align*}
\rho_U({\bf Q^{i}}(z),({\bf Q^{i-1}}\circ \cdots \cdots \circ {\bf Q^{1}})^{-1}(z) )<\frac{\epsilon_{k+1}}{3C}
\end{align*}
where we use the same branch of $({\bf Q^{i-1}}\circ \cdots \cdots \circ {\bf Q^{1}})^{-1}$ which fixes $0$ from {\ref{ind:v}} above.

\hspace{-2cm} For the final two hypotheses, let $i=2k+1$ with $0\leq k \leq n-1$ be odd.  

\item[\namedlabel{ind:viii}{viii)}] If $z \in U_{R_i}$, using the same inverse branch mentioned in {\ref{ind:v}} we have
\begin{align*}
\rho_U(({\bf Q^{i}}\circ \cdots \cdots \circ {\bf Q^{1}})^{-1}(z),z)<\epsilon_{k+1}.
\end{align*}

\item[\namedlabel{ind:ix}{ix)}] If, for each $1\leq m \leq J_i$, ${\bf Q_{m}^{i}}$ denotes the partial composition of the first $m$ quadratics of ${\bf Q^{i}}$, then for all $f \in \calS$ there exists $1\leq m \leq J_i$, such that, for all $z \in U_{\frac{1}{20}}$, we have 
\begin{align*}
\rho_U({\bf Q^i_m}\circ {\bf Q^{i-1}}\circ \cdots \cdots \circ {\bf Q^{1}}(z), f(z))<\epsilon_{k+1}. 
\end{align*}
\end{enumerate}
\end{claim}

Remarks: 

\begin{enumerate}

\item Statements \emph{\ref{ind:i}}-\emph{\ref{ind:iii}} are designed for keeping track of the domains on which estimates are holding and in particular to ensure that these domains do not get too small and that the constants $\epsilon_i$ which keep track of the accuracy of the approximations do indeed tend to $0$. The outer radii $R_i$ are chosen primarily so that the image of $U_{R_i}$ under the inverse branch of ${\bf Q^i}$ which fixes $0$ is contained in $U_{R_{i-1}}$ (this is \emph{\ref{ind:v}} above). This allows us to compose the inverses of these compositions and then approximate this composition of inverses by means of Phase II. The inner radii $S_i$ are chosen primarily so that the image of $U_{S_{i-1}}$ under the polynomial composition ${\bf Q^i}$ lies inside  $U_{S_i}$ (this is \emph{\ref{ind:vi}} above). This allows us to compose these polynomial compositions and gives us our iterates which remain bounded and approximate the elements of $\calS$. 


\item Statement \emph{\ref{ind:vii}} is a `Phase II' statement regarding error correction using Phase II of the inverse of an earlier polynomial composition. Effectively, the Phase II correction compensates for the error in the previous Phase I composition, whose deviation from the identity is measured in \emph{\ref{ind:viii}} above.

\item Statements \emph{\ref{ind:viii}}  and \emph{\ref{ind:ix}} are  `Phase I' statements. Statement \emph{\ref{ind:viii}} is a bound on the error to be corrected by the next Phase II approximation. Statement \emph{\ref{ind:ix}} is the key element for proving Theorem \ref{thetheorem}.

\item It follows readily from \emph{\ref{ind:i}} that the sequence $\{\epsilon_i\}_{i=1}^\infty$ converges to $0$ exponentially fast, which gives a) in the statement of the Lemma. b) follows from \emph{\ref{ind:iv}} and our choice of $\kappa_0$, while c) and d) follow from \emph{\ref{ind:ii}}.

\item Part \emph{(1)} of the second part of the statement of the Lemma follows from \emph{\ref{ind:iii}} above while \emph{(2)} of the statement follows from \emph{\ref{ind:iv}}. Lastly, \emph{(3)} follows from \emph{\ref{ind:vi}} while \emph{(4)} follows from \emph{\ref{ind:ix}}. 

\end{enumerate}


\begin{center}
\begin{figure}[ht]
\label{blockdiagram}

\scalebox{.48}{
\begin{tikzpicture}

\draw (0,3) -- (22,3);
	\node at (1,3.2) {$k=0$}; 
	\node at (4,3.2) {$k=1$};
	\node at (12,3.2) {Block $k$};
	\node at (20,4.2) {Last Block};
	\node at (20,3.7) {$k=n-1$};
	\node at (20,3.2) {($n$ blocks)};
	
\draw (0,3.2) -- (0,2.8);	
\draw (2,3.2) -- (2,2.8);
\draw (6,3.2) -- (6,2.8);
\draw (10,3.2) -- (10,2.8);
\draw (14,3.2) -- (14,2.8);
\draw (18,3.2) -- (18,2.8);
\draw (22,3.2) -- (22,2.8);


\draw (0,0) -- (22,0);
	\node at (-2.68,3.2) {Index:};
	\node at (-2.4,2.3) {Domains:};
	\node at (-2.1,.5) {Polynomials:};
	\node at (-2.65,-.5) {Phase:};
	\node at (-2.28,-1.7) {Number of};
	\node at (-1.98,-2.2) {Compositions:};
	\node at (1,.5) {${\bf Q^1}$};
	\node at (1,-.5) {Phase I};
	\node at (3,.5) {${\bf Q^2}$};
	\node at (3,-.5) {Phase II};
	\node at (5,.5) {${\bf Q^3}$};
	\node at (5,-.5) {Phase I};
	\node at (7,.5) {$\cdots$};
	\node at (9,.5) {$\cdots$};
	\node at (11,.5) {${\bf Q^{2k}}$};
	\node at (11,-.5) {Phase II};
	\node at (13,.5) {${\bf Q^{2k+1}}$};
	\node at (13,-.5) {Phase I};
	\node at (15,.5) {$\cdots$};
	\node at (17,.5) {$\cdots$};
	\node at (19,.5) {${\bf Q^{2n-2}}$};
	\node at (19,-.5) {Phase II};
	\node at (21,.5) {${\bf Q^{2n-1}}$};
	\node at (21,-.5) {Phase I};
	
\draw (0,2) -- (0,-2);
	\node at (0,2.3) {$U_{R_0}$};
	\node at (0,-2.2) {$i=0$};
	
\draw (2,2)  -- (2,-2); 
	\node at (2,2.3) {$U_{R_1}$};
	\node at (2,-2.2) {$i=1$};
	
\draw[dashed] (4,2) -- (4,-2);
	\node at (4,2.3) {$U_{R_2}$};
	\node at (4,-2.2) {$i=2$};
	
\draw (6,2) -- (6,-2);
	\node at (6,2.3) {$U_{R_3}$};
	\node at (6,-2.2) {$i=3$};	
	
\draw[dashed] (8,2) -- (8,-2);
		
\draw (10,2) -- (10,-2);
	\node at (10,2.3) {$U_{R_{2k-1}}$};
	\node at (10,-2.2) {$i=2k-1$};
	
\draw[dashed] (12,2) -- (12,-2);
	\node at (12,2.3) {$U_{R_{2k}}$};
	\node at (12,-2.2) {$i=2k$};

\draw (14,2) -- (14,-2);
	\node at (14,2.3) {$U_{R_{2k+1}}$};
	\node at (14,-2.2) {$i=2k+1$};
	
\draw[dashed] (16,2) -- (16,-2);

\draw (18,2) -- (18,-2);	
	\node at (18,2.3) {$U_{R_{2n-3}}$};
	\node at (18,-2.2) {$i=2n-3$};

\draw[dashed] (20,2) -- (20,-2);
	\node at (20,2.3) {$U_{R_{2n-2}}$};
	\node at (20,-2.2) {$i=2n-2$};

\draw (22,2) -- (22,-2);
	\node at (22,2.3) {$U_{R_{2n-1}}$};
	\node at (22,-2.2) {$i=2n-1$};	
	
\end{tikzpicture}
}
\caption{A block diagram illustrating the induction scheme.}
\end{figure}
\end{center}


\begin{claimproof}

{\bf Base Case:} $n=1$. 

Recall the bound {$\tilde \epsilon_1 > 0$ and function $\delta(x)$ defined on $(0, \tilde \epsilon_1]$} whose existence are given by Phase II (recall that we have fixed the values of $h_0$, $r_0$, $R_0$ at the start of the proof) and that {$\delta(x) \to 0$ as $x \to 0_+$. We can then pick} $0< \epsilon_1 \, \le\, \tilde \epsilon_1$ such that if we set 
\begin{align*}
\eta_{1}&=\frac{4}{3}\epsilon_{1}+\delta(\epsilon_1), \\
\sigma_{1}&=\frac{4}{3}\epsilon_1,
\end{align*}
then we can ensure that $0< \epsilon_1 < \,\, \sigma_1<\eta_1<\frac{1}{40\cdot 2} = \tfrac{1}{80}$.  This verifies \emph{\ref{ind:i}}. Now recall that we already set $R_0=\frac{1}{4}$, let $S_0=\frac{1}{20}$, and set
\begin{align*}
R_1&= \frac{1}{4}-\frac{\epsilon_1}{3},\\
S_1&= \frac{1}{20}+\frac{\epsilon_1}{3},
\end{align*}
which verifies \emph{\ref{ind:ii}} and then \emph{\ref{ind:iii}} follows easily. Applying Lemma \ref{net}, we choose an $\frac{\epsilon_1}{3}$-net $\{f_0, f_1, \cdots,  f_{N_1+1} \}$ for $\calS$ (consisting of elements of $\calS$) on $U_{\frac{1}{2}}$, where $N_1 = N_1 (\epsilon_1) \in \mathbb{N}$, and with $f_0=f_{N_1+1}=\mathrm{Id}$. Apply Phase I (Lemma \ref{PhaseI}) for this collection of functions with $R_0=\frac{1}{4}$, $\varepsilon=\frac{\epsilon_1}{3}$, to obtain $M_1 \in \N$, and a ($17 + \kappa_0$)-bounded finite sequence $\{P_m\}_{m=1}^{(N_1 +1)M_1}$ of quadratic polynomials both of which depend directly on $R_0$, $\kappa_0$,  $N_1$, the functions $\{f_i\}_{i=0}^{N_1 + 1}$, and $\epsilon$
and thus ultimately on $\epsilon_1$ and  $\{f_i\}_{i=0}^{N_1 + 1}$ such that, for $1\leq i \leq N_1+1$, if we let $\bf Q_m^1$, $1 \le m \le J_1$ denote the composition of the first $m$ polynomials of this sequence, we have

\begin{enumerate}

\item ${\bf Q^1_{iM_1}}(0)=0$,

\item ${\bf Q^1_{iM_1}}$ is univalent on $U_{\frac{1}{2}}$,

\item $\rho_{U}(f_i(z), {\bf Q^1_{iM_1}}(z))<\frac{\epsilon_1}{3}$ on $U_{\frac{1}{2}}$,

\item $\|({\bf Q^1_{iM_1}})^{\natural} \|_{U_{\frac{1}{4}}}\leq C$.

\end{enumerate} 

Now set ${\bf Q^1}=Q_{(N_1+1)M_1}$. By (1) ${\bf Q^1}(0)=0$ and, as Phase I guarantees ${\bf Q^1}$ is ($17 + \kappa_0$)-bounded, on setting $J_1 = (N_1+1)M_1$, \emph{\ref{ind:iv}} is verified.  

Now we have that each ${\bf Q^1_{iM_1}}$ is univalent on $U_{\frac{1}{2}}\supset {\overline U_{\frac{1}{4}}}={\overline U_{R_0}}$ by (2) above. Further, by (3), if $\rho_U(0,z)=\frac{1}{4}$, then $\rho_U({\bf Q^1}(z),z)< \frac{\epsilon_1}{3}$, so by (1) and the Jordan curve argument (Lemma \ref{thejordancurveargument}), ${\bf Q^1}(U_{R_0})\supset U_{R_1}$. The branch of $({\bf Q^1})^{-1}$ which fixes $0$ is then well-defined and univalent on $U_{R_1}$ and maps this set inside $U_{R_0}$. With this we have verified \emph{\ref{ind:v}}. 

Likewise, if $\rho_U(0,z) < \frac{1}{20}$, then $\rho_U({\bf Q^1}(z),0)<\frac{1}{20}+\frac{\epsilon_1}{3}$. This implies ${\bf Q^1}(U_{S_0})\subset U_{S_1}$ and, since by \emph{\ref{ind:iii}}, $S_1 < \tfrac{1}{10}$ while by (2) above ${\bf Q^1}$ is univalent on $U_{\frac{1}{2}}\supset U_{\frac{1}{10}}$, which verifies \emph{\ref{ind:vi}}.  We observe that hypothesis \emph{\ref{ind:vii}} is vacuously true as it is concerned only with Phase II.  

Now let $z \in U_{R_1}$. Using the same branch of $({\bf Q^1})^{-1}$  as in \emph{\ref{ind:v}}, it follows from \emph{\ref{ind:v}} that we can write $z={\bf Q^{1}}(w)$ for some $w\in U_{R_0}$.  Since $f_{N_1+1}= Id$, it follows from (3) above that
\begin{align*}
\rho_U(({\bf Q^1})^{-1}(z),z)&=\rho_U(w,{\bf Q^1}(w)) \\
&<\frac{\epsilon_1}{3} \\
&<\epsilon_1
\end{align*}
which verifies \emph{\ref{ind:viii}}. 

Finally, let $z \in U_{\frac{1}{20}}$.  For $f \in \calS$, let $f_i$ be a member of the net for which $\rho_U(f(w),f_i(w))<\frac{\epsilon_1}{3}$ on $U_{\frac{1}{2}}\supset U_{\frac{1}{20}}$, and, using (3), let ${\bf Q_{iM_1}^1} $ be a partial composition which satisfies $\rho_U({\bf Q_m^1}(w),f_i(w))<\frac{\epsilon_1}{3}$ on $U_{\frac{1}{2}}\supset U_{\frac{1}{20}}$
Then, on setting $m = iM_1$, 
\begin{align*}
\rho_U({\bf Q_m^1}(z),f(z))&\leq \rho_U({\bf Q_m^1}(z),f_i(z))+\rho_U(f_i(z),f(z)) \\
&\leq \frac{\epsilon_1}{3}+\frac{\epsilon_1}{3} \\
&<\epsilon_1,
\end{align*}
which verifies \emph{\ref{ind:ix}} and completes the base case. \\

{\bf Induction Hypothesis:}  Assume \emph{\ref{ind:i}}-\emph{\ref{ind:ix}} hold for some arbitrary $n \ge 1$.  \\

{\bf Induction Step:}  We now show this is true for $n+1$. 

Since the above hypotheses hold for $n$, we have already defined $R_{2n-1}=R_{2n-2}-\frac{\epsilon_n}{3}$. Using \emph{\ref{ind:viii}} for $n$ with $i=2n-1$ we have 
\begin{align}
\label{PhaseIIhypothesisn}
\rho_U&(({\bf Q^{2n-1}}\circ \cdots \cdots \circ {\bf Q^{1}})^{-1}(z),z)<\epsilon_n, \hspace{-2cm} &z \in U_{R_{2n-1}}
\end{align}  
where of course we are using the branch of $({\bf Q^{2n-1}}\circ \cdots \cdots \circ {\bf Q^{1}})^{-1}$ from \emph{\ref{ind:v}} which fixes $0$. 

Recalling that the function $\delta: (0, \tilde \epsilon_1] \mapsto (0, \tfrac{r_0}{4})$ in Phase II (Lemma \ref{PhaseII}) has a limit of $0$ from the right, we can pick $\epsilon_{n+1}>0$ sufficiently small such that $\epsilon_{n+1} \le \tilde \epsilon_1$ and if we set
\begin{align}
\label{etasigmadef1}
\eta_{n+1}&=\left( \frac{4}{3}+\frac{1}{3C}\right)\epsilon_{n+1}+\delta(\epsilon_{n+1}), \\
\sigma_{n+1}&=\left( \frac{4}{3}+\frac{1}{3C} \right)\epsilon_{n+1},
\label{etasigmadef2}
\end{align}
then we can ensure
\begin{align}
\label{epsilonetasigmasmall}
0&< \epsilon_{n+1} < \sigma_{n+1}<\eta_{n+1}<\frac{1}{40\cdot 2^{n+1}}
\end{align}
which verifies \emph{\ref{ind:i}} for n+1.  If we now apply Phase II, with $\kappa_0$, $h_0$, $r_0$, $R_0$ as above, $R=R_{2n-1}$, $\varepsilon_1=\epsilon_n$ (recall that $\epsilon_n \le \tilde \epsilon_1$ in view of hypothesis \emph{\ref{ind:i}} for $n$), $\varepsilon_2=\frac{\epsilon_{n+1}}{3C}$, and $\calE = ({\bf Q^{2n-1}}\circ \cdots \circ {\bf Q^{1}})^{-1}$ and make use of \eqref{PhaseIIhypothesisn}, we can find a $(17 + \kappa_0)$-bounded composition of quadratic polynomials ${\bf Q^{2n}}$ which depends immediately on $\kappa_0$, $\epsilon_n$, $\epsilon_{n+1}$, $R$ and $\calE$ and thus ultimately on $\{\epsilon_k\}_{k=1}^{n+1}$ and $\{{\bf Q^i}\}_{i=1}^{2n-1}$, such that ${\bf Q^{2n}}$ is univalent on a neighborhood of $\overline U_{R_{2n-1}-\delta(\epsilon_n)}$, satisfies ${\bf Q^{2n}}(0)=0$, and
\begin{align}
\label{PhaseIIconclusionn+1}
\rho_U&({\bf Q^{2n}}(z),({\bf Q^{2n-1}}\circ \cdots \circ {\bf Q^1})^{-1}(z))<\frac{\epsilon_{n+1}}{3C}, \hspace{-1cm}  &z\in \overline U_{R_{2n-1}-\delta(\epsilon_n)},
\end{align}
which verifies \emph{\ref{ind:vii}} for $n+1$.  Note that, because of the upper bound ${\tilde \epsilon_2}$ in the statement of Phase II, we may need to make $\varepsilon_{n+1}$ smaller, if necessary.  However, this does not affect the estimates on $\eta_{n+1}$ or $\sigma_{n+1}$ or any of the other dependencies for ${\bf Q^{2n}}$ above. Finally ${\bf Q^{2n}}(0)=0$ from above, so that, if we let $J_{2n}$ be the number of quadratics in ${\bf Q^{2n}}$, we see that the first half of \emph{\ref{ind:iv}} for $n+1$ is also verified.  Now set
\begin{align}
\label{R2ndef}
R_{2n}&=R_{2n-1}-\epsilon_n- \delta(\epsilon_n) - \frac{\epsilon_{n+1}}{3C}, \\
\label{S2ndef}
S_{2n}&=S_{2n-1}+\epsilon_n+\frac{\epsilon_{n+1}}{3C}.
\end{align}
We observe that the $\epsilon_n$ change in radius above is required in view of \emph{\ref{ind:viii}} which measures how much the function  $({\bf Q^{2n-1}}\circ \cdots \cdots \circ {\bf Q^1})^{-1}$ which we are approximating moves points on $U_{R_{2n-1}-\delta(\epsilon_n)}$, the $\delta(\epsilon_n)$ change is loss of domain incurred by Phase II, while the additional $\tfrac{\epsilon_{n+1}}{3C}$ is to account for the error in the Phase II approximation (the factor of $C$ arising from the fact that this error needs to be passed through a subsequent Phase I to verify \emph{\ref{ind:ix}} for $n+1$). 

A final observation worth making is that here we are dealing with a loss of radius in passing from $R_{2n-1}$ to $R_{2n}$ arising from two distinct sources - the initial loss of domain by an amount $\delta(\epsilon_n)$ arising from the need to make a Phase II approximation and the subsequent losses of $\epsilon_n$ and $\tfrac{\epsilon_{n+1}}{3C}$ which arise via the Jordan curve argument (Lemma \ref{thejordancurveargument}) due the amount that ${\bf Q^{2n}}$ moves points on $U_{R_{2n-1}-\delta(\epsilon_n)}$ (for details see \eqref{firsthalfvvin+1} below as well as the discussions immediately preceding and succeeding this inequality).

One easily checks that using hypotheses \emph{\ref{ind:i}} and \emph{\ref{ind:ii}} for $n$ that
\begin{align}
\label{R2nEqn}
R_{2n}&=\left(\frac{1}{4}-\left(\sum_{j=1}^{n-1}\eta_j\right) - \left(\frac{1}{3}+\frac{1}{3C}\right)\epsilon_n \right)-\epsilon_n-\delta(\epsilon_n)-\frac{\epsilon_{n+1}}{3C} \nonumber \\
&=\frac{1}{4}-\left(\sum_{j=1}^{n}\eta_j\right) -\frac{\epsilon_{n+1}}{3C}, \\
\label{S2nEqn}\
S_{2n}&=\left(\frac{1}{20}+\left(\sum_{j=1}^{n-1}\sigma_j\right) + \left(\frac{1}{3}+\frac{1}{3C}\right)\epsilon_n \right) +\epsilon_n+\frac{\epsilon_{n+1}}{3C} \nonumber\\
&=\frac{1}{20}+\left(\sum_{j=1}^{n}\sigma_j\right) +\frac{\epsilon_{n+1}}{3C},
\end{align}
which verifies the first half of \emph{\ref{ind:ii}} for $n+1$. We also observe at this stage that one can verify that the total loss of radius on passing from $R_{2n-2}$ to $R_{2n}$ is $\tfrac{\epsilon_n}{3} + \left ( \epsilon_n + \delta (\epsilon_n) + \tfrac{\epsilon_{n+1}}{3C} \right ) = \left ( \tfrac{4\epsilon_n}{3} + \delta(\epsilon_n) \right ) + \tfrac{\epsilon_{n+1}}{3C}$ which explains the form of the constants $\eta_i$ in part \emph{\ref{ind:i}} of the induction hypothesis. A similar argument also accounts for the other constants $\sigma_i$ in \emph{\ref{ind:i}}. Further, clearly $R_{2n} \le \tfrac{1}{4}$ and, using \emph{\ref{ind:iii}} for $n$ and \eqref{epsilonetasigmasmall},
\begin{align*}
R_{2n}&=\frac{1}{4}-\left( \sum_{j=1}^{n}\eta_j \right) -\frac{\epsilon_{n+1}}{3C}\\
&>\frac{1}{4}-\left(\sum_{j=1}^{n}\frac{1}{40\cdot 2^j}\right) -\frac{1}{40\cdot 2^{n+1}} \\
&=\frac{1}{4}-\frac{1}{40}\left (1-\frac{1}{2^n}-\frac{1}{2^{n+1}}\right ) \\
&>\frac{1}{5}.
\end{align*}
The calculation for $S_{2n}$ is similar, and thus we have verified the first half of \emph{\ref{ind:iii}} for $n+1$.  Combining \eqref{PhaseIIhypothesisn} and \eqref{PhaseIIconclusionn+1} we have, on $\overline U_{R_{2n-1}-{\delta(\epsilon_n)}}$,
\begin{align}
\label{firsthalfvvin+1}
\rho_U({\bf Q^{2n}}(z),z)&\leq \rho_U({\bf Q^{2n}}(z),({\bf Q^{2n-1}}\circ \cdots \circ {\bf Q^1})^{-1}(z)) \nonumber \\
&\hspace{1.11cm}+\rho_U(({\bf Q^{2n-1}}\circ \cdots \circ {\bf Q^1})^{-1}(z),z) \nonumber\\
&<\frac{\epsilon_{n+1}}{3C}+\epsilon_n
\end{align}
This, combined with the Jordan curve argument (Lemma \ref{thejordancurveargument}) and \eqref{R2ndef} above implies that 
\begin{align}
\label{PhaseIIContainment}
{\bf Q^{2n}}(U_{R_{2n-1}-\delta(\epsilon_n)}) \supset U_{R_{2n-1}-\delta(\epsilon_n)- \epsilon_n - \tfrac{\epsilon_{n+1}}{3C}} =  U_{R_{2n}},  
\end{align}
and, since from above $\bf Q^{2n}$ is univalent on a neighbourhood of $\overline U_{R_{2n-1} - \delta(\epsilon_n)}$, the branch of $({\bf Q^{2n}})^{-1}$ which fixes $0$ is well-defined on $U_{R_{2n}}$ and maps this set inside $U_{R_{2n-1}-\delta(\epsilon_n)} \subset U_{R_{2n-1}}$ which verifies the first half of \emph{\ref{ind:v}} for $n+1$. 

By \eqref{R2ndef}, the first half of \emph{\ref{ind:iii}} for $n+1$, and \emph{\ref{ind:iii}} for $n$, $R_{2n-1}-\delta(\epsilon_n) > R_{2n} > \tfrac{1}{5} > S_{2n-1}$ so that ${\bf Q^{2n}}$ is univalent on $U_{S_{2n-1}}$. It then follows using 
\eqref{S2ndef}, \eqref{firsthalfvvin+1} that 
\begin{align*}
{\bf Q^{2n}}(U_{S_{2n-1}})\subset U_{S_{2n-1}+\epsilon_n+\frac{\epsilon_{n+1}}{3C}}=U_{S_{2n}}.
\end{align*}
Since, by the first half of \emph{\ref{ind:iii}} for $n+1$, $S_{2n} < \tfrac{1}{10}$ and, together with \emph{\ref{ind:vi}} for $n$, this verifies half of \emph{\ref{ind:vi}} for $n+1$ and finishes the Phase II portion of the induction step. \\

Now again apply Lemma \ref{net} to construct an $\frac{\epsilon_{n+1}}{3}$-net $\{f_0, f_1, \cdots, f_{N_{n+1}+1} \}$ for $\calS$ (which again consists of elements of $\calS$) on $U_{\frac{1}{2}}$, where we obtain $N_{n+1}=N_{n+1}(\epsilon_{n+1})\in \mathbb{N}$ and require $f_0=f_{N_{n+1}+1}=\mathrm{Id}$. We apply Phase I (Lemma \ref{PhaseI}) with $R_0 =\frac{1}{4}$ and $\varepsilon=\frac{\epsilon_{n+1}}{3}$ for this collection of functions to obtain $M_{n+1} \in \N$, and a (17+$\kappa_0$)-bounded sequence of quadratic polynomials $\{P_m \}_{m=1}^{(N_{n+1}+1)M_{n+1}}$ both of which depend directly on $R_0$, $\kappa_0$, $N_{n+1}$, the functions $\{f_i\}_{i=0}^{N_{n+1} + 1}$, and $\epsilon$ 
and thus ultimately on $\{\epsilon_k\}_{k=1}^{n+1}$ and  $\{f_i\}_{i=0}^{N_{n+1} + 1}$. 

Now let $J_{2n+1}=M_{n+1}(N_{n+1}+1)$ be the number of quadratics and denote similarly to before the composition of the first $m$ of these quadratics by ${\bf Q^{2n+1}_m}$. By Phase I these compositions satisfy, for each $1\le i \le N_{n+1}+1$ 

\begin{enumerate}
\item ${\bf Q^{2n+1}_{iM_{n+1}}}(0)=0$,
\item ${\bf Q^{2n+1}_{iM_{n+1}}}$ is univalent on $U_{\frac{1}{2}}$,
\item $\rho_U(f_i(z),{\bf Q^{2n+1}_{iM_{n+1}}}(z))<\frac{\epsilon_{n+1}}{3}$, $z\in U_{\frac{1}{2}}$,
\item $\| ({\bf Q_{iM_{n+1}}^{2n+1}})^{\natural} \|_{U_{\frac{1}{4}}} \leq C$.
\end{enumerate}

Now set ${\bf Q^{2n+1}}:={\bf Q^{2n+1}_{(N_{n+1}+1)M_{n+1}}}$. The polynomial composition ${\bf Q^{2n+1}}$ is then a $(17+\kappa_0)$-bounded composition of $J_{2n+1}$ quadratic polynomials which by (1) satisfies ${\bf Q^{2n+1}}(0)=0$. This then verifies \emph{\ref{ind:iv}} for $n+1$. 

Next, we define  
\begin{align}
\label{R2nplus1def}
R_{2n+1}&=R_{2n}-\frac{\epsilon_{n+1}}{3}, \\
\label{S2nplus1def}
S_{2n+1}&=S_{2n}+\frac{\epsilon_{n+1}}{3}.
\end{align}

We observe that the $\tfrac{\epsilon_{n+1}}{3}$ change in radius above is required in view of (3) above. One easily checks, using the above and \eqref{R2nEqn}, \eqref{S2nEqn},
\begin{align*}
R_{2n+1}&= \left (\frac{1}{4}-\sum_{j=1}^{n}\eta_j -\frac{\epsilon_{n+1}}{3C} \right )-\frac{\epsilon_{n+1}}{3} \\
&=\frac{1}{4}-\sum_{j=1}^{n}\eta_j - \left (\frac{1}{3} +  \frac{1}{3C} \right )  \epsilon_{n+1}, \\
S_{2n+1}&= \left (\frac{1}{20}+\sum_{j=1}^{n}\sigma_j +\frac{\epsilon_{n+1}}{3C} \right ) +\frac{\epsilon_{n+1}}{3} \\
&=\frac{1}{20}+\sum_{j=1}^{n}\sigma_j + \left (\frac{1}{3}+\frac{1}{3C} \right )\epsilon_{n+1}.
\end{align*}
Thus we have verified \emph{\ref{ind:ii}} for $n+1$ and a similar calculation (again using the first half of \emph{\ref{ind:iii}} for $n+1$ and \eqref{epsilonetasigmasmall}) to that for verifying the first half of \emph{\ref{ind:iii}} for $n+1$ allows us to complete the verification of \emph{\ref{ind:iii}} for $n+1$. 

By (1) and (3) above applied to the function $f_{N_{n+1}+1}= Id$, together with \emph{\ref{ind:iii}} for $n+1$, \eqref{R2nplus1def}, and Lemma \ref{thejordancurveargument}, we have 
\begin{align}
\label{PhaseIContainment}
{\bf Q^{2n+1}}(U_{R_{2n}})\supset U_{R_{2n}-\frac{\epsilon_{n+1}}{3}} =  U_{R_{2n+1}},
\end{align}
while ${\bf Q^{2n+1}}$ is univalent on a neighbourhood of this set by (2). Hence, the branch of $({\bf Q^{2n+1}})^{-1}$ which fixes $0$ is well-defined and univalent on $U_{R_{2n+1}}$ and maps $U_{R_{2n+1}}$ inside $U_{R_{2n}}$ which then verifies \emph{\ref{ind:v}} for $n+1$.  

By (2) above and the first half of \emph{\ref{ind:iii}} for $n+1$, ${\bf Q^{2n+1}}$ is univalent on $U_{\tfrac{1}{2}} \supset U_{\tfrac{1}{10}} \supset U_{S_{2n}} $. Again by (3) applied to the function $f_{N_{n+1}+1}=Id$, \emph{\ref{ind:iii}} for $n+1$ and \eqref{S2nplus1def}, we see
\begin{align*}
{\bf Q^{2n+1}}(U_{S_{2n}})\subset U_{S_{2n}+\frac{\epsilon_{n+1}}{3}}=U_{S_{2n+1}}.
\end{align*}
By \emph{\ref{ind:iii}} for $n+1$, we have $U_{S_{2n+1}} \subset U_{\tfrac{1}{10}}$ and, together with \emph{\ref{ind:vi}} for $n$, this verifies \emph{\ref{ind:vi}} for $n+1$.

Now let $w \in U_{R_{2n}}$.  Using the same branch of $({\bf Q^{2n}})^{-1}$ which fixes $0$ as in the first part of \emph{\ref{ind:v}} for $n+1$, by \eqref{PhaseIIContainment}, $({\bf Q^{2n}})^{-1}(w) = \zeta$ for some $\zeta \in U_{R_{2n-1}-\delta(\epsilon_n)}$ and thus $w={\bf Q^{2n}}(\zeta)$. Then, by \eqref{PhaseIIconclusionn+1}, 
\begin{align}
\label{inversetrick1}
\rho_U(({\bf Q^{2n}}\circ {\bf Q^{2n-1}}\circ \cdots \circ {\bf Q^1})^{-1}(w),w) \hspace{-6cm} & \nonumber \\
&= \rho_U(({\bf Q^{2n-1}}\circ \cdots \circ {\bf Q^1})^{-1}\circ({\bf Q^{2n}})^{-1}(w),w)   \nonumber \\
&=\rho_U(({\bf Q^{2n-1}}\circ \cdots \circ {\bf Q^1})^{-1}(\zeta),{\bf Q^{2n}}(\zeta))  \hspace{0cm}  \nonumber \\
&<\frac{\epsilon_{n+1}}{3C}. 
\end{align}
By \eqref{PhaseIContainment}, if now $z \in U_{R_{2n+1}} \subset {\bf Q^{2n+1}}(U_{R_{2n}})$, then, using the same inverse branch as in \emph{\ref{ind:v}} which fixes $0$, $({\bf Q^{2n+1}})^{-1}(z) = w$ for some $w\in U_{R_{2n}}$ and thus  $z={\bf Q^{2n+1}}(w)$. By (3) above and  \emph{\ref{ind:iii}} for $n+1$, since $f_{N_{n+1}+1}= Id$,  
\begin{align}
\label{inversetrick2}
\rho_U(({\bf Q^{2n+1}})^{-1}(z),z)& = \rho_U(w, {\bf Q^{2n+1}}(w)) \nonumber \\
& <\frac{\epsilon_{n+1}}{3}.
\end{align}

Then, if we now take $z \in U_{R_{2n+1}}$ and we let $w=({\bf Q^{2n+1}})^{-1}(z)\in U_{R_{2n}}$ again as above using the branch of $({\bf Q^{2n+1}})^{-1}$ which fixes $0$, using \eqref{inversetrick1}, \eqref{inversetrick2} we have 
\begin{align}
\label{xiiin+1}
\rho_U(({\bf Q^{2n+1}}\circ {\bf Q^{2n}}\circ \cdots \circ {\bf Q^1})^{-1}(z),z) \hspace{6.8cm}& \nonumber \\ 
=\rho_U(({\bf Q^{2n}}\circ \cdots \circ {\bf Q^1})^{-1}\circ({\bf Q^{2n+1}})^{-1}(z),z)  \hspace{3.1cm}& \nonumber\\
\leq \rho_U(({\bf Q^{2n}}\circ \cdots \circ {\bf Q^1})^{-1}\circ ({\bf Q^{2n+1}})^{-1}(z),({\bf Q^{2n+1}})^{-1}(z)) \hspace{1.1cm}&\nonumber\\ 
+\rho_U(({\bf Q^{2n+1}})^{-1}(z),z) \hspace{5.7cm} & \nonumber \\
=\rho_U(({\bf Q^{2n}}\circ {\bf Q^{2n-1}}\circ \cdots \circ {\bf Q^1})^{-1}(w),w)+\rho_U(({\bf Q^{2n+1}})^{-1}(z),z)  \hspace{-0.1cm} & \nonumber \\
<\frac{\epsilon_{n+1}}{3C}+\frac{\epsilon_{n+1}}{3}<\epsilon_{n+1}. \hspace{6cm}& 
\end{align}
This verifies \emph{\ref{ind:viii}}.

Now by \emph{\ref{ind:iii}}, \emph{\ref{ind:vi}} for $n$ together with \eqref{R2ndef} we have 
\begin{align}
\label{AbletodoPhaseII}
({\bf Q^{2n-1}}\circ \cdots \cdots \circ {\bf Q^1})(U_{\tfrac{1}{20}}) \subset U_{S_{2n-1}} \subset U_{\tfrac{1}{10}} \subset U_{R_{2n}} \subset U_{R_{2n-1}- \delta(\epsilon_n)}. 
\end{align} 
while again by \emph{\ref{ind:vi}} for $n$ the forward composition ${\bf Q^{2n-1}} \circ \cdots \circ {\bf Q^1}$ is univalent on $U_{\tfrac{1}{20}}$. Lastly, applying \emph{\ref{ind:v}} for $n$, we see that the branch of $({\bf Q^{2n-1}} \circ \cdots \circ {\bf Q^1})^{-1}$ which fixes $0$ is well-defined and univalent on $U_{R_{2n-1}} \supset \overline U_{R_{2n} - \delta(\epsilon_n)}$. Combining these three observations, we have the cancellation property 
\begin{align}
\label{Cancellation}
({\bf Q^{2n-1}} \circ \cdots \cdots \circ {\bf Q^1})^{-1}\circ ({\bf Q^{2n-1}} \circ \cdots \cdots \circ {\bf Q^1}) = Id \quad \mbox{on} \quad U_{\tfrac{1}{20}}.
\end{align} 

Let $z \in U_{\frac{1}{20}}$ and set $\zeta =   ({\bf Q^{2n-1}} \circ \cdots \circ {\bf Q^1})(z)$. Then from \eqref{Cancellation} above we have $({\bf Q^{2n-1}}\circ \cdots \circ {\bf Q^1})^{-1}(\zeta)=z$ while by \eqref{AbletodoPhaseII} $\zeta \in U_{R_{2n-1}-\delta(\epsilon_n)}$. We then calculate, using \eqref{PhaseIIconclusionn+1},
\begin{align}
\label{eqctysetup}
\rho_U({\bf Q^{2n}}\circ \cdots \circ {\bf Q^1}(z),z)&=\rho_U({\bf Q^{2n}}(\zeta), ({\bf Q^{2n-1}}\circ \cdots \circ {\bf Q^1})^{-1}(\zeta)) \nonumber \\
&<\frac{\epsilon_{n+1}}{3C}.
\end{align}

Now let $f \in \calS$ be arbitrary. Let $f_i  \in \calS$ be an element of the $\frac{\epsilon_{n+1}}{3}$-net which approximates $f$ to within $\frac{\epsilon_{n+1}}{3}$ on $U_{\frac{1}{2}}\supset U_{\frac{1}{20}}$.  Let ${\bf Q^{2n+1}_{iM_{n+1}}}$ be a partial composition of ${\bf Q^{2n+1}}$ which approximates $f_i$ to within $\frac{\epsilon_{n+1}}{3}$ also on $U_{\frac{1}{2}}\supset U_{\frac{1}{20}}$ using (3) above and let $m = iM_{n+1}$ so that ${\bf Q^{2n+1}_m}= {\bf Q^{2n+1}_{iM_{n+1}}}$. 

Applying \emph{\ref{ind:vi}} for $n+1$ gives us that ${\bf Q^{2n}} \circ \cdots \circ {\bf Q^1}(z) \in U_{\tfrac{1}{10}} \subset U_{\tfrac{1}{4}}$. Then, using the hyperbolic convexity of $U_{\tfrac{1}{4}}$ (which follows from Lemma \ref{stupidfuckinglemma}), the hyperbolic M-L estimates (Lemma \ref{hyperbolicML}), \eqref{eqctysetup}, (3), (4), and the fact that $f_i$ approximates $f$, we have
\begin{align*}
&\rho_U({\bf Q^{2n+1}_m}\circ {\bf Q^{2n}} \circ \cdots \cdots  \circ {\bf Q^1}(z),f(z)) \\
&\leq \rho_U({\bf Q^{2n+1}_m}\circ {\bf Q^{2n}} \circ \cdots \cdots  \circ {\bf Q^1}(z),{\bf Q^{2n+1}_m}(z))\\ 
&\hspace{1cm}+\rho_U({\bf Q^{2n+1}_m}(z),f_i(z))+\rho_U(f_i(z),f(z)) \\
&\leq C\cdot \frac{\epsilon_{n+1}}{3C}+\frac{\epsilon_{n+1}}{3}+\frac{\epsilon_{n+1}}{3}\\
&= \epsilon_{n+1}
\end{align*}
which verifies \emph{\ref{ind:ix}}.  Note that the first term uses Lemmas \ref{stupidfuckinglemma}, \ref{hyperbolicML}, \eqref{eqctysetup}, and (4), the second uses (3), and the third uses the net approximation.  This completes the proof of the claim from which Lemma \ref{mediuminductionlemma} follows. \end{claimproof} 
\end{proof}

We are now finally in a position to prove the main result of this paper. 


\begin{proof}[Proof of Theorem \ref{thetheorem}]
Let $f \in \calS$ be arbitrary. Let $\{P_m \}_{m=1}^{\infty}$ be the sequence of quadratic polynomials which exists in view of Lemma \ref{smallinductionlemma} and which is bounded by part \emph{(1)} of the statement. By Proposition \ref{VerySimpleProposition} and part \emph{(2)} of the statement, $U_{\frac{1}{20}}$ is contained in a bounded Fatou component $V$ for this sequence. By part \emph{(3)} of the statement, there exists a subsequence $\{Q_{m_k} \}_{k=1}^{\infty}$ of $\{Q_m \}_{m=1}^{\infty}$ such that the sequence of compositions $\{Q_{m_k}\}_{k=1}^{\infty}$ converges locally uniformly to $f$ on $U_{\frac{1}{20}}$.  Since $\{Q_{m_k}\}_{k=1}^{\infty}$ is normal on $V$, we may pass to a further subsequence, if necessary, to ensure this subsequence of iterates will converge locally uniformly on all of $V$.  By the identity principle, the limit must then be $f$. In fact, since every such convergent subsequence must have limit $f$, it follows readily that $\{Q_{m_k}\}_{k=1}^{\infty}$ converges locally uniformly to $f$ on all of $V$. 
\end{proof}

Finally, we arrive at the last result of this paper. 

{\it Proof of Theorem \ref{abitmore}}: Let $r > 0$ be such that ${\mathrm D}(z_0, r) \subset \Omega$. Then the function 
\begin{align}
\label{gdefn}g(w) = \frac{f(rw +z_0) - f(z_0)}{rf'(z_0)}, \qquad w \in \D
\end{align}
belongs to ${\mathcal S}$ while $f$ can clearly be recovered from $g$ using the formula
\begin{align}
\label{frecover}
f(z) = rf'(z_0) g\left( \tfrac{z-z_0}{r} \right ) + f(z_0), \qquad z \in {\mathrm D}(z_0, r).
\end{align}

Since ${\mathcal N}$ is locally bounded and all limit functions are non-constant, using Hurwitz's Theorem e.g. Theorem VII.2.5  in \cite{Con} and also Corollary IV.5.9 from the same source we can find $K \ge 1$ such that, for all $f \in {\mathcal N}$, we have 
\begin{align}
\label{Kbounds}
\frac{1}{K}   \le   |f'(z_0)| \le K,  \qquad |f(z_0)| \le K.
\end{align}

Then, if we let $X$ be the subset of $\C^2$ given by $X = \{(f'(z_0), f(z_0)), f \in {\mathcal N}\}$, we can clearly pick a sequence $\{(\alpha_n, \beta_n)\}_{n=1}^\infty$ which densely approximates all of $X$ and such that, for all $n$,
\begin{align*}
\frac{1}{2K}   \le   |\alpha_n| \le 2K,  \qquad |\beta_n| \le 2K.
\end{align*}

We next wish to apply a suitable affine conjugacy to the polynomial sequence $\Pm$ of Theorem \ref{thetheorem} to construct the sequence $\Pmt$ needed to prove the current result. To this end, define $\phi_0(w) = rw + z_0$, and $\phi_n(w) = r\alpha_n w + \beta_n$ for $n \ge 1$. 
Recall the compositions $\{{\bf Q^i} \}_{i=1}^{\infty}$ from Lemma \ref{mediuminductionlemma} and that each ${\bf Q^i}$ was a $(17 + \kappa_0)$-bounded composition of $J_i$ quadratic polynomials. 

As we did before the proof of Lemma \ref{smallinductionlemma}, for $i=0$, set $T_0 = 0$ and, for each $i \ge 1$, set $T_i = \sum_{j=1}^i J_j$. Recall that these compositions $\{{\bf Q^i} \}_{i=1}^{\infty}$ then gave rise the the polynomial sequence $\Pm$ of Lemma \ref{smallinductionlemma} and ultimately Theorem \ref{thetheorem}. 

For $m=1$ we define $\tilde P_1 = \phi_1 \circ P_1 \circ \phi_0^{-1}$. For $m > 1$ let $i \ge 1$ be the largest index such that $T_{i-1} < m$. For $i=2k$ even, we define $\tilde P_m$ by 
\begin{align}
\label{Pmeven}
 \tilde P_m= \left\{
\begin{array}{ll}
     \phi_{k+1}\circ P_m \circ \phi_k^{-1}, & m =T_{i-1} + 1, \\
     \phi_{k+1} \circ P_m \circ {\phi_{k+1}}^{-1}, & T_{i-1} + 1 < m \le T_i \end{array} 
\right. 
\end{align}
while for $i = 2k+1$ odd we set 
\begin{align}
\label{Pmodd}
\tilde P_m =  \phi_{k+1} \circ P_m \circ {\phi_{k+1}}^{-1}, \quad T_{i-1} + 1 \le m \le T_i.
\end{align}
Then, (whether $i$ is even or odd), if as usual we let  $Q_m = P_m \circ \cdots \cdots  \circ P_2 \circ P_1$ and $\tilde Q_m = \tilde P_m \circ \cdots \cdots \circ \tilde P_2 \circ \tilde P_1$, then
\begin{align}
\label{Qtilde}
\tilde Q_m =  \phi_{k+1}\circ Q_m \circ \phi_0^{-1}.
\end{align}

Recall the Fatou component $V \supset U_{\frac{1}{20}} \ni 0$ from the proof of Theorem \ref{thetheorem}. Since the family $\{\phi_n\}_{n=0}^\infty$ is bi-equicontinuous in the sense that the family $\{\phi_n\}_{n=0}^\infty$ as well as the family of inverses $\{\phi_n^{-1}\}_{n=0}^\infty$ are both equicontinuous and locally bounded on $\C$, it follows from \cite{Com3} Proposition 2.1 that 
$W = \phi_0(V)$ is a bounded Fatou component for the sequence $\Pmt$ which contains $\phi_0(U_{\frac{1}{20}})$. 

Let $\epsilon > 0$. It follows from applying the local equivalence of the Euclidean and hyperbolic metrics from Lemma \ref{lemma4.3} to {\bf a)}, {\bf b)} and part \emph{(4)} of Lemma \ref{mediuminductionlemma} there exists $j_0$ such that, for each $j \ge j_0$, there exists $\tilde m_j$, $1 \le \tilde m_j \le J_{2j+1}$ such that for $w \in U_{\frac{1}{20}}$, we have 
\begin{align}
\label{gapprox}
|{\bf Q_{\tilde  m_j}^{2j+1}}\circ {\bf Q^{2j}}\circ \cdots \circ {\bf Q^1}(w) - g(w)| < \frac{\epsilon}{2Kr}
\end{align}
where as before ${\bf Q_{\tilde  m_j}^{2j+1}}$ denotes the partial composition of the first $\tilde m_j$ quadratics of ${\bf Q^{2j+1}}$.

Next, using the approximation property of the sequence $\{(\alpha_n \beta_n)\}_{n=1}^\infty$ to all of the set $X$ above, we can find a subsequence $\{(\alpha_{n_k}, \beta_{n_k})\}_{j=1}^\infty$ which converges to $(f'(z_0), f(z_0))$. Hence we can find $k_0$ such that for all $k \ge k_0$, if $|w| \le \tfrac{1}{288} \le \frac{2}{\kappa_0}$, we have 
\begin{align}
\label{phiapprox}
|\phi_{n_k}(w) - (rf'(z_0)w + f(z_0))| < \frac{\epsilon}{2}.
\end{align}

Now let $z \in \phi_0(U_{\frac{1}{20}})$ be arbitrary, and let $k_0$ be sufficiently large so that $n_{k_0} \ge  j_0$. Then, for each $k \ge k_0$ if we let $i = 2n_k +1$ so that $i = 2j +1$ where $j = n_k$, so that by \eqref{Qtilde} and the construction of the sequence $\Pm$ from just before Lemma \ref{smallinductionlemma} 
\begin{align*}
\tilde Q_{T_{2n_k} + \tilde  m_{n_k}} = \phi_{n_k + 1}  \circ Q_{T_{2n_k} + \tilde  m_{n_k}} \circ \phi_0^{-1} =  \phi_{n_k + 1} \circ {\bf Q_{\tilde  m_{n_k}}^{2n_k+1}}\circ {\bf Q^{2n_k}}\circ \cdots \circ {\bf Q^1} \circ  \phi_0^{-1}
\end{align*}
and, using \eqref{frecover} and \eqref{Qtilde},
\begin{align*}
|\tilde Q_{T_{2n_k} + \tilde  m_{n_k}}(z) - f(z)|\\ 
&\hspace{-4.25cm} = |\phi_{n_k + 1} \circ Q_{T_{2n_k} + \tilde  m_{n_k}} \circ \phi_0^{-1}(z) - (rf'(z_0)\circ g \circ \phi_0^{-1}(z) + f(z_0))|\\
&\hspace{-4.25cm} \le  |\phi_{n_k+1} \circ Q_{T_{2n_k} + \tilde  m_{n_k}} \circ \phi_0^{-1}(z) -  (rf'(z_0)\circ Q_{T_{2n_k} + \tilde  m_{n_k}} \circ \phi_0^{-1}(z) + f(z_0))|\\
&\hspace{-4cm} +  | (rf'(z_0)\circ Q_{T_{2n_k} + \tilde  m_{n_k}} \circ \phi_0^{-1}(z) + f(z_0)) - (rf'(z_0)\circ g \circ \phi_0^{-1}(z) + f(z_0))|.
\end{align*}

Recall that we chose $\kappa_0 \ge 576$ in Lemma \ref{mediuminductionlemma}. From this it follows that $Q_{T_{2n_k} + \tilde  m_{n_k}} \circ \phi_0^{-1}(z) \in Q_{T_{2n_k} + \tilde  m_{n_k}}(U_{\frac{1}{20}}) \subset Q_{T_{2n_k} + \tilde  m_{n_k}}(V)
\subset {\mathrm D}(0, \tfrac{1}{288})$ so that the first term on the right hand side of the above is less than $\tfrac{\epsilon}{2}$ in view of \eqref{phiapprox} above. In addition, it follows from \eqref{gapprox} that the second term is bounded above by $r|f'(z_0)|\tfrac{\epsilon}{2Kr} \le \tfrac{\epsilon}{2}$ in view of \eqref{Kbounds} above. Thus, if for $k \ge 1$ we set $m_k : = T_{2n_k} + \tilde  m_{n_k}$, then for $k \ge k_0$ we have 
\begin{align*}
|\tilde Q_{m_k}(z) - f(z)| < \epsilon
\end{align*} and, since $\epsilon >0$ was arbitrary, $\{\tilde  Q_{m_k}\}_{k=1}^\infty$ converges uniformly to $f$ on $\phi_0(U_{\frac{1}{20}})$. The same argument using the identity principle as at the end of proof of Theorem \ref{thetheorem} above shows that $\{\tilde  Q_{m_k}\}_{k=1}^\infty$ converges locally uniformly on $W$ to $f$ and, as $f \in {\mathcal N}$ was arbitrary, this completes the argument.  \hfill $\square$\\

\appendix
\chapter{Appendices}

\section{Glossary of Symbols}

We will be using many different symbols repeatedly throughout this exposition.  For clarity of exposition, we have gathered them into the following table.

\begin{center}
\begin{tabular}{ || c | m{8.2cm}|m{1.5cm} || } 
 \hline 
 {\bf Symbol} & {{\bf Description}}&{\bf Defined on Page}  \\ 
 \hline \hline 
 
 $\lambda$ & $\lambda=e^{2\pi i \frac{\sqrt{5}-1}{2}}$: the irrational multiplier for the fixed point $0$ in the Siegel disc polynomial &\pageref{Plambda}, \pageref{RestrictionforPIL}, \pageref{overviewpi1}, \pageref{overviewpii1}, \pageref{PhaseIIKappa}\\ 
 \hline
 $P_\lambda$ & $P_\lambda (z)=\lambda z(1-z)$: the unscaled Siegel disc polynomial &\pageref{Plambda}, \pageref{RestrictionforPIL}, \pageref{overviewpi1}, \pageref{overviewpi2}, \pageref{overviewpii1}, \pageref{PhaseIIKappa}\\ 
 \hline 
 $\calK_\lambda$ & The filled Julia set for $P_\lambda$  &\pageref{RestrictionforPIL}\\ 
 \hline 
 $U_\lambda$ & The Siegel disc for $P_\lambda$ &\pageref{Plambda}, \pageref{RestrictionforPIL}, \pageref{overviewpi2}\\ 
 \hline 
 ${\tilde U_R}$ & The disc of hyperbolic radius $R>0$ about $0$ in $U$ & \pageref{overviewpi2}, \pageref{overviewpii2}\\ 
 \hline 
 ${\tilde r_0}$ & ${\tilde r_0}:=\mathrm{dist}(\partial {\tilde U_R}, \partial U_\lambda)$: the Euclidean distance between $\partial{U_\lambda}$ and $\partial {\tilde U}$&\pageref{tilder0}\\ 
 \hline 
 $\psi_\lambda$ & The unique Riemann map from $U_\lambda$ to $\D$ satisfying $\psi_\lambda(0)=0$ and $\psi_\lambda'(0)>0$ & \pageref{overviewpi2}\\ 
 \hline 
 $\kappa$ & Scaling factor & \pageref{RestrictionforPIL}, \pageref{tilder0}, \pageref{fcircghdestimates}, \pageref{PhaseI}, \pageref{PhaseIIKappa}\\ 
 \hline
 $P$ & $P(z)=\frac{1}{\kappa}{P_\lambda}(\kappa z)=\lambda z(1- \kappa z)$: the scaled version of $P_\lambda$ &\pageref{overview1}, \pageref{RestrictionforPIL}, \pageref{overviewpi2}\\ 
 \hline
 $\calK$ & The filled Julia set for $P$ &\pageref{overview2}, \pageref{RestrictionforPIL}, \pageref{overviewpi2}\\ 
 \hline 
 $U$ & The Siegel disc for $P$ &\pageref{overview1}, \pageref{RestrictionforPIL}, \pageref{overviewpi2} \\ 
 \hline 
 $U_R$ & The disc of hyperbolic radius $R>0$ about $0$ in $U$ &\pageref{overviewpi2} \\ 
 \hline 
 $r_0$ & $r_0:=\mathrm{dist}(\partial U_R, \partial U)$: the Euclidean distance between $\partial U_R$ and $\partial U$& \pageref{r0}\\ 
 \hline
 $\psi$ & The unique Riemann map from $U$ to $\D$ satisfying $\psi(0)=0$ and $\psi'(0)>0$ & \pageref{overviewpi2}, \pageref{phi2h} \\ 
 \hline
 $G$ & $G(z)$: the Green's function for $P$ &\pageref{psi0N-1est}, \pageref{overviewpii1}, \pageref{PhaseIIKappa}\\ 
 \hline 
 $V_{h}$ & The Green's domain $\{z\in \C \: : \: G(z)<h \}$ (for $h>0$)& \pageref{PhaseIIKappa}\\ 
 \hline 
 $V_{2h}$ & The Green's domain $\{z\in \C \: : \: G(z)<2h \}$ (for $h>0$) & \pageref{PhaseIIKappa}, \pageref{overviewpii2}\\ 
 \hline 
 ${\tilde R}$ & ${\tilde R}:=R_{(V_{2h},0)}^{int}U_R$: the internal hyperbolic radius of $U_R$ in $V_{2h}$ about $0$ & \pageref{overviewpii2}\\ 
 \hline 
 ${\tilde V_{2h}}$ & ${\tilde R}:={\tilde V_{2h}}:=\Delta_{V_{2h}}(0,{\tilde R})$: the hyperbolic disc of radius of ${\tilde R}$ in $V_{2h}$ about $0$& \pageref{overviewpii2}\\ 
 \hline 
 $\phi_{2h}$ & The unique Riemann map from ${\tilde V_{2h}}$ to $V_{2h}$ satisfying $\phi_{2h}(0)=0$, $\psi_{2h}'(0)>0$. & \pageref{phi2h}, \pageref{defofh3} \\ 
 \hline 
 $\psi_{2h}$ & The unique Riemann map from $V_{2h}$ to $\D$ satisfying $\psi_{2h}(0)=0$, $\psi_{2h}'(0)>0$. & \pageref{phi2h}, \pageref{defofh3} \\ 
 \hline 
 $R'$ & $R':=R_{(U,0)}^{int}{\tilde V_{2h}}$: the internal hyperbolic radius of ${\tilde V_{2h}}$ in $U$ about $0$ &\pageref{Rdash},  \pageref{defofh3}, \pageref{up}\\ 
 \hline
 ${\tilde U}$ & ${\tilde U}:=\phi_{2h}^{-1}(U)$: the inverse image of $U$ under $\phi_{2h}$ &\pageref{intsiegeldisc}\\ 
 \hline
 $R''$ & $R'':=R_{(U,0)}^{int}{\tilde U}$: the internal hyperbolic radius of ${\tilde U}$ in $U$ about $0$ &\pageref{Rdoubledash}, \pageref{defofh3}, \pageref{up}\\ 
 \hline
\end{tabular}
\end{center}

\section{Dependency Tables}

The proofs of the three key steps in this paper, namely the Polynomial Implementation Lemma (Lemma \ref{PIL}), Phase I (Lemma \ref{PhaseI}), and Phase II (Lemma \ref{PhaseII}) involve many quantities and functions which are defined in terms of other quantities introduced earlier (and occasionally later) in the proofs of these results. In order to fully understand these quantities and avoid any danger of circular reasoning, we feel it is therefore important if not indispensable that we provide full tables for all three of these results detailing the dependencies of the most important objects in their statements and proofs. 

The objects in each table are for the most part listed in the order in which they appear in the proof of the corresponding result as well as the statements and proofs of the supporting lemmas which lead up to it. The tables for Polynomial Implementation Lemma (Lemma \ref{PIL}) and Phase I each have five columns. To determine the dependencies for a given object (given as ultimate dependency in the third column), one looks at the immediate dependencies (second column) for that row. One then reads off the dependencies for each object in this column from the column entries for ultimate dependencies (third column) for the earlier lines in the table for these objects and the combined list of these dependencies for every object then forms the new entry in the third column. 

Due to the more complicated nature of the proof, the table for Phase II has an extra column for intermediate dependencies. However, the determination of the ultimate dependencies is done similarly to before where instead one looks at the ultimate dependencies (fourth column) for each quantity in the second and third columns for the row containing a given object and the combined list gives the entry for the ultimate dependencies for that object (which is in the fourth column). One exception to this is where, for objects depending on the constants $K_2$, $K_3$, one needs to look at later entries in the table for these constants (as explained in the proof of Lemma \ref{PhaseII}, there is no danger of circular reasoning here). For intermediate dependencies, when listed these are the same as ultimate dependencies but involve extra quantities which are then eliminated by monotonicity or some uniformization procedure such as taking a maximum (e.g. $\eta_1$, $\eta_2$) or by being determined later as is the case with $K_2$, $K_3$. Another exception is when some of the ultimate dependencies appear to be `missing'. Most of these are instances of where the scaling factor $\kappa$ is omitted because an estimate involving the hyperbolic metric of $U$ which will not depend on $\kappa$. However, there is one exception to this which is the first instance of the quantity $N$ which appears at \eqref{Nineq} in the text where a monotonicity argument allows us to dispense with the dependence on the quantities $R$ and $R_0$.

One final remark concerns those objects which appear in the columns for ultimate dependencies. As a matter of logical necessity these fall into two categories - objects which are defined or whose value is determined before the proof (e.g. $\kappa$, $R_0$) and universally quantified objects appearing in the statement (e.g. $\epsilon_1$, $\epsilon_2$). Both of these are indicated in the tables by their appearance in the first column being in green. However, these objects also generally come with bounds which have dependencies of their own which then forces these objects to inherit these dependencies. A good way to think of this is to view these results of determining these ultimate dependencies as methods or routines within a computer programme where the objects in the ultimate dependencies are then free objects which are either defined outside the method and then passed to it as parameters or alternatively set within the method itself.


\begin{landscape}
	
	\begin{center}
	{\large \bf Dependencies Table for the Polynomial Implementation Lemma (Lemma \ref{PIL})}
	\vspace{.3cm}
		
		\begin{longtable}{ | p{1.20in} | p{1.00in} | p{1.10in} | p{3in} | p{0.67in} | }
			\hline
			\textbf{Quantity/} & \textbf{Immediate} & \textbf{Ultimate} & \textbf{Role} & \textbf{Defined}   \\
			\textbf{Function} & \textbf{Dependency} & \textbf{Dependency} & &  \textbf{on Page}   \\ \hline \hline

			 $\color{darkgreen}\gamma$, $\color{darkgreen}\Gamma$ &  &  &  Boundary curves of Jordan domains $\Omega$, $\Omega'$ & \pageref{IntrotoPIL}, \pageref{RestrictionforPIL}  \\ \hline
			$\color{darkgreen} f$ & $\gamma$ &   $\gamma$  &  Function defined and univalent on a neighbourhood of $\overline \Omega$ &  \pageref{IntrotoPIL}, \pageref{RestrictionforPIL}  \\ \hline
                          $F$ &   $\gamma$, $\Gamma$, $f$ &  $\gamma$, $\Gamma$, $f$ &  Quasiconformal interpolation between $f$ and $Id$ &  \pageref{IntrotoPIL}, \pageref{qcextension},  \pageref{RestrictionforPIL}  \\ \hline
                          $\color{darkgreen} \kappa$ & &   &  Scaling factor for $P_\lambda$ & \pageref{RestrictionforPIL}  \\ \hline
                          $\color{darkgreen}\gamma$, $\color{darkgreen}\Gamma$ {\color{darkgreen}(scale fixed)} &  &  &  Boundary curves of Jordan domains $\Omega$, $\Omega'$ & \pageref{IntrotoPIL}, \pageref{RestrictionforPIL}  \\ \hline
			$\color{darkgreen} f$ {\color{darkgreen}(scale fixed)} & $\kappa$, $\gamma$ &   $\kappa$, $\gamma$  &  Function defined and univalent on a neighbourhood of $\overline \Omega$ &  \pageref{IntrotoPIL}, \pageref{RestrictionforPIL}  \\ \hline
                          $F\,$ (scale fixed) &   $\kappa$, $\gamma$, $\Gamma$, $f$ &  $\kappa$,  $\gamma$, $\Gamma$, $f$ &  Quasiconformal interpolation between $f$ and $Id$ &  \pageref{IntrotoPIL}, \pageref{RestrictionforPIL}  \\ \hline
                             $\color{darkgreen}N$ & &   &  Iterative time at which interpolation is constructed & \pageref{PmnDef} \\ \hline
			 $\{\psi_m^N\}_{m=0}^\N$ &  $\kappa$,  $\gamma$, $\Gamma$, $f$, $N$  & $\kappa$, $\gamma$, $\Gamma$, $f$, $N$   &Quasiconformal changes of coordinates which fix $0$ & \pageref{PmnDef}    \\ \hline
			 $\{P_m^N\}_{m=0}^\N$ & $\kappa$, $\gamma$, $\Gamma$, $f$, $N$  & $\kappa$, $\gamma$, $\Gamma$, $f$, $N$ & Conjugated version of $P$ which fixes $0$ & \pageref{PmnDef}    \\ \hline

		\end{longtable}
		
	\end{center}		
	
\end{landscape}

\begin{landscape}
	
         \begin{center}
		
		\begin{longtable}{ | p{1.20in} | p{1.00in} | p{1.10in} | p{3in} | p{0.67in} | }
			\hline
			\textbf{Quantity/} & \textbf{Immediate} & \textbf{Ultimate} & \textbf{Role} & \textbf{Defined}   \\
			\textbf{Function} & \textbf{Dependency} & \textbf{Dependency} & &  \textbf{on Page}   \\ \hline \hline

			  $\color{darkgreen} A$ & $\kappa$, $f$ & $\kappa$, $f$  &  Relatively compact set on which polynomial approximation is constructed. & \pageref{PIL}  \\ \hline
			    $\color{darkgreen} \delta$ & $f$ & $f$  &  Enlargement of hyperbolic neighbourhood & \pageref{PIL}  \\ \hline
			    $\color{darkgreen} M$ & $f$ & $f$ &  Bound on hyperbolic derivative & \pageref{PIL}  \\ \hline
			    $\color{darkgreen} \epsilon$ & &   &  Error bound of polynomial approximation & \pageref{PIL}  \\ \hline
			  $k_0$ &  $\kappa$, $\gamma$, $\Gamma$, $f$, $A$, $\delta$, $M$, $\epsilon$, $\{(\psi_0^{n_k})^{-1}\}_{k=1}^\infty$ & $\kappa$, $\gamma$, $\Gamma$, $f$, $A$, $\delta$, $M$, $\epsilon$ & Minimum number of compositions of $P$ required for desired approximations & \pageref{PIL}    \\ \hline
			 $\color{darkgreen}k_1$ & $k_0$ & $\kappa$, $\gamma$, $\Gamma$, $f$, $A$, $\delta$, $M$, $\epsilon$ & Length of finite polynomial sequence & \pageref{PIL}    \\ \hline			
			  $\{P_m^{n_{k_1}}\}_{m=1}^{n_{k_1}}$ & $\kappa$, $\gamma$, $\Gamma$, $f$, $A$, $\delta$, $M$, $\epsilon$, $k_0$, $k_1$, $\{\psi_m^{n_{k_1}}\}_{m=0}^{n_{k_1}}$  & $\kappa$, $\gamma$, $\Gamma$, $f$, $A$, $\delta$, $M$, $\epsilon$, $k_1$ & Finite polynomial sequence possessing desired approximation properties & \pageref{PIL}    \\ \hline
			
		\end{longtable}
		
	\end{center}		
	
\end{landscape}


\begin{landscape}

	\begin{center}
	{\large \bf Dependencies Table for Phase I (Lemma \ref{PhaseI})}
	\end{center}
	\vspace{.3cm}
		\begin{longtable}{ | p{1.30in} | p{0.95in} | p{0.95in} | p{2.75in} | p{0.67in} | }
			\hline
			\textbf{Quantity/} & \textbf{Immediate} & \textbf{Ultimate} & \textbf{Role} & \textbf{Defined}   \\
			\textbf{Function} & \textbf{Dependency} & \textbf{Dependency} & & \textbf{on Page} \\ \hline \hline

			{\color{darkgreen}$\kappa$ {\color{darkgreen}(first version)}} &  &   &  Scaling factor for $P_\lambda$ & \pageref{tilder0}  \\ \hline
			{\color{darkgreen}$R_0$} &    &   & Radius of hyperbolic disc about $0$ in $U$ & \pageref{localdistortion}, \pageref{PhaseI}    \\ \hline
			$\tilde r_0$ {\color{darkgreen}(function of $R$)} &   &   & $\mathrm{d}(\partial \tilde U_{R}, \partial \tilde U$) &  \pageref{tilder0}    \\ \hline
			$r_0$ {\color{darkgreen}(function of $R$)} & $\kappa$    &  $\kappa$   & $\mathrm{d}(\partial U_{R}, \partial U)$ &  \pageref{r0}    \\ \hline
			$\kappa_0$ &   $R_0$ &  $R_0$ &  Minimum necessary scaling to control distortion & \pageref{fcircghdestimates}, \pageref{kappa0},  \pageref{PhaseI} \\ \hline
			$\color{darkgreen}\kappa$ {\color{darkgreen}(redefined)} & $\kappa_0$ & $R_0$ & $\kappa \ge \kappa_0$ &  \pageref{fcircghdestimates}, \pageref{kappa0},  \pageref{PhaseI}  \\ \hline
			{\color{darkgreen}$\epsilon$ {\color{darkgreen}(initial version)}} &  &  & Upper bound on error of approximation &  \pageref{PhaseI}    \\ \hline
			{\color{darkgreen}$N+1$} &  &  & Number of functions being approximated  & \pageref{PhaseI}    \\ \hline
			{\color{darkgreen}$\{f_i\}_{i=1}^{N+1}$} &  &  & The finite sequence of  functions being approximated &  \pageref{PhaseI}    \\ \hline
			{\color{darkgreen}$\epsilon$ {\color{darkgreen}(first redefinition)}} & $R_0$ & $R_0$ & $\epsilon < R_0$ &  \pageref{epsilonbound1}    \\ \hline
		       $M_N$ & $\kappa$, $R_0$, $N$, $\{f_i\}_{i=1}^{N+1}$, $\epsilon$ & $\kappa$, $R_0$, $N$, $\{f_i\}_{i=1}^{N+1}$, $\epsilon$ & Number of quadratics needed to approximate each $f_{i}\circ f_{i-1}^{-1}$ &  \pageref{PhaseI}    \\ \hline
		       $\{P_m\}_{m=1}^{(N+1)M_N}$ & $\kappa$, $R_0$, $N$, $\{f_i\}_{i=1}^{N+1}$, $\epsilon$ & $\kappa$, $R_0$, $N$, $\{f_i\}_{i=1}^{N+1}$, $\epsilon$   & Finite polynomial sequence with desired approximation properties &  \pageref{PhaseI}    \\ \hline
			$\sigma$ & $R_0$  & $R_0$  & Inf. of hyperbolic density $\sigma_U$ on $U_{4R_0}$ &  \pageref{sigmabound}    \\ \hline
			$\delta_0$ & $R_0$  & $R_0$  & $\mathrm{d}(\partial U_{R_0}, \partial  U_{\tfrac{3}{2}R_0}$) &  \pageref{distboundary}    \\ \hline
			{\color{darkgreen}$\epsilon$ {\color{darkgreen}(second redefinition)}} & $\kappa$, $R_0$ & $\kappa$, $R_0$ & Continuity estimate for $\sigma_U$ on $U_{4R_0}$ &  \pageref{epsilonbound2}    \\ \hline
		\end{longtable}

\end{landscape}


\begin{landscape}
	
	\begin{center}
	{\large \bf Dependencies Table for Phase II (Lemma \ref{PhaseII})}
	\vspace{.3cm}

		\begin{tabular}{ | p{1.2in} | p{0.95in} | p{1.00in} | p{0.95in} | p{1.95in} | p{0.7in} | }
			\hline
			\textbf{Quantity/} & \textbf{Immediate} & \textbf{Intermediate} & \textbf{Ultimate} & \textbf{Role} & \textbf{Defined}   \\
			\textbf{Function} & \textbf{Dependency} & \textbf{Dependency} & \textbf{Dependency} & & \textbf{on Page}   \\ \hline \hline
			
			$\color{darkgreen}\kappa$ &  &  &  &  Scaling factor for $P_\lambda$ &  \pageref{PhaseIIKappa}  \\ \hline
			$\color{darkgreen}r_0$ &  &  &  &  Lower bound for hyperbolic radius  &  \pageref{r0Def}, \pageref{R0Def}  \\ \hline
			$d_0$ &   $\kappa$, $r_0$ &  & $\kappa$, $r_0$ &  $d(0, \partial U_R)\geq d_0$ &  \pageref{dnaught}  \\ \hline
			$\color{darkgreen}h_0$ &  &  &  &  Upper bound for value of Green's function for $P$ &   \pageref{rhonaught},  \pageref{h0set}, \pageref{PhaseII}  \\ \hline
			$D_0$ &   $\kappa$, $h_0$ &  & $\kappa$, $h_0$ &  $\delta_{V_{2h}}(z)\leq D_0, \, z \in U, h \le h_0$ &  \pageref{rhonaught} \\ \hline				$\rho_0$ & $d_0$, $D_0$ &  & $h_0$, $r_0$ & $R_{(V_{2h},0)}({\tilde V_{2h}})\geq \rho_0$ &  \pageref{rhonaught}    \\ \hline
			
			${\check R}$ (function of $h$) &  $h_0$ &  & $h_0$ & ${\check R}(h)=R^{ext}_{(V_{2h},0)}V_h$  &  \pageref{rcheck}    \\ \hline
			$\color{darkgreen}R_0$ &  &  &  &  Upper bound for hyperbolic radius  &  \pageref{R0Def}  \\ \hline
			$\color{darkgreen}R$ & $r_0$, $R_0$ &  & $r_0$, $R_0$  &  Hyperbolic radius of disc on which $\mathcal E$ is defined (value is fixed at the start of `up') &  \pageref{R0Def}, \pageref{PhaseII}, \pageref{up}  \\ \hline
			$\color{darkgreen}\calE$ & $\kappa$, $R$, $r_0$, $R_0$ &  & $\kappa$, $R$, $r_0$, $R_0$  &  Error we wish to correct (fixed at the start of `during') &  \pageref{PhaseII}, \pageref{during}  \\ \hline
			${\tilde \epsilon_1}$ (first version) & $\rho_0$ &  & $h_0$, $r_0$ & $\tilde V_{2h} \setminus \hat N \neq \emptyset$  &   \pageref{FirstEpsilonOne}   \\ \hline
			$\color{darkgreen}\epsilon_1$ & ${\tilde \epsilon_1}$ &  &  $h_0$, $r_0$ &  $\epsilon_1 \in (0,{\tilde \epsilon_1}]$ &   \pageref{TargetLemma}, \pageref{FirstEpsilonOne}   \\ \hline

		\end{tabular}
		
	\end{center}
	
\end{landscape}

\begin{landscape}
	
	\begin{center}
		
		\begin{tabular}{  | p{1.35in} | p{1.02in} | p{1.0in} | p{0.95in} | p{1.80in} | p{0.75in} | }
			\hline
			\textbf{Quantity/} & \textbf{Immediate} & \textbf{Intermediate} & \textbf{Ultimate} & \textbf{Role} & \textbf{Defined}   \\
			\textbf{Function} & \textbf{Dependency} & \textbf{Dependency} & \textbf{Dependency} & & \textbf{on Page}   \\ \hline \hline

			${\tilde \epsilon_1}$ (first restriction) & $\tilde \epsilon_1$ (previous), $\kappa$, $d_0$ &  & $h_0$, $r_0$ & $\tilde \epsilon_1<\frac{\kappa d_0}{8D_1}$ &   \pageref{TargetLemma}, \pageref{FirstRestriction}   \\ \hline
			$\color{darkgreen} \epsilon_1$ {\color{darkgreen}(restricted)} & ${\tilde \epsilon_1}$ &  &  $h_0$, $r_0$ &  $\epsilon_1 \in (0,{\tilde \epsilon_1}]$ &   \pageref{TargetLemma},  \pageref{FirstRestriction}   \\ \hline	
			$T$ (function of $\epsilon_1$) & $\kappa$, $\tilde \epsilon_1$, $d_0$ &  &  $h_0$, $r_0$ & $R^{int}_{({\tilde V_{2h}},0)}({\tilde V_{2h}} \setminus {\hat N})\geq T(\epsilon_1)$, &  \pageref{TargetLemma}, \pageref{DefofT}    \\ \hline			 
			$\tilde \epsilon_1$ (second restriction) & $\tilde \epsilon_1$ (previous), $T$, $\check R$, $h_0$ & & $h_0$, $r_0$ & $\min_{\,0 < \epsilon_1 \le \tilde \epsilon_1} T(\epsilon_1) \ge $  &  \pageref{FittingLemma}, \pageref{ExistenceCriterion}\\ 
	                 &  &  &  &  $\min_{\,0 < h \le h_0}{\check R}(h)$ & \\ \hline
                          $\color{darkgreen} \epsilon_1$ {\color{darkgreen}(restricted)} & ${\tilde \epsilon_1}$  &  &  $h_0$, $r_0$ & $\epsilon_1 \in (0,{\tilde \epsilon_1}]$ &  \pageref{ExistenceCriterion}  \\ \hline
			$h$ (function of $\epsilon_1$) &  ${\tilde \epsilon_1}$, $T$, $\check R$, $h_0$ &  & $h_0$, $r_0$ & $\tilde V_{h(\epsilon_1)} \subset \tilde V_{2h(\epsilon_1)} \setminus \hat N, \quad  0 < \epsilon_1 \le \tilde \epsilon_1$ &  \pageref{FittingLemma}, \pageref{defofh1} \\ \hline
			$\delta$ (function of $\epsilon_1$) & $\tilde \epsilon_1$, $h$, $r_0$, $R_0$ &   &  $h_0$, $r_0$, $R_0$  & Measures total loss of $\quad$ domain  &  \pageref{deltadef}   \\ \hline
			${\tilde \epsilon_1}$ (third restriction)  & $\tilde \epsilon_1$ (previous), $r_0$, $\delta$ & $\tilde \epsilon_1$ (previous), $r_0$, & $h_0$, $r_0$, $R_0$ & $U_{R-\delta(\epsilon_1)} \neq  \emptyset$  &  \pageref{tildeepsilon1}    \\
			&  &  $\sup_{(0, \tilde \epsilon_1]}\delta(\epsilon_1)$ &  &  &     \\ \hline 
			$\color{darkgreen} \epsilon_1$ {\color{darkgreen}(restricted)} & ${\tilde \epsilon_1}$ &  &  $h_0$, $r_0$, $R_0$ &  Maximum size of error to be corrected in Phase II &  \pageref{PhaseII}, \pageref{tildeepsilon1}  \\ 
			&  &   &  &  (fixed at the start of `up')&     \\ \hline 
			$h$ (domain restricted) &  ${\tilde \epsilon_1}$, $T$ &  & $h_0$, $r_0$, $R_0$ & $\tilde V_{h(\epsilon_1)} \subset \tilde V_{2h(\epsilon_1)} \setminus \hat N, \quad  0 < \epsilon_1 \le \tilde \epsilon_1$&  \pageref{FittingLemma},  \pageref{defofh1}, \pageref{tildeepsilon1} \\  \hline 
			$\psi$ & $\kappa$ && $\kappa$ & Normalized Riemann $\quad$ map from $U$ to $\D$ &  \pageref{phi2h}  \\ \hline

		\end{tabular}
		
	\end{center}
	
\end{landscape}

\begin{landscape}
	
	\begin{center}
		
		\begin{tabular}{ | p{1.00in} | p{1.00in} | p{1.00in} | p{1.25in} | p{1.80in} | p{0.80in} | }
			\hline
			\textbf{Quantity} & \textbf{Immediate} & \textbf{Intermediate} & \textbf{Ultimate} & \textbf{Role} & \textbf{Defined}   \\
			& \textbf{Dependency} & \textbf{Dependency} & \textbf{Dependency} & &  \textbf{on Page}   \\ \hline \hline

			$\psi_{2h}$ & $\kappa$, $h$  &  &  $\kappa$, $\epsilon_1$, $h_0$, $r_0$, $R_0$ & Normalized Riemann $\quad$ map from $V_{2h}$ to $\D$ & \pageref{phi2h}, \pageref{defofh3}    \\ \hline
			$\phi_{2h}$ &  $\kappa$, $h$, $R$ &  &  $\kappa$, $\epsilon_1$, $R$, $h_0$, $r_0$, $R_0$ & Normalized Riemann $\quad$ map from ${\tilde V_{2h}}$ to $V_{2h}$ &  \pageref{phi2h}, \pageref{defofh3}      \\ \hline
			$R'$ & $h$, $R$ &  & $\epsilon_1$, $R$, $h_0$, $r_0$, $R_0$ & $R' = R^{int}_{(U,0)} \tilde V_{2h}$ & \pageref{Rdash},  \pageref{defofh3}, \pageref{up} \\ \hline 
			$R''$ & $h$, $R$, $\phi_{2h}$ & $\kappa$, $\epsilon_1$, $R$, $h_0$, $r_0$, $R_0$ & $\epsilon_1$, $R$, $h_0$, $r_0$, $R_0$ & $R'' = R^{int}_{(U,0)} \tilde U$ & \pageref{Rdoubledash}, \pageref{defofh3}, \pageref{up} \\ \hline 
			$r_{2h}$ & $h$, $R$ &  & $\epsilon_1$, $R$, $h_0$, $r_0$, $R_0$ & $\psi_{2h}({\tilde V_{2h}})= \mathrm{D}(0,r_{2h})$ &  \pageref{r2hup}, \pageref{Nineq}  \\ \hline
			$s$ & $h$, $\psi_{2h}$  &  & $\epsilon_1$, $R$, $h_0$, $r_0$, $R_0$ & $\psi_{2h}({\overline V_h})\subset \mathrm{D}(0,s)$ &  \pageref{sup}    \\ \hline
			$N$ & $r_{2h}$, $s$  & $\epsilon_1$, $R$, $h_0$, $r_0$, $R_0$ & $\epsilon_1$, $h_0$, $r_0$ & $s\sqrt[N]{\frac{1}{r_{2h}}}< \sqrt{s}$ &  \pageref{Nup}    \\ \hline	
			$g$ & $\psi_{2h}$, $r_{2h}$, $s$, $N$ &  & $\kappa$, $\epsilon_1$, $R$, $h_0$, $r_0$, $R_0$ & Conjugated expansion map &  \pageref{gdef}    \\ \hline
			$R_2$ & $\epsilon_1$, $R_0$ &  & $\epsilon_1$, $R_0$ & $R^{ext}_{(U,0)}\phi_{2h}(U_{R''-\epsilon_1})\leq R_2$ &  \pageref{R2}    \\ \hline
			$K_1$ & & & & $K_1 = \frac{3}{2}$ & \pageref{Nupredef} \\ \hline
			$N$ (redefined) & $g$ (previous), $R_2$  & $\kappa$, $\epsilon_1$, $R$, $h_0$, $r_0$, $R_0$  & $\epsilon_1$, $R$, $h_0$, $r_0$, $R_0$ & $\|g^{\natural} \|_{{\hat A}}\leq K_1$ &  \pageref{Nupredef}  \\ \hline
		        $g$ (redefined) &  $g$ (previous), $R_2$, &  & $\kappa$, $\epsilon_1$, $R$, $h_0$, $r_0$, $R_0$ & $\|g^{\natural} \|_{{\hat A}}\leq K_1$ &  \pageref{Nupredef}    \\ \hline
			${\tilde \epsilon_2}$ (first version) &  &  &  & $\tilde \epsilon_2 = 1$ &  \pageref{epsilon2initialbound}    \\ \hline
			$\color{darkgreen} \epsilon_2$ & ${\tilde \epsilon_2}$ &  & ${\tilde \epsilon_2}$ & $\epsilon_2 \in (0, \tilde \epsilon_2]$ &  \pageref{epsilon2initialbound}    \\ \hline

		\end{tabular}
		
	\end{center}
	
\end{landscape}

\begin{landscape}
	
	\begin{center}
		
		\begin{tabular}{ | p{1.00in} | p{1.00in} | p{1.00in} | p{1.25in} | p{1.80in} | p{0.80in} | }
			\hline
			\textbf{Quantity} & \textbf{Immediate} & \textbf{Intermediate} & \textbf{Ultimate} & \textbf{Role} & \textbf{Defined}   \\
			& \textbf{Dependency} & \textbf{Dependency} & \textbf{Dependency} & &  \textbf{on Page}  \\ \hline \hline
			
			                          ${\bf Q_1}$ & $\kappa$, $\epsilon_2$, $K_1$, $K_2$, $K_3$, $R_2$, $h$, $g$   &  $\kappa$, $\epsilon_1$, $\epsilon_2$, $K_1$, $K_2$, $K_3$, $R$, $h_0$, $r_0$, $R_0$ & $\kappa$, $\epsilon_1$, $\epsilon_2$, $R$, $h_0$, $r_0$, $R_0$ & Approximates $\phi_{2h}$  &  \pageref{bfQ1}    \\ \hline
			$\eta_1$, $\eta_2$ & $\phi_{2h}$, $R_2$ & $\kappa$, $\epsilon_1$, $R$, $h_0$, $r_0$, $R_0$ & $\epsilon_1$, $h_0$, $r_0$, $R_0$ & $\eta_1 \le \|( \phi_{2h}^{-1})^{\natural}\|_{U_{R_2 + 2}}\leq \eta_2$ &  \pageref{etas}    \\ \hline
			${\tilde \epsilon_2}$ (restricted) & $\kappa$, $\epsilon_1$, $\eta_2$ &  & $\epsilon_1$, $h_0$, $r_0$, $R_0$ & Maximum size of error of approximation using $\mathbf Q$ &  \pageref{epsilon2}    \\ \hline
			$\color{darkgreen} \epsilon_2$ & ${\tilde \epsilon_2}$  &  & $\epsilon_1$, $h_0$, $r_0$, $R_0$ & Upper bound for error of approximation using $\mathbf Q$&  \pageref{epsilon2}    \\ \hline
			${\hat \calE}$ & $\kappa$, $\epsilon_1$, $h$, $R$, $\phi_{2h}$, $\calE$ & & $\kappa$, $\epsilon_1$, $R$, $h_0$, $r_0$, $R_0$, $\calE$  & 
			Conjugated error ${\hat \calE}=\phi_{2h}\circ \calE \circ \phi_{2h}^{-1}$ & \pageref{Ehat} \\ \hline
			$K_2$ & $\epsilon_1$, $R''$, $\phi_{2h}$, $\hat \calE$ &  & $\epsilon_1$, $R$, $h_0$, $r_0$, $R_0$, $\calE$  & $|{\hat \calE}^{\natural}(z)|\leq K_2$ for  $z \in \phi_{2h}(U_{R''-2\epsilon_1})$ &  \pageref{K2}    \\ \hline
			${\bf Q_2}$ & $\kappa$, $\epsilon_1$, $\epsilon_2$, $K_2$, $K_3$, $\eta_2$, $\phi_{2h}$, $R''$, $\hat \calE$  &  $\kappa$, $\epsilon_1$, $\epsilon_2$, $K_2$, $K_3$, $R$, $h_0$, $r_0$, $R_0$, $\calE$ & $\kappa$, $\epsilon_1$, $\epsilon_2$, $R$, $h_0$, $r_0$, $R_0$, $\calE$  & Approximates the conjugated error ${\hat \calE}$ on $\phi_{2h}(U_{R''-3\epsilon_1})$ &  \pageref{bfQ2}    \\ \hline
			$K_3$ & $\eta_2$, $R_2$  &  & $\epsilon_1$, $h_0$, $r_0$, $R_0$  & $|(\phi_{2h}^{-1})^{\natural}(z)|\leq K_3$ for  $z \in U_{R_2+2}$ &  \pageref{K3}    \\ \hline
			${\bf Q_3}$ & $\kappa$, $K_3$, $\epsilon_2$, $R_2$, $h$, $\phi_{2h}$ & $\kappa$, $\epsilon_1$, $\epsilon_2$, $K_3$, $R$, $h_0$, $r_0$, $R_0$ & $\kappa$, $\epsilon_1$, $\epsilon_2$, $R$, $h_0$, $r_0$, $R_0$ & Approximates $\phi_{2h}^{-1}$ &  \pageref{bfQ3}    \\ \hline
			${\bf Q}$ & ${\bf Q_1}$, ${\bf Q_2}$, ${\bf Q_3}$ & & $\kappa$, $\epsilon_1$, $\epsilon_2$, $R$, $h_0$, $r_0$, $R_0$, $\calE$ & ${\bf Q} := {\bf Q_3}\circ{\bf Q_2}\circ{\bf Q_1}$ &  \pageref{Qdef} \\ \hline
			
		\end{tabular}
		
	\end{center}
	
\end{landscape}

\backmatter
\bibliographystyle{amsalpha}
\bibliography{references}
\printindex

\end{document}